  \newcommand{\adj}{\dashv}
  \newcommand{\iso}{\mathrel{\cong}}
  \newcommand{\we}{\mathrel\sim}
  \newcommand{\ev}{\operatorname{ev}}
  \newcommand{\Ho}{\operatorname{Ho}}
  \newcommand{\hamm}{\operatorname{L^H}}
  \def\hammrep#1{\expandafter\hammrep@i#1,,\@nil}
  \def\hammrep@i#1,#2,#3\@nil{\hamm #1 (\uvar, #2)}
  \newcommand{\Frac}{\operatorname{Frac}}
  \newcommand{\fr}{\operatorname{Fr}}
  \newcommand{\frh}{\hat{\fr}}
  \newcommand{\op}{{\mathord\mathrm{op}}}
  \newcommand{\ob}{\operatorname{ob}}
  \newcommand{\pull}{\times}
  \newcommand{\push}{\sqcup}
  \newcommand{\Lan}{\operatorname{Lan}}
  \newcommand{\coend}{\int}
  \newcommand{\slice}{\mathbin\downarrow}
  \newcommand{\fslice}{\mathbin\twoheaddownarrow}
  \newcommand{\Ex}{\operatorname{Ex}}
  \newcommand{\nerve}{\operatorname{N}}
  \newcommand{\Sd}{\operatorname{Sd}}
  \newcommand{\Sk}{\operatorname{Sk}}
  \newcommand{\bd}{\partial}
  \newcommand{\gprod}{\boxbslash}
  \newcommand{\id}{\operatorname{id}}
  \newcommand{\cod}{\operatorname{cod}}
  \newcommand{\simp}[1]{\mathord\Delta[#1]}
  \newcommand{\bdsimp}[1]{\mathord{\bd\simp{#1}}}
  \newcommand{\ssimp}[1]{\mathord\Delta_\sharp[#1]}
  \newcommand{\bdssimp}[1]{\bd \Delta_\sharp[#1]}
  \newcommand{\Reedy}{\mathrm{R}}
  \newcommand{\uvar}{\mathord{\relbar}}
  \renewcommand{\tilde}{\widetilde}
  \renewcommand{\hat}{\widehat}
  \renewcommand{\bar}{\widebar}
  \newcommand{\Id}{\mathsf{Id}}
  \newcommand{\tSigma}{\mathsf{\Sigma}}
  \newcommand{\nat}{{\mathord\mathbb{N}}}
  \renewcommand{\emptyset}{\varnothing}
  \newcommand{\cat}[1]{\mathscr{#1}}
  \newcommand{\ccat}[1]{\mathsf{#1}}
  \newcommand{\ncat}[1]{\mathsf{#1}}
  \newcommand{\FibCat}{\ncat{FibCat}}
  \newcommand{\sFibCat}{\ncat{sFibCat}}
  \newcommand{\Trb}{\ncat{Trb}}
  \newcommand{\sTrb}{\ncat{sTrb}}
  \newcommand{\CompCat}{\ncat{CompCat}}
  \newcommand{\Lex}{\ncat{Lex}}
  \newcommand{\Simp}{\Delta}
  \newcommand{\sSimp}{\Delta_\sharp}
  \newcommand{\from}{\colon}
  \newcommand{\ito}{\hookrightarrow}
  \newcommand{\weto}{\mathrel{\ensurestackMath{\stackon[-2pt]{\xrightarrow{\makebox[.8em]{}}}{\mathsmaller{\mathsmaller\we}}}}}
  \newcommand{\weot}{\mathrel{\ensurestackMath{\stackon[-2pt]{\xleftarrow{\makebox[.8em]{}}}{\mathsmaller{\mathsmaller\we}}}}}
  \newcommand{\cto}{\rightarrowtail}
  \newcommand{\acto}{\mathrel{\ensurestackMath{\stackon[-1pt]{\cto}{\mathsmaller{\mathsmaller\we}}}}}
  \newcommand{\fto}{\twoheadrightarrow}
  \newcommand{\afto}{\mathrel{\ensurestackMath{\stackon[-1pt]{\fto}{\mathsmaller{\mathsmaller\we}}}}}
  \newcommand{\rfto}{\mathrel{\rotatebox[origin=c]{-45}{$\Rsh$}}}
  \newcommand{\zto}{\rightsquigarrow}
  \renewcommand{\iff}{if and only if}
  \newcommand{\st}{such that}
  \newcommand{\wrt}{with respect to}
  \newcommand{\tfae}{the following are equivalent}
  \newcommand{\llp}{left lifting property}
  \newcommand{\whe}{weak homotopy equivalence}
  \newcommand{\Whe}{Weak homotopy equivalence}
  \declaretheorem[style=definition,within=section]{definition}
  \declaretheorem[style=definition,numberlike=definition]{remark}
  \declaretheorem[style=plain,numberlike=definition]{corollary}
  \declaretheorem[style=plain,numberlike=definition]{lemma}
  \declaretheorem[style=plain,numberlike=definition]{proposition}
  \declaretheorem[style=plain,numberlike=definition]{theorem}
  \declaretheorem[style=plain,numbered=no,name=Theorem]{theorem*}
  \Crefname{corollary}{Corollary}{Corollaries}
  \Crefname{definition}{Definition}{Definitions}
  \Crefname{lemma}{Lemma}{Lemmas}
  \Crefname{proposition}{Proposition}{Propositions}
  \Crefname{remark}{Remark}{Remarks}
  \Crefname{theorem}{Theorem}{Theorems}
  \newcommand{\axm}[1]{(#1\arabic*)}
  \newlist{axioms}{enumerate}{1}
  \Crefname{axiomsi}{}{}
  \newenvironment{tikzeq*}
  {
    \begingroup
    \begin{equation*}
    \begin{tikzpicture}[baseline=(current bounding box.center)]
  }
  {
    \end{tikzpicture}
    \end{equation*}
    \endgroup
    \ignorespacesafterend
  }
  \tikzset
  {
    diagram/.style=
    {
      matrix of math nodes,
      column sep={4.3em,between origins},
      row sep={4em,between origins},
      text height=1.5ex,
      text depth=.25ex
    },
    over/.style={preaction={draw=white,-,line width=6pt}},
    every to/.style={font=\footnotesize},
    inj/.style={right hook->},
    surj/.style={-{Latex[open]}},
    cof/.style={>->},
    fib/.style={->>},
  }
  \DeclareFontFamily{U}{mathx}{\hyphenchar\font45}
  \DeclareFontShape{U}{mathx}{m}{n}{
    <5> <6> <7> <8> <9> <10>
    <10.95> <12> <14.4> <17.28> <20.74> <24.88>
    mathx10}{}
  \DeclareSymbolFont{mathx}{U}{mathx}{m}{n}
  \DeclareFontFamily{U}{mathb}{\hyphenchar\font45}
  \DeclareFontShape{U}{mathb}{m}{n}{
    <5> <6> <7> <8> <9> <10>
    <10.95> <12> <14.4> <17.28> <20.74> <24.88>
    mathb10}{}
  \DeclareSymbolFont{mathb}{U}{mathb}{m}{n}
  \DeclareMathAccent{\widebar}{0}{mathx}{"73}
  \DeclareMathSymbol{\Rsh}{\mathrel}{mathb}{"E9}
  \DeclareFontFamily{U}{MnSymbolA}{}
  \DeclareFontShape{U}{MnSymbolA}{m}{n}{
    <-6> MnSymbolA5
    <6-7> MnSymbolA6
    <7-8> MnSymbolA7
    <8-9> MnSymbolA8
    <9-10> MnSymbolA9
    <10-12> MnSymbolA10
    <12-> MnSymbolA12}{}
  \DeclareSymbolFont{MnSyA}{U}{MnSymbolA}{m}{n}
  \DeclareMathSymbol{\twoheaddownarrow}{\mathrel}{MnSyA}{27}
  \newcommand{\MSC}[1]{%
    \let\thempfn\relax
    \footnotetext[0]{2010 Mathematics Subject Classification: #1.}
  }
\author{Krzysztof Kapulkin \and Karol Szumi{\l}o}
\title{Internal Languages of Finitely Complete $(\infty, 1)$-categories}
\date{\today}
\begin{document}

  \maketitle


  \begin{abstract}
    We prove that the homotopy theory of Joyal's tribes is equivalent to
    that of fibration categories.
    As a consequence, we deduce a variant of the conjecture asserting that
    Martin-L\"of Type Theory with dependent sums and intensional identity types
    is the internal language of $(\infty, 1)$-categories with finite limits.%
    \MSC{18G55, 55U35, 03B15 (primary)}
  \end{abstract}

  \setlist[enumerate]{label=(\arabic*)}

\section{Introduction}

In recent years, two new frameworks for abstract homotopy theory have emerged:
higher category theory, 
developed extensively by Joyal \cite{Joyal-theory} and Lurie \cite{HTT}, and
Homotopy Type Theory extensively developed in \cite{uf} (and
formalized in \cite{um} and \cite{hott}), referred to as HoTT below.
The homotopy-theoretic theorems proven in the latter are often labeled as
Synthetic Homotopy Theory, which is supposed to express two ideas.
First, we can reason about objects of an abstract $(\infty,1)$-category as if 
they were spaces. 
Second, a theorem proven in HoTT becomes true in a wide class of higher categories.


Although the connection between higher categories and HoTT seems intuitive to
those familiar with both settings, a formal statement of equivalence between them
was only conjectured in \cites{kl,k} in three variants depending on
the choice of type constructors and
the corresponding higher categorical structures.
Slightly more precisely, \cite{kl} provides a link between contextual categories,
an algebraic notion of a model of type theory,
and quasicategories with the appropriate extra structure.
Thus, along with the Initiality Conjecture%
\footnote{\url{https://www.math.ias.edu/vladimir/sites/math.ias.edu.vladimir/files/2015_06_30_RDP.pdf}},
\cite{kl} suggests a relation between syntactically presented type theories
and $(\infty,1)$-categories.

These conjectures have far reaching consequences.
As mentioned above, they allow one to use HoTT to reason about
sufficiently structured higher categories, for example,
since the Blakers--Massey Theorem has been proven in HoTT \cite{ffll},
it is satisfied in an arbitrary $(\infty, 1)$-topos admitting
the relevant Higher Inductive Types.
Conversely, a type-theoretic statement true in every $(\infty, 1)$-topos
must be provable in HoTT and consequently,
results in higher category theory can suggest new principles of logic,
such as the Univalence Axiom of Voevodsky.
Here, we use the term $(\infty, 1)$-topos for
some yet-to-be-defined notion of \emph{elementary} $(\infty, 1)$-topos.
Although there is no universally agreed upon definition,
a significant progress towards it has been made by Rasekh \cite{r} and
in the unpublished work of Shulman.

In the present paper, we prove a version of the first of these conjectures,
asserting that Martin-L\"of Type Theory with dependent sums and
intensional identity types is the internal language of
finitely complete $(\infty,1)$-categories.
To make this result precise, we assemble the categorical models of type theory,
given by comprehension categories \cite{j},
into a category $\CompCat_{\Id,\Sigma}$ and compare it with
the category $\Lex_\infty$ of quasicategories with finite limits.
Our main theorem (cf.\ \Cref{main-theorem}) states:
\begin{theorem*}
  The $(\infty, 1)$-category of categorical models of Martin-L\"of Type Theory
  with dependent sums and intensional identity types is equivalent to
  the $(\infty, 1)$-category of finitely complete $(\infty, 1)$-categories.
\end{theorem*}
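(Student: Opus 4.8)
The plan is to realize the asserted equivalence as a zig-zag of equivalences of homotopy theories
\[
  \CompCat_{\Id,\Sigma} \we \sTrb \we \Trb \we \FibCat \we \Lex_\infty,
\]
where $\Trb$ is the category of Joyal's tribes, $\sTrb$ its (semi)simplicially enriched refinement, $\FibCat$ the category of fibration categories, and each of these categories is equipped with the homotopy theory whose weak equivalences are the exact functors inducing Dwyer--Kan equivalences on the hammock localizations $\hamm$. The two outer links are the comparatively soft ones. On the right, $\FibCat \we \Lex_\infty$ is the formal dual of the second author's earlier identification of fibration categories as a model for finitely complete $(\infty,1)$-categories, which we import wholesale. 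On the left, $\CompCat_{\Id,\Sigma} \we \sTrb$ is a strictification statement: every comprehension category modeling $\Id$ and $\Sigma$ has an underlying tribe, which one enriches by framing, while conversely a (semi)simplicial tribe is rigidified into such a comprehension category by choosing identity-type and dependent-sum structure coherently, in the manner of the local-universe construction and the left-adjoint splitting of a fibration; one checks that these passages are mutually weakly inverse for the homotopy theory fixed on $\CompCat_{\Id,\Sigma}$ earlier.

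The substance of the theorem is the middle link: that the forgetful functor $U \from \Trb \to \FibCat$, which keeps the fibrations and the weak equivalences of a tribe but discards its distinguished class of anodyne maps, is an equivalence of homotopy theories. Rather than invert $U$ head-on, I would build a weak inverse through the enriched theories by means of the \emph{frame} construction: for a fibration category $\cat{C}$, let $\fr\cat{C}$ be the category of frames in $\cat{C}$, i.e.\ Reedy-fibrant (semi)simplicial resolutions, equipped with the Reedy fibrations; it is naturally enriched over (semi)simplicial sets. The key structural claim to establish is that $\fr\cat{C}$ is itself a \emph{tribe}: the shift of a frame supplies functorial path objects that are stable under pullback along Reedy fibrations, and this is exactly the data an anodyne--fibration factorization needs in order to satisfy the tribe axioms. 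Since the constant-frame functor $\cat{C} \to \fr\cat{C}$, followed by Reedy-fibrant replacement, is an exact weak equivalence, $\fr$ serves as a homotopy inverse to the composite $\sTrb \to \sFibCat \to \FibCat$; and the same framing calculus shows the remaining de-enriching functors $\sTrb \to \Trb$ and $\sFibCat \to \FibCat$ to be equivalences, since the hammock localizations --- indeed the whole homotopy theory --- are already computed by frames.

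To turn this into a proof that $U$, and hence the entire chain, is an equivalence, I would lean on a criterion of Waldhausen-approximation type, to the effect that an exact functor of fibration categories is a weak equivalence as soon as it is weakly essentially surjective and satisfies a lifting condition for morphisms into path objects. The frame construction supplies the weak essential surjectivity of $U$, and the lifting condition, together with homotopical full faithfulness, is verified by comparing the function complexes of a tribe $\cat{T}$ with those of its underlying fibration category $U\cat{T}$ --- both computed by frames, hence the same. Composing the chain then yields an equivalence of homotopy theories $\CompCat_{\Id,\Sigma} \we \Lex_\infty$, and passing to the associated $(\infty,1)$-categories --- the hammock localizations followed by a coherent nerve, equivalently the Rezk nerves --- gives the equivalence asserted in the statement.

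The hard part, I expect, is the verification that $\fr\cat{C}$ is a tribe: producing a coherent, pullback-stable system of path objects and anodyne factorizations inside a fibration category that a priori offers such factorizations only non-canonically, while arranging for the Reedy-fibrancy conditions, the closure properties of the anodyne class, and the compatibility with the (semi)simplicial enrichment to hold at once, and while keeping the output small enough that $\fr$ is a bona fide functor $\FibCat \to \sTrb$ rather than a large-category construction. By comparison, the strictification on the left, the de-enriching comparisons, and the invocation of Waldhausen approximation ought to be largely formal once this core construction is in hand.
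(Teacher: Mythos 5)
Your proposal correctly identifies the high-level architecture --- the chain of homotopical categories $\CompCat_{\Id,\Sigma} \to \Trb \to \FibCat \to \Lex_\infty$, the role of the semisimplicial enrichment via frames, and the invocation of a Waldhausen/Cisinski-type approximation criterion --- and the outer links are described essentially as the paper treats them. But the proposal has a serious gap at exactly the point you identify as ``the hard part.''

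The central claim you make, that the category of frames $\fr\cat{C}$ over a mere fibration category $\cat{C}$ \emph{is a tribe}, is false, and the paper does not assert it. When $\cat{C}$ is a fibration category, $\fr\cat{C}$ is only a semisimplicial \emph{fibration} category; it becomes a semisimplicial tribe only when $\cat{C}$ is already a tribe (\Cref{tribe-frames}). The obstruction is structural: a tribe requires a class of anodyne morphisms with the genuine left lifting property against all fibrations, and nothing in a fibration category, nor in its category of Reedy-fibrant (semi)simplicial resolutions, manufactures such a class. The path objects supplied by shift give factorizations into weak equivalences followed by Reedy fibrations, but those weak equivalences need not lift against arbitrary Reedy fibrations. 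If $\fr\cat{C}$ were a tribe, the forgetful functor $\Trb \to \FibCat$ would have a near-obvious homotopy inverse and the theorem would reduce to formalities; that it does not is precisely why the paper's argument is substantial.

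What your proposal is missing is the actual device used to verify the approximation property (App2) for $\Trb \to \FibCat$: given a fibration category $\cat{C}$, a tribe $\cat{T}$, and an exact functor $F \from \cat{C} \to \cat{T}$, the paper passes to \emph{simplicial presheaves over the hammock localization} $\hamm \cat{C}$, where the injective model structure supplies a genuine tribe of homotopy representable fibrant presheaves (\Cref{representable-tribe}), and then constructs a comparison span $\cat{C} \weot \hat{\cat{C}} \to \bar{\cat{C}} \to \cat{T}$ through a gluing-type category $\bar{\cat{C}}$ of presheaves equipped with compatible $\cat{T}$-data. The frame construction is used in the paper only for a different purpose: to show $\sTrb \to \Trb$ and $\sFibCat \to \FibCat$ are DK-equivalences, and to transport the approximation argument to $\sTrb \to \sFibCat$, which, unlike $\Trb$, actually carries a fibration-category structure so that Cisinski's theorem applies. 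Your proposal collapses the two roles and thereby skips the heart of the proof.
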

As indicated above, ideally, one would like to establish
an equivalence between suitable syntactically presented type theories and
finitely complete $(\infty, 1)$-categories and the theorem above is
an important step in this direction.
However, a complete result along these lines would require
a proof of the Initiality Conjecture and
a comparison between contextual categories and comprehension categories
and indeed, both are currently being investigated.

Our approach builds on two recent developments.
First, it was shown in \cite{s3} that
the homotopical category $\Lex_\infty$ is DK-equivalent to that of
fibration categories.
Second, the connection between fibration categories and type theory
was observed in \cite{akl} and then
explored in detail by Shulman \cite{sh} and in the unpublished work of Joyal.
Both Shulman and Joyal gave a categorical axiomatization of
the properties of fibration categories arising from type theory,
introducing the notions of a type-theoretic fibration category and a tribe,
respectively.
The resulting characterizations closely resemble
the \emph{identity type categories} of van den Berg and Garner
\cite{bg}*{Def.~3.2.1}.
Tribes can be seen as a more structured version of fibration categories in that
they are equipped with two clasess of morphisms,
fibrations and anodyne morphisms, that
nearly\footnote{However, fibrations are not necessarily closed under retracts.}
form a weak factorization system with the ``Frobenius property''.

The equivalence between comprehension categories and tribes is
fairly straightforward and thus the heart of the paper is the proof that
the forgetful functor $\Trb \to \FibCat$
from the homotopical category of tribes to
the homotopical category of fibration categories is a DK-equivalence.
A direct way of accomplishing that would be to construct its homotopy inverse.
However, associating a tribe to a fibration category in
a strictly functorial manner has proven difficult.
Another approach would be to verify Waldhausen's approximation properties which
requires constructing a fibration category structure on $\Trb$.
While this idea does not appear to work directly,
it can refined using semisimplicial methods.

To this end, in course of the proof,
we will consider the following homotopical categories.
\begin{tikzeq*}
\matrix[diagram,column sep={6em,between origins}]
{
                              &[1em] |(sT)| \sTrb & |(sF)| \sFibCat &                   \\
  |(C)| \CompCat_{\Id,\Sigma} &      |(T)|  \Trb  & |(F)|  \FibCat  & |(Q)| \Lex_\infty \\
};

\draw[->] (C) to (T);
\draw[->] (T) to (F);
\draw[->] (F) to (Q);

\draw[->] (sT) to (sF);

\draw[->] (sT) to (T);
\draw[->] (sF) to (F);
\end{tikzeq*}
In the top row, $\sTrb$ and $\sFibCat$ denote
the categories of semisimplicially enriched tribes and
semisimplicially enriched fibration categories, respectively.
The vertical forgetful functors are
directly verified to be homotopy equivalences.
Moreover, both $\sTrb$ and $\sFibCat$ are fibration categories allowing us to
verify that the top map is a DK-equivalence by
checking the approximation properties.

\subsection*{Outline}

In \Cref{fibcat-tribe}, we review background on fibration categories and tribes,
and in \Cref{semisimp-fibcat-tribe}, we introduce
their semisimplicially enriched counterparts.
Then in \Cref{fibcat-of-fibcats}, we construct fibration categories of
semisimplicial fibration categories and semisimplicial tribes,
following \cite{s1}.

To associate a tribe to a fibration category,
we use injective model structures on categories of simplicial presheaves,
which we briefly recall in \Cref{presheaves}.
In \Cref{hammocks}, we study the hammock localization of a fibration category and
construct tribes of representable presheaves over such localizations.

In \Cref{tribes-of-presheaves}, we use them to
verify the approximation properties for
the forgetful functor $\Trb \to \FibCat$.
As mentioned above, this argument is insufficient and we rectify it
in \Cref{equivalence} using semisimplicial enrichments.
Finally in \Cref{language}, we complete the proof by
relating tribes to comprehension categories.

\subsection*{Acknowledgments}

This work would not have been possible without
the support and encouragement of Andr\'e Joyal.
We are also very grateful to him for
sharing an early draft of his manuscript on the theory of tribes with us.

\section{Fibration categories and tribes}
\label{fibcat-tribe}

To start off, we discuss the basic theory of fibration categories and tribes.
Fibration categories were first introduced by Brown \cite{br} as
an abstract framework for homotopy theory
alternative to Quillen's model categories.
Our definition differs slightly from Brown's original one in that
we assume the 2-out-of-6 property instead of 2-out-of-3.

\begin{definition}
  A \emph{fibration category} is a category $\cat{C}$ equipped with
  a subcategory of \emph{weak equivalences} (denoted by $\weto$) and
  a subcategory of \emph{fibrations} (denoted by $\fto$) subject to
  the following axioms.
  \begin{axioms}[label=\axm{F}]
  \item\label{fibcat-terminal}
    $\cat{C}$ has a terminal object $1$ and all objects $X$ are fibrant
    (i.e., the morphism $X \to 1$ is a fibration).
  \item\label{fibcat-pullback}
    Pullbacks along fibrations exist in $\cat{C}$ and
    (acyclic) fibrations are stable under pullback.
    (Here, an \emph{acyclic fibration} is a morphism that
    is both a fibration and a weak equivalence.)
  \item\label{fibcat-factorization}
    Every morphism factors as a weak equivalence followed by a fibration.
  \item\label{fibcat-2-out-of-6}
    Weak equivalences satisfy the 2-out-of-6 property.
  \end{axioms}
\end{definition}

We will need a few fundamental facts about fibration categories which
we now recall.
For a more thorough discussion, see \cite{rb}.

\begin{definition}
  Let $\cat{C}$ be a fibration category.
  \begin{enumerate}
  \item A \emph{path object} for an object $a \in \cat{C}$ is
    a factorization of the diagonal morphism $a \to a \times a$ as
    $(\pi_0, \pi_1) \sigma \from a \weto P a \fto a \times a$.
  \item A \emph{homotopy} between morphisms $f, g \from a \to b$ is
    a morphism $H \from a \to P b$ \st{} $(\pi_0, \pi_1) H = (f, g)$.
  \item A morphism $f \from a \to b$ is a \emph{homotopy equivalence} if
    there is a morphism $g \from b \to a$ \st{}
    $g f$ is homotopic to $\id_a$ and $f g$ is homotopic to $\id_b$.
  \item An object $a$ is \emph{cofibrant} if
    for every acyclic fibration $p \from x \afto y$
    there is a lift in every diagram of the form
    \begin{tikzeq*}
    \matrix[diagram,column sep={5em,between origins}]
    {
              & |(x)| x          \\
      |(a)| a & |(y)| y \text{.} \\
    };

    \draw[fib] (x) to node[left] {$\we$} node[right] {$p$} (y);

    \draw[->]        (a) to (y);
    \draw[->,dashed] (a) to (x);
    \end{tikzeq*}
  \end{enumerate}
\end{definition}

\begin{lemma}\label{we-he}
  In a fibration category $\cat{C}$,
  a morphism $f \from a \to b$ between cofibrant objects is a weak equivalence
  \iff{} it is a homotopy equivalence.
\end{lemma}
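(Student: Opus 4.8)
The plan is to prove the two implications separately. The implication ``homotopy equivalence $\Rightarrow$ weak equivalence'' needs no cofibrancy hypothesis, so I would dispose of it first. The key observation is that in any path object $(\pi_0,\pi_1)\sigma \from v \weto Pv \fto v\times v$ the two projections $\pi_0,\pi_1 \from Pv \to v$ are acyclic fibrations: each is a composite $Pv \fto v\times v \fto v$ of fibrations, and since $\pi_i\sigma = \id_v$ while $\sigma$ is a weak equivalence, $\pi_i$ is a weak equivalence by the 2-out-of-3 property. (This property is available: a subcategory of weak equivalences with the 2-out-of-6 property also satisfies 2-out-of-3, as one sees by inserting an identity morphism into the middle of a two-term composite, turning it into a three-term one.) It follows that if $H$ is a homotopy between $h$ and $h'$, then $h$ is a weak equivalence \iff{} $H$ is \iff{} $h'$ is; in particular homotopic morphisms are weak equivalences simultaneously. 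So if $f\from a \to b$ has a homotopy inverse $g$, then $gf$ and $fg$ are weak equivalences, being homotopic to identities, and applying 2-out-of-6 to $a \xrightarrow{f} b \xrightarrow{g} a \xrightarrow{f} b$ shows that $f$ is a weak equivalence.

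For the converse, let $f\from a \to b$ be a weak equivalence between cofibrant objects. First I would apply Brown's factorization lemma, obtained by factoring $f$ through the mapping path space $N := a \times_b Pb$ (pulled back along one of the acyclic fibrations $Pb \fto b$): this writes $f = \rho\lambda$ where the other projection $\alpha \from N \afto a$ is an acyclic fibration with $\alpha\lambda = \id_a$, and where $\rho \from N \to b$ is a fibration because $(\alpha,\rho)\from N \to a\times b$ is a pullback of the fibration $Pb \fto b\times b$ and the projection $a\times b \fto b$ is a fibration. Since $\alpha\lambda = \id_a$, the morphism $\lambda$ is a weak equivalence by 2-out-of-3, hence so is $\rho$, so $\rho$ is an acyclic fibration. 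It therefore suffices to prove: (i) every section of an acyclic fibration is a homotopy equivalence; (ii) every acyclic fibration with cofibrant codomain is a homotopy equivalence; and (iii) homotopy equivalences are closed under composition. Granting these, $\lambda$ is a homotopy equivalence by (i), $\rho$ is one by (ii) since its codomain $b$ is cofibrant, and $f = \rho\lambda$ is one by (iii).

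Statements (i) and (ii) I would handle as follows. For (i), given an acyclic fibration $q \from c \afto x$ with a section $s$ (so $qs = \id_x$), I need a homotopy $sq \simeq \id_c$; then $s$ is a homotopy equivalence with inverse $q$, since $qs = \id_x$ is homotopic to $\id_x$ trivially. Form the pullback $c \times_x c$ of $q$ along itself: its two projections are pullbacks of $q$, hence acyclic fibrations, so the diagonal $c \to c\times_x c$ is a weak equivalence by 2-out-of-3; factoring the canonical map $c\times_x c \to c\times c$ as a weak equivalence followed by a fibration $c\times_x c \weto Pc \fto c\times c$ then exhibits $Pc$ as a genuine path object for $c$. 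Since $q\cdot sq = q = q\cdot \id_c$, the pair $(sq,\id_c)$ is a morphism $c \to c\times_x c$, and composing it with $c\times_x c \weto Pc$ produces the required homotopy (no cofibrancy is used anywhere here). For (ii), given an acyclic fibration $p \from c \afto y$ with $y$ cofibrant, cofibrancy of $y$ lets us lift $\id_y$ along $p$ to a section $t$ of $p$; by (i), $t$ is a homotopy equivalence, hence so is $p$.

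The point I expect to require the most care is (iii), and specifically the bookkeeping of cofibrancy around it. Verifying that $f = \rho\lambda$ is a homotopy equivalence amounts to manipulating homotopies such as $\rho(\lambda\alpha)t \simeq \rho t = \id_b$ and $\alpha(t\rho)\lambda \simeq \alpha\lambda = \id_a$ (using $\lambda\alpha \simeq \id_N$ and $t\rho \simeq \id_N$ from (i), with homotopy inverse $\alpha t$ of $f$), and these proceed by pre-composing a homotopy by a morphism into $N$ and then post-composing by a morphism $N \to b$, respectively $N \to a$. Thus one needs the homotopy relation to be a congruence — an equivalence relation compatible with pre- and post-composition, and independent of the chosen path object — on morphisms out of $b$, respectively out of $a$. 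In a fibration category this holds for morphisms out of a cofibrant object; it is a standard fact (cf.\ \cite{rb}), but it must be invoked precisely where the endpoints are cofibrant, since the intermediate object $N$ need not be cofibrant. And this cannot be avoided: a weak equivalence with merely cofibrant domain need not be a homotopy equivalence, so the hypothesis that both $a$ and $b$ are cofibrant is genuinely needed, entering exactly through (ii) and (iii).
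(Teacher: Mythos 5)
Your argument for the implication ``homotopy equivalence $\Rightarrow$ weak equivalence'' is essentially identical to the paper's: you show that homotopic maps are weak equivalences simultaneously (using that the path object projections $\pi_0,\pi_1$ are acyclic fibrations), deduce that $gf$ and $fg$ are weak equivalences, and apply 2-out-of-6 to the chain $a \xrightarrow{f} b \xrightarrow{g} a \xrightarrow{f} b$. The paper encodes exactly this in its big diagram.

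For the converse, you and the paper diverge: the paper simply cites Baues (\cite{b}, Cor.~2.12) and moves on, whereas you give a self-contained construction via Brown's factorization through the mapping path space $N = a \times_b Pb$, reducing to the three lemmas (i)--(iii). The construction is correct, and (i) and (ii) you verify in full. The sole ingredient you import is the standard fact that for morphisms out of a cofibrant object the homotopy relation is an equivalence relation compatible with pre- and post-composition and independent of the chosen path object; this is needed in (iii) precisely because the intermediate object $N$ is typically not cofibrant, and you correctly flag this and apply the congruence only where the source is $a$ or $b$. What your route buys is transparency about exactly where cofibrancy of $a$ and of $b$ enters (through lifting a section in (ii) and through the congruence in (iii)), at the cost of invoking a foundational result about the homotopy relation that the paper avoids by deferring the entire direction to a reference. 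Both are valid; yours is more instructive, the paper's is shorter.
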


\begin{proof}
  If $f$ is a weak equivalence, then it is a homotopy equivalence by
  \cite{b}*{Cor.\ 2.12}.

  Conversely, let $f$ be a homotopy equivalence and
  $g \from b \to a$ its homotopy inverse.
  Moreover, let $H$ be a homotopy between $g f$ and $\id_a$ and
  let $G$ be a homotopy between $f g$ and $\id_b$ which
  yield the following diagram.
  \begin{tikzeq*}
  \matrix[diagram]
  {
             &            &          & |(b)| b  &            & |(b0)| b \\
    |(a0)| a &            & |(a)| a  &          & |(Pb)| P b &          \\
             & |(Pa)| P a &          & |(b1)| b &            &          \\
             &            & |(a1)| a &          &            &          \\
  };

  \draw[->] (b)  to node[above left]  {$g$} (a);
  \draw[->] (a)  to node[above right] {$f$} (b1);
  \draw[->] (b1) to node[below right] {$g$} (a1);

  \draw[->] (Pa) to node[below left] {$\we$} (a0);
  \draw[->] (Pa) to node[below left] {$\we$} (a1);

  \draw[->] (a) to node[above]      {$\id$} (a0);
  \draw[->] (a) to node[above left] {$H$}   (Pa);
  \draw[->] (a) to node[left]       {$g f$} (a1);

  \draw[->] (Pb) to node[below right] {$\we$} (b0);
  \draw[->] (Pb) to node[below right] {$\we$} (b1);

  \draw[->] (b) to node[above]       {$\id$} (b0);
  \draw[->] (b) to node[above right] {$G$}   (Pb);
  \draw[->] (b) to node[right]       {$f g$} (b1);
  \end{tikzeq*}
  By 2-out-of-6 it follows that $f$ is a weak equivalence.
\end{proof}

\begin{definition}\label{hpb}
  A commutative square
  \begin{tikzeq*}
  \matrix[diagram]
  {
    |(u)| a & |(x)| c \\
    |(v)| b & |(y)| d \\
  };

  \draw[->] (u) to (v);
  \draw[->] (x) to (y);
  \draw[->] (u) to (x);
  \draw[->] (v) to (y);
  \end{tikzeq*}
  in a fibration category is a \emph{homotopy pullback} if
  given a factorization of $c \weto c' \fto d$,
  the induced morphism $a \to b \pull_d c'$ is a weak equivalence.
\end{definition}

\begin{lemma}\label{hpb-invariant}
  Let $\cat{C}$ be a fibration category.
  \begin{enumerate}
  \item If two squares in $\cat{C}$ are weakly equivalent, then
    one is a homotopy pullback \iff{} the other one is.
  \item Every homotopy pullback in $\cat{C}$ is weakly equivalent to
    a strict pullback along a fibration.
  \item Every homotopy pullback in $\cat{C}$ is weakly equivalent to
    a strict pullback of two fibrations.
  \end{enumerate}
\end{lemma}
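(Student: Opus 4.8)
The plan is to prove the three statements in sequence, each building on the previous one, using the definition of homotopy pullback via factorizations of the "cospan leg" $c \to d$ together with the basic fibration-category axioms.

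For (1), suppose we have two commutative squares connected by a weak equivalence of squares, i.e.\ a natural weak equivalence between the two diagrams $(a \to b \leftarrow ??? )$—more precisely a levelwise weak equivalence $(a,b,c,d) \weto (a', b', c', d')$ commuting with all the structure maps. I would first choose a factorization $c \weto c' \fto d$ as in \Cref{hpb}; then, using \ref{fibcat-factorization} and \ref{fibcat-pullback}, lift this to a compatible factorization on the primed side, obtaining $c \weto c' \fto d$ and $\bar c \weto \bar c' \fto \bar d$ together with a weak equivalence $c' \weto \bar c'$ over the weak equivalence $d \weto \bar d$ (one factors the composite $c' \to d \to \bar d$, or uses the relative factorization/lifting to produce the comparison). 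Since fibrations are stable under pullback, and since a pullback of a weak equivalence between fibrations along a fibration is again a weak equivalence (a standard consequence of \ref{fibcat-pullback} and \ref{fibcat-2-out-of-6}, using that acyclic fibrations pull back to acyclic fibrations), the induced map $b \pull_d c' \weto \bar b \pull_{\bar d} \bar c'$ is a weak equivalence. Now a 2-out-of-3 (hence 2-out-of-6) argument applied to the commuting triangle relating $a \to b \pull_d c'$, $a' \to \bar b \pull_{\bar d} \bar c'$, and the weak equivalences $a \weto a'$ and $b \pull_d c' \weto \bar b \pull_{\bar d} \bar c'$ shows that one comparison map is a weak equivalence \iff\ the other is. A separate small lemma is needed to see that the notion of homotopy pullback does not depend on the chosen factorization of $c \to d$; this follows by the same 2-out-of-3 trick comparing two factorizations via a common refinement.

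For (2), given a homotopy pullback square with vertices $a,b,c,d$, choose a factorization $c \weto c' \fto d$ and form the strict pullback $P = b \pull_d c'$, which exists and whose projection to $b$ is a fibration by \ref{fibcat-pullback}. By definition of homotopy pullback the induced map $a \to P$ is a weak equivalence, so the square $(a, b, c, d)$ is weakly equivalent to the square $(P, b, c', d)$, which is a strict pullback along the fibration $c' \fto d$. For (3), start from the square $(P, b, c', d)$ produced in (2) and additionally factor $b \to d$ as $b \weto b' \fto d$; replacing $b$ by $b'$ and pulling back $c' \fto d$ along $b' \fto d$ gives a strict pullback $b' \pull_d c'$ of two fibrations, and part (1) (invariance under weak equivalence of squares) identifies this as weakly equivalent to the original homotopy pullback. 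One must check that after this replacement the square is still a homotopy pullback and that the relevant comparison maps are weak equivalences, which again is a 2-out-of-3 bookkeeping argument using that pullback along a fibration preserves weak equivalences between fibrations.

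The main obstacle is the auxiliary fact that a pullback, along a fibration, of a weak equivalence between fibrant objects that are fibered over the same base is again a weak equivalence; more precisely, that in a square of fibrations over $d$, pulling back along any fibration $b \fto d$ takes a fiberwise weak equivalence to a weak equivalence. This is the "properness"-type statement for fibration categories, and while it is classical (it follows from \ref{fibcat-pullback}, \ref{fibcat-factorization}, and 2-out-of-6, e.g.\ by factoring the weak equivalence into an acyclic cofibration-like map and an acyclic fibration and handling each case), getting it cleanly from our axioms is the one genuinely non-formal point; everything else is diagram-chasing with 2-out-of-6. I would either cite it from \cite{rb} or prove it as a preliminary lemma before establishing \Cref{hpb-invariant}.
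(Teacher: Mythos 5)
Your proposal is essentially the paper's argument unpacked: the paper simply cites the Gluing Lemma of R\u{a}dulescu-Banu (\cite{rb}*{Lem.~1.4.1(2b)}) and asserts that all three parts are immediate consequences, while you reconstruct the deduction explicitly (comparing factorizations, forming the two pullbacks, and using 2-out-of-3 on the comparison maps), but the key non-formal input you isolate at the end — that pulling back along a fibration preserves weak equivalences between objects fibered over the same base — is exactly the Gluing Lemma and you would cite it from the same place. The only imprecision worth noting is the phrase ``acyclic cofibration-like map'': fibration categories have no cofibrations, and the proof of that auxiliary lemma actually proceeds via Brown's factorization of a weak equivalence as a section of an acyclic fibration followed by an acyclic fibration; but since you defer to \cite{rb} for this step anyway, it does not create a gap.
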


\begin{proof}
  These are all well-known;
  they follow directly from the Gluing Lemma \cite{rb}*{Lem.\ 1.4.1(2b)}.
\end{proof}

\begin{definition}
  \leavevmode
  \begin{enumerate}
  \item A \emph{homotopical category} is a category equipped with a class
    of weak equivalences that satisfies the 2-out-of-6 property.
  \item A \emph{homotopical functor} between homotopical categories is
    a functor that preserves weak equivalences.
  \item A \emph{Dwyer--Kan equivalence} (or \emph{DK-equivalence} for short)
    is a homotopical functor that induces an equivalence of homotopy categories
    and \whe{}s on mapping spaces in the hammock localizations,
    cf.\ \cite{dk2}.
  \item A \emph{homotopy equivalence} is
    a homotopical functor $F \from \cat{C} \to \cat{D}$ \st{}
    there is a homotopical functor $G \from \cat{D} \to \cat{C}$ \st{}
    both composites $G F$ and $F G$ are weakly equivalent
    (via zig-zags of natural weak equivalences) to the identity functors.
    A homotopy equivalence is a DK-equivalence,
    cf.\ \cite{dk2}*{Proposition 3.5}.
  \end{enumerate}
\end{definition}

Here, the homotopy category of a homotopical category $\cat{C}$ is
its localization at the class of weak equivalences denoted by $\Ho \cat{C}$.

\begin{definition}
  \leavevmode
  \begin{enumerate}
  \item A functor between fibration categories is \emph{exact} if
    it preserves weak equivalences, fibrations, terminal objects and
    pullbacks along fibrations.
  \item A \emph{weak equivalence} between fibration categories is
     an exact functor that induces an equivalence of the homotopy categories.
  \end{enumerate}
\end{definition}

The homotopical category of small fibration categories will be
denoted by $\FibCat$.

A useful criterion for an exact functor to be a weak equivalence is given by
the following approximation properties.
They were originally introduced by Waldhausen \cite{wa} in the context of
algebraic K-theory and later adapted by Cisinski to
the setting of abstract homotopy theory.
\begin{definition}
  An exact functor $F \from \cat{C} \to \cat{D}$ satisfies
  the \emph{approximation properties} if:
  \begin{enumerate}
  \item[(App1)] $F$ reflects weak equivalences;
  \item[(App2)] for every pair of objects $b \in \cat{C}$, $x \in \cat{D}$ and
    a morphism $x \to F b$, there is a commutative square
    \begin{tikzeq*}
    \matrix[diagram]
    {
      |(x)|  x  & |(b)| F b \\
      |(x')| x' & |(a)| F a \\
    };

    \draw[->] (x') to node[left]  {$\we$} (x);
    \draw[->] (x') to node[below] {$\we$} (a);

    \draw[->] (x) to (b);
    \draw[->] (a) to (b);
    \end{tikzeq*}
    where the labeled morphisms are weak equivalences and
    the one on the right is the image of a morphism $a \to b$.
  \end{enumerate}
\end{definition}

\begin{theorem}[Cisinski]\label{App}
  If $F \from \cat{C} \to \cat{D}$ is
  an exact functor between fibration categories, then \tfae{}:
  \begin{enumerate}
  \item $F$ is a weak equivalence;
  \item $F$ satisfies the approximation properties;
  \item $F$ is a DK-equivalence.
  \end{enumerate}
\end{theorem}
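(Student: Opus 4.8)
The plan is to establish the cycle of implications (1) $\Rightarrow$ (2) $\Rightarrow$ (3) $\Rightarrow$ (1), relying on the hammock localization and the already-recorded facts about homotopy pullbacks (\Cref{hpb-invariant}) and the weak-equivalence/homotopy-equivalence dictionary on cofibrant objects (\Cref{we-he}).

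\emph{Proof of (1) $\Rightarrow$ (2).} Assume $F$ is a weak equivalence, i.e.\ $\Ho F \from \Ho\cat C \to \Ho\cat D$ is an equivalence of categories. For (App1), suppose $f$ is a morphism of $\cat C$ with $Ff$ a weak equivalence; then $(\Ho F)[f] = [Ff]$ is an isomorphism in $\Ho\cat D$, and since $\Ho F$ is an equivalence it reflects isomorphisms, so $[f]$ is invertible in $\Ho\cat C$; by the saturation of a fibration category (a consequence of the 2-out-of-6 axiom \ref{fibcat-2-out-of-6}) this forces $f$ to be a weak equivalence. For (App2), first reduce to the case where $b$ and everything in sight is cofibrant by replacing objects with cofibrant approximations; the point is that $\Ho F$ being essentially surjective and full lets us realize, up to the relevant zig-zag, any morphism $x \to Fb$ in $\Ho\cat D$ as coming from a morphism $a \to b$ in $\cat C$. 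Concretely, pick $a \in \cat C$ and an isomorphism $Fa \iso x$ in $\Ho\cat D$, compose with $x \to Fb$ to get a map $Fa \to Fb$ in $\Ho\cat D$, lift it along the fullness of $\Ho F$ to a map $a \to b$ in $\Ho\cat C$ represented by a genuine morphism after a further cofibrant/fibrant replacement, and then assemble the solid square; the commuting-up-to-homotopy data is upgraded to a strictly commuting square of weak equivalences using a path object and the factorization axiom \ref{fibcat-factorization}, exactly as in the standard Waldhausen-style argument.

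\emph{Proof of (2) $\Rightarrow$ (3).} This is the substantive part and the main obstacle: showing the approximation properties force $F$ to be a DK-equivalence, i.e.\ an equivalence on homotopy categories \emph{and} a weak homotopy equivalence on all hammock mapping spaces $\hamm$. Essential surjectivity of $\Ho F$ is immediate from (App2) applied to $x \to F1$ (or to $x \iso x$), and fullness plus faithfulness of $\Ho F$ follow from a calculus-of-fractions description of maps in $\Ho\cat C$ as homotopy classes of spans with fibration legs, together with (App1) and (App2) used to lift and compare such spans. For the statement on mapping spaces one argues that in a fibration category the hammock mapping space $\hamm\cat C(a,b)$ is weakly equivalent to the nerve of a category of ``right fractions'' $a \weot \bullet \to b$ (or, after \Cref{hpb-invariant}, to a simplicial set built from fibrant-cofibrant replacements and path objects); (App1) and (App2) then show that $F$ induces a functor between these diagram categories satisfying the hypotheses of Quillen's Theorem A, or equivalently that each comma category is contractible, whence $\hamm F$ is a \whe. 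The delicate point is organizing the homotopy-coherent bookkeeping so that (App2) can be applied \emph{functorially enough} along a simplex's worth of data — this is precisely where one must be careful, and it is where Cisinski's original argument does the real work.

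\emph{Proof of (3) $\Rightarrow$ (1).} A DK-equivalence induces an equivalence of homotopy categories essentially by definition (the homotopy category is the $\pi_0$ of the hammock localization, and a DK-equivalence is in particular $\pi_0$-bijective on mapping spaces and essentially surjective), so $F$ is a weak equivalence of fibration categories. This closes the cycle. I expect steps (1) $\Rightarrow$ (2) and (3) $\Rightarrow$ (1) to be essentially formal given the axioms and \Cref{we-he}, \Cref{hpb-invariant}, while (2) $\Rightarrow$ (3) — the mapping-space computation — is where all the difficulty lies; since this is a known theorem of Cisinski, the honest move is to cite the literature for that implication and only sketch the easy directions, which is presumably what the authors will do.
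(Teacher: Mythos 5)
The paper's proof is a two-line citation: the equivalence $(1) \Leftrightarrow (2)$ is \cite{c-cd}*{Thm.\ 3.19} and the equivalence $(1) \Leftrightarrow (3)$ is \cite{c-ik}*{Thm.\ 3.25}; nothing is reproved. So your closing guess --- that the honest move is to cite Cisinski --- is exactly what the paper does, except that the paper cites for all implications rather than sketching the ``easy'' ones.

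Two remarks on the sketch itself. You organize the argument as a cycle $(1) \Rightarrow (2) \Rightarrow (3) \Rightarrow (1)$, while the paper's sources prove the two biconditionals $(1) \Leftrightarrow (2)$ and $(1) \Leftrightarrow (3)$ in separate papers; the single ``hard step'' you single out, $(2) \Rightarrow (3)$, in fact factors through $(1)$, and both factors --- $(2) \Rightarrow (1)$ (the Waldhausen--Cisinski approximation theorem) and $(1) \Rightarrow (3)$ (that a weak equivalence of fibration categories induces \whe{}s on all hammock mapping spaces) --- carry real content, so the apportionment of difficulty in your plan is a bit off. More substantively, your sketch of $(1) \Rightarrow (2)$ opens by ``replacing objects with cofibrant approximations,'' but a general fibration category has no guarantee of cofibrant replacements --- none of the axioms (F1)--(F4) supplies them --- so that reduction is not available; the cited arguments work directly with the calculus of right fractions and do not assume any cofibrancy.
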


\begin{proof}
  The equivalence between conditions (1) and (2) was proven in
  \cite{c-cd}*{Thm.\ 3.19}.
  The equivalence between conditions (1) and (3) was proven in
  \cite{c-ik}*{Thm.\ 3.25}.
\end{proof}

Properties of categories arising from type theory were first axiomatized by
van den Berg and Garner \cite{bg}
under the name \emph{identity type categories}.
Taking a somewhat different approach to this problem,
Shulman \cite{sh} introduced \emph{type-theoretic fibration categories}.
A similar notion of a tribe was introduced by Joyal and
developed extensively in \cite{Joyal-notes}.
In the remainder of this section, we recall basic definitions and results of
the theory of tribes, almost all of which are folklore.


\begin{definition}
  A \emph{tribe} is a category $\cat{T}$ equipped with
  a subcategory whose morphisms are called \emph{fibrations}
  subject to the following axioms.
  (A morphism with the \llp{} \wrt{} all fibrations is called \emph{anodyne} and
  denoted by $\acto$.)
  \begin{axioms}[label=\axm{T}]
  \item\label{tribe-terminal}
    $\cat{T}$ has a terminal object $1$ and all objects are fibrant.
  \item\label{tribe-pullback}
    Pullbacks along fibrations exist in $\cat{T}$ and
    fibrations are stable under pullback.
  \item\label{tribe-factorization}
    Every morphism factors as an anodyne morphism followed by a fibration.
  \item\label{tribe-anodyne}
    Anodyne morphisms are stable under pullbacks along fibrations.
  \end{axioms}
\end{definition}

As mentioned in the introduction,
anodyne morphisms and fibrations
do not necessarily form a weak factorization system,
as the latter need not to be closed under retracts.

\begin{definition}
  Let $\cat{T}$ be a tribe.
  \begin{enumerate}
  \item A \emph{path object} for an object $x \in \cat{T}$ is
    a factorization of the diagonal morphism $x \to x \times x$ as
    $(\pi_0, \pi_1) \sigma \from x \acto P x \fto x \times x$.
  \item A \emph{homotopy} between morphisms $f, g \from x \to y$ is
    a morphism $H \from x \to P y$ \st{} $(\pi_0, \pi_1) H = (f, g)$.
  \item A morphism $f \from x \to y$ is a \emph{homotopy equivalence} if
    there is a morphism $g \from y \to x$ \st{}
    $g f$ is homotopic to $\id_x$ and $f g$ is homotopic to $\id_y$.
  \end{enumerate}
\end{definition}

\begin{lemma}[cf.\ \cite{sh}*{Lem.\ 3.6}]\label{anodyne-he}
  An anodyne morphism $f \from x \acto y$ in a tribe is a homotopy equivalence.
\end{lemma}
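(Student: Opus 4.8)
The plan is to show that an anodyne morphism $f \from x \acto y$ is a homotopy equivalence by constructing its homotopy inverse directly, using the lifting property of $f$ against fibrations together with a path object on $y$. First I would produce a retraction: since $f$ is anodyne and the unique map $y \to 1$ is a fibration (by \ref{tribe-terminal}, every object is fibrant), the square with $\id_x \from x \to x$ on the left and $x \to 1$, $y \to 1$ on the right admits a diagonal filler $g \from y \to x$ with $g f = \id_x$. So $g$ is a strict retraction of $f$, and in particular $g f$ is literally equal to $\id_x$, hence trivially homotopic to it.

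The substantial half is to show $f g \simeq \id_y$. Here I would use \ref{tribe-factorization} to choose a path object $x \acto[\sigma] Px \fto[(\pi_0,\pi_1)] x \times x$ for $y$ — more precisely, a path object $Py$ for $y$, with $\sigma \from y \acto Py$ and $(\pi_0,\pi_1) \from Py \fto y \times y$. I want a homotopy $H \from y \to Py$ with $\pi_0 H = f g$ and $\pi_1 H = \id_y$. The idea is to build $H$ as a lift in a suitable square whose left edge is $f$ (which we may lift against since $f$ is anodyne and the relevant target map is a fibration). Consider the map $(f g, \id_y) \from y \to y \times y$; precomposing with $f$ gives $(f g f, f) = (f, f) \from x \to y \times y$ (using $gf = \id_x$), which factors through the diagonal, hence through $\sigma$: explicitly $\sigma f \from x \to Py$ lifts $(f,f)$. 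Thus we have a commutative square
\begin{tikzeq*}
\matrix[diagram]
{
  |(x)| x & |(Py)| P y  \\
  |(y)| y & |(yy)| y \times y \\
};
\draw[->] (x) to node[left] {$\acto$} node[right] {$f$} (y);
\draw[fib] (Py) to node[right] {$(\pi_0,\pi_1)$} (yy);
\draw[->] (x) to node[above] {$\sigma f$} (Py);
\draw[->] (y) to node[below] {$(f g, \id_y)$} (yy);
\end{tikzeq*}
whose right edge $(\pi_0,\pi_1)$ is a fibration by definition of a path object. Since $f$ is anodyne, there is a diagonal filler $H \from y \to Py$; it satisfies $(\pi_0,\pi_1) H = (fg, \id_y)$, i.e.\ $H$ is a homotopy from $f g$ to $\id_y$.

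Combining the two halves: $g f = \id_x$ (a fortiori homotopic to $\id_x$) and $f g$ is homotopic to $\id_y$ via $H$, so $g$ is a homotopy inverse of $f$ and $f$ is a homotopy equivalence. The one point requiring a little care — and the step I expect to be the main obstacle — is verifying that the square above genuinely commutes, i.e.\ that $\sigma f$ and $(fg,\id_y)$ agree after composing to $y \times y$; this reduces to the computation $(\pi_0,\pi_1)\sigma = \Delta_y$ and $\Delta_y f = (f,f) = (fgf, f) = (fg,\id_y)f$, the last equality using the retraction $gf = \id_x$ established in the first step. Everything else is a routine application of the anodyne lifting property and of \ref{tribe-terminal}–\ref{tribe-factorization}.
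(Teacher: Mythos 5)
Your proof is correct and follows essentially the same route as the paper: first obtain a retraction $g$ of $f$ by lifting $\id_x$ against $y \fto 1$, then obtain the homotopy $f g \simeq \id_y$ by lifting $\sigma f$ against the path-object fibration $(\pi_0,\pi_1) \from P y \fto y \times y$. The only cosmetic difference is the ordering of the pair $(f g, \id_y)$ versus the paper's $(\id_y, f r)$, which is immaterial.
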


\begin{proof}
  Using the lifting property of $f$ against $x \fto 1$ we obtain
  a retraction $r \from y \to x$.
  On the other hand, a lift in
  \begin{tikzeq*}
  \matrix[diagram,column sep={6em,between origins}]
  {
    |(x)| x & |(P)|  P y        \\
    |(y)| y & |(yy)| y \times y \\
  };

  \draw[cof] (x) to node[left] {$f$} node[right] {$\we$} (y);

  \draw[->]  (y) to node[below] {$(\id, f r)$}     (yy);
  \draw[->]  (x) to node[above] {$\sigma f$}       (P);
  \draw[fib] (P) to node[right] {$(\pi_0, \pi_1)$} (yy);
  \end{tikzeq*}
  is a homotopy between $\id_y$ and $f r$.
\end{proof}

\begin{lemma}[Joyal]\label{he-2-out-of-6}
  Homotopy equivalences in a tribe $\cat{T}$ are saturated, i.e.,
  a morphism is a homotopy equivalence \iff{}
  it becomes an isomorphism in $\Ho \cat{T}$.
  In particular, homotopy equivalences satisfy 2-out-of-6.
\end{lemma}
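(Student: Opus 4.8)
The plan is to reduce the saturation of homotopy equivalences to a statement about the existence of a path object construction that behaves well enough to invert ``one-sided'' equivalences, and then feed this into a $2$-out-of-$6$ argument exactly as in the proof of \Cref{we-he}. First I would show that a homotopy equivalence $f \from x \to y$ admits a factorization $x \acto x' \fto y$ as guaranteed by \ref{tribe-factorization}; by \Cref{anodyne-he} the anodyne part is a homotopy equivalence, so by $2$-out-of-$3$ for homotopy equivalences (which is elementary: compose homotopies using path objects and the fact that $P(\uvar)$ can be chosen functorially enough on a single object) it suffices to treat the case where $f$ itself is a fibration. Thus the crux is: \emph{an acyclic fibration in the sense of being a homotopy equivalence that is also a fibration has a section, and in fact the relevant lifts exist to make it anodyne-like.} Concretely, given such a fibration $p \from x \fto y$ with homotopy inverse $g$ and homotopies $H, G$ as in \Cref{we-he}, I would build — via \ref{tribe-pullback} and \ref{tribe-anodyne} — the usual ``mapping path space'' factorization of $p$ and use the homotopy $G$ between $pg$ and $\id_y$ together with a lifting of $G$ along $p$ (using that $p$ is a fibration and the path endpoint inclusion is anodyne) to produce a genuine section $s$ of $p$ with $ps = \id_y$ and $sp$ homotopic to $\id_x$.

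With the section $s$ in hand, I would then assemble a diagram of the same shape as in the proof of \Cref{we-he}: namely a zig-zag
\[
x \xleftarrow{\we} P x \xrightarrow{\we} x, \qquad y \xleftarrow{\we} P y \xrightarrow{\we} y
\]
of homotopy equivalences (the two projections out of a path object are homotopy equivalences, being retractions of an anodyne section by \Cref{anodyne-he}), interleaved with $p$, $s$, $sp$, so that $2$-out-of-$6$ for homotopy equivalences — which we are allowed to invoke once we know the relevant maps are homotopy equivalences — forces $p$ to become an isomorphism in $\Ho \cat{T}$. Conversely, if a morphism becomes an isomorphism in $\Ho \cat{T}$, then since homotopic morphisms are identified in $\Ho \cat{T}$ and every morphism between objects of a tribe has a well-defined homotopy class, a standard argument shows the inverse in $\Ho \cat{T}$ is represented by an actual morphism $g$ with $gf$ and $fg$ in the correct homotopy classes, i.e.\ $f$ is a homotopy equivalence. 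The final clause, that homotopy equivalences satisfy $2$-out-of-$6$, is then immediate: isomorphisms in any category satisfy $2$-out-of-$6$, and being a homotopy equivalence is equivalent to being sent to an isomorphism by the functor $\cat{T} \to \Ho \cat{T}$.

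The main obstacle I anticipate is the careful bookkeeping in the ``fibration case'': one must produce the section $s$ and the homotopy $sp \we \id_x$ using only the tribe axioms, and in particular one cannot appeal to a weak factorization system since fibrations need not be closed under retracts. The right tool is the standard trick of replacing $p$ by an equivalent fibration over $y$ whose fibre is contractible (via the mapping path space $P(p) = y \times_y Py$ built over the target and pulled back along $p$), observing that the comparison map $x \to P(p)$ is anodyne because it is a section of a projection that is a pullback of $Py \fto y$ along a fibration — here \ref{tribe-anodyne} is essential — and then lifting against $p$. Once the section exists, everything else is formal manipulation of path objects and an appeal to $2$-out-of-$6$ in $\Ho\cat{T}$ along the lines already used for \Cref{we-he}.
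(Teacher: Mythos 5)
Your proposal misreads where the difficulty lies, and as a result the hard direction is handwaved while the easy direction is given an elaborate (and in places circular) argument.

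Since for a tribe $\Ho \cat{T}$ is \emph{defined} as the localization at the class of homotopy equivalences, the forward implication — that a homotopy equivalence becomes an isomorphism in $\Ho\cat{T}$ — holds for free, with no factorization into anodyne-then-fibration, no mapping path space, and no section-producing lemma needed. Moreover the 2-out-of-6 step in your forward argument is circular: you cannot ``invoke 2-out-of-6 for homotopy equivalences'' in the middle of a proof whose conclusion is precisely that homotopy equivalences satisfy 2-out-of-6 (unlike the proof of \Cref{we-he}, where 2-out-of-6 is available as axiom \ref{fibcat-2-out-of-6} of a fibration category; the point of the present lemma is exactly to supply that property for tribes, where it is not an axiom).

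The genuine content is the converse: if $f$ is inverted in $\Ho\cat{T}$, why must it already have a homotopy inverse in $\cat{T}$? Your proposal asserts that ``a standard argument shows the inverse in $\Ho\cat{T}$ is represented by an actual morphism,'' but this is exactly what needs justification — a localization is a priori built from zigzags, and merely observing that the quotient functor $\cat{T} \to \cat{T}/\!\simeq$ identifies homotopic maps shows only that $\cat{T} \to \Ho\cat{T}$ \emph{factors through} the quotient, not that the comparison $\cat{T}/\!\simeq \to \Ho\cat{T}$ is an isomorphism. The missing step is the universal-property argument: one must show that a functor $F$ out of $\cat{T}$ inverts homotopy equivalences \emph{if and only if} it identifies homotopic morphisms. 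The nontrivial implication is that inversion forces identification, and it hinges on \Cref{anodyne-he}: $\sigma \from x \to Px$ is anodyne, hence a homotopy equivalence, hence inverted by $F$; since $\pi_0\sigma = \pi_1\sigma = \id$, it follows that $F\pi_0 = F\pi_1$, and therefore $F$ identifies any two homotopic maps. This identifies $\Ho\cat{T}$ with $\cat{T}/\!\simeq$, from which saturation (both directions at once) and 2-out-of-6 follow immediately, since a morphism of $\cat{T}/\!\simeq$ is invertible exactly when it is a homotopy equivalence and isomorphisms in any category satisfy 2-out-of-6. That single observation replaces all the machinery about sections of acyclic fibrations, which — while true (cf.\ \Cref{tribe-cofibrant}) — is not used at this point in the paper and does not by itself close the gap.
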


\begin{proof}
  Let $\cat{B}$ be an arbitrary category and $F \from \cat{T} \to \cat{B}$ be
  any functor.
  If $F$ identifies homotopic morphisms, then
  it carries homotopy equivalences to isomorphisms by definition.
  Conversely, if $F$ inverts homotopy equivalences, then
  it identifies homotopic morphisms.
  Indeed, this follows from the fact that for every object $x$,
  the morphism $\sigma \from x \to P x$ is a homotopy equivalence
  by \Cref{anodyne-he}.
  Thus the localization of $\cat{T}$ at homotopy equivalences
  coincides with its quotient by the homotopy relation
  which implies saturation.
  Consequently, homotopy equivalences satisfy 2-out-of-6 since
  isomorphisms in $\Ho \cat{T}$ do.
\end{proof}

\begin{lemma}[cf.\ \cite{sh}*{Lem.\ 3.7}]\label{anodyne-cancellation}
  If $f \from x \to y$ and $g \from y \to z$ are morphisms \st{}
  $g$ and $g f$ are anodyne, then so is $f$.
\end{lemma}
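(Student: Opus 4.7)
The plan is to exhibit $f$ as a retract of $gf$, after which the conclusion is immediate since the class of anodyne morphisms, being defined by a left lifting property, is automatically closed under retracts.

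The key construction is a retraction of $g$. Since $g$ is anodyne and $y \fto 1$ is a fibration (by \ref{tribe-terminal}), the lifting square
\begin{tikzeq*}
\matrix[diagram,column sep={6em,between origins}]
{
  |(y)| y & |(y2)| y \\
  |(z)| z & |(1)| 1 \\
};
\draw[cof] (y) to node[left] {$g$} node[right] {$\we$} (z);
\draw[->] (y) to node[above] {$\id_y$} (y2);
\draw[fib] (y2) to (1);
\draw[->] (z) to (1);
\draw[->,dashed] (z) to node[above] {$r$} (y2);
\end{tikzeq*}
admits a diagonal filler $r \from z \to y$ satisfying $r g = \id_y$ (this is exactly the step used in the proof of \Cref{anodyne-he}).

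With $r$ in hand, I would then display the retract diagram
\begin{tikzeq*}
\matrix[diagram]
{
  |(x1)| x & |(x2)| x  & |(x3)| x \\
  |(y1)| y & |(z)|  z  & |(y2)| y \\
};
\draw[->] (x1) to node[above] {$\id_x$} (x2);
\draw[->] (x2) to node[above] {$\id_x$} (x3);
\draw[->] (y1) to node[below] {$g$} (z);
\draw[->] (z)  to node[below] {$r$} (y2);
\draw[->] (x1) to node[left]  {$f$}  (y1);
\draw[->] (x2) to node[left]  {$g f$} (z);
\draw[->] (x3) to node[right] {$f$}  (y2);
\end{tikzeq*}
and check that both squares commute (the left one by associativity, the right one using $rg = \id_y$) and that both horizontal composites are identities. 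This presents $f$ as a retract of $gf$.

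The concluding step is to invoke closure of anodyne morphisms under retracts. Since anodyne morphisms are characterized by the \llp{} against the class of fibrations, this closure property is a formal consequence of the retract closure of any left lifting class, requires no additional hypotheses on the tribe, and applies directly to the retract diagram above. I do not expect any real obstacle: the only conceptual input is the existence of the retraction $r$, which is the same trick already employed in \Cref{anodyne-he}, and the retract diagram is then essentially forced.
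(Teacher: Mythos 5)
Your proof is correct and is essentially identical to the paper's: both construct the retraction $r$ of $g$ by lifting $\id_y$ against $y \fto 1$, exhibit $f$ as a retract of $gf$ via the same retract diagram, and conclude by closure of the left lifting class under retracts.
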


\begin{proof}
  Since $g$ is anodyne, there is a lift in the square
  \begin{tikzeq*}
  \matrix[diagram,column sep={5em,between origins}]
  {
    |(y0)| y & |(y1)| y          \\
    |(z)|  z & |(1)|  1 \text{.} \\
  };

  \draw[->] (y0) to node[above] {$\id$} (y1);

  \draw[cof] (y0) to node[left] {$g$} node[right] {$\we$} (z);

  \draw[->]  (z)  to (1);
  \draw[fib] (y1) to (1);

  \draw[->,dashed] (z) to node[above left] {$r$} (y1);
  \end{tikzeq*}
  The diagram
  \begin{tikzeq*}
  \matrix[diagram]
  {
    |(xl)| x & |(xm)| x & |(xr)| x \\
    |(yl)| y & |(z)|  z & |(yr)| y \\
  };

  \draw[->] (xl) to node[left]  {$f$}   (yl);
  \draw[->] (xr) to node[right] {$f$}   (yr);

  \draw[cof] (xm) to node[left] {$g f$} node[right] {$\we$} (z);

  \draw[->] (xl) to node[above] {$\id$} (xm);
  \draw[->] (xm) to node[above] {$\id$} (xr);

  \draw[->] (yl) to node[below] {$g$} (z);
  \draw[->] (z)  to node[below] {$r$} (yr);
  \end{tikzeq*}
  shows that $f$ is a retract of $g f$ and so it is anodyne.
\end{proof}

\begin{lemma}[Joyal]\label{anodyne-pullback}
  Given a commutative diagram of the form
  \begin{tikzeq*}
  \matrix[diagram]
  {
    & |(X0)| x_0 & & |(Y0)| y_0   \\[-1.5em]
      |(X1)| x_1 & & |(Y1)| y_1 & \\
      |(A)|  a   & & |(B)|  b   & \\
  };

  \draw[fib] (X0) to (A);
  \draw[fib] (Y0) to (B);

  \draw[->]  (X0) to (X1);
  \draw[cof] (Y0) to node[above left] {$\we$} (Y1);

  \draw[fib] (X1) to (A);
  \draw[fib] (Y1) to (B);

  \draw[->,over] (X1) to (Y1);
  \draw[->]      (X0) to (Y0);
  \draw[->]      (A)  to (B);
  \end{tikzeq*}
  where all squares are pullbacks,
  if $y_0 \to y_1$ is anodyne, then so is $x_0 \to x_1$.
\end{lemma}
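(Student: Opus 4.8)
The plan is to exhibit $x_0 \to x_1$ as a pullback of $y_0 \acto y_1$ and to transfer the anodyne property across. Since all squares in the diagram are pullbacks, $x_0$ and $x_1$ are the pullbacks $a \pull_b y_0$ and $a \pull_b y_1$, and under these identifications $x_0 \to x_1$ is the pullback of $y_0 \acto y_1$ along the projection $x_1 \to y_1$. Were $x_1 \to y_1$ a fibration, \ref{tribe-anodyne} would finish the argument at once; however, $x_1 \to y_1$ is merely the pullback of the arbitrary morphism $a \to b$, so first I would replace $a$ by an object that is fibrant over $b$.

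Using \ref{tribe-factorization}, factor $a \to b$ as an anodyne morphism $a \acto \bar a$ followed by a fibration $\bar a \fto b$, and put $\bar{x}_i = \bar a \pull_b y_i$ for $i = 0, 1$; these pullbacks exist by \ref{tribe-pullback} since $y_i \fto b$ are fibrations. Now the projections $\bar{x}_i \fto \bar a$ and $\bar{x}_1 \fto y_1$ are fibrations, being pullbacks of $y_i \fto b$ and of $\bar a \fto b$ respectively. By the pasting lemma for pullbacks one obtains $x_i = a \pull_{\bar a} \bar{x}_i$ and $\bar{x}_0 = \bar{x}_1 \pull_{y_1} y_0$, so \ref{tribe-anodyne} applies twice: $x_i \to \bar{x}_i$ is anodyne, being the pullback of $a \acto \bar a$ along the fibration $\bar{x}_i \fto \bar a$, and $\bar{x}_0 \to \bar{x}_1$ is anodyne, being the pullback of $y_0 \acto y_1$ along the fibration $\bar{x}_1 \fto y_1$.

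It remains to assemble these. The square with vertices $x_0, \bar{x}_0, x_1, \bar{x}_1$ commutes, so the composite $x_0 \to \bar{x}_0 \to \bar{x}_1$ coincides with $x_0 \to x_1 \to \bar{x}_1$. The former is anodyne, since anodyne morphisms are closed under composition (they are defined by a left lifting property), and $x_1 \to \bar{x}_1$ is anodyne, so \Cref{anodyne-cancellation} forces $x_0 \to x_1$ to be anodyne. The only step that is not pure diagram chasing is the opening move of factoring $a \to b$ in order to make the relevant projection a fibration; after that it is routine bookkeeping with iterated pullbacks.
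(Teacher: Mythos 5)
Your proof is correct and follows essentially the same route as the paper: factor $a\to b$ into anodyne followed by fibration, interpolate the two pullback squares to get anodyne maps $x_i\to\bar x_i$ and $\bar x_0\to\bar x_1$ via \Cref{tribe-anodyne}, and conclude by \Cref{anodyne-cancellation}. The only cosmetic difference is that you spell out the final cancellation step (composing $x_0\to\bar x_0\to\bar x_1$ and canceling against $x_1\to\bar x_1$) a bit more explicitly than the paper does.
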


\begin{proof}
  Pick a factorization $a \acto a' \fto b$ and form pullback squares
  \begin{tikzeq*}
    \matrix[diagram]
    {
      |(x0)| x_0 & |(x'0)| x'_0 & |(y0)| y_0        \\
      |(x1)| x_1 & |(x'1)| x'_1 & |(y1)| y_1        \\
      |(A)|  a   & |(A')|  a'   & |(B)|  b \text{.} \\
    };

    \draw[->] (x0) to (x1);
    \draw[->] (x1) to (A);

    \draw[->] (x'0) to (x'1);
    \draw[->] (x'1) to (A');

    \draw[cof] (y0) to node[right] {$\we$} (y1);
    \draw[fib] (y1) to (B);

    \draw[->] (x0)  to (x'0);
    \draw[->] (x'0) to (y0);

    \draw[->] (x1)  to (x'1);
    \draw[->] (x'1) to (y1);

    \draw[cof] (A)  to node[below] {$\we$} (A');
    \draw[fib] (A') to (B);
  \end{tikzeq*}
  The morphism $x'_1 \to a'$ is a fibration
  as a pullback of a fibration $y_1 \to b$ and
  thus the morphism $x_1 \to x'_1$ is anodyne as
  a pullback of an anodyne morphism $a \to a'$ along a fibration.
  Similarly, $x'_0 \to x'_1$ is anodyne.
  Furthermore, since $y_0 \to b$ is also a fibration,
  the same argument implies that $x_0 \to x'_0$ is anodyne.
  Therefore, $x_0 \to x_1$ is also anodyne by \Cref{anodyne-cancellation}.
\end{proof}

\begin{definition}
  \leavevmode
  \begin{enumerate}
  \item A functor between tribes is a \emph{homomorphism} if
    it preserves fibrations, anodyne morphisms, terminal objects and
    pullbacks along fibrations.
  \item A homomorphism between tribes is a \emph{weak equivalence} if it
    induces an equivalence of the homotopy categories.
  \end{enumerate}
\end{definition}

The homotopical category of small tribes will be denoted by $\Trb$.

\begin{theorem}\label{Trb-FibCat}
  Every tribe with its subcategories of fibrations and homotopy equivalences
  is a fibration category.
  Moreover, every homomorphism of tribes is an exact functor.
  This yields a homotopical forgetful functor $\Trb \to \FibCat$.
\end{theorem}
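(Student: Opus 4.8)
The plan is to verify the fibration-category axioms \ref{fibcat-terminal}--\ref{fibcat-2-out-of-6} for $\cat{T}$ with its fibrations and its homotopy equivalences (as weak equivalences), then to check that homomorphisms of tribes become exact functors, and finally to observe that this assignment is functorial and homotopical.

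Most of the axioms are immediate. Axiom \ref{fibcat-terminal} is \ref{tribe-terminal}. For \ref{fibcat-factorization} we use the factorization of \ref{tribe-factorization} and note that its anodyne part is a homotopy equivalence by \Cref{anodyne-he}. Axiom \ref{fibcat-2-out-of-6} is contained in \Cref{he-2-out-of-6}, which moreover identifies the homotopy equivalences with the class of morphisms inverted in $\Ho\cat{T}$, so that they form a subcategory and the data has the required shape. The first half of \ref{fibcat-pullback} — existence of pullbacks along fibrations and stability of fibrations under pullback — is \ref{tribe-pullback}.

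The one substantive point, and the step I expect to be the main obstacle, is the remaining half of \ref{fibcat-pullback}: that acyclic fibrations are stable under pullback. I would handle this in two reductions followed by a core case. If the morphism one pulls back along is anodyne, then the base change of that morphism along the fibration is again anodyne by \ref{tribe-anodyne}, hence a homotopy equivalence by \Cref{anodyne-he}; reading the pullback square in $\Ho\cat{T}$ and using saturation of homotopy equivalences (\Cref{he-2-out-of-6}) shows the pulled-back fibration is a homotopy equivalence. An arbitrary morphism factors as an anodyne morphism followed by a fibration by \ref{tribe-factorization}, so it suffices to treat pullback along a fibration. For this, the key input is that an acyclic fibration $p \from x \fto y$ is a ``shrinkable map'': it admits a section $s$ together with a homotopy from $s p$ to $\id_x$ \emph{over} $y$. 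Granting this, the section and the fiberwise homotopy both survive base change along a fibration — the comparison maps involved remain anodyne by \Cref{anodyne-pullback}, so base-changing a path object for $x$ over $y$ yields a path object over the new base — and hence the pullback of $p$ is again a homotopy equivalence. The delicate part is the shrinkability claim itself: the section is obtained by forming the pullback $E$ of $p$ along $\pi_0 \from P y \fto y$, noting (via \Cref{anodyne-pullback}) that the canonical map $x \to E$ induced by $\sigma p$ and $\id_x$ is anodyne, lifting it against $p$, and composing with the map $y \to E$ determined by a chosen homotopy inverse $g$ of $p$ and a homotopy $p g \we \id_y$; upgrading the resulting retraction $s p \we \id_x$ to a \emph{fiberwise} one is a Dold-type argument, using a further lifting against the fibration $P_y x \fto x \times_y x$ together with \ref{tribe-anodyne}. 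Everything outside this is bookkeeping.

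For the second assertion, let $F \from \cat{S} \to \cat{T}$ be a homomorphism of tribes. By definition $F$ preserves fibrations, terminal objects and pullbacks along fibrations, and therefore finite products, since $x \times x = x \times_1 x$ is a pullback along the fibration $x \fto 1$. Consequently $F$ sends a path-object factorization $x \acto P x \fto x \times x$ to a path-object factorization of $F x$, so it preserves homotopies and hence homotopy equivalences; as these are the weak equivalences of the associated fibration categories, $F$ preserves weak equivalences, and together with the preservation of fibrations, terminal objects and pullbacks along fibrations this is precisely exactness. The assignment sending a tribe to its underlying fibration category and a homomorphism to its underlying exact functor is visibly functorial, and it is homotopical because the homotopy category of a tribe agrees with that of its underlying fibration category (both being the localization at the homotopy equivalences): a weak equivalence of tribes, being a homomorphism that induces an equivalence of homotopy categories, is thus sent to a weak equivalence of fibration categories. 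This yields the homotopical forgetful functor $\Trb \to \FibCat$.
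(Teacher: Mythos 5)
Your proof is correct, and its mathematical content coincides with the paper's, but the two arguments are structured quite differently. The paper's proof is essentially a pointer: it invokes Shulman \cite{sh}*{Thm.~3.13} (every type-theoretic fibration category is a category of fibrant objects), observes that tribes satisfy its hypotheses --- this is where \Cref{anodyne-pullback} enters --- and upgrades Shulman's ``category of fibrant objects'' conclusion to the full fibration-category axioms by supplying 2-out-of-6 via the saturation statement \Cref{he-2-out-of-6}. You instead verify the axioms directly, which forces you to reconstruct the proof sitting behind that citation; in particular, stability of acyclic fibrations under pullback, which you correctly single out as the only real work. Your route to it (build a section of the acyclic fibration $p \from x \fto y$ by lifting the anodyne map $x \acto x \times_y P y$ against $p$, then upgrade to a fiberwise deformation retraction, and observe both pull back because \Cref{anodyne-pullback} keeps the relevant anodyne morphisms anodyne) is exactly the content of Shulman's Lemmas~3.11 and~3.12, so the two proofs agree at the level of ideas; the paper trades your explicitness for brevity. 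One small streamlining you could make: your preliminary reduction to pullback along a fibration is not needed, since \Cref{anodyne-pullback} already applies when the base change $a \to b$ is an arbitrary morphism --- the fibration hypotheses in that lemma fall on the projections to the bases, not on the map of bases itself, so the shrinkability argument carries over to an arbitrary pullback directly. The second half of your argument (a homomorphism preserves anodyne morphisms, hence path objects, hence homotopies and homotopy equivalences, so it is exact; and the forgetful functor is homotopical because the homotopy categories agree) matches the paper.
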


\begin{proof}
  In \cite{sh}*{Thm.\ 3.13}, it is proven that
  every type-theoretic fibration category is
  a ``category of fibrant objects''.
  A type-theoretic fibration category is defined just like a tribe except that
  the statement of \Cref{anodyne-pullback} is used in addition to
  axiom \Cref{tribe-anodyne}.
  (The definition also includes an additional axiom about
  dependent products which is not used in the proof of Thm.\ 3.13.)
  Similarly, a category of fibrant objects is
  defined just like a fibration category except that only 2-out-of-3 is assumed
  in the place of 2-out-of-6.
  However, the latter was verified in \Cref{he-2-out-of-6}.

  A homomorphism of tribes preserves fibrations, terminal objects and
  pullbacks along fibrations by definition.
  It also preserves anodyne morphism and hence path objects and, consequently,
  homotopies and homotopy equivalences.
  Thus it is an exact functor.
\end{proof}

For clarity of exposition, from this point on,
we will refer to homotopy equivalences in a tribe as weak equivalences.

\begin{lemma}[cf.\ \cite{sh}*{Lem.\ 3.11}]\label{tribe-cofibrant}
  Every acyclic fibration in a tribe admits a section.
  In particular, every object in a tribe is cofibrant. \qed
\end{lemma}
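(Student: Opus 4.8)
The plan is to prove the two assertions separately, deriving the second as an immediate consequence of the first together with \Cref{tribe-pullback}. For the first assertion, let $p \from x \afto y$ be an acyclic fibration, meaning a fibration that is also a weak equivalence — that is, by the convention just adopted, a homotopy equivalence. I would factor the identity-seeking lifting problem through a path object: since $p$ is a homotopy equivalence, there is $g \from y \to x$ with $pg$ homotopic to $\id_y$ via some homotopy $H \from y \to Py$ with $(\pi_0, \pi_1) H = (\id_y, pg)$. The idea is to use $H$ to correct $g$ to an honest section. Concretely, I would form the pullback $Py \pull_{y} x$ along $\pi_1 \from Py \fto y$ (which is a fibration as a composite in the path-object factorization) against $p$, and observe that the projection to $Py$ is an acyclic fibration because $p$ is (pullbacks of acyclic fibrations are acyclic fibrations by \Cref{tribe-pullback}, noting acyclic fibrations are stable under pullback — or one uses that $p$ being a homotopy equivalence and a fibration makes it acyclic in the fibration-category sense, hence stable).

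More cleanly, I expect the shortest route is: since $p$ is an acyclic fibration in the associated fibration category (\Cref{Trb-FibCat}), and in a fibration category acyclic fibrations are stable under pullback by \ref{fibcat-pullback}, one can try to lift $\id_y$ directly against $p$ — but this requires knowing $y$ is cofibrant, which is precisely what we are trying to prove, so that would be circular. Instead I would argue as follows. Take a path object $x \acto Px \fto x \times x$. Form the composite $q \from Px \fto x \times x \xrightarrow{\pi_0} x$; this is a fibration and a homotopy equivalence (it is a retraction of the anodyne $\sigma \from x \acto Px$, and anodynes are homotopy equivalences by \Cref{anodyne-he}, so $q$ is a homotopy equivalence by \Cref{he-2-out-of-6}). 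Now consider $gp \from x \to x$, which is homotopic to $\id_x$ via the homotopy obtained from $pg \sim \id_y$ — more precisely one transports the homotopy $H$ along $g$ — giving $K \from x \to Px$ with $\pi_0 K = \id_x$ (so $K$ is a lift of $\id_x$ along $q$) and $\pi_1 K = gp$.

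The key move is then a lifting argument: I would set up the square with top edge $x \xrightarrow{gp \cdot (\text{something})}$ — rather, use the \emph{anodyne} half of a factorization to get a lift. Factor the map $(K, \text{appropriate datum}) \from x \to Px \pull_x x$ suitably; the cleanest formulation: since $\sigma \from x \acto Px$ is anodyne, lift against the fibration $p$ in the square whose other sides are $x \xrightarrow{g} y$ hmm — let me instead just invoke the standard argument. Using that $x$ is the domain and the lifting property of the \emph{anodyne} morphism $\sigma_y \from y \acto Py$ against the fibration $p$, solve the lifting problem with top edge $y \xrightarrow{\sigma_y g' } $ where $g'$ is chosen so that the square commutes via $H$; a diagonal filler $s \from Py \to x$ composed with $\sigma_y \from y \acto Py$ gives a map $y \to x$, and one checks $p \circ (s \sigma_y) = \pi_1 \sigma_y \circ (\dots) = \id_y$ using $(\pi_0,\pi_1)\sigma_y = (\id_y,\id_y)$ and the commutativity built from $H$. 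This produces the desired section. The second statement follows at once: given an acyclic fibration $p \from x \afto y$ and a lifting problem against $p$ with lower-left object $a$, pull $p$ back along the map $a \to y$ to get an acyclic fibration over $a$ (stable by \Cref{tribe-pullback} and the fact that acyclic fibrations are stable under pullback in the fibration-category structure), which by the first part has a section; composing with the projection yields the required lift, so every object $a$ is cofibrant.

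The main obstacle is bookkeeping the homotopy $H$ correctly so that the lifting square genuinely commutes — in particular transporting the homotopy $pg \sim \id_y$ to arrange a lifting problem whose solution is a strict section rather than merely a homotopy section — and being careful that the pullback stability of acyclic fibrations being invoked is the fibration-category one from \ref{fibcat-pullback} applied via \Cref{Trb-FibCat}, not an unproven tribe-level statement, so as to avoid circularity with cofibrancy.
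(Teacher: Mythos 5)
Your second half is fine, and you are right that lifting $\id_y$ directly against $p$ would be circular. But the core construction of a section has a gap. The final lifting square you propose --- $\sigma_y \from y \acto P y$ on the left, $p$ on the right, a top edge $y \to x$ and a bottom edge $P y \to y$ --- cannot be made to commute strictly: the only candidate bottom edges are $\pi_0$ or $\pi_1$, and since $\pi_i \sigma_y = \id_y$, strict commutativity would force $p$ composed with the top edge to equal $\id_y$, which is exactly the thing you are trying to produce. ``The commutativity built from $H$'' gives commutation only up to homotopy, so a diagonal filler would yield a homotopy section, not a strict one. Your earlier variant using $\sigma_x \from x \acto P x$ produces a lift $P x \to x$ and hence only a self-map of $x$, never a map out of $y$, so it cannot output a section of $p$ either.

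The missing ingredient is to factor $p$ through its mapping path object and lift against the anodyne leg of \emph{that} factorization, rather than against $\sigma_y$ or $\sigma_x$ directly. Let $W := x \pull_y P y$ be the pullback of $p$ and $\pi_0 \from P y \fto y$, set $j := (\id_x, \sigma_y p) \from x \to W$ and $q \from W \to y$ the composite of the projection $W \to Py$ with $\pi_1$; then $q$ is a fibration and $q j = p$. Applying \Cref{anodyne-pullback} to the anodyne $\sigma_y \from y \acto P y$ over $y$ (with the fibrations $\id_y$ and $\pi_0$) shows $j$ is anodyne. A homotopy $H \from y \to P y$ with $(\pi_0, \pi_1) H = (p g, \id_y)$ gives $(g, H) \from y \to W$ and $q(g,H) = \id_y$. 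Now lift $\id_x$ against $p$ in the square with $j$ on the left and $q$ on the bottom, which commutes because $q j = p$; the filler $r \from W \to x$ satisfies $p r = q$, and $s := r(g,H)$ is a strict section since $ps = q(g,H) = \id_y$. Your instinct to bring in a pullback involving $Py$ was correct; the point is that the anodyne morphism to lift against is $j$, not $\sigma_y$.
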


\begin{lemma}\label{anodyne-gluing}
  Let
  \begin{tikzeq*}
  \matrix[diagram,column sep={5em,between origins}]
  {
      |(xe)| x_\emptyset & & |(x1)|  x_1    & \\
    & |(ye)| y_\emptyset & & |(y1)|  y_1      \\
      |(x0)| x_0         & & |(x01)| x_{01} & \\
    & |(y0)| y_0         & & |(y01)| y_{01}   \\
  };

  \draw[->]  (xe) to (x0);
  \draw[fib] (x1) to (x01);

  \draw[->] (xe) to (x1);
  \draw[->] (x0) to (x01);

  \draw[->,over] (ye) to (y0);
  \draw[fib]     (y1) to (y01);

  \draw[->,over] (ye) to (y1);
  \draw[->]      (y0) to (y01);

  \draw[->] (xe)  to (ye);

  \draw[cof] (x0)  to node[above right] {$\we$} (y0);
  \draw[cof] (x1)  to node[above right] {$\we$} (y1);
  \draw[cof] (x01) to node[above right] {$\we$} (y01);
  \end{tikzeq*}
  be a cube in a tribe where
  $x_1 \fto x_{01}$ and $y_1 \fto y_{01}$ are fibrations and
  the front and back squares are pullbacks.
  If all $x_0 \acto y_0$, $x_1 \acto y_1$ and $x_{01} \acto y_{01}$ are anodyne,
  then so is $x_\emptyset \to y_\emptyset$.
\end{lemma}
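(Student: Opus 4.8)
The plan is to factor the morphism $x_\emptyset \to y_\emptyset$ as a composite of two anodyne morphisms, reducing everything to \Cref{anodyne-cancellation} and \Cref{anodyne-pullback}. All pullbacks appearing below exist by \Cref{tribe-pullback}, as they are taken along fibrations. From the fibration $y_1 \fto y_{01}$ one obtains the fibrations $y_\emptyset = y_0 \pull_{y_{01}} y_1 \fto y_0$ and $Q \fto x_{01}$, where $Q := x_{01} \pull_{y_{01}} y_1$; from the fibration $x_1 \fto x_{01}$ one obtains the fibration $x_\emptyset = x_0 \pull_{x_{01}} x_1 \fto x_0$; and then $P := x_0 \pull_{x_{01}} Q \fto x_0$ is a fibration. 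Since the cube commutes, the two composites $x_0 \to x_{01} \to y_{01}$ and $x_0 \to y_0 \to y_{01}$ coincide, so $P \iso x_0 \pull_{y_0} y_\emptyset$ as well; under these identifications the morphism $x_\emptyset \to y_\emptyset$ factors as $x_\emptyset \to P \to y_\emptyset$, where $P \to y_\emptyset$ is the projection and $x_\emptyset \to P$ is the morphism over $x_0$ determined by $x_\emptyset \to x_1 \to Q$.

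Now $P \to y_\emptyset$ is the pullback of the anodyne morphism $x_0 \acto y_0$ along the fibration $y_\emptyset \fto y_0$, hence anodyne by \Cref{tribe-anodyne}. For the other factor, I would first show that $x_1 \to Q$ is anodyne: its composite with the projection $Q \to y_1$ is the anodyne morphism $x_1 \acto y_1$, while $Q \to y_1$ is itself the pullback of the anodyne morphism $x_{01} \acto y_{01}$ along the fibration $y_1 \fto y_{01}$, hence anodyne by \Cref{tribe-anodyne}; so $x_1 \to Q$ is anodyne by \Cref{anodyne-cancellation}. Then the diagram consisting of the morphism $x_0 \to x_{01}$, the fibrations $x_1 \fto x_{01}$ and $Q \fto x_{01}$, the anodyne morphism $x_1 \acto Q$, and the pullbacks $x_\emptyset = x_0 \pull_{x_{01}} x_1$ and $P = x_0 \pull_{x_{01}} Q$ is precisely the configuration of \Cref{anodyne-pullback}, which therefore shows that $x_\emptyset \to P$ is anodyne.

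Finally, anodyne morphisms are closed under composition (being defined by a left lifting property), so $x_\emptyset \to y_\emptyset$, which factors as $x_\emptyset \to P \to y_\emptyset$ with both factors anodyne, is itself anodyne. The only step requiring some care is the choice of the intermediate objects $P$ and $Q$ and the verification that the relevant squares are genuine pullbacks with the expected maps --- in particular the identification $P \iso x_0 \pull_{x_{01}} Q \iso x_0 \pull_{y_0} y_\emptyset$ and the compatibility of the two resulting descriptions of $x_\emptyset \to P$ --- but each of these is forced by the commutativity of the given cube, so I expect no genuine obstacle beyond this bookkeeping.
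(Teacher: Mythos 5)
Your proof is correct and is essentially the paper's own argument with different names: your $Q$ and $P$ are the paper's $z'$ and $z$, and the three lemma applications (\Cref{tribe-anodyne}, \Cref{anodyne-cancellation}, \Cref{anodyne-pullback}) are used at the same steps in the same roles, followed by closure of anodyne morphisms under composition.
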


\begin{proof}
  Taking pullbacks in the right and left faces, we obtain a diagram
  \begin{tikzeq*}
  \matrix[diagram,column sep={5em,between origins}]
  {
        |(xe)| x_\emptyset & & |(x1)|  x_1    & & \\
    &   |(z)|  z           & & |(zp)|  z'     &   \\
    & & |(ye)| y_\emptyset & & |(y1)|  y_1        \\
    &   |(x0)| x_0         & & |(x01)| x_{01} &   \\
    & & |(y0)| y_0         & & |(y01)| y_{01}     \\
  };

  \draw[->] (xe) to (x1);
  \draw[->] (xe) to (z);
  \draw[->] (x1) to (zp);

  \draw[->]  (xe) to [bend right=20] (x0);
  \draw[fib] (x1) to [bend right=20] (x01);

  \draw[->] (z)  to (x0);
  \draw[->] (zp) to (x01);

  \draw[->,over] (z)  to (zp);
  \draw[->]      (x0) to (x01);

  \draw[->,over] (ye) to (y0);
  \draw[fib]     (y1) to (y01);

  \draw[->,over] (ye) to (y1);
  \draw[->]      (y0) to (y01);

  \draw[->] (z)  to (ye);
  \draw[->] (zp) to (y1);

  \draw[cof] (x0)  to node[above right] {$\we$} (y0);
  \draw[cof] (x01) to node[above right] {$\we$} (y01);
  \end{tikzeq*}
  where all the downward arrows are fibrations by \Cref{tribe-pullback}.
  Moreover, $z \to y_\emptyset$ and $z' \to y_1$ are anodyne
  by \Cref{tribe-anodyne}.
  Thus $x_1 \to z'$ is anodyne by \Cref{anodyne-cancellation} and
  \Cref{anodyne-pullback} implies that $x_\emptyset \to z$ is anodyne.
  It follows that the composite $x_\emptyset \to z \to y_\emptyset$ is
  also anodyne as required.
\end{proof}

\begin{corollary}[Joyal]\label{anodyne-product}
  The product of anodyne morphisms is anodyne. \qed
\end{corollary}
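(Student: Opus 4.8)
The plan is to exhibit the product of two anodyne morphisms $f_0 \from x_0 \acto y_0$ and $f_1 \from x_1 \acto y_1$ as a composite of two maps, each obtained by pulling back an anodyne morphism along a fibration, and then to conclude by \Cref{tribe-anodyne} together with the (automatic) closure of anodyne morphisms under composition.

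First I would observe that binary products exist in any tribe: since every object is fibrant by \Cref{tribe-terminal}, the morphism $x_1 \to 1$ is a fibration, so the pullback $x_0 \times x_1$ exists by \Cref{tribe-pullback}, and moreover the projection $y_0 \times x_1 \to y_0$ is a fibration, being a pullback of $x_1 \to 1$ (and similarly for the projection $y_0 \times y_1 \to y_1$). Then I would factor the product map as
\[
  x_0 \times x_1 \xrightarrow{\,f_0 \times \id\,} y_0 \times x_1 \xrightarrow{\,\id \times f_1\,} y_0 \times y_1 \text{.}
\]
The first map is a pullback of $f_0$ along the fibration $y_0 \times x_1 \to y_0$ and the second is a pullback of $f_1$ along the fibration $y_0 \times y_1 \to y_1$, so both are anodyne by \Cref{tribe-anodyne}. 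Since anodyne morphisms are precisely the morphisms with the \llp{} against all fibrations, this class is closed under composition, and hence $f_0 \times f_1$ is anodyne.

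There is no substantial obstacle here: the statement is a formal consequence of the axioms, and the only points requiring a moment's care are that binary products are available in a tribe and that the relevant projections are fibrations, so that \Cref{tribe-anodyne} applies. Alternatively, one can read off the result directly from \Cref{anodyne-gluing} by taking both $x_{01}$ and $y_{01}$ to be the terminal object $1$, so that the front and back faces of the cube become product diagrams and the map $x_{01} \acto y_{01}$ is the identity of $1$.
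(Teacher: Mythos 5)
Your main argument is correct and takes a genuinely different (and more elementary) route than the paper. You factor the product map as $x_0 \times x_1 \to y_0 \times x_1 \to y_0 \times y_1$, observe that each factor is a pullback of an anodyne morphism along a projection that is itself a fibration (being a pullback of a map to $1$), invoke \Cref{tribe-anodyne}, and close under composition. This only uses \Cref{tribe-terminal,tribe-pullback,tribe-anodyne} and the tautological closure of a left-lifting class under composition; it does not need \Cref{anodyne-cancellation} or \Cref{anodyne-pullback}, which are both used in the proof of \Cref{anodyne-gluing}. The paper instead records the corollary with a \emph{qed} immediately after \Cref{anodyne-gluing}, the intended derivation being exactly the alternative you mention at the end: specialize the cube by taking $x_{01} = y_{01} = 1$ (with $x_{01}\to y_{01}$ the identity of $1$), whereupon the front and back pullback squares become binary products and the lemma delivers the statement. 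Your interleaving argument buys a self-contained proof from the axioms; the paper's buys economy of presentation since it has already done the work in \Cref{anodyne-gluing}. Both are sound.
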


We conclude this section by constructing fibration categories and tribes of
Reedy fibrant diagrams.
The argument given in the proof of the lemma below is standard,
but it will reappear in various modified forms throughout the paper.

\begin{definition}
  \leavevmode
  \begin{enumerate}
  \item[(1)] A category $J$ is \emph{inverse} if there is a function,
    called \emph{degree}, $\deg \from \ob(J) \to \nat$ \st{}
    for every non-identity map $j \to j'$ in $J$ we have $\deg(j) > \deg(j')$.
  \end{enumerate}
  Let $J$ be an inverse category.
  \begin{enumerate}
  \item[(2)] Let $j \in J$.
    The \emph{matching category} $\bd (j \slice J)$ of $j$ is
    the full subcategory of the slice category $j \slice J$ consisting of
    all objects except $\id_j$.
    There is a canonical functor $\bd (j \slice J) \to J$,
    assigning to a morphism (regarded as an object of $\bd (j \slice J)$)
    its codomain.
  \item[(3)] Let $X \from J \to \cat{C}$ and $j \in J$.
    The \emph{matching object} of $X$ at $j$ is defined as a limit of
    the composite
    \begin{tikzeq*}
    \matrix[diagram]
    {
               |(M)| M_j X := \lim \big( \bd (j \slice J)
      &[3.5em] |(J)| J
      &[-1em]  |(C)| \cat{C} \big) \text{.} \\
    };

    \draw[->] (M) to (J);

    \draw[->] (J) to node[above] {$X$} (C);
    \end{tikzeq*}
    The canonical morphism $X_j \to M_j X$ is called
    the \emph{matching morphism}.
  \item[(4)] Let $\cat{C}$ be a fibration category.
     A diagram $X \from J \to \cat{C}$ is called \emph{Reedy fibrant},
     if for all $j \in J$, the matching object $M_j X$ exists and
     the matching morphism $X_j \to M_j X$ is a fibration.
  \item[(5)] Let $\cat{C}$ be a fibration category and
     let $X, Y \from J \to \cat{C}$ be Reedy fibrant diagrams in $\cat{C}$.
     A morphism $f \from X \to Y$ of diagrams is a \emph{Reedy fibration} if
     for all $j \in J$ the induced morphism $X_j \to M_j X \pull_{M_j Y} Y_j $
     is a fibration.
  \item[(6)] If $\cat{C}$ is a fibration category, then
    $\cat{C}^J_\Reedy$ denotes the category of
    Reedy fibrant diagrams in $\cat{C}$ over $J$.
    If $J$ is a homotopical inverse category
    (i.e., inverse category equipped with a homotopical structure),
    then $\cat{C}^J_\Reedy$ will denote
    the category of Reedy fibrant homotopical diagrams in $\cat{C}$ over $J$.
  \end{enumerate}
\end{definition}

The second part of the following lemma has been proven in \cite{sh}*{Thm.~11.11}
in the special case of plain (non-homotopical) inverse category $J$.
Here, we provide a homotopical generalization.

\begin{lemma}\label{tribe-Reedy}
  Let $J$ be a homotopical inverse category.
  \begin{enumerate}
  \item If $\cat{C}$ is a fibration category, then so is $\cat{C}^J_\Reedy$ with
    levelwise weak equivalences and Reedy fibrations.
  \item If $\cat{T}$ is a tribe, then so is $\cat{T}^J_\Reedy$ with
    Reedy fibrations.
    Moreover, both anodyne morphisms and weak equivalences in $\cat{T}^J_\Reedy$
    are levelwise.
  \end{enumerate}
\end{lemma}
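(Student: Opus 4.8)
The plan is to carry out the standard degree-by-degree Reedy argument, taking care that every construction stays within Reedy fibrant \emph{homotopical} diagrams; the homotopical bookkeeping is automatic from 2-out-of-3 and the Gluing Lemma, and the only genuinely new ingredient is the levelwise characterisation of anodyne morphisms.

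I would begin by recording the structural facts about matching objects over an inverse category. For a Reedy fibrant diagram $X$ the restriction to each matching category $\bd (j \slice J)$ is again Reedy fibrant, so $M_j X$ is built as an iterated pullback along fibrations; the matching functor $X \mapsto M_j X$ carries Reedy fibrations to fibrations and, by repeated use of the Gluing Lemma (cf.\ \Cref{hpb-invariant} and \Cref{anodyne-gluing}), carries levelwise weak equivalences, respectively levelwise anodyne maps, between Reedy fibrant diagrams to weak equivalences, respectively anodyne maps. In particular, for a Reedy fibration $f \from X \to Y$ the relative matching map $X_j \to M_j X \pull_{M_j Y} Y_j$ exists and is a fibration.

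For part~(1), axioms~\ref{fibcat-terminal} and~\ref{fibcat-2-out-of-6} are immediate: the constant diagram at $1$ is Reedy fibrant and every Reedy fibrant diagram is fibrant, and 2-out-of-6 holds levelwise. For~\ref{fibcat-pullback} one forms pullbacks levelwise; the relative matching maps show the result is Reedy fibrant and the projection a Reedy fibration, stability of acyclic fibrations is inherited from $\cat{C}$ since a Reedy fibration is levelwise a fibration and an acyclic one levelwise an acyclic fibration, and the pullback is homotopical by the Gluing Lemma at each weak equivalence $j \weto j'$ of $J$. Axiom~\ref{fibcat-factorization} is the heart of the matter: given $f \from X \to Y$ one builds a factorisation $X \weto W \fto Y$ by induction on degree, at each $j$ factoring the induced map $X_j \to M_j W \pull_{M_j Y} Y_j$ in $\cat{C}$ as a weak equivalence followed by a fibration; the objects $M_j W$ depend only on strictly lower degrees, so this is well-posed, $W$ is Reedy fibrant, $W \to Y$ is a Reedy fibration, and $W$ is homotopical by 2-out-of-3 applied to $X_j \weto W_j$, $X_{j'} \weto W_{j'}$ and $X_j \weto X_{j'}$.

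Part~(2) follows the same pattern: axioms~\ref{tribe-terminal} and~\ref{tribe-pullback} are verified just as~\ref{fibcat-terminal} and~\ref{fibcat-pullback}, and~\ref{tribe-factorization} by the same inductive scheme using anodyne--fibration factorisations in $\cat{T}$, which yields $X \to W \fto Y$ with $X \to W$ \emph{levelwise} anodyne ($W$ homotopical as before, using \Cref{anodyne-he}). It then remains to show --- this also gives the final assertion --- that a morphism of Reedy fibrant homotopical diagrams is anodyne in $\cat{T}^J_\Reedy$ exactly when it is levelwise anodyne. The implication ``levelwise anodyne $\Rightarrow$ anodyne'' is a direct inductive lifting argument against a given Reedy fibration: at $j$ one solves the lifting problem of the anodyne map $X_j \acto Y_j$ against the relative matching fibration, and since objects of a fixed degree support no non-identity morphisms of $J$, no further compatibility is needed. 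The reverse implication calls for producing, for each $j$, enough Reedy fibrations of $\cat{T}^J_\Reedy$ to detect anodyne-ness at $j$, which one does by a cofree-diagram construction right adjoint to evaluation at $j$; the place where real care is needed --- and which I expect to be the main obstacle --- is that this cofree diagram must be arranged so as to land among \emph{homotopical} Reedy fibrant diagrams, the naive cofree diagram over a homotopical inverse category being non-homotopical in general. Given the levelwise description of anodynes, axiom~\ref{tribe-anodyne} follows from stability of anodynes under pullback along fibrations in $\cat{T}$ together with the levelwise formula for pullbacks, and $\ev_j \from \cat{T}^J_\Reedy \to \cat{T}$ becomes a homomorphism of tribes. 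Finally, by \Cref{Trb-FibCat} the tribe $\cat{T}^J_\Reedy$ is a fibration category sharing its underlying category and its fibrations with the fibration category $\cat{C}^J_\Reedy$ of part~(1) for $\cat{C}$ the fibration category underlying $\cat{T}$; since all objects are cofibrant in both (\Cref{tribe-cofibrant}) and every tribe path object is a path object of the latter, the two have the same homotopy relation, hence by \Cref{we-he} the same weak equivalences --- which are levelwise by construction.
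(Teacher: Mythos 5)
Your verification of part~(1), of \axm{T}1--\axm{T}3, and of the levelwise characterisation of weak equivalences (via the path-object comparison, cofibrancy of all objects, and \Cref{we-he}) matches the paper. The ``levelwise anodyne $\Rightarrow$ anodyne'' direction via the inductive lifting against relative matching fibrations is also the paper's route (cited there from Hovey). The gap is the converse, ``anodyne $\Rightarrow$ levelwise anodyne,'' which you yourself flag as unresolved: the cofree-diagram approach you sketch does indeed run into exactly the difficulty you anticipate --- the right adjoint to evaluation need not land in homotopical Reedy fibrant diagrams --- and you do not get past it. Since the levelwise description of anodynes is what you use to deduce \axm{T}4 and the ``moreover'' clause of the lemma, this is not a cosmetic omission but a load-bearing step.

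The paper closes this gap by a retract argument, not by detecting anodynes with cofree diagrams. Given an anodyne map $x \acto y$ in $\cat{T}^J_\Reedy$, factor it (using the already-constructed factorisation) as a levelwise anodyne $x \to x'$ followed by a Reedy fibration $x' \to y$; then lift in the square
\begin{tikzeq*}
\matrix[diagram]
{
  |(A)|  x & |(A')| x' \\
  |(B0)| y & |(B1)| y  \\
};
\draw[->]  (A)  to (A');
\draw[->]  (B0) to (B1);
\draw[fib] (A') to (B1);
\draw[cof] (A) to node[left] {$\we$} (B0);
\end{tikzeq*}
using the LLP of $x \acto y$ against the Reedy fibration $x' \to y$. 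This exhibits $x \to y$ as a retract of $x \to x'$, and retracts of levelwise anodyne morphisms are levelwise anodyne. This is both simpler and sidesteps the cofree-diagram homotopicality issue entirely. With that step in hand, the rest of your argument goes through as you wrote it.
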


\begin{proof}
  Part (1) is \cite{rb}*{Thm.\ 9.3.8(1a)}.

  In part (2), \Cref{tribe-terminal} is evident and
  \Cref{tribe-pullback} is verified exactly as in part (1).

  Every morphism in $\cat{T}^J_\Reedy$ factors into
  a levelwise anodyne morphism followed by a Reedy fibration by
  the proof of \cite{ho}*{Thm.\ 5.1.3}.
  Moreover, levelwise anodyne morphisms have the \llp{} \wrt{} Reedy fibrations
  by \cite{ho}*{Prop.\ 5.1.4}.
  In particular, they are anodyne.

  Thus for \Cref{tribe-factorization} it suffices to verify that
  every anodyne morphism $x \acto y$ is levelwise anodyne.
  Factor it into a levelwise anodyne morphism $x \to x'$ followed by
  a Reedy fibration $x' \to y$.
  Then there is a lift in
  \begin{tikzeq*}
  \matrix[diagram]
  {
    |(A)|  x & |(A')| x' \\
    |(B0)| y & |(B1)| y  \\
  };

  \draw[->]  (A)  to (A');
  \draw[->]  (B0) to (B1);
  \draw[fib] (A') to (B1);

  \draw[cof] (A) to node[left] {$\we$} (B0);
  \end{tikzeq*}
  which exhibits $x \to y$ as a retract of $x \to x'$ and hence
  the former is levelwise anodyne.
  Since anodyne morphisms coincide with levelwise anodyne morphisms,
  they are stable under pullback along Reedy fibrations
  (which are, in particular, levelwise), which proves \Cref{tribe-anodyne}.

  Let $\cat{T}^J_{\mathrm{lvl}}$ be
  the fibration category of
  Reedy fibrant diagrams in the underlying fibration category of $\cat{T}$ with
  levelwise weak equivalences as constructed in part (1).
  We verify that a morphism $f$ is a weak equivalence in $\cat{T}^J_\Reedy$
  \iff{} it is a weak equivalence in $\cat{T}^J_{\mathrm{lvl}}$.
  A path object in $\cat{T}^J_\Reedy$ is also
  a path object in $\cat{T}^J_{\mathrm{lvl}}$
  since anodyne morphisms in $\cat{T}^J_\Reedy$ are levelwise.
  It follows that $f$ is a weak equivalence in $\cat{T}^J_\Reedy$ \iff{}
  it is a homotopy equivalence in $\cat{T}^J_{\mathrm{lvl}}$.
  By \Cref{we-he}, it suffices to verify that
  all objects of $\cat{T}^J_{\mathrm{lvl}}$ are cofibrant.
  By \cite{ho}*{Prop.\ 5.1.4}, an object of $\cat{T}^J_{\mathrm{lvl}}$ is
  cofibrant \iff{} it is levelwise cofibrant so the conclusion follows
  by \Cref{tribe-cofibrant}.
\end{proof}

\section{Semisimplicial fibration categories and tribes}
\label{semisimp-fibcat-tribe}

As mentioned in the introduction,
the homotopical category $\Trb$ does not appear to carry a structure of
a fibration category for the reasons that
will be explained in \Cref{fibcat-of-fibcats}.
To resolve this issue we introduce semisimplicial enrichments of
both tribes and fibration categories, following Schwede \cite{s}*{Sec.~3}.
The key ingredient of this solution is the notion of a frame.
The category of frames in a fibration category (tribe) carries
a natural structure of a semisimplicial fibration category (tribe).
Although our approach is inspired by Joyal's theory of simplicial tribes,
we were forced to modify the enriching category since
the construction of the category of frames has no simplicial counterpart.

We begin this section by reviewing basics of semisimplicial sets.
(For a more complete account, see \cite{rs}.)
Let $\sSimp$ denote the category whose objects are
finite non-empty totally ordered sets of the form $[m] = \{ 0 < \ldots < m \}$
and morphisms are injective order preserving maps.
A \emph{semisimplicial set} is a presheaf over $\sSimp$.
A representable semisimplicial set will be denoted by $\ssimp{m}$ and
its boundary (obtained by removing the top-dimensional simplex)
by $\bdssimp{m}$.

The inclusion $\sSimp \ito \Simp$ induces a forgetful functor from
simplicial sets to semisimplicial sets which admits a left adjoint.
\emph{\Whe{}s} of semisimplicial sets are created by this adjoint from
\whe{}s of simplicial sets.

The \emph{geometric product} of semisimplicial sets $K$ and $L$ is defined by
the coend formula
\begin{equation*}
  K \gprod L = \coend^{[m], [n]} K_m \times L_n \times \nerve_\sharp ([m] \times [n])
\end{equation*}
where $\nerve_\sharp P$ is the semisimplicial nerve of a poset $P$, i.e.,
the semisimplicial set whose $k$-simplices are
injective order preserving maps $[k] \ito P$.
This defines a symmetric monoidal structure on
the category of semisimplicial sets with $\ssimp{0}$ as a unit.
(This monoidal structure can be seen as an example of
a ``Day convolution'' structure induced by
a promonoidal structure of $\Simp_\sharp$.)

\begin{definition}
  A \emph{semisimplicial fibration category} $\cat{C}$ is a fibration
  category that carries a semisimplicial enrichment with cotensors by
  finite semisimplicial sets satisfying the following \emph{pullback-cotensor property}.
  \begin{enumerate}
  \item[(SF)] If $i \from K \cto L$ is
    a monomorphism between finite semisimplicial sets and
    $p \from a \fto b$ is a fibration in $\cat{C}$, then
    the induced morphism $(i^*, p_*) \from a^L \to a^K \pull_{b^K} b^L$ is
    a fibration. 
    Moreover, if either $i$ or $p$ is acyclic, then so is $(i^*, p_*)$.
  \end{enumerate}
\end{definition}

\begin{definition}
  A \emph{semisimplicial tribe} $\cat{T}$ is a tribe that
  carries a semisimplicial enrichment that
  makes the underlying fibration category semisimplicial and
  satisfies the following condition.
  \begin{enumerate}
  \item[(ST)] If $K$ is a finite semisimplicial set and $x \acto y$ is anodyne,
    then so is $x^K \to y^K$.
  \end{enumerate}
\end{definition}

\begin{lemma}\label{semisimplicial-tribe-Reedy}
  Let $J$ be a homotopical inverse category.
  \begin{enumerate}
  \item If $\cat{C}$ is a semisimplicial fibration category, then
    so is $\cat{C}^J_\Reedy$.
  \item If $\cat{T}$ is a semisimplicial tribe, then so is $\cat{T}^J_\Reedy$.
  \end{enumerate}  
\end{lemma}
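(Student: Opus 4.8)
The plan is to upgrade the argument of \Cref{tribe-Reedy} by tracking the semisimplicial enrichment throughout. First I would recall that $\cat{C}^J_\Reedy$ is a fibration category (resp.\ $\cat{T}^J_\Reedy$ a tribe) by \Cref{tribe-Reedy}, so the only thing left to verify is the enriched structure: cotensors by finite semisimplicial sets and the properties (SF) and (ST). The cotensor of a Reedy fibrant diagram $X \from J \to \cat{C}$ by a finite semisimplicial set $K$ should be computed levelwise, $(X^K)_j := (X_j)^K$, using the cotensors in $\cat{C}$. The first thing to check is that this is again Reedy fibrant and that the construction is functorial, which amounts to observing that the matching object functor $M_j$ commutes with cotensors — this holds because $M_j$ is a finite limit and cotensoring by a fixed $K$ preserves limits in the enriched sense. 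Thus the matching morphism of $X^K$ at $j$ is $(X_j)^K \to (M_j X)^K$, which is the cotensor of the fibration $X_j \fto M_j X$ by $K$; since cotensoring a fibration by any finite semisimplicial set is again a fibration (the case $i \from \emptyset \cto K$ of (SF) in $\cat{C}$), Reedy fibrancy is preserved.

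Next I would verify property (SF) for $\cat{C}^J_\Reedy$. Given a monomorphism $i \from K \cto L$ of finite semisimplicial sets and a Reedy fibration $p \from X \fto Y$, the induced morphism $(i^*, p_*) \from X^L \to X^K \pull_{Y^K} Y^L$ must be shown to be a Reedy fibration, and an acyclic one if $i$ or $p$ is acyclic. The key computation is to identify the matching morphism of this map at each $j \in J$ with a pullback-cotensor built out of the matching data of $p$. Concretely, one shows that the relative matching morphism of $(i^*, p_*)$ at $j$ is obtained by applying the pullback-cotensor functor $(i^*, (\uvar)_*)$ to the relative matching morphism $X_j \fto M_j X \pull_{M_j Y} Y_j$ of $p$ at $j$; here one uses again that $M_j$ (a finite limit) commutes with the relevant cotensors and pullbacks, so that forming matching objects and forming pullback-cotensors can be interchanged. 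Once this identification is in place, (SF) for $\cat{C}^J_\Reedy$ follows immediately from (SF) for $\cat{C}$ applied levelwise, including the acyclicity clauses. For part (2), property (ST) is easier: by \Cref{tribe-Reedy}(2) anodyne morphisms in $\cat{T}^J_\Reedy$ are levelwise anodyne, and cotensoring a levelwise-anodyne morphism by a finite $K$ is levelwise anodyne by (ST) in $\cat{T}$, hence anodyne in $\cat{T}^J_\Reedy$.

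There is one subtlety in the homotopical case worth isolating: when $J$ is a \emph{homotopical} inverse category, $\cat{C}^J_\Reedy$ consists of Reedy fibrant \emph{homotopical} diagrams, and one must check that the levelwise cotensor $X^K$ is still homotopical, i.e.\ sends weak equivalences in $J$ to weak equivalences in $\cat{C}$. This follows because $X$ is homotopical and cotensoring by a fixed $K$ preserves weak equivalences (the acyclic case of the $i \from \emptyset \cto K$ instance of (SF)), so each component $(X_{j})^K \to (X_{j'})^K$ of a $J$-weak-equivalence is again a weak equivalence. Likewise the enriched naturality of the construction in $K$ — functoriality $X^{(\uvar)} \from (\sSimp_{\mathrm{fin}})^\op \to \cat{C}^J_\Reedy$ and the requisite coherence of cotensors — is inherited levelwise from $\cat{C}$.

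The main obstacle I anticipate is purely bookkeeping rather than conceptual: making precise and verifying the interchange of the matching-object limit $M_j$ with the pullback-cotensor construction, i.e.\ the identity expressing the relative matching morphism of $(i^*, p_*)$ at $j$ as $(i^*, (\uvar)_*)$ applied to the relative matching morphism of $p$ at $j$. This requires carefully unwinding the definitions of $M_j$, of cotensor in an enriched category, and of the pullback-cotensor (Leibniz) construction, and checking that all the limits in sight commute. Since this is exactly the sort of standard-but-tedious verification the paper flags as ``reappearing in various modified forms throughout'' (cf.\ the remark before \Cref{tribe-Reedy}), I would present the identification of matching morphisms as the single lemma-level computation and let (SF) and (ST) drop out formally.
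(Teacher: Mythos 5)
Your proposal is correct and follows essentially the same approach as the paper's one-line proof, which cites \Cref{tribe-Reedy}, a result of Schwede on the semisimplicial Reedy structure (the content of your (SF) verification via the interchange of matching objects with pullback-cotensors), and the fact that cotensors are computed levelwise; you have simply unwound those citations in detail. One small imprecision: the acyclic clause of (SF) with $i \from \emptyset \cto K$ directly gives only that $(\uvar)^K$ preserves acyclic fibrations, so your homotopicality check for $X^K$ needs an appeal to Ken Brown's lemma to conclude that $(\uvar)^K$ preserves all weak equivalences.
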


\begin{proof}
  This follows from \Cref{tribe-Reedy}, \cite{s}*{Prop.~3.5} and
  the fact that cotensors are levelwise.
\end{proof}

\begin{definition}
  \leavevmode
  \begin{enumerate}
  \item A semisimplicial functor between semisimplicial fibration categories is
    \emph{exact} if it is exact as a functor of
    the underlying fibration categories and
    it preserves cotensors by finite semisimplicial sets.
  \item A semisimplicial functor between semisimplicial tribes is
    a \emph{homomorphism} if it is a homomorphism of the underlying tribes and
    it preserves cotensors by finite semisimplicial sets.
  \end{enumerate}
\end{definition}

\begin{definition}
  \leavevmode
  \begin{enumerate}
  \item A \emph{weak equivalence} of semisimplicial fibration categories is
    an exact functor that is a weak equivalence of
    their underlying fibration categories.
  \item A \emph{weak equivalence} of semisimplicial tribes is
    a homomorphism that is a weak equivalence of their underlying tribes.
  \end{enumerate}
\end{definition}

A \emph{frame} in a fibration category $\cat{C}$ is a Reedy fibrant,
homotopically constant semisimplicial object in $\cat{C}$.
The category of frames in a fibration category $\cat{C}$
will be denoted by $\fr \cat{C}$.
If $\cat{C}$ is semisimplicial, then every object $a$ has
the \emph{canonical frame} $a^{\ssimp{\uvar}}$.

\begin{lemma}\label{anodyne-end}
  If $x \to y$ is a levelwise anodyne extension between frames in a tribe and
  $K$ is a finite semisimplicial set, then
  $\coend_{[n]} x_n^{K_n} \to \coend_{[n]} y_n^{K_n}$ is anodyne
  (in particular, these ends exist).
\end{lemma}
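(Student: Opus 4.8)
The plan is to induct on the cell structure of $K$. For a frame $x$ and a finite semisimplicial set $W$, write $x^W := \coend_{[n]} x_n^{W_n}$ for the weighted limit of $x$ by $W$ (each $x_n^{W_n}$ being the finite power, which exists since $\cat{T}$ has finite products). The induction will establish simultaneously that $x^W$ exists and that $x^W \to y^W$ is anodyne for every levelwise anodyne $x \to y$ of frames. I order finite semisimplicial sets so that $\emptyset$ is minimal and non-empty $W$ are compared lexicographically by $(\dim W, |W_{\dim W}|)$. Given $W \neq \emptyset$ with $m := \dim W$, deleting one $m$-simplex $\sigma$ yields a subobject $W' \ito W$ (closure under faces is clear, as there are no simplices above degree $m$), and $W = W' \cup_{\bdssimp m} \ssimp m$, the pushout of $\sigma|_{\bdssimp m} \from \bdssimp m \to W'$ along the boundary inclusion; both $W'$ and $\bdssimp m$ strictly precede $W$. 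The base case $W = \emptyset$ is immediate: $x^\emptyset = \coend_{[n]} x_n^{\emptyset} = 1$ and $\id_1$ is anodyne.

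For the inductive step, since $x^{(-)}$ sends colimits of weights to limits (formally, $\cat{T}(c, x^W) \iso \mathrm{Hom}_{\sSimp\text{-set}}(W, \cat{T}(c, x_\bullet))$, using that $\cat{T}(c,-)$ preserves limits), the pushout presentation of $W$ gives a pullback square
\[
  x^W \iso x^{W'} \pull_{x^{\bdssimp m}} x^{\ssimp m}.
\]
By the Yoneda lemma $x^{\ssimp m} \iso x_m$, while $x^{\bdssimp m} \iso M_m x$ is the matching object of $x$ at $[m]$. As $x$ is a frame, $M_m x$ exists and the matching morphism $x_m \fto M_m x$ is a fibration; combined with the inductive hypothesis that $x^{W'}$ exists, \Cref{tribe-pullback} shows that $x^{W'} \pull_{M_m x} x_m$, and hence $x^W$, exists. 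The same discussion applies to $y$ and identifies $x^W \to y^W$ with the induced morphism $x^{W'} \pull_{M_m x} x_m \to y^{W'} \pull_{M_m y} y_m$.

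To see that this morphism is anodyne, I apply \Cref{anodyne-gluing} to the cube whose back and front faces are the two pullback squares just described. Its hypotheses hold: the matching morphisms $x_m \fto M_m x$ and $y_m \fto M_m y$ are fibrations; $x^{W'} \to y^{W'}$ is anodyne by the inductive hypothesis applied to $W'$; $x_m \to y_m$ is anodyne since $x \to y$ is levelwise anodyne; and $M_m x \to M_m y$, being $x^{\bdssimp m} \to y^{\bdssimp m}$, is anodyne by the inductive hypothesis applied to $\bdssimp m$. Hence $x^W \to y^W$ is anodyne, closing the induction.

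The steps I am treating as routine are the end-calculus identities $x^\emptyset \iso 1$, $x^{\ssimp m} \iso x_m$, $x^{\bdssimp m} \iso M_m x$ and the continuity of $x^{(-)}$ in the weight (all formal consequences of the defining universal property of ends together with the fact that $\cat{T}(c,-)$ preserves limits), and the observation that deleting a top-dimensional simplex produces a subobject. The only genuinely homotopy-theoretic input is \Cref{anodyne-gluing}, so I do not expect a serious obstacle; the one point requiring care is the choice of ordering on weights—ordering first by dimension is what guarantees that $\bdssimp m$, and not merely $W'$, is available as an inductive hypothesis when an $m$-cell is attached.
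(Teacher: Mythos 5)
Your proposal is correct and follows the same structural template as the paper's proof: decompose the weighted limit $\coend_{[n]} x_n^{K_n}$ via a skeletal pullback whose fibered leg is the matching morphism (a fibration by Reedy fibrancy), then close the argument with \Cref{anodyne-gluing}. The one genuine difference is the granularity of the induction. The paper inducts on $\dim K$ alone and attaches all $k$-simplices of $K$ simultaneously, using the pushout $(\bdssimp{k} \times K_k) \cup_{(\bdssimp{k}\times K_k)} (\ssimp{k} \times K_k) \to \Sk^k K$; the resulting top-cell map is the finite power $x_k^{K_k} \to y_k^{K_k}$, which is anodyne by \Cref{anodyne-product}. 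You instead attach one $m$-cell at a time, well-ordering weights by $(\dim W, |W_{\dim W}|)$, so the top-cell map is literally $x_m \to y_m$, anodyne by hypothesis, and \Cref{anodyne-product} is never needed. Since \Cref{anodyne-product} is itself a one-line consequence of \Cref{anodyne-gluing}, the two versions have essentially the same dependency footprint; yours is marginally more self-contained in exchange for the slightly fussier well-founded ordering and a careful check (which you correctly supply) that both $W'$ and $\bdssimp{m}$ precede $W$. Both arguments are sound.
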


\begin{proof}
  Let $k$ be the dimension of K, i.e., the largest $l$ \st{} $K_l$ is non-empty.
  Let $\Sk^{k - 1} K$ be the $(k - 1)$-skeleton of $K$, i.e.,
  the semisimplicial set consisting of simplices of $K$ of
  dimension at most $k - 1$.
  The end $\coend_{[n]} x_n^{K_n}$ is constructed by induction \wrt{} $k$.
  For $k = -1$ (i.e., $K = \emptyset$) it is just the terminal object.
  For $k \ge 0$, we have a pushout
  \begin{tikzeq*}
  \matrix[diagram,column sep={7em,between origins}]
  {
    |(b)| \bdssimp{k} \times K_k & |(k1)| \Sk^{k-1} K \\
    |(s)|   \ssimp{k} \times K_k & |(k)|            K \\
  };

  \draw[->,shorten >=0.2em] (k1) to (k);

  \draw[inj] (b)  to (s);
  \draw[->]  (b)  to (k1);
  \draw[->]  (s)  to (k);
  \end{tikzeq*}
  which yields a pullback
  \begin{tikzeq*}
  \matrix[diagram,column sep={10em,between origins}]
  {
    |(k)|  \coend_{[n]} x_n^{(\Sk^k K)_n}     & |(s)| \coend_{[n]} x_n^{(\ssimp{k} \times K_k)_n}            \\[1em]
    |(k1)| \coend_{[n]} x_n^{(\Sk^{k-1} K)_n} & |(b)| \coend_{[n]} x_n^{(\bdssimp{k} \times K_k)_n} \text{.} \\
  };

  \draw[fib,shorten >=0.5em] (s)  to (b);
  \draw[->,shorten >=0.5em]  (k)  to (k1);

  \draw[->]  (k1) to (b);
  \draw[->]  (k)  to (s);
  \end{tikzeq*}
  The ends on the right exist since
  they coincide with $x_k^{K_k}$ and $(M_k x)^{K_k}$.
  Moreover, the right arrow coincides with
  the $K_k$-fold power of the matching morphism of $x$ and hence
  it is a fibration.
  The end $\coend_{[n]} x_n^{(\Sk^{k-1} K)_n}$ exists
  by the inductive hypothesis and thus so does $\coend_{[n]} x_n^{(\Sk^k K)_n}$.

  For $k = -1$, the morphism $\coend_{[n]} x_n^{K_n} \to \coend_{[n]} y_n^{K_n}$
  %
  is an isomorphism.
  For $k \ge 0$ we note that
  \begin{tikzeq*}
  \matrix[diagram,column sep={9.5em,between origins}]
  {
    |(x)| \coend_{[n]} x_n^{(\ssimp{k} \times K_k)_n} &
    |(y)| \coend_{[n]} y_n^{(\ssimp{k} \times K_k)_n} \\
  };
  
  \draw[->] (x) to (y);
  \end{tikzeq*}
  coincides with $x_k^{K_k} \to y_k^{K_k}$ so it is anodyne
  by \Cref{anodyne-product}.
  Moreover, the morphisms
  \begin{tikzeq*}
  \matrix[diagram,column sep={9em,between origins}]
  {
    |(x0)| \coend_{[n]} x_n^{(\Sk^{k-1} K)_n}            &
    |(y0)| \coend_{[n]} y_n^{(\Sk^{k-1} K)_n}            &[-3em]
           \text{and}                                    &[-3em]
    |(x1)| \coend_{[n]} x_n^{(\bdssimp{k} \times K_k)_n} &[1em]
    |(y1)| \coend_{[n]} y_n^{(\bdssimp{k} \times K_k)_n} \\
  };
  
  \draw[->] (x0) to (y0);
  \draw[->] (x1) to (y1);
  \end{tikzeq*}
  are anodyne by the inductive hypothesis.
  Thus the assumptions of \Cref{anodyne-gluing} are satisfied when
  we apply it to the map from the square above to the analogous square for $y$.
  It follows that
  \begin{tikzeq*}
  \matrix[diagram,column sep={8em,between origins}]
  {
    |(x)| \coend_{[n]} x_n^{(\Sk^k K)_n} &
    |(y)| \coend_{[n]} y_n^{(\Sk^k K)_n} \\
  };
  
  \draw[->] (x) to (y);
  \end{tikzeq*}
  is also anodyne.
\end{proof}

The following theorem establishes a semisimplicial enrichment on $\fr \cat{C}$.
In this enrichment, the object
\begin{equation*}
  (K \rhd a)_m = \coend_{[n] \in \sSimp} a_n^{(\ssimp{m} \gprod K)_n}
\end{equation*}
is a cotensor of $a$ by $K$ by the proof of \cite{s}*{Thm.\ 3.17}.
Moreover, for frames $a$ and $b$, the hom-object is a semisimplicial set whose
$m$-simplices are maps of frames $a \to \ssimp{m} \rhd b$.

\begin{theorem}\label{tribe-frames}
  \leavevmode
  \begin{enumerate}
  \item For a fibration category $\cat{C}$,
    the category of frames $\fr \cat{C}$ is a semisimplicial fibration category
    and the evaluation at $0$ functor $\fr \cat{C} \to \cat{C}$ is
    a weak equivalence.
  \item For a tribe $\cat{T}$,
    the category of frames $\fr \cat{T}$ is a semisimplicial tribe and
    the evaluation at $0$ functor $\fr \cat{T} \to \cat{T}$ is
    a weak equivalence.
  \end{enumerate}
\end{theorem}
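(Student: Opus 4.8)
The plan is to treat both parts uniformly, building on Schwede's framework \cite{s}*{Sec.~3} for part~(1) and then carefully checking that the tribe structure survives the passage to frames. For part~(1) the statement that $\fr\cat{C}$ is a semisimplicial fibration category and that $\ev_0\from\fr\cat{C}\to\cat{C}$ is a weak equivalence is essentially \cite{s}*{Thm.~3.17} together with \cite{s}*{Thm.~3.10} (or the relevant results on frames), so I would recall the construction of the cotensor $K\rhd a$ displayed above and cite Schwede for the verification of the pullback-cotensor property~(SF), adding only whatever remarks are needed because our axioms use 2-out-of-6 rather than 2-out-of-3 (which is harmless by \Cref{he-2-out-of-6} and the discussion preceding \Cref{Trb-FibCat}). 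The key external input is that frames in $\cat{C}$ form a fibration category with levelwise weak equivalences and Reedy fibrations (a consequence of \Cref{tribe-Reedy}(1) applied to the inverse category $\sSimp^\op$, intersected with the homotopically-constant condition), and that cotensoring a frame $a$ by a finite semisimplicial set $K$ via the coend formula is well-defined and Reedy fibrant — this is where \Cref{anodyne-end} and its ``fibration'' analogue (implicit in the inductive construction inside the proof of \Cref{anodyne-end}) do the work.

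For part~(2), I would first invoke part~(1) to know that the underlying fibration category of $\fr\cat{T}$ is a semisimplicial fibration category, so it remains to produce the tribe structure and verify the axioms \ref{tribe-terminal}--\ref{tribe-anodyne} together with condition~(ST). The fibrations of $\fr\cat{T}$ are the Reedy fibrations, exactly as for $\fr\cat{C}$; the crucial point is to identify the anodyne morphisms. I claim a morphism of frames in $\cat{T}$ is anodyne if and only if it is levelwise anodyne, and the proof runs exactly as in the proof of \Cref{tribe-Reedy}(2): levelwise anodyne morphisms have the left lifting property against Reedy fibrations (by the semisimplicial analogue of \cite{ho}*{Prop.~5.1.4}, using that $\sSimp^\op$ is inverse), hence are anodyne; conversely, any anodyne map factors as a levelwise anodyne map followed by a Reedy fibration and is therefore a retract of a levelwise anodyne map, hence itself levelwise anodyne. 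Given this identification, \ref{tribe-factorization} follows from the factorization into a levelwise anodyne map followed by a Reedy fibration, and \ref{tribe-anodyne} follows since levelwise anodyne maps are stable under pullback along Reedy fibrations (which are in particular levelwise fibrations) by \Cref{tribe-anodyne} for $\cat{T}$ levelwise. Condition~(ST) then says that if $x\to y$ is levelwise anodyne between frames and $K$ is finite semisimplicial, then $x^K\to y^K$ is anodyne; but the cotensor is computed levelwise by the coend $\coend_{[n]} x_n^{(\ssimp{m}\gprod K)_n}$, and each $\ssimp{m}\gprod K$ is a finite semisimplicial set, so this is precisely the content of \Cref{anodyne-end}. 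Finally, $\ev_0\from\fr\cat{T}\to\cat{T}$ is a homomorphism of tribes (it preserves Reedy fibrations, terminal objects, pullbacks along fibrations, cotensors, and — since anodyne means levelwise anodyne — anodyne morphisms), and it is a weak equivalence of the underlying fibration categories by part~(1), hence a weak equivalence of semisimplicial tribes.

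The main obstacle I anticipate is the bookkeeping around the anodyne morphisms: one must be sure that ``anodyne'' in $\fr\cat{T}$, defined intrinsically as left-lifting against all Reedy fibrations, really does coincide with ``levelwise anodyne'', since the homotopically-constant condition cutting $\fr\cat{T}$ out of all Reedy fibrant semisimplicial objects could in principle interact with the lifting problems. The retract argument handles this cleanly — the factorization stays inside $\fr\cat{T}$ because levelwise anodyne maps between frames have frames as targets (the mapping cylinder construction preserves homotopical constancy), and the lift in the retract diagram lives in $\fr\cat{T}$ by functoriality — but it is worth spelling out. A secondary point requiring care is that the coend formula for $K\rhd a$ genuinely lands in frames (Reedy fibrant and homotopically constant), which is again Schwede's \cite{s}*{Thm.~3.17} but deserves an explicit pointer. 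Everything else is a routine transport of the corresponding fibration-category statements through the already-established part~(1).
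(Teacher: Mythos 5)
Your proposal is correct and takes essentially the same route as the paper: part~(1) is a citation of Schwede's Theorems~3.10 and~3.17, and part~(2) obtains the tribe structure on $\fr\cat{T}$ from \Cref{tribe-Reedy}, the pullback-cotensor property (SF) from part~(1), and (ST) from \Cref{anodyne-end}. The only difference is that you unfold the identification of anodyne morphisms in $\fr\cat{T}$ with levelwise anodyne morphisms and the resulting verification of the tribe axioms, which the paper compresses into a single invocation of \Cref{tribe-Reedy}(2) applied to the homotopical inverse category $\sSimp^{\op}$ with all maps as weak equivalences.
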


\begin{proof}
  Part (1) is \cite{s}*{Thms.\ 3.10 and 3.17}.

  For part (2), $\fr \cat{T}$ is a tribe by \Cref{tribe-Reedy}.
  The axiom (SF) follows from part (1) and (ST) follows from \Cref{anodyne-end}.
\end{proof}

If $\cat{C}$ is a semisimplicial fibration category, then
\Cref{semisimplicial-tribe-Reedy} yields
another structure of a semisimplicial fibration category on $\fr \cat{C}$ with
levelwise cotensors.
In general, this structure differs from
the one described in \Cref{tribe-frames} and in the remainder of the paper
we will always consider the latter.
However, the two cotensor operations agree on canonical frames.

\begin{lemma}\label{frame-cotensor}
  Let $\cat{C}$ be a semisimplicial fibration category and $a \in \cat{C}$.
  Then for every finite semisimplicial set $K$,
  we have $K \rhd (a^{\ssimp{\uvar}}) \iso (a^K)^{\ssimp{\uvar}}$.
\end{lemma}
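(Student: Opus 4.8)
The plan is to compute both sides using the coend formula for the frame cotensor and the pullback-cotensor property (SF), reducing everything to an identity of semisimplicial sets. Recall that the $m$-simplices of $K \rhd (a^{\ssimp{\uvar}})$ are, by the formula preceding \Cref{tribe-frames},
\begin{equation*}
  \bigl(K \rhd (a^{\ssimp{\uvar}})\bigr)_m = \coend_{[n] \in \sSimp} (a^{\ssimp{n}})^{(\ssimp{m} \gprod K)_n},
\end{equation*}
and since cotensors in $\cat{C}$ compose, $(a^{\ssimp{n}})^{(\ssimp{m} \gprod K)_n} \iso a^{\ssimp{n} \times (\ssimp{m} \gprod K)_n}$, where $\ssimp{n} \times (\ssimp{m} \gprod K)_n$ denotes the coproduct of $(\ssimp{m} \gprod K)_n$-many copies of $\ssimp{n}$. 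The coend over $[n]$ of $a^{(\uvar)}$ applied to such a diagram of semisimplicial sets then turns into $a$ applied to the corresponding weighted colimit of semisimplicial sets; concretely, $\coend_{[n]} a^{\ssimp{n} \times S_n} \iso a^{\coend^{[n]} \ssimp{n} \times S_n}$ whenever the relevant coend of semisimplicial sets, computed via skeleta, is finite. Thus the key point is the isomorphism of semisimplicial sets
\begin{equation*}
  \coend^{[n] \in \sSimp} \ssimp{n} \times (\ssimp{m} \gprod K)_n \iso \ssimp{m} \gprod K,
\end{equation*}
which is just the co-Yoneda (density) formula applied to the semisimplicial set $\ssimp{m} \gprod K$. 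Feeding this back in gives $\bigl(K \rhd (a^{\ssimp{\uvar}})\bigr)_m \iso a^{\ssimp{m} \gprod K}$, which by composition of cotensors in $\cat{C}$ is $\iso (a^K)^{\ssimp{m}} = \bigl((a^K)^{\ssimp{\uvar}}\bigr)_m$, and this is exactly the $m$-simplices of the canonical frame of $a^K$.

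First I would set up the identifications of cotensors: cotensoring in $\cat{C}$ is contravariant-monoidal in the semisimplicial variable, i.e.\ $a^{\emptyset} \iso 1$, $a^{K \push L} \iso a^K \times a^L$, and $a^{K \gprod L} \iso (a^K)^L$, the last holding because $\cat{C}$ is a semisimplicial fibration category and $\gprod$ is its monoidal structure. Then I would justify commuting the coend past the cotensor: since everything in sight is a \emph{finite} semisimplicial set, the coend $\coend^{[n]} \ssimp{n} \times S_n$ is built from finitely many pushouts along boundary inclusions $\bdssimp{k} \cto \ssimp{k}$ — this is precisely the skeletal induction already used in the proof of \Cref{anodyne-end} — and cotensoring sends these finite colimits of semisimplicial sets to finite limits in $\cat{C}$ (pullbacks along fibrations, which exist and behave well by (SF)), so $a^{(\uvar)}$ takes the coend-colimit of semisimplicial sets to the coend-limit defining the frame cotensor. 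Finally I would invoke the density formula to collapse $\coend^{[n]} \ssimp{n} \times (\ssimp{m} \gprod K)_n$ to $\ssimp{m} \gprod K$, and check — by naturality of all the isomorphisms in $m$ and in the structure maps of $\sSimp$ — that the resulting bijection on $m$-simplices is compatible with the semisimplicial face maps, hence assembles into an isomorphism of frames, and moreover respects the rest of the semisimplicial enrichment.

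The main obstacle I expect is the bookkeeping in commuting the two coends and the cotensor: one must be careful that the coend $\coend^{[n]}$ defining $K \rhd a$ really does interact with the cotensor $a^{(\uvar)}$ as claimed, given that $a^{(\uvar)}$ only sends \emph{finite} semisimplicial sets to objects of $\cat{C}$ and only turns a restricted class of colimits (those of the skeletal-filtration type) into limits. The clean way around this is not to try to prove a general "cotensor commutes with coends" statement, but to run exactly the same skeletal induction on the dimension of $K$ (equivalently, of $\ssimp{m} \gprod K$) as in \Cref{anodyne-end}: at each stage one has a pushout of finite semisimplicial sets along $\bdssimp{k} \cto \ssimp{k}$, applying $a^{(\uvar)}$ yields a pullback square in $\cat{C}$ by (SF), and one matches this, level by level, against the corresponding stage of the coend formula for the frame cotensor, whose inductive structure was spelled out in the proof of \Cref{anodyne-end}. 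Once the inductive step is seen to go through for both descriptions simultaneously, the isomorphism — and its naturality, which is what upgrades it from a levelwise isomorphism to an isomorphism of frames compatible with cotensors — falls out formally.
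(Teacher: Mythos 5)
Your proposal is correct and follows essentially the same chain of isomorphisms as the paper's proof: expand the frame cotensor as a coend, compose cotensors, commute the coend past the cotensor via the skeletal filtration (the paper leaves this implicit), collapse by density, and compose cotensors again. The extra discussion of finiteness and naturality is a fair elaboration of what the paper compresses into a one-line display.
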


\begin{proof}
  We have the following string of isomorphisms, natural in $m$:
  \begin{equation*}
    (K \rhd (a^{\ssimp{\uvar}}))_m
      \iso \coend_{[n]} (a^{\ssimp{n}})^{(\ssimp{m} \gprod K)_n}
      \iso a^{\coend^{[n]} \ssimp{n} \times (\ssimp{m} \gprod K)_n}
      \iso a^{K \gprod \ssimp{m}} \iso (a^K)^{\ssimp{m}} \text{.} \qedhere
  \end{equation*}
\end{proof}

We now need to establish a semisimplicial equivalence between
a semisimplicial fibration category $\cat{C}$ and $\fr \cat{C}$
(note that the evaluation at $0$ functor of \Cref{tribe-frames} is
not semisimplicial).
By the preceding lemma, there is
a semisimplicial exact functor $\cat{C} \to \fr \cat{C}$ given by
$a \mapsto a^{\ssimp{\uvar}}$ which is in fact a weak equivalence.
However, it is only pseudonatural in $\cat{C}$ which
is insufficient for our purposes.
We can correct this defect by introducing a modified version of $\fr \cat{C}$.

Let $\hat{[1]}$ denote the homotopical category with underlying category $[1]$
whose all morphisms are weak equivalences.
Let $\frh \cat{C}$ be the full subcategory of
$(\fr \cat{C})^{\hat{[1]}}_\Reedy$ spanned by objects $X$ \st{}
$X_1$ is (isomorphic to) the canonical frame on $X_{1,0}$.
It is a variant of the gluing construction
(in the sense of \cite{sh}*{Sec.~13}, inspired by Artin gluing)
along the canonical frame functor $\cat{C} \to \fr \cat{C}$.
More precisely, it is its full subcategory spanned by
the objects whose structure maps are weak equivalences.

\begin{proposition}
  \leavevmode
  \begin{enumerate}
  \item If $\cat{C}$ is a semisimplicial fibration category,
  then so is $\frh \cat{C}$.
  Moreover, both the evaluation at $0$ functor $\frh \cat{C} \to \fr \cat{C}$
  and the evaluation at $(1, 0)$ functor $\frh \cat{C} \to \cat{C}$ are
  semisimplicial exact.
  \item If $\cat{T}$ is a semisimplicial tribe, then so is $\frh \cat{T}$.
  Moreover, both the evaluation at $0$ functor $\frh \cat{T} \to \fr \cat{T}$
  and the evaluation at $(1, 0)$ functor $\frh \cat{T} \to \cat{T}$ are
  semisimplicial homomorphisms.
  \end{enumerate}
\end{proposition}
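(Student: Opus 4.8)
\emph{Approach.} The plan is to present $\frh \cat{C}$ (resp.\ $\frh \cat{T}$) as a full subcategory of the semisimplicial fibration category (resp.\ semisimplicial tribe) $(\fr \cat{C})^{\hat{[1]}}_\Reedy$ of \Cref{tribe-frames} and \Cref{semisimplicial-tribe-Reedy} — note that $[1]$, with $\deg(0) = 1$, $\deg(1) = 0$ and its unique non-identity morphism marked, is a homotopical inverse category — and then to check that this full subcategory is closed under all of the relevant structure, so that the two evaluation functors preserve it automatically. Two facts about the ambient category are used throughout. First, in $(\fr \cat{C})^{\hat{[1]}}_\Reedy$ the weak equivalences, the anodyne morphisms (in the tribe case), and the cotensors by finite semisimplicial sets are all computed levelwise over $\hat{[1]}$, so that the only condition to be maintained is the defining one of $\frh \cat{C}$: that the value at the object $1 \in \hat{[1]}$ be the canonical frame on its $0$th object. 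Second, evaluation at $0 \in \hat{[1]}$ and evaluation at $1 \in \hat{[1]}$ are both semisimplicial exact functors to $\fr \cat{C}$; the only point worth checking, preservation of fibrations, holds because the relative matching morphism of a Reedy fibration at the degree-zero object is its comparison with $Y_1$ and at the degree-one object is its comparison with $X_1 \pull_{Y_1} Y_0$.

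\emph{The easy axioms.} The terminal object of $(\fr \cat{C})^{\hat{[1]}}_\Reedy$ is the constant diagram on the terminal frame, which is $1^{\ssimp{\uvar}}$ and so lies in $\frh \cat{C}$. For a Reedy fibration $X \to Z$ and any $Y \to Z$ in $\frh \cat{C}$, the levelwise pullback $P$ has $P_1 = X_1 \pull_{Z_1} Y_1$; since the cotensor functor $(\uvar)^{\ssimp{n}} \from \cat{C} \to \cat{C}$ is a right adjoint it preserves pullbacks, so $P_1 \iso (X_{1,0} \pull_{Z_{1,0}} Y_{1,0})^{\ssimp{\uvar}} = P_{1,0}^{\ssimp{\uvar}}$ and $P \in \frh \cat{C}$. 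Hence $\frh \cat{C}$ is closed under pullbacks along fibrations, which are preserved by both evaluation functors, and two-out-of-six — and, in the tribe case, stability of anodyne morphisms under pullback along fibrations — is inherited from the ambient category. The same bookkeeping handles cotensors and the semisimplicial axioms: the value at $1 \in \hat{[1]}$ of the cotensor of $X$ by a finite semisimplicial set $K$ is the cotensor of $X_1 = X_{1,0}^{\ssimp{\uvar}}$ by $K$ in the enrichment of \Cref{tribe-frames}, which by \Cref{frame-cotensor} is $(X_{1,0}^{K})^{\ssimp{\uvar}}$, again a canonical frame; evaluating it at $0$ returns $X_{1,0}^{K}$, which is precisely the compatibility of the evaluation at $(1,0)$ functor with cotensors.

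\emph{The factorization axiom.} This is the substance of the proposition. I would run the usual levelwise Reedy factorization of a morphism $f \from X \to Y$; the only genuinely new step is the one at the degree-zero object, which amounts to factoring $f_1 \from c^{\ssimp{\uvar}} \to d^{\ssimp{\uvar}}$ (writing $c = X_{1,0}$, $d = Y_{1,0}$) as a weak equivalence — in the tribe case, an anodyne morphism — into a \emph{canonical} frame, followed by a fibration. This reduces to $\cat{C}$: because the canonical frame functor is exact (resp.\ a homomorphism), a weak equivalence, and a section of the evaluation $\fr \cat{C} \to \cat{C}$, the map $f_1$ agrees in $\Ho \fr \cat{C}$ with $\bar f^{\ssimp{\uvar}}$ where $\bar f = \ev_0 f_1 \from c \to d$, and hence is homotopic to it — routinely so in the tribe case by \Cref{he-2-out-of-6}, with some extra care for fibration categories. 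Now choose a factorization $c \to a \fto d$ in $\cat{C}$ (resp.\ $\cat{T}$) and a canonical path object of $d^{\ssimp{\uvar}}$ obtained by applying the canonical frame functor to a path object of $d$, so that its reflexivity map is anodyne in the tribe case. Pulling one endpoint of this path object back along the fibration $a^{\ssimp{\uvar}} \fto d^{\ssimp{\uvar}}$ yields, by \Cref{frame-cotensor} together with the fact that the canonical frame functor preserves pullbacks along fibrations, a canonical frame through which $f_1$ factors: the chosen homotopy provides the map of $c^{\ssimp{\uvar}}$ into the pullback, and in the tribe case the anodyne half is obtained from the reflexivity map, whose pullback along the fibration is again anodyne by \Cref{tribe-anodyne}. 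With the value of the factorization object at $1 \in \hat{[1]}$ built this way, the level-zero step is the standard factorization in $\fr \cat{C}$, and checking that the result again lies in $\frh \cat{C}$ is a two-out-of-three argument showing its structure map is a weak equivalence, using that $Y_0 \fto d^{\ssimp{\uvar}}$ is an acyclic fibration.

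\emph{The evaluation functors and the main obstacle.} The evaluation at $0$ functor $\frh \cat{C} \to \fr \cat{C}$ is semisimplicial exact (resp.\ a semisimplicial homomorphism) because all of the structure on $\frh \cat{C}$ was restricted from that of $(\fr \cat{C})^{\hat{[1]}}_\Reedy$, on which evaluation at $0 \in \hat{[1]}$ is such a functor; and the evaluation at $(1,0)$ functor factors as evaluation at $1 \in \hat{[1]}$ followed by the weak equivalence $\fr \cat{C} \to \cat{C}$ of \Cref{tribe-frames}, both semisimplicial exact, their compatibility with cotensors being exactly \Cref{frame-cotensor}. The main obstacle, predictably, is the factorization axiom — keeping the factorization within the class of canonical frames, and, in the tribe case, producing the anodyne half on the nose rather than only up to weak equivalence; everything else rests on the levelwise nature of the structure on the ambient Reedy category.
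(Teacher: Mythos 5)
Your overall strategy matches the paper's: present $\frh \cat{C}$ as the appropriate subcategory of $(\fr \cat{C})^{\hat{[1]}}_\Reedy$ and check that the ambient structure restricts. The easy axioms (terminal object, pullbacks along fibrations, cotensors via \Cref{frame-cotensor}, 2-out-of-6, anodyne closure) are handled essentially as in the paper, and your account of the evaluation functors and their exactness is fine.

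The divergence, and where the trouble lies, is the factorization axiom. The paper's factorization is far more direct than yours: given $a \to b$, factor $a_{1,0} \to b_{1,0}$ in $\cat{C}$ (as a weak equivalence / anodyne followed by a fibration), set $\hat a_1$ to be the canonical frame of the middle term, and observe that the induced maps $a_1 \to \hat a_1$ and $\hat a_1 \to b_1$ form a (levelwise weak equivalence / levelwise anodyne, Reedy fibration) factorization of $f_1$ by (SF) (resp.\ (SF) and (ST)); then complete at level $0$ by Reedy factoring $a_0 \to \hat a_1 \pull_{b_1} b_0$. What makes this work on the nose is that in $\frh \cat{C}$ — a gluing construction along $\cat{C} \to \fr \cat{C}$, despite the "full subcategory" phrasing — the datum at $1 \in \hat{[1]}$ is effectively an object of $\cat{C}$, so the level-$1$ component of a morphism is a morphism of $\cat{C}$ and its canonical frame. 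You instead treat $f_1$ as an arbitrary map of frames between canonical frames and therefore need a homotopy argument (homotopy from $f_1$ to $(\ev_0 f_1)^{\ssimp{\uvar}}$, then a mapping-path-space factorization through a canonical frame). This makes the argument much longer than necessary, and it also introduces two genuine gaps.

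The first gap is the one you flag yourself: "agrees in $\Ho\fr\cat{C}$, hence is homotopic, with some extra care for fibration categories." Passing from equality in the homotopy category to an actual (right) homotopy requires cofibrancy of the domain frame $a_1$, which you neither invoke nor prove; the paper's construction avoids this entirely. The second gap is more serious and is in the tribe case. Your map $c^{\ssimp{\uvar}} \to N^{\ssimp{\uvar}}$ into the pullback is built from the canonical frame of $c \to a'$ in the $a'$-component and the homotopy $H$ in the $(Pd)^{\ssimp{\uvar}}$-component; it is \emph{not} the canonical frame of the anodyne morphism $c \to N = a' \pull_d Pd$, so (ST) does not apply to it, nor does \Cref{tribe-anodyne}. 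What you establish is only that it is a levelwise weak equivalence (by 2-out-of-3 against the acyclic fibration $N^{\ssimp{\uvar}} \to a'^{\ssimp{\uvar}}$), which does not suffice for the tribe factorization axiom. The sentence "the anodyne half is obtained from the reflexivity map, whose pullback along the fibration is again anodyne" refers to $a'^{\ssimp{\uvar}} \to N^{\ssimp{\uvar}}$ rather than to the map $c^{\ssimp{\uvar}} \to N^{\ssimp{\uvar}}$ you actually need, so this step does not close. In short: your extra machinery is not needed if one recognizes that level-$1$ morphisms of $\frh$ come from $\cat{C}$, and, having introduced it, the anodyne condition in the tribe case is left unproved.
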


\begin{proof}
  If $\cat{C}$ is a semisimplicial fibration category,
  then so is $(\fr \cat{C})^{\hat{[1]}}_\Reedy$ by
  \Cref{semisimplicial-tribe-Reedy}.
  The subcategory $\frh \cat{C}$ contains the terminal object and
  is closed under pullbacks along fibrations.
  Moreover, it is closed under powers by finite semisimplicial sets by
  \Cref{frame-cotensor}.
  Therefore, it is enough to verify that it has factorizations.
  Given a morphism $a \to b$, we first factor $a_{1,0} \to b_{1,0}$
  as $a_{1,0} \weto \hat a_{1,0} \fto b_{1,0}$ in $\cat{C}$.
  If we set $\hat a_1$ to the canonical frame on $\hat a_{1,0}$, then
  $a_1 \to \hat a_1$ is a levelwise weak equivalence and
  $\hat a_1 \to b_1$ is a Reedy fibration by (SF).
  To complete the factorization, it suffices to
  factor $a_0 \to \hat a_1 \pull_{b_1} b_0$ into
  a levelwise weak equivalence and a Reedy fibration.

  The two evaluation functors $\frh \cat{C} \to \fr \cat{C}$ and
  $\frh \cat{C} \to \cat{C}$ are semisimplicial exact by construction.
  (In particular, preservation of powers by the latter follows from
  \Cref{frame-cotensor}.)

  In part (2) we proceed in similar manner,
  this time using (ST) to construct a factorization into
  a levelwise anodyne morphism followed by a Reedy fibration.
  Using this factorization and
  a retract argument as in the proof of \Cref{tribe-Reedy}
  we verify that a morphism of $\frh \cat{T}$ is anodyne \iff{}
  it is levelwise anodyne which directly implies the remaining axioms.
\end{proof}

\begin{lemma}\label{tribe-frh1}
  \leavevmode
  \begin{enumerate}
  \item If $\cat{C}$ is a semisimplicial fibration category,
  then the evaluation at $(1, 0)$ functor $\frh \cat{C} \to \cat{C}$ is
  a weak equivalence.
  \item If $\cat{T}$ is a semisimplicial tribe,
  then the evaluation at $(1, 0)$ functor $\frh \cat{T} \to \cat{T}$ is
  a weak equivalence.
  \end{enumerate}
\end{lemma}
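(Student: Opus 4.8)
The plan is to establish (1) by verifying the approximation properties and appealing to Cisinski's criterion \Cref{App}, and then to read off (2). By the preceding proposition the evaluation at $(1,0)$ functor $F \from \frh \cat{C} \to \cat{C}$ is exact, so only (App1) and (App2) remain. Two features of $\frh \cat{C}$, inherited from its description as a full subcategory of $(\fr \cat{C})^{\hat{[1]}}_\Reedy$ (whose structure is set up in the preceding proposition), will be used repeatedly. First, a morphism of $\frh \cat{C}$ is a weak equivalence \iff{} both of its components $X_0 \to Y_0$ and $X_1 \to Y_1$ are weak equivalences in the fibration category $\fr \cat{C}$ (\Cref{tribe-frames}). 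Second, for every object $X$ the structure map $X_0 \to X_1$ is an acyclic fibration in $\fr \cat{C}$: it is the matching morphism at $0 \in \hat{[1]}$, hence a fibration by Reedy fibrancy, and a weak equivalence since the diagram is homotopical. I will also use that a map of frames is a weak equivalence in $\fr \cat{C}$ \iff{} its $0$th level is a weak equivalence in $\cat{C}$, and that the composite of the canonical frame functor $\cat{C} \to \fr \cat{C}$ with evaluation at $0$ is the identity.

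For (App1): suppose $g \from X \to Y$ in $\frh \cat{C}$ has $Fg \from X_{1,0} \to Y_{1,0}$ a weak equivalence. Since $Fg$ is the $0$th level of the frame map $g_1 \from X_1 \to Y_1$, that map is a weak equivalence in $\fr \cat{C}$; then 2-out-of-3 applied to the commutative square formed by $g_0$, $g_1$ and the two acyclic structure maps $X_0 \to X_1$, $Y_0 \to Y_1$ forces $g_0 \from X_0 \to Y_0$ to be a weak equivalence in $\fr \cat{C}$ as well. Hence $g$ is a weak equivalence in $\frh \cat{C}$.

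For (App2): given an object $b = (b_0 \afto b_1)$ of $\frh \cat{C}$ (so $b_1 = b_{1,0}^{\ssimp{\uvar}}$), an object $x \in \cat{C}$, and a morphism $\phi \from x \to Fb = b_{1,0}$, the plan is \emph{not} to factor $\phi$ but to pull the acyclic fibration $b_0 \afto b_1$ back in $\fr \cat{C}$ along the image $\phi^{\ssimp{\uvar}} \from x^{\ssimp{\uvar}} \to b_{1,0}^{\ssimp{\uvar}} = b_1$ of $\phi$ under the canonical frame functor, obtaining $P := b_0 \pull_{b_1} x^{\ssimp{\uvar}}$ with an acyclic fibration $P \afto x^{\ssimp{\uvar}}$ and a projection $P \to b_0$. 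Then $a := (P \afto x^{\ssimp{\uvar}})$ is an object of $\frh \cat{C}$ — its structure map is an acyclic fibration, and $x^{\ssimp{\uvar}}$ is the canonical frame on its $0$th level $x$ — and the projection $P \to b_0$ together with $\phi^{\ssimp{\uvar}}$ form a morphism $a \to b$ of $\frh \cat{C}$, the compatibility square being the pullback square. Since $F a = x$ and $F(a \to b) = \phi$, the approximation square with lower-left corner $x' := x$ and with identities on its left and bottom edges supplies the data required by (App2). Thus by \Cref{App}, $F$ is a weak equivalence of fibration categories, proving (1). Part (2) follows: by the preceding proposition the evaluation at $(1,0)$ functor $\frh \cat{T} \to \cat{T}$ is a homomorphism of tribes, and its underlying exact functor of fibration categories is the $F$ of (1) — as in \Cref{tribe-Reedy}, the homotopy equivalences of $\frh \cat{T}$ are precisely the levelwise weak equivalences — hence it is a weak equivalence of tribes.

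I do not expect a serious obstacle. The argument is short once the inherited structure on $\frh \cat{C}$ is made explicit, and the only genuinely chosen step is the one in (App2): pulling the structure map of $b$ back along $\phi^{\ssimp{\uvar}}$ yields, essentially for free, an object of $\frh \cat{C}$ over $b$ whose $(1,0)$-evaluation is $x$ on the nose, which is exactly what the approximation property demands. The supporting facts — that $\frh \cat{C}$ inherits levelwise weak equivalences and acyclic structure maps, and that $a \mapsto a^{\ssimp{\uvar}}$ is functorial so that $\phi^{\ssimp{\uvar}}$ makes sense — are already available in the surrounding material.
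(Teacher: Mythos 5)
Your proof is correct, but it takes a genuinely different route from the paper. You verify the approximation properties and invoke Cisinski's criterion (\Cref{App}); the paper instead constructs an explicit homotopy inverse $G \from \cat{C} \to \frh \cat{C}$ sending $a$ to the constant diagram $(a^{\ssimp{\uvar}} = a^{\ssimp{\uvar}})$, observes that $\ev_{1,0} \circ G = \id_{\cat{C}}$ on the nose, and that for any $b$ the structure map $b_0 \afto b_1$ gives a natural weak equivalence $b \weto G(b_{1,0})$ — so $\ev_{1,0}$ is a homotopy equivalence of homotopical categories. The paper's argument is shorter, does not depend on \Cref{App}, and yields the slightly stronger conclusion that $\ev_{1,0}$ is a homotopy equivalence (hence automatically a DK-equivalence). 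Your argument buys nothing extra here, but the central idea in your (App2) step — that pulling the acyclic Reedy fibration $b_0 \afto b_1$ back along the canonical frame map $\phi^{\ssimp{\uvar}}$ produces an object of $\frh \cat{C}$ lying \emph{strictly} over $x$ — is sound and mirrors the same geometric content: both proofs exploit that the structure map of an object of $\frh \cat{C}$ is already an acyclic fibration onto a canonical frame. Your handling of (App1) by transporting the weak equivalence from level $(1,0)$ through $g_1$ and the acyclic structure maps to $g_0$ is likewise correct (using homotopy constancy of frames to promote a level-$0$ equivalence to a levelwise one), and your reduction of part (2) to part (1) via the levelwise description of weak equivalences in $\frh \cat{T}$ matches the paper's ``special case'' remark. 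One small point worth flagging: you write $Fa = x$ and $F(a\to b)=\phi$ ``on the nose,'' which strictly speaking holds only up to the canonical isomorphism $x^{\ssimp{0}} \iso x$; this is harmless for (App2) since the left leg $x' \to x$ and the bottom leg $x' \to Fa$ need only be weak equivalences, but you should either fix a cotensor with $(a^{\ssimp{\uvar}})_0 = a$ literally or insert the isomorphism explicitly.
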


\begin{proof}
  Part (2) is a special case of part (1) which is verified as follows.
  Define a functor $F \from \cat{C} \to \frh \cat{C}$ so that
  $(F a)_0 = (F a)_1 = a^{\ssimp{\uvar}}$ with the identity structure map.
  Then $F$ is a homotopical functor and
  $(F a)_{1,0} = a$ for all $a \in \cat{C}$.
  Moreover, for any $b \in \frh \cat{C}$
  the structure map $b_0 \to b_1$ provides a natural weak equivalence
  $b \weto F b_{1,0}$.
  Hence the evaluation functor is a homotopy equivalence.
\end{proof}

\begin{lemma}\label{tribe-frh2}
  \leavevmode
  \begin{enumerate}
  \item If $\cat{C}$ is a semisimplicial fibration category, then
   the evaluation at $0$ functor $\frh \cat{C} \to \fr \cat{C}$ is
   a weak equivalence.
  \item If $\cat{T}$ is a semisimplicial tribe, then
    the evaluation at $0$ functor $\frh \cat{T} \to \fr \cat{T}$ is
    a weak equivalence.  
  \end{enumerate}
\end{lemma}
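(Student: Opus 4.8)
The plan is to reduce part (2) to part (1), exactly as in \Cref{tribe-frh1}, since the weak equivalence of underlying fibration categories is all that needs checking for semisimplicial tribes. So assume $\cat{C}$ is a semisimplicial fibration category. The evaluation at $0$ functor $\frh \cat{C} \to \fr \cat{C}$ forgets the ``base'' object $X_{1,0}$ and its structure map, remembering only the frame $X_0$. The natural candidate for a homotopy inverse sends a frame $a \in \fr \cat{C}$ to the object of $\frh \cat{C}$ with both legs equal to $a$ and identity structure map — but this does not land in $\frh \cat{C}$, because $X_1$ is required to be the canonical frame on $X_{1,0}$, and a general frame $a$ is not the canonical frame on its own $0$-th object. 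So I would instead build the inverse using the canonical frame construction on the zeroth object $a_0$.

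Concretely, first I would observe that for any frame $a$ the matching object comparisons exhibit $a$ as Reedy equivalent, via levelwise weak equivalences, to the canonical frame $(a_0)^{\ssimp{\uvar}}$ (both are frames on $a_0$ and any map of frames inducing an equivalence on $0$-simplices is a levelwise weak equivalence, since frames are homotopically constant). Using this, define $G \from \fr \cat{C} \to \frh \cat{C}$ by $(Ga)_0 = a$, $(Ga)_{1} = (a_0)^{\ssimp{\uvar}}$, $(Ga)_{1,0} = a_0$, with structure map $a \to (a_0)^{\ssimp{\uvar}}$ the canonical comparison of frames over $a_0$. This is a well-defined homotopical functor into $\frh \cat{C}$ and $\ev_0 \circ G = \id_{\fr \cat{C}}$ on the nose (or up to canonical isomorphism). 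For the other composite, $G \circ \ev_0$ sends $X$ to the object with $0$-leg $X_0$ and $1$-leg the canonical frame on $(X_0)_0$; the natural weak equivalence $X \weto G(\ev_0 X)$ is the identity on the $0$-leg together with the canonical frame comparisons relating $X_1$ (a frame on $X_{1,0} \we (X_0)_0$) to $(X_0)_0^{\ssimp{\uvar}}$. This gives a zig-zag of natural weak equivalences, so $\ev_0$ is a homotopy equivalence, hence a weak equivalence.

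The main obstacle is checking that the comparison natural transformation $X \weto G(\ev_0 X)$ is genuinely natural and levelwise a weak equivalence in $\frh \cat{C}$ — i.e., that the square relating $X_0 \to X_1$ and $X_0 \to (X_0)_0^{\ssimp{\uvar}}$ commutes up to the required structure and consists of levelwise weak equivalences; this rests on \Cref{tribe-Reedy} (weak equivalences in $(\fr\cat{C})^{\hat{[1]}}_\Reedy$ are levelwise) together with the fact that the canonical frame functor is only pseudonatural, which forced the introduction of $\frh\cat{C}$ in the first place and must be handled carefully here. Alternatively — and this may be cleaner — one can factor $\ev_0$ through the comparison already established: by \Cref{tribe-frh1} the functor $\ev_{(1,0)} \from \frh\cat{C} \to \cat{C}$ is a weak equivalence, and by \Cref{tribe-frames}(1) the functor $\ev_0 \from \fr\cat{C} \to \cat{C}$ is a weak equivalence, so if one exhibits any homotopical functor $\cat{C} \to \frh\cat{C}$ (for instance the $F$ from the proof of \Cref{tribe-frh1}) whose composite with $\ev_0 \from \frh\cat{C}\to\fr\cat{C}$ is weakly equivalent to $a \mapsto a^{\ssimp{\uvar}}$, then two-out-of-three for weak equivalences of fibration categories (valid since these induce equivalences of homotopy categories) finishes the argument. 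I would present the proof in this second form, deducing $\ev_0 \from \frh\cat{C}\to\fr\cat{C}$ is a weak equivalence from \Cref{tribe-frh1}, \Cref{tribe-frames}, and the fact that $a \mapsto a^{\ssimp{\uvar}} \from \cat{C}\to\fr\cat{C}$ is a weak equivalence, together with two-out-of-three.
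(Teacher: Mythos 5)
Your preferred (second) argument is essentially the paper's proof: the paper observes that the triangle of evaluation functors $\frh\cat{C} \to \fr\cat{C} \to \cat{C}$ and $\frh\cat{C} \to \cat{C}$ commutes up to natural weak equivalence (witnessed by the structure maps $X_0 \to X_1$), then applies 2-out-of-3 using \Cref{tribe-frh1} and \Cref{tribe-frames}. Your phrasing via the homotopy inverse $F$ of $\ev_{1,0}$ is an equivalent (if slightly more roundabout) version of the same 2-out-of-3 argument, and the reduction of part (2) to part (1) is also how the paper proceeds.

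I will flag, though, that your first sketched construction of $G \from \fr\cat{C} \to \frh\cat{C}$ does not work as described: for a general frame $a$ there is no ``canonical comparison of frames'' $a \to (a_0)^{\ssimp{\uvar}}$. Both objects are frames on $a_0$, but that does not produce a map between them, let alone one that is natural in $a$. This is precisely the obstruction that motivated introducing $\frh\cat{C}$ in the first place, and it cannot be finessed by noting that \emph{if} such a map existed it would automatically be a levelwise weak equivalence. You partly acknowledge this, but the phrase ``canonical comparison'' misstates the situation. The clean route is the 2-out-of-3 argument you settle on.
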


\begin{proof}
  Part (2) follows from part (1) which is be proven as follows.
  The triangle
  \begin{tikzeq*}
  \matrix[diagram]
  {
                       & |(fhC)| \frh \cat{C} &               \\
    |(fC)| \fr \cat{C} &                      & |(C)| \cat{C} \\
  };

  \draw[->] (fhC) to node[above left]  {$\ev_0$}      (fC);
  \draw[->] (fhC) to node[above right] {$\ev_{1, 0}$} (C);
  \draw[->] (fC)  to node[below]       {$\ev_0$}      (C);
  \end{tikzeq*}
  commutes up to natural weak equivalence.
  Moreover, $\ev_{1, 0} \from \frh \cat{C} \to \cat{C}$ is a weak equivalence
  by \Cref{tribe-frh1} and
  $\ev_0 \from \fr \cat{C} \to \cat{C}$ is a weak equivalence
  by \Cref{tribe-frames}.
  Hence so is $\ev_0 \from \frh \cat{C} \to \fr \cat{C}$ by 2-out-of-3.
\end{proof}

\begin{proposition}\label{semisimp-DK}
  \leavevmode
  \begin{enumerate}
  \item The functor $\sFibCat \to \FibCat$ is a DK-equivalence.
  \item The functor $\sTrb \to \Trb$ is a DK-equivalence.
  \end{enumerate}
\end{proposition}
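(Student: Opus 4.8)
The plan is to deduce this proposition from the machinery already assembled, and in particular from the fact that the two categories of semisimplicial objects are fibration categories in their own right (constructed in \Cref{fibcat-of-fibcats}, as promised in the introduction). The key observation is that a DK-equivalence between homotopical categories can be detected after passing to any zig-zag of homotopy equivalences, and that the forgetful functors $\frh\cat{C} \to \fr\cat{C} \to \cat{C}$ provide such a zig-zag functorially. So I would first record that the assignments $\cat{C} \mapsto \fr\cat{C}$ and $\cat{C} \mapsto \frh\cat{C}$ are strictly functorial in exact functors (respectively in homomorphisms), since they are built from the Reedy-fibrant-diagram and cotensor constructions which are themselves strictly functorial; and likewise for the evaluation functors $\ev_0$ and $\ev_{1,0}$, which are strictly natural. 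This gives, for the fibration category case, a commutative diagram of functors between homotopical categories
\begin{tikzeq*}
\matrix[diagram]
{
  |(fh)| \frh[\uvar] & |(f)| \fr[\uvar] \\
  |(s)|  \sFibCat    & |(F)| \FibCat    \\
};
\draw[->] (fh) to (f);
\draw[->] (fh) to (s);
\draw[->] (f)  to (F);
\draw[->] (s)  to (F);
\end{tikzeq*}
wait --- more precisely, $\frh$ and $\fr$ are endofunctors of $\FibCat$ landing in $\sFibCat$, so the relevant square is
\[
\begin{array}{ccc}
\sFibCat & \xleftarrow{\ \fr\ } & \FibCat \\
\downarrow & & \| \\
\FibCat & = & \FibCat
\end{array}
\]
together with the natural transformations $\frh \Rightarrow \fr$ (via $\ev_0$) and $\fr \Rightarrow \id$ (via $\ev_0 \from \fr\cat{C}\to\cat{C}$), and $\frh \Rightarrow \id$ (via $\ev_{1,0}$), all valued in weak equivalences.

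Next I would run the standard two-out-of-three argument on DK-equivalences. By \Cref{tribe-frames}(1), the natural transformation $\ev_0 \from \fr\cat{C}\to\cat{C}$ is a levelwise weak equivalence of fibration categories, so the composite $\FibCat \xrightarrow{\fr} \sFibCat \to \FibCat$ is (objectwise a weak equivalence, hence) a homotopy equivalence of homotopical categories, in particular a DK-equivalence; similarly $\ev_{1,0}\from\frh\cat{C}\to\cat{C}$ is a levelwise weak equivalence by \Cref{tribe-frh1}(1), so $\frh\from\FibCat\to\sFibCat$ followed by $\ev_{1,0}\from\sFibCat\to\FibCat$ is a DK-equivalence, and $\ev_0\from\frh\cat{C}\to\fr\cat{C}$ is a levelwise weak equivalence by \Cref{tribe-frh2}(1). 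Now consider the square: the forgetful functor $U\from\sFibCat\to\FibCat$ composed with $\fr\from\FibCat\to\sFibCat$ is naturally weakly equivalent to the canonical-frame functor $\cat{C}\mapsto\cat{C}^{\ssimp{\uvar}}$ precomposed appropriately --- actually the cleanest route is: $U\circ\frh$ sends $\cat{C}$ to the underlying fibration category of $\frh\cat{C}$, and the two evaluation functors exhibit it as connected by natural weak equivalences to both $\fr\cat{C}$ and $\cat{C}$; feeding this through \cite{dk2}*{Prop.~3.5} (a homotopy equivalence of homotopical categories is a DK-equivalence) and the two-out-of-three property of DK-equivalences (which holds because DK-equivalences are exactly the functors inducing Dwyer--Kan equivalences of hammock localizations, a class with two-out-of-three), we conclude that $U\from\sFibCat\to\FibCat$ is a DK-equivalence.

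In detail: from $\ev_{1,0}\circ(\text{incl}) = \id_{\FibCat}$ up to natural weak equivalence and $\ev_0\from\frh\cat{C}\to\fr\cat{C}$ a weak equivalence, the composite $\fr\from\FibCat\to\sFibCat$ followed by $U$ is a DK-equivalence (being $\ev_0\circ\frh$-conjugate to $\ev_0\from\fr\to\id$, hence a homotopy equivalence), and $\fr$ itself, followed by $U$, being a homotopy equivalence means $\fr$ is a DK-equivalence onto its essential image with $U$ a retraction up to homotopy; precisely, since $U\circ\fr\we\id_{\FibCat}$ and $\fr\circ U \we \id_{\sFibCat}$ --- this last because for a semisimplicial fibration category $\cat{D}$ the canonical-frame functor $\cat{D}\to\fr\cat{D}$ is a weak equivalence (the content of the discussion after \Cref{frame-cotensor}, that $a\mapsto a^{\ssimp{\uvar}}$ is a semisimplicial exact weak equivalence, combined with $\ev_0$) --- we get that $\fr$ and $U$ are mutually inverse homotopy equivalences, hence both DK-equivalences. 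This proves (1). Part (2) is identical with $\FibCat$ replaced by $\Trb$, $\sFibCat$ by $\sTrb$, ``exact functor'' by ``homomorphism'', and invoking the tribe clauses (2) of \Cref{tribe-frames}, \Cref{tribe-frh1}, and \Cref{tribe-frh2} in place of clauses (1).

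The main obstacle is the pseudonaturality problem already flagged in the text: the naive functor $a\mapsto a^{\ssimp{\uvar}}$ is only pseudonatural, so one genuinely needs $\frh$ rather than $\fr$ to get strict functoriality of the comparison, and one must check that the zig-zag $\frh\cat{C}\to\fr\cat{C}$, $\frh\cat{C}\to\cat{C}$ consists of \emph{strictly} natural transformations valued in weak equivalences --- which it does, by construction of $\frh$ as a subcategory of a Reedy-diagram category and of the evaluation functors as honest evaluations. Everything else is a bookkeeping exercise with the two-out-of-three property for DK-equivalences and \cite{dk2}*{Prop.~3.5}.
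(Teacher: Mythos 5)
Your proposal is correct and follows essentially the same route as the paper: exhibit $\fr\from\FibCat\to\sFibCat$ as a homotopy inverse to the forgetful functor, using the zig-zag $\fr\cat{C}\weot\frh\cat{C}\weto\cat{C}$ (from \Cref{tribe-frh1,tribe-frh2}) to connect $\fr\circ U$ to $\id_{\sFibCat}$ by natural weak equivalences in $\sFibCat$, and using $\ev_0\from\fr\cat{C}\weto\cat{C}$ (from \Cref{tribe-frames}) for $U\circ\fr\we\id_{\FibCat}$; the paper's proof is simply a much terser version of this. Your middle-paragraph appeal to the canonical-frame functor $a\mapsto a^{\ssimp{\uvar}}$ for the equivalence $\fr\circ U\we\id_{\sFibCat}$ would not work on its own (it is only pseudonatural, as you yourself flag), but you correctly identify $\frh$ as the fix, which is the same device the paper uses; the detour through the 2-out-of-3 property of DK-equivalences is unnecessary once one observes that a homotopy equivalence of homotopical categories is automatically a DK-equivalence, but it does no harm.
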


\begin{proof}
  The proofs of two parts are parallel.
  The functor $\fr \from \FibCat \to \sFibCat$ is homotopical.
  Indeed, this follows from the 2-out-of-3 property and the fact that
  the evaluation at $0$ functor $\fr \cat{C} \to \cat{C}$ is a weak equivalence
  as proven in \Cref{tribe-frames}.
  By \Cref{tribe-frames,tribe-frh1,tribe-frh2},
  it is a homotopy inverse to the functor $\sFibCat \to \FibCat$.
\end{proof}

\section{Fibration categories of fibration categories and tribes}
\label{fibcat-of-fibcats}

In this section, we construct the fibration categories of
semisimplicial fibration categories and semisimplicial tribes.
We begin by recalling the fibration category of fibration categories
of \cite{s1}.

\begin{definition}[\cite{s1}*{Definition 2.3}]\label{fibcat-fib}
  An exact functor $P \from \cat{E} \to \cat{D}$ between fibration categories
  is a \emph{fibration} if it satisfies the following properties.
  \begin{enumerate}
  \item It is an \emph{isofibration}: 
    for every object $a \in \cat{E}$ and an isomorphism $f' \from P a \to b'$
    there is an isomorphism $f \from a \to b'$ \st{} $P f = f'$.
  \item It has the \emph{lifting property for WF-factorizations}:
    for any morphism $f \from a \to b$ of $\cat{E}$ and a factorization
    \begin{tikzeq*}
    \matrix[diagram]
    {
      |(A)| P a &          & |(B)| P b \\
                & |(X)| c' &           \\
    };

    \draw[->]  (A) to node[above]       {$P f$} (B);
    \draw[fib] (X) to node[below right] {$p'$}   (B);

    \draw[->] (A) to node[below left] {$i'$} node[above right] {$\we$} (X);
    \end{tikzeq*}
    there exists a factorization
    \begin{tikzeq*}
    \matrix[diagram]
    {
      |(A)| a &         & |(B)| b \\
              & |(C)| c &         \\
    };

    \draw[->]  (A) to node[above]       {$f$} (B);
    \draw[fib] (C) to node[below right] {$p$} (B);

    \draw[->] (A) to node[below left] {$i$} node[above right] {$\we$} (C);
    \end{tikzeq*}
    \st{} $P i = i'$ and $P p = p'$ (in particular, $P c = c'$).
  \item It has the \emph{lifting property for pseudofactorizations}:
    for any morphism $f \from a \to b$ of $\cat{E}$ and a diagram
    \begin{tikzeq*}
    \matrix[diagram]
    {
      |(A)| P a & |(B)| P b \\
      |(X)| c'  & |(Y)| d'  \\
    };

    \draw[->]  (A) to node[above] {$P f$} (B);
    \draw[fib] (Y) to node[right] {$u'$}  (B);

    \draw[fib] (X) to node[left]  {$i'$} node[right] {$\we$} (A);
    \draw[->]  (X) to node[below] {$s'$} node[above] {$\we$} (Y);
    \end{tikzeq*}
    there exists a diagram
    \begin{tikzeq*}
    \matrix[diagram]
    {
      |(A)| a & |(B)| b \\
      |(C)| c & |(D)| d \\
    };

    \draw[->]  (A) to node[above] {$f$} (B);
    \draw[fib] (D) to node[right] {$u$} (B);

    \draw[fib] (C) to node[left]  {$i$} node[right] {$\we$} (A);
    \draw[->]  (C) to node[below] {$s$} node[above] {$\we$} (D);
    \end{tikzeq*}
    \st{} $P i = i'$, $P s = s'$ and $P u = u'$
    (in particular, $P c = c'$ and $P d = d'$).
  \end{enumerate}
\end{definition}

\begin{theorem}[\cite{s1}*{Thm.\ 2.8}]\label{fibcat-of-fibcats-old}
  The category of fibration categories with
  weak equivalences and fibrations as defined above is a fibration category.
  \qed
\end{theorem}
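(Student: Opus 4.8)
The plan is to verify the four fibration-category axioms for $\FibCat$ directly; \Cref{fibcat-terminal} and \Cref{fibcat-2-out-of-6} are easy, \Cref{fibcat-pullback} is bookkeeping once the pullback has been identified, and \Cref{fibcat-factorization} is the crux. For \Cref{fibcat-terminal}, the terminal object is the one-object, one-morphism fibration category $1$, and for any small fibration category $\cat{C}$ the unique exact functor $\cat{C} \to 1$ is a fibration: the three conditions of \Cref{fibcat-fib} all become vacuous on the $\cat{D} = 1$ side and reduce, respectively, to producing an identity isomorphism, a factorization of a morphism of $\cat{C}$ as a weak equivalence followed by a fibration (\Cref{fibcat-factorization} in $\cat{C}$), and a pseudofactorization, all of which are available in $\cat{C}$. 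For \Cref{fibcat-2-out-of-6}, note that $\cat{C} \mapsto \Ho \cat{C}$ is a functor (exact functors preserve weak equivalences, hence descend to localizations, compatibly with composition), so the weak equivalences of $\FibCat$ are exactly the exact functors that $\Ho$ sends to equivalences of categories; since equivalences in any $2$-category — in particular $\ncat{Cat}$ — satisfy $2$-out-of-$6$ (from the given data one extracts a left and a right pseudoinverse for the middle map, hence a genuine one, and the remaining maps are then composites of equivalences), the same holds in $\FibCat$.

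For \Cref{fibcat-pullback}, given a fibration $P \from \cat{E} \to \cat{D}$ and an exact functor $F \from \cat{C} \to \cat{D}$, I would form the strict pullback of underlying categories $\cat{C} \pull_{\cat{D}} \cat{E}$ — well behaved because $P$, being a fibration, is an isofibration — with a morphism declared a weak equivalence, resp.\ a fibration, when both of its components are. Checking that this is a fibration category is routine apart from factorizations: to factor a morphism $(g, h)$, first factor $g$ in $\cat{C}$ as a weak equivalence followed by a fibration, apply $F$ to get such a factorization of $P h = F g$ in $\cat{D}$, and lift it along $P$ by the lifting property for WF-factorizations to a compatible factorization of $h$ in $\cat{E}$. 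One then verifies that the projection $\cat{C} \pull_{\cat{D}} \cat{E} \to \cat{C}$ is a fibration — the isofibration property comes from that of $P$, and the two lifting properties are obtained by solving the corresponding lifting problems for $P$ and recombining components — and that it is acyclic whenever $P$ is; the universal property in $\FibCat$ is inherited from the strict pullback of categories.

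The remaining axiom \Cref{fibcat-factorization} — factoring an exact functor $F \from \cat{C} \to \cat{D}$ — is the heart of the matter, and the step I expect to be the main obstacle. The idea is to build a mapping-path fibration category $\cat{E}_F$ by a gluing construction in the spirit of \cite{sh}*{Sec.~13}: take the fibration category of triples $(c, d, F c \to d)$ with $c \in \cat{C}$ and $d \in \cat{D}$, equipped with componentwise weak equivalences and Reedy-type fibrations (a fibration category by a Reedy argument modelled on \Cref{tribe-Reedy}), and let $\cat{E}_F$ be the full subcategory on the triples whose structure map is a weak equivalence. Then $\cat{C} \to \cat{E}_F$, $c \mapsto (c, F c, \id)$, is a weak equivalence with homotopy inverse $(c, d, w) \mapsto c$, the counit being the natural weak equivalence with components $w$; and the other projection $q \from \cat{E}_F \to \cat{D}$, $(c, d, w) \mapsto d$, is the sought fibration. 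The real work is verifying the three clauses of \Cref{fibcat-fib} for $q$: each lifting problem posed in $\cat{D}$ must be reduced to one in $\cat{C}$, solved there using \Cref{fibcat-factorization} and \Cref{fibcat-pullback} of $\cat{C}$, and then reassembled over $\cat{D}$. The delicate point — and the reason \Cref{fibcat-fib} is stated with a pseudofactorization clause rather than a naive lifting condition — is that a bare factorization of the structure map need not lift strictly, so one must route the construction through a homotopy pullback, invoking \Cref{hpb-invariant}. Once the three clauses are checked, $F$ factors as $\cat{C} \weto \cat{E}_F \fto \cat{D}$, completing the verification.
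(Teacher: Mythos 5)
The paper does not prove this result; it cites \cite{s1}*{Thm.~2.8}, whose proof constructs path objects for a fibration category $\cat{C}$ via a \emph{modified} Reedy structure and then invokes Brown's Factorization Lemma to obtain \Cref{fibcat-factorization}. (The paper says exactly this in the paragraph following the theorem.) Your treatment of \Cref{fibcat-terminal}, \Cref{fibcat-pullback} and \Cref{fibcat-2-out-of-6} is correct and matches the outline of that proof. For \Cref{fibcat-factorization} you take a genuinely different route — a direct mapping-path factorization via a gluing category $\cat{E}_F$ — but that route has a real gap, and it is precisely the gap that the ``modified Reedy structure'' is designed to close.

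Here is the problem concretely. For the Reedy machinery over the inverse shape $\{1 \to 0\}$ to apply, the structure map of an object of $\cat{E}_F$ must point $d \to F c$ (you wrote $F c \to d$, for which there is no matching object and hence no Reedy fibration structure); take it to be an acyclic fibration so that $\cat{E}_F$ is a fibration category and $c \mapsto (c, Fc, \id)$ lands in it. Now try to verify clause (2) of \Cref{fibcat-fib} for $q \from \cat{E}_F \to \cat{D}$, $(c,d,w) \mapsto d$. Given a morphism $(c_0,d_0,w_0) \to (c_1,d_1,w_1)$ and a factorization $d_0 \weto d' \fto d_1$ in $\cat{D}$, any lift must use a factorization $c_0 \weto c' \fto c_1$ in $\cat{C}$ together with an acyclic fibration $w' \from d' \to F c'$ fitting into
\begin{tikzeq*}
\matrix[diagram,column sep={7em,between origins}]
{
  |(a)| d_0 & |(b)| F c' \\
  |(c)| d'  & |(d)| F c_1 \text{,} \\
};
\draw[->] (a) to node[left] {$\we$} (c);
\draw[fib] (b) to (d);
\draw[->] (a) to (b);
\draw[->] (c) to (d);
\draw[->,dashed] (c) to node[above left] {$w'$} (b);
\end{tikzeq*}
where the left map is a weak equivalence and the right map is merely a fibration (not acyclic, since $c_0 \to c_1$ was arbitrary). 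A fibration category provides no such lift: weak equivalences do not have the left lifting property against fibrations. Taking $c' = c_0$ does not help (then $c' \to c_1$ need not be a fibration), and ``routing through a homotopy pullback'' does not help either, because $q$ of the lifted factorization is required to equal the \emph{given} one on the nose, so $d'$ cannot be deformed. The same difficulty reappears from the other side if you instead fatten the shape to a fraction $\{0 \leftarrow 01 \rightarrow 1\}$: then evaluation at $1$ becomes a cosieve and hence a fibration by \Cref{tribe-cosieve-fib}, but the diagonal $c \mapsto (c, Fc, Fc, \Delta)$ no longer lands in the Reedy fibrant subcategory since $\Delta \from Fc \to Fc \times Fc$ is not a fibration. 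These two failures are two faces of the same obstacle — the absence of a strict, functorial way to resolve the diagonal — which is exactly what Szumi\l{}o's modified Reedy structure circumvents, and exactly what fails for tribes (hence the semisimplicial detour of this paper). Your sketch acknowledges the delicacy but does not supply the mechanism that makes the factorization go through; without it, \Cref{fibcat-factorization} is unproven.
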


The key difficulty lies in the construction of path objects and
it is addressed in \cite{s1}*{Thm.\ 2.8} by
a modified version of the Reedy structure.
This modification is not available in the setting of tribes where
for a tribe $\cat{T}$ it is difficult to ensure that
both the path object $P \cat{T}$ is a tribe and
a homomorphism $\cat{T} \to P \cat{T}$ exists.

For semisimplicial tribes, we can use the standard Reedy structure to
construct $P \cat{T}$ and the homomorphism $\cat{T} \to P \cat{T}$ can
be defined using cotensors.

We proceed to define fibrations of semisimplicial tribes.
Since the definition does not depend on the enrichment,
we first give it for ordinary tribes.

\begin{definition}\label{tribe-fib}
  A homomorphism $P \from \cat{S} \to \cat{T}$ between tribes
  is a \emph{fibration} if it is a fibration of underlying fibration categories
  and satisfies the following properties.
  \begin{enumerate}
  \setcounter{enumi}{3}
  \item It has the \emph{lifting property for AF-factorizations}:
    for any morphism $f \from x \to y$ of $\cat{S}$ and a factorization
    \begin{tikzeq*}
    \matrix[diagram]
    {
      |(A)| P x &          & |(B)| P y \\
                & |(X)| z' &           \\
    };

    \draw[->]  (A) to node[above]       {$P f$} (B);
    \draw[fib] (X) to node[below right] {$p'$}  (B);

    \draw[cof] (A) to node[below left] {$i'$} node[above right] {$\we$} (X);
    \end{tikzeq*}
    there exists a factorization
    \begin{tikzeq*}
    \matrix[diagram]
    {
      |(A)| x &         & |(B)| y \\
              & |(C)| z &         \\
    };

    \draw[->]  (A) to node[above]       {$f$} (B);
    \draw[fib] (C) to node[below right] {$p$} (B);

    \draw[cof] (A) to node[below left] {$i$} node[above right] {$\we$} (C);
    \end{tikzeq*}
    \st{} $P i = i'$ and $P p = p'$ (in particular, $P z = z'$).
  \item It has the \emph{lifting property for lifts}: for a square
    \begin{tikzeq*}
    \matrix[diagram]
    {
      |(A)| a & |(B)| x \\
      |(C)| b & |(D)| y \\
    };

    \draw[->]  (A) to (B);
    \draw[fib] (B) to (D);
    \draw[->]  (C) to (D);

    \draw[cof] (A) to node[left] {$\we$} (C);
    \end{tikzeq*}
    in $\cat{S}$ and a lift for its image
    \begin{tikzeq*}
    \matrix[diagram,column sep={5em,between origins}]
    {
      |(A)| P a & |(B)| P x \\
      |(C)| P b & |(D)| P y \\
    };

    \draw[->]  (A) to (B);
    \draw[fib] (B) to (D);
    \draw[->]  (C) to (D);

    \draw[->,dashed] (C) to node[above left] {$f'$} (B);

    \draw[cof] (A) to node[left] {$\we$} (C);
    \end{tikzeq*}
    in $\cat{T}$,
    there exists a lift $f$ in the original square \st{} $P f = f'$.
  \item It has the \emph{lifting property for cofibrancy lifts}:
    for any acyclic fibration $p \from x \afto y$,
    a morphism $f \from a \to y$ in $\cat{S}$ and a lift
    \begin{tikzeq*}
    \matrix[diagram,column sep={5em,between origins}]
    {
                & |(x)| P x          \\
      |(a)| P a & |(y)| P y \text{,} \\
    };

    \draw[fib] (x) to node[left] {$\we$} node[right] {$P p$} (y);

    \draw[->] (a) to node[below] {$P f$} (y);

    \draw[->,dashed] (a) to node[above left] {$g'$} (x);
    \end{tikzeq*}
    there is a morphism $g \from a \to x$ \st{} $P g = g'$ and $p g = f$.
  \end{enumerate}
\end{definition}

This definition is similar to Joyal's definition of a \emph{meta-fibration},
i.e., a homomorphism of tribes satisfying conditions (1), (4) and (5) above
as well as the \emph{lifting property for sections of acyclic fibrations}:
for any acyclic fibration $p$ in $\cat{S}$ and a section $s$ of $P p$,
there is a section $s'$ of $p$ \st{} $P s' = s$.
The latter can be shown to be equivalent to (6).
Our definition also includes conditions (2) and (3) since
we need the forgetful functor $\sTrb \to \sFibCat$ to preserve fibrations.

\begin{definition}
  \leavevmode
  \begin{enumerate}
  \item An exact functor of semisimplicial fibration categories is
    a \emph{fibration} if it is a fibration of
    their underlying fibration categories.
  \item A homomorphism of semisimplicial tribes is a \emph{fibration} if it is
    a fibration of their underlying tribes.
  \end{enumerate}
\end{definition}

Recall that a functor $I \to J$ of small categories is a \emph{cosieve} if
it is injective on objects, fully faithful and
if $i \to j$ is a morphism of $J$ \st{} $i \in I$, then $j \in I$.
A cosieve between homotopical categories is additionally required to
preserve and reflect weak equivalences.
The following lemma gives a basic technique of constructing fibrations,
using cosieves.

\begin{lemma}\label{tribe-cosieve-fib}
  Let $I \to J$ be a cosieve between homotopical inverse categories.
  \begin{enumerate}
  \item If $\cat{C}$ is a semisimplicial fibration category, then
    the induced functor $\cat{C}^J_\Reedy \to \cat{C}^I_\Reedy$ is a fibration.
  \item If $\cat{T}$ is a semisimplicial tribe, then
    the induced functor $\cat{T}^J_\Reedy \to \cat{T}^I_\Reedy$ is a fibration.
  \end{enumerate}
\end{lemma}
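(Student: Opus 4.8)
The plan is to verify the six lifting properties from Definitions \ref{fibcat-fib} and \ref{tribe-fib} directly, using the standard technique: since $I \to J$ is a cosieve, the objects of $J$ not in $I$ can be listed by decreasing degree, and any diagram-theoretic construction (factorization, lift, section) that has been fixed on the $I$-part can be extended to the $J$-part one object at a time, with each step a matching-object construction in $\cat{C}$ (resp. $\cat{T}$). Let me think about the key points.

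For property (1), that $\cat{C}^J_\Reedy \to \cat{C}^I_\Reedy$ is an isofibration: given a Reedy fibrant $X \colon J \to \cat{C}$ and an isomorphism $X|_I \to Y'$ in $\cat{C}^I_\Reedy$, transport the values $X_j$ for $j \notin I$ unchanged and use the isomorphism to re-express the matching morphisms; this is routine. For the WF-factorization lifting property (2) and the AF-factorization property (4), suppose $f \colon X \to Y$ in $\cat{C}^J_\Reedy$ and a factorization of $f|_I$ through some $Z'$ is given. I would build $Z \colon J \to \cat{C}$ by induction on degree: on $I$ it is $Z'$; for $j \notin I$ of degree $n$, the matching object $M_j Z$ is already determined (all objects of $\bd(j \slice J)$ have strictly smaller degree, and some may lie in $I$), and I factor the canonical map $X_j \to M_j Z \pull_{M_j Y} Y_j$ as a weak equivalence (resp. anodyne morphism) followed by a fibration using \ref{fibcat-factorization} (resp. \ref{tribe-factorization}). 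This simultaneously produces $Z_j$, the Reedy fibrancy of $Z$, the Reedy fibration $Z \to Y$, and the levelwise weak equivalence (resp. anodyne) $X \to Z$; by construction the restriction to $I$ is the given data. The pseudofactorization property (3) is handled the same way — factor $X_j \to M_j X \pull_{\text{appropriate matching pullbacks}} (\text{the span data})$ — I would just unwind the Reedy formulation of the span $(i', s', u')$ and do the same inductive matching-object factorization.

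For the lifting property for lifts (5) and for cofibrancy lifts (6): given a commutative square in $\cat{T}^J_\Reedy$ with left leg a (levelwise) anodyne morphism $A \acto B$ and right leg a Reedy fibration $X \fto Y$, together with a lift of the underlying $I$-diagram square in $\cat{T}^I_\Reedy$, I would extend the lift to $J$ by induction on degree: for $j \notin I$ the lift $B_j \to X_j$ must be compatible with the already-constructed lift on $\bd(j \slice J)$ and with the square, which amounts to solving a lifting problem of the anodyne morphism $A_j \to B_j$ (anodyne because $A \to B$ is levelwise anodyne, by the description of anodyne morphisms in \ref{tribe-Reedy}) against the Reedy-fibration matching morphism $X_j \to M_j X \pull_{M_j Y} Y_j$; such a lift exists by definition of anodyne. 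Property (6) is identical with the acyclic-fibration matching morphism in place, using that an object of $\cat{T}^J_\Reedy$ is cofibrant (every object is, by \ref{tribe-cofibrant}) so the relevant lift exists levelwise and assembles.

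The main obstacle is bookkeeping rather than any genuine difficulty: one must check at each inductive step that the matching object $M_j$ computed for the newly built diagram agrees with the one used in the lifting problem, and that the cosieve condition guarantees $\bd(j \slice J) \to J$ never leaves $I$ "upward" — i.e. that the data already fixed on $I$ is exactly the matching data one needs, so the inductively constructed extension restricts correctly. Since $I \to J$ is a cosieve, every morphism out of an object of $I$ stays in $I$, so for $j \in I$ the matching category $\bd(j \slice J)$ lies in $I$, and the $I$-data is genuinely a sub-Reedy-diagram; this is precisely what makes the induction coherent. Once this is set up, each of the six properties follows by the same one-object-at-a-time argument, so I would prove the two parts in parallel, noting that (1)–(3) live in the underlying fibration category (hence handle both cases at once) and (4)–(6) are the tribe-specific additions verified via the levelwise characterization of anodyne morphisms from \ref{tribe-Reedy}.
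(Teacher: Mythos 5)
Your approach is correct and matches the paper's: both parts are established by extending the given $I$-data to $J$ one object at a time by decreasing degree, solving at each stage a factorization or lifting problem against the relative matching morphism $x_j \to M_j x \pull_{M_j y} y_j$, with the cosieve hypothesis ensuring that the matching data for objects not in $I$ is coherently determined by the already-constructed lower-degree data and the given $I$-restriction. The only difference is cosmetic: the paper dispatches part (1) by citing an earlier result ({\cite{s1}}, Lemma~1.10) rather than reproving the three fibration-category lifting properties inductively, and then verifies only the three tribe-specific conditions of Definition~\ref{tribe-fib} exactly as you describe (proving (4) and (5) explicitly, remarking that (6) is analogous).
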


\begin{proof}
  Part (1) follows directly from \cite{s1}*{Lemma 1.10}.

  In particular, the functor in part (2) is
  a fibration of underlying fibration categories.
  We check the remaining conditions of \Cref{tribe-fib}.

  Let $x \to y$ be a morphism in $\cat{T}^J_\Reedy$ and
  let $x|I \acto x' \fto y|I$ be a factorization of its restriction to $I$.
  By induction, it suffices to extend it to the subcategory generated by $I$
  and an object $j \in J$ of minimal degree among those not in $I$.
  The partial factorization above induces
  a morphism $x_j \to M_j x' \pull_{M_j y} y_j$ which
  we factor as an anodyne morphism $x_j \acto x'_j$ followed by
  a fibration $x'_j \fto M_j x' \pull_{M_j y} y_j$.
  This proves the lifting property for AF-factorizations.

  Let
  \begin{tikzeq*}
  \matrix[diagram]
  {
    |(a)| a & |(x)| x \\
    |(b)| b & |(y)| y \\
  };

  \draw[cof] (a) to node[left] {$\we$} (b);

  \draw[fib] (x) to (y);
  \draw[->]  (a) to (x);
  \draw[->]  (b) to (y);
  \end{tikzeq*}
  be a lifting problem in $\cat{T}^J_\Reedy$ and
  $b|I \to x|I$ a solution of its restriction to $I$.
  Again, it is enough to extend it to the subcategory generated by $I$
  and an object $j \in J$ of minimal degree among those not in $I$.
  This extension can be chosen as a solution in the following lifting problem:
  \begin{tikzeq*}
  \matrix[diagram,column sep={6em,between origins}]
  {
    |(a)| a_j & |(x)| x_j                              \\
    |(b)| b_j & |(y)| M_j x \pull_{M_j y} y_j \text{.} \\
  };

  \draw[cof] (a) to node[left] {$\we$} (b);

  \draw[fib] (x) to (y);
  \draw[->]  (a) to (x);
  \draw[->]  (b) to (y);
  \end{tikzeq*}
  This proves the lifting property for lifts.

  The verification of the lifting property for cofibrancy lifts is analogous.
\end{proof}

In the next two lemmas, we construct path objects and pullbacks along fibrations
in the categories $\sFibCat$ and $\sTrb$.

For a semisimplicial fibration category $\cat{C}$ let $P \cat{C}$ denote
$\cat{C}^{{\Sd \hat{[1]}}^\op}_\Reedy$, where $\Sd \hat{[1]}$ is
the homotopical poset $\{ 0 \weto 01 \weot 1 \}$.
It comes with a functor $\cat{C} \to P \cat{C}$ that maps $x$ to
the diagram $x \leftarrow x^{\ssimp{1}} \to x$
(which is semisimplicial exact by (SF)) and
a functor  $P \cat{C} \to \cat{C} \times \cat{C}$ that evaluates at $0$ and $1$.
For a semisimplicial tribe $\cat{T}$, we define
an analogous factorization $\cat{T} \to P \cat{T} \to \cat{T} \times \cat{T}$
in $\sTrb$
(where $\cat{T} \to P \cat{T}$ is a semisimplicial homomorphism
by (SF) and (ST)).

\begin{lemma}\label{tribe-path}
  \leavevmode
  \begin{enumerate}
  \item The object $P \cat{C}$ with
    the factorization $\cat{C} \to P \cat{C} \to \cat{C} \times \cat{C}$ is
    a path object for $\cat{C}$ in $\sFibCat$.
  \item The object $P \cat{T}$ with
    the factorization $\cat{T} \to P \cat{T} \to \cat{T} \times \cat{T}$ is
    a path object for $\cat{T}$ in $\sTrb$.
  \end{enumerate}
\end{lemma}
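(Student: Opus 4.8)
First I would observe that $P\cat{C} = \cat{C}^{{\Sd\hat{[1]}}^\op}_\Reedy$ is a semisimplicial fibration category by \Cref{semisimplicial-tribe-Reedy} (since ${\Sd\hat{[1]}}^\op$, the opposite of the homotopical poset $\{0 \weto 01 \weot 1\}$, is a homotopical inverse category), and that the two functors in the factorization $\cat{C} \to P\cat{C} \to \cat{C} \times \cat{C}$ are semisimplicial exact, the first by (SF) as already noted above and the second because it is an evaluation functor and cotensors in $P\cat{C}$ are computed levelwise. Three points then remain: that the composite is the diagonal of $\cat{C}$, that $P\cat{C} \to \cat{C} \times \cat{C}$ is a fibration of semisimplicial fibration categories, and that $\cat{C} \to P\cat{C}$ is a weak equivalence. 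The first is immediate: $\cat{C} \to P\cat{C}$ carries $x$ to the diagram $x \weot x^{\ssimp{1}} \weto x$ whose two structure maps are the vertex inclusions, so evaluating at $0$ and at $1$ returns $(x, x)$. For the second I would note that $P\cat{C} \to \cat{C} \times \cat{C}$ is restriction along $\{0, 1\} \ito {\Sd\hat{[1]}}^\op$, which is a cosieve between homotopical inverse categories, so \Cref{tribe-cosieve-fib} applies verbatim (and similarly with $\sTrb$ in place of $\sFibCat$).

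The real content is that $\cat{C} \to P\cat{C}$ is a weak equivalence. The plan is to reduce this to a statement about a single evaluation functor: composing $\cat{C} \to P\cat{C}$ with evaluation at the object $0$ — again semisimplicial exact — yields the identity of $\cat{C}$, so it suffices to show that $\ev_0 \from P\cat{C} \to \cat{C}$ is a weak equivalence. Here I would use that $\{0\} \ito {\Sd\hat{[1]}}^\op$ is a cosieve (so that restriction behaves well) which is moreover a homotopy equivalence of homotopical categories: its retraction ${\Sd\hat{[1]}}^\op \to \{0\}$ is a homotopy inverse, the requisite zig-zag of natural weak equivalences $\id \Leftarrow \mathrm{const}_{01} \Rightarrow \mathrm{const}_0$ being supplied by the unique maps out of the initial object $01$. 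Restriction along a cosieve which is a homotopy equivalence of homotopical inverse categories is a weak equivalence of fibration categories by the standard theory of Reedy fibrant diagrams (cf.\ \cite{s1}); alternatively, this is precisely the semisimplicial counterpart — now for the \emph{standard} Reedy structure — of the fact that $P\cat{C}$ is a path object for $\cat{C}$ in $\FibCat$ established for a modified Reedy structure in \cite{s1}*{Thm.\ 2.8}, and it can also be checked by hand by producing, naturally in a diagram $(X_0 \weot X_{01} \weto X_1)$, a zig-zag of levelwise weak equivalences to $(X_0 \weot X_0^{\ssimp{1}} \weto X_0)$ built from the cotensor $X_{01}^{\ssimp{1}}$.

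For part (2) I expect the tribe case to add nothing essential. By \Cref{tribe-Reedy} weak equivalences (and anodyne morphisms) in a Reedy tribe are levelwise, so the underlying fibration category of $P\cat{T}$ is exactly the path object $P\cat{C}$ of the underlying fibration category $\cat{C}$ of $\cat{T}$; since fibrations and weak equivalences in $\sTrb$ are tested on underlying semisimplicial fibration categories, part (2) follows from part (1) once one checks that $\cat{T} \to P\cat{T} \to \cat{T} \times \cat{T}$ consists of semisimplicial homomorphisms, which it does by (SF) and (ST). The main obstacle is the weak-equivalence claim in part (1): the diagonal identity and the fibration property are formal consequences of the cosieve machinery of \Cref{tribe-cosieve-fib} and of the enrichments of Reedy diagram categories already in hand, whereas showing that $\ev_0 \from P\cat{C} \to \cat{C}$ is a weak equivalence requires genuine input — either the comparison results for Reedy diagrams along homotopy equivalences of index categories, or the direct construction of the natural zig-zag sketched above.
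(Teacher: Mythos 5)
Your proposal is correct and follows essentially the same route as the paper's own proof: invoke \Cref{semisimplicial-tribe-Reedy} to get the semisimplicial structure on $P\cat{C}$, invoke \Cref{tribe-cosieve-fib} for the fibration $P\cat{C}\to\cat{C}\times\cat{C}$, reduce the weak-equivalence claim for $\cat{C}\to P\cat{C}$ to the retraction $\ev_0$, and then appeal to the fact that $\ev_0$ is induced by a homotopy equivalence of homotopical inverse categories (the paper cites \cite{s1}*{Lem.\ 1.8(3)} at exactly this step, while you unpack it via the explicit zig-zag through the initial object $01$). The only cosmetic difference is that the paper spells out part (2) and says part (1) is analogous, whereas you do the reverse.
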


\begin{proof}
  The proofs of both parts are analogous so we only prove part (2).
  $P \cat{T}$ is a semisimplicial tribe by \Cref{semisimplicial-tribe-Reedy}.
  The evaluation $P \cat{T} \to \cat{T} \times \cat{T}$ is a fibration by
  \Cref{tribe-cosieve-fib}.
  The functor $\cat{T} \to P \cat{T}$ has a retraction given by
  evaluation at $0$ and thus it suffices to check that this evaluation is
  a weak equivalence.
  It is induced by a homotopy equivalence $[0] \to \Sd \hat{[1]}$ and
  hence it is a weak equivalence by \cite{s1}*{Lem.\ 1.8(3)}.
\end{proof}

\begin{lemma}\label{pullback-of-tribes}
  \leavevmode
  \begin{enumerate}
  \item Pullbacks along fibrations exist in $\sFibCat$.
  \item Pullbacks along fibrations exist in $\sTrb$.
  \end{enumerate}
\end{lemma}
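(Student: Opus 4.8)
The plan is to realize the pullback as the strict pullback $\cat{P} = \cat{C} \pull_{\cat{D}} \cat{E}$ of underlying categories, equipped with the componentwise structure: a morphism of $\cat{P}$ is a fibration (and, in the fibration-category case, a weak equivalence) precisely when both of its projections are, the hom-semisimplicial set of $(c, e)$ and $(c', e')$ is $\cat{C}(c, c') \pull_{\cat{D}(Fc, Fc')} \cat{E}(e, e')$, and cotensors by finite semisimplicial sets are formed componentwise (legitimate since $F$ and $P$ preserve them). Here $P \colon \cat{E} \to \cat{D}$ is the given fibration and $F \colon \cat{C} \to \cat{D}$ an arbitrary exact functor (resp.\ homomorphism). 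On underlying fibration categories, $\cat{P}$ with this structure is the pullback along a fibration in the fibration category of \Cref{fibcat-of-fibcats-old} (see \cite{s1}); in particular it is a fibration category and the projections are exact. So the new work lies in the semisimplicial enrichment together with (SF) and, in the tribe case, the tribe axioms and (ST). Once this is done, the projections are semisimplicial exact (resp.\ homomorphisms) by construction and, since all the structure on $\cat{P}$ is componentwise, every cone over the cospan factors uniquely through $\cat{P}$; as $\sFibCat$ and $\sTrb$ are ordinary categories, this exhibits $\cat{P}$ as the pullback.

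Axioms \Cref{fibcat-terminal}/\Cref{tribe-terminal} and \Cref{fibcat-pullback}/\Cref{tribe-pullback} hold componentwise, using that $F$ preserves the terminal object and pullbacks along fibrations so that the componentwise limits remain in $\cat{P}$, and $2$-out-of-$6$ is componentwise. For the factorization axioms one exploits the lifting properties built into \Cref{fibcat-fib} and \Cref{tribe-fib}: given $f = (f_{\cat{C}}, f_{\cat{E}})$, factor $f_{\cat{C}}$ in $\cat{C}$ as a weak equivalence (resp.\ anodyne morphism) followed by a fibration, apply $F$ to obtain such a factorization of $P f_{\cat{E}}$ in $\cat{D}$, and lift it through $P$ by the lifting property for WF-factorizations (resp.\ AF-factorizations); the two factorizations agree over $\cat{D}$ and assemble into one in $\cat{P}$ of the required kind. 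In the tribe case one must still identify the anodyne morphisms of $\cat{P}$ with the componentwise anodyne ones: the latter have the left lifting property against fibrations of $\cat{P}$ by solving each lifting problem first in $\cat{C}$ and then transporting the solution to $\cat{E}$ via the lifting property for lifts, while the converse follows from the factorization just constructed together with the closure of anodyne morphisms under retracts. Then \Cref{tribe-anodyne} and (ST) are componentwise.

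Finally, (SF) requires that a componentwise acyclic fibration $p$ induce an acyclic pullback-cotensor morphism; in the fibration-category case this is immediate from the componentwise definition of weak equivalence. In the tribe case the projections, being homomorphisms and hence exact, preserve homotopy equivalences, so it is enough to prove conversely that a componentwise weak equivalence is a weak equivalence in $\cat{P}$; factoring such a morphism as a componentwise anodyne morphism followed by a componentwise fibration and applying $2$-out-of-$3$ reduces this to showing that a componentwise acyclic fibration $p \colon x \fto y$ is a homotopy equivalence in $\cat{P}$. For that, the lifting property for cofibrancy lifts (equivalently, the lifting property for sections of acyclic fibrations) lifts a section of $p$ in $\cat{C}$ to a section $s$ of $p$ in $\cat{P}$, and the standard path object $P y \pull_{y \times y} (x \times x)$ — with $P y$ a path object chosen in $\cat{P}$ — carries a homotopy $s p \we \id_x$; being defined by a formula in $\cat{P}$, this homotopy is automatically a morphism of $\cat{P}$. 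The main obstacle, and the reason \Cref{fibcat-fib} and \Cref{tribe-fib} are stated with so many clauses, is exactly this transport of factorizations, lifts, and sections along $P$ that makes the componentwise constructions coherent over $\cat{D}$.
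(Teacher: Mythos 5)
Your proof takes the same route as the paper: form the strict pullback of semisimplicial categories, declare a morphism of $\cat{P}$ to be a fibration (and, in the $\sFibCat$ case, a weak equivalence) when both projections are, observe that this is already a pullback of underlying fibration categories by \cite{s1}*{Prop.\ 2.4}, construct componentwise cotensors by forming them in one component and lifting through $P$ via the isofibration property, and then handle the tribe axioms by transporting factorizations and lifts through $P$ via the clauses of \Cref{tribe-fib}, identifying anodyne morphisms with componentwise anodyne ones by the retract argument of \Cref{tribe-Reedy}.

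The one place your argument is substantively looser than the paper's is the last step: showing that weak equivalences in the tribe $\cat{P}$ coincide with componentwise ones (needed to import (SF) and (ST) from the components). You correctly reduce to showing that a componentwise acyclic fibration $p \from x \fto y$ is a homotopy equivalence in $\cat{P}$, obtain a section $s$ via the cofibrancy lift, and then assert that the homotopy $sp \we \id_x$ through $Py \pull_{y\times y}(x\times x)$, ``being defined by a formula in $\cat{P}$, is automatically a morphism of $\cat{P}$.'' That is not accurate: the homotopy is not a formula but the solution to a lifting problem, and the fact that a solution found in the components can be assembled into a morphism of $\cat{P}$ is precisely not automatic --- it is what the lifting property for lifts (clause (5) of \Cref{tribe-fib}) is for, and you do not invoke it here. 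The paper avoids this entirely by introducing the auxiliary fibration category $\cat{P}_{\mathrm{lvl}}$ (same underlying category and fibrations, levelwise weak equivalences), proving all its objects are cofibrant via the cofibrancy lift together with \Cref{tribe-cofibrant}, noting that path objects in $\cat{P}$ and $\cat{P}_{\mathrm{lvl}}$ agree, and then applying \Cref{we-he} as in the proof of \Cref{tribe-Reedy}. Your sketch can be repaired by solving the homotopy lifting problem in one component and transporting the solution to the other via the lifting property for lifts, but as written there is a gap at this point.
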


\begin{proof}
  For part (1), let $F \from \cat{C} \to \cat{E}$ and
  $P \from \cat{D} \to \cat{E}$ be exact functors between
  semisimplicial fibration categories with $P$ a fibration.
  Form a pullback of $P$ along $F$ in the category of semisimplicial categories.
  \begin{tikzeq*}
    \matrix[diagram]
    {
      |(P)| \cat{P} & |(D)| \cat{D} \\
      |(C)| \cat{C} & |(E)| \cat{E} \\
    };

    \draw[->] (P) to node[above] {$G$} (D);
    \draw[->] (P) to node[left]  {$Q$} (C);

    \draw[->]  (C) to node[below] {$F$} (E);
    \draw[fib] (D) to node[right] {$P$} (E);
  \end{tikzeq*}
  Define a morphism $f$ of $\cat{P}$ to be a weak equivalence (a fibration) if
  both $G f$ and $Q f$ are weak equivalences (fibrations).
  By \cite{s1}*{Prop.\ 2.4}, $\cat{P}$ is
  a pullback of the underlying fibration categories.
  It remains to verify that it is a semisimplicial fibration category.
  
  Let $(x, y)$ be an object of $\cat{P}$ and $K$ a finite semisimplicial set.
  We form a cotensor $x^K$ in $\cat{C}$ and lift its image $F(x^K)$ in $\cat{E}$
  to a cotensor $y^K$ in $\cat{D}$ using the fact that $P$ is an isofibration.
  Then $(x^K, y^K)$ is a cotensor of $(x, y)$ by $K$ in $\cat{P}$.
  The pullback-cotensor property (SF) is satisfied in $\cat{P}$ since
  it is satisfied in both $\cat{C}$ and $\cat{D}$.

  The proof of part (2) is very similar.
  However, there are a few differences, so we spell it out.

  Let $F \from \cat{R} \to \cat{T}$ and $P \from \cat{S} \to \cat{T}$
  be homomorphisms between semisimplicial tribes with $P$ a fibration.
  Form a pullback of $P$ along $F$ in the category of semisimplicial categories.
  \begin{tikzeq*}
    \matrix[diagram]
    {
      |(P)| \cat{P} & |(S)| \cat{S} \\
      |(R)| \cat{R} & |(T)| \cat{T} \\
    };

    \draw[->] (P) to node[above] {$G$} (S);
    \draw[->] (P) to node[left]  {$Q$} (R);

    \draw[->]  (R) to node[below] {$F$} (T);
    \draw[fib] (S) to node[right] {$P$} (T);
  \end{tikzeq*}
  Define a morphism $f$ of $\cat{P}$ to be a fibration if
  both $G f$ and $Q f$ are fibrations.

  First, let $1_\cat{R}$ be a terminal object of $\cat{R}$.
  Since $P$ is an isofibration
  there is a terminal object $1_\cat{S}$ of $\cat{S}$
  \st{} $P 1_\cat{S} = F 1_\cat{R}$.
  Then $(1_\cat{R}, 1_\cat{S})$ is a terminal object of $\cat{P}$.
  Moreover, all objects are fibrant since all objects of $\cat{R}$ and $\cat{S}$
  are fibrant.
  This proves \Cref{tribe-terminal}.

  Similarly, to construct a pullback in $\cat{P}$,
  we first construct it in $\cat{R}$ and
  then lift its image from $\cat{T}$ to $\cat{S}$.
  Fibrations are stable under pullback since
  they are stable in both $\cat{R}$ and $\cat{S}$.
  This proves \Cref{tribe-pullback}.

  To factor a morphism as a levelwise anodyne morphism
  (i.e., one whose components in $\cat{R}$ and $\cat{S}$ are anodyne)
  followed by a fibration in $\cat{P}$, we first factor it in $\cat{R}$ and
  then lift the image of this factorization from $\cat{T}$ to $\cat{S}$.
  A retract argument as in the proof of \Cref{tribe-Reedy} shows that
  every anodyne morphism is levelwise anodyne.

  Conversely, to solve a lifting problem between
  a levelwise anodyne morphism and a fibration, we first solve it in $\cat{R}$
  and then lift the image of this solution from $\cat{T}$ to $\cat{S}$.
  It follows that every morphism factors as
  an anodyne morphism followed by a fibration,
  proving \Cref{tribe-factorization}.

  Levelwise anodyne morphisms are stable under pullbacks along fibrations
  and thus so are the anodyne morphisms.
  This proves \Cref{tribe-anodyne}.

  Before we can conclude the proof, we need to verify that
  a morphism $f$ in $\cat{P}$ is a weak equivalence if and only if
  $Q f$ an $G f$ are weak equivalences in $\cat{R}$ and $\cat{S}$, respectively.
  To this end, we consider the fibration category $\cat{P}_{\mathrm{lvl}}$
  arising as the pullback of the underlying fibration categories of
  $\cat{R}$, $\cat{S}$ and $\cat{T}$.
  Note that $\cat{P}$ and $\cat{P}_{\mathrm{lvl}}$ have
  the same underlying category and the same fibrations while
  weak equivalences in $\cat{P}_{\mathrm{lvl}}$ are levelwise.
  Moreover, every object of $\cat{P}_{\mathrm{lvl}}$ is cofibrant.
  Indeed, to find a lift against an acyclic fibration $p$
  in $\cat{P}_{\mathrm{lvl}}$, using \Cref{tribe-cofibrant},
  we first pick a lift against $Q p$ in $\cat{R}$
  and then lift its image in $\cat{T}$ to a lift against $G p$ in $\cat{S}$.
  A path object in $\cat{P}$ is also a path object in $\cat{P}_{\mathrm{lvl}}$
  since anodyne morphisms in $\cat{P}$ are levelwise.
  The conclusion follows by
  the same argument as in the proof of \Cref{tribe-Reedy}.

  Finally, since weak equivalences in $\cat{P}$ are levelwise,
  the proof that it is a semisimplicial tribe is the same as
  in the case of fibration categories above.
\end{proof}

In the construction of the fibration categories $\sFibCat$ and $\sTrb$,
we will need the following characterization of acyclic fibrations.

\begin{lemma}\label{tribe-acyclic-fib}
  \leavevmode
  \begin{enumerate}
  \item An exact functor $P \from \cat{C} \to \cat{D}$ of
    semisimplicial fibrations categories is an acyclic fibration \iff{}
    it is a fibration, satisfies (App1) and
    for every fibration $q \from x' \fto P y$ in $\cat{D}$,
    there is a fibration $p \from x \fto y$ \st{} $P p = q$.
  \item A homomorphism $P \from \cat{S} \to \cat{T}$ of semisimplicial tribes is
    an acyclic fibration \iff{} it is a fibration, satisfies (App1) and
    for every fibration $q \from x' \fto P y$ in $\cat{T}$,
    there is a fibration $p \from x \fto y$ \st{} $P p = q$.
  \end{enumerate}
\end{lemma}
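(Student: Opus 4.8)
The plan is to reduce the statement to Cisinski's criterion (\Cref{App}) and to the lifting properties packaged into the definition of a fibration (\Cref{fibcat-fib,tribe-fib}). Since the semisimplicial enrichment plays no role in any of the three conditions, I would work throughout with the underlying exact functor of fibration categories; an acyclic fibration is then a functor that is both a fibration and a weak equivalence, and by \Cref{App} the latter is equivalent to satisfying (App1) and (App2). Thus, assuming throughout that $P$ is a fibration, the task is to show that, in the presence of (App1), the condition (App2) is equivalent to the stated lifting property for fibrations. I would prove parts (1) and (2) in parallel, writing $P \from \cat{C} \to \cat{D}$, and indicate the (single) place where the tribe case needs an extra argument.

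For the implication from the lifting property to (App2): given $b \in \cat{C}$, $x \in \cat{D}$ and a morphism $g \from x \to Pb$, I would factor $g$ as a weak equivalence followed by a fibration $\bar x \fto Pb$ (in case (2), as an anodyne morphism $x \acto \bar x$ followed by a fibration, which is again a weak equivalence by \Cref{anodyne-he}), use the hypothesis to lift $\bar x \fto Pb$ to a fibration $p \from a \fto b$ in $\cat{C}$ with $Pa = \bar x$, and then exhibit the required approximation square with $x$ itself as the top-left corner, the identity on the left, the weak equivalence $x \to \bar x = Pa$ on the right, and $Pp$ along the bottom; it commutes by construction. Hence (App2) holds, so $P$ is a weak equivalence and therefore an acyclic fibration.

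For the converse: assuming $P$ is a fibration and a weak equivalence, hence satisfies (App1) and (App2), I would take a fibration $q \from x' \fto Py$, apply (App2) to $y$, $x'$ and $q$ to obtain a morphism $h \from a \to y$ in $\cat{C}$ together with an object $x_0 \in \cat{D}$ and weak equivalences $x_0 \weto x'$, $x_0 \weto Pa$ filling the expected square, and then factor $h$ as $a \weto \bar a \fto y$ to replace $h$ by a fibration $\bar h \from \bar a \fto y$ and the data by a span of weak equivalences $x' \weot x_0 \weto P\bar a$ lying over $Py$ (i.e.\ compatible with $q$ and $P\bar h$). The remaining task is to convert this span into an on-the-nose lift, namely a fibration $p \from x \fto y$ with $Pp = q$; this I would obtain from the lifting property for pseudofactorizations of \Cref{fibcat-fib} applied to $\bar h$, once the span has been brought into the shape that property requires --- a span $P\bar a \weot c' \weto x'$ whose leg to $P\bar a$ is an acyclic fibration and which is compatible with $P\bar h$ and $q$. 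In case (2) I would produce such a $c'$ by factoring $x_0 \weto P\bar a$ as $x_0 \weto x_1 \afto P\bar a$, composing a homotopy inverse of $x_0 \weto x_1$ (which exists because weak equivalences in a tribe are homotopy equivalences, cf.\ \Cref{we-he,tribe-cofibrant}) with $x_0 \weto x'$ to get a weak equivalence $x_1 \to x'$, and then correcting it within its homotopy class --- using that $q$ is a fibration --- so that it becomes strictly compatible with $P\bar h$ and $q$; \Cref{fibcat-fib,tribe-fib} then supply the lift $p$. In case (1), where objects need not be cofibrant, this rectification is essentially the analysis of acyclic fibrations in the fibration category of fibration categories, so I would deduce part (1) from \cite{s1}.

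I expect this last rectification to be the crux. The approximation property (App2) only records a weak-equivalence-level relationship between $x'$ and the image of an object of $\cat{C}$ --- in fact merely a span of weak equivalences over $Py$ --- whereas the lifting properties built into \Cref{fibcat-fib,tribe-fib} take inputs in which one leg is already an acyclic fibration. Bridging this gap is exactly where homotopy invertibility of weak equivalences (hence cofibrancy of all objects) is used in the tribe case, and where one must lean on the constructions of \cite{s1} in the fibration-category case; making the bookkeeping of the matching conditions $Pi = i'$, $Ps = s'$, $Pu = u'$ come out correctly is the part that will require the most care.
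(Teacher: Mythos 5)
The paper disposes of this lemma with a one-line citation to \cite{s1}*{Prop.\ 2.5}, so any self-contained argument is already going further than the text; your reduction to \Cref{App} and the lifting properties of \Cref{fibcat-fib,tribe-fib} is the right strategy, and your derivation of (App2) from the fibration-lifting property (factor $x \to Pb$, lift the fibration part, take $x' = x$) is correct.

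The converse direction is where your plan gets into trouble. You correctly reduce it to producing, from the span $x' \weot x_0 \weto P\bar a$ over $Py$ supplied by (App2), a pseudofactorization input $P\bar a \afot c' \weto x'$ over $Py$ with the left leg an acyclic \emph{fibration}. But the route you propose --- factor $x_0 \to P\bar a$ as $x_0 \acto x_1 \afto P\bar a$, take a retraction of the anodyne leg, compose with $x_0 \to x'$, and then ``correct within the homotopy class'' using that $q$ is a fibration --- invokes a covering-homotopy rectification that the paper never establishes for tribes, and that is genuinely nontrivial (you would need to lift a homotopy $q\alpha r \simeq P\bar h\, p$ along the fibration $q$ while keeping all the matching conditions). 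Moreover this rectification would not even carry over to part (1), which is why you end up citing \cite{s1} there anyway, making the construction moot. The gap is avoidable: since both $P\bar h$ and $q$ are fibrations, form the strict pullback $z = P\bar a \pull_{Py} x'$, note the span induces a canonical map $x_0 \to z$, and factor it as $x_0 \weto c' \fto z$. Then $c' \to z \fto P\bar a$ is a fibration (composite of fibrations) and a weak equivalence by 2-out-of-3 (both $x_0 \to c'$ and $x_0 \to P\bar a$ are weak equivalences), so it is the required acyclic fibration; $c' \to z \fto x'$ is a weak equivalence by 2-out-of-3 and is strictly compatible with $q$ and $P\bar h$ by the universal property of $z$. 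This works verbatim in both parts, with no homotopy inverses, no cofibrancy, and no rectification, and hands the data directly to the lifting property for pseudofactorizations, whose output $u \from d \fto y$ with $Pu = q$ is exactly the desired lift.
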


\begin{proof}
  This follows directly from \cite{s1}*{Prop.\ 2.5}.
\end{proof}

\begin{theorem}\label{fibcat-of-tribes}
  \leavevmode
  \begin{enumerate}
  \item The category of semisimplicial fibration categories with
    weak equivalences and fibrations as defined above is a fibration category.
  \item The category of semisimplicial tribes with
    weak equivalences and fibrations as defined above is a fibration category.
  \end{enumerate}
\end{theorem}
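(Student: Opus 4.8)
The plan is to verify the four fibration category axioms \Cref{fibcat-terminal}--\Cref{fibcat-2-out-of-6} for $\sFibCat$ and $\sTrb$, running the two cases in parallel (in part~(1) ``structure functor'' means an exact functor and ``fibration'' is as in \Cref{fibcat-fib}; in part~(2) they mean semisimplicial homomorphism and \Cref{tribe-fib}). By this stage essentially all of the real work has been packaged into the preceding lemmas, so the argument is mostly assembly, following the pattern of \cite{s1}*{Thm.\ 2.8} ($=$ \Cref{fibcat-of-fibcats-old}). First I would record the routine observation, immediate from \Cref{fibcat-fib} and \Cref{tribe-fib}, that these fibrations contain identities and are closed under composition, so they form a subcategory. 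For \Cref{fibcat-terminal}, the terminal object is the trivial semisimplicial fibration category, resp.\ tribe; every object $\cat{C}$ is fibrant because $\cat{C} \to 1$ meets each clause of \Cref{fibcat-fib} (and \Cref{tribe-fib}) for reasons internal to $\cat{C}$: the factorization axioms of $\cat{C}$ provide the WF- and AF-factorizations, pullbacks along fibrations in $\cat{C}$ provide the pseudofactorizations, the defining property of anodyne maps provides the lifts against fibrations, and cofibrancy of every object (\Cref{tribe-cofibrant}) provides the cofibrancy lifts.

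Axiom \Cref{fibcat-pullback} is supplied by \Cref{pullback-of-tribes}, which builds pullbacks along fibrations and, along the way, shows fibrations are stable under them. For the stability of \emph{acyclic} fibrations I would invoke \Cref{tribe-acyclic-fib}: given an acyclic fibration $P \from \cat{D} \to \cat{E}$ and a structure functor $F \from \cat{C} \to \cat{E}$, the projection $\cat{C} \pull_{\cat{E}} \cat{D} \to \cat{C}$ is a fibration by the above; it reflects weak equivalences because $P$ does and weak equivalences of the pullback are tested componentwise; and any fibration into an object of $\cat{C}$ lifts to one in $\cat{C} \pull_{\cat{E}} \cat{D}$ by transporting it through $F$ into $\cat{E}$ and then, via the corresponding property of $P$, into $\cat{D}$. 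So all three conditions of \Cref{tribe-acyclic-fib} hold.

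For \Cref{fibcat-factorization} I would run the mapping path construction. Given $F \from \cat{C} \to \cat{D}$, take the path object $\cat{D} \to P \cat{D} \to \cat{D} \times \cat{D}$ of \Cref{tribe-path}. The product $\cat{D} \times \cat{D}$ is the pullback of $\cat{D} \to 1$ along itself (available by \Cref{fibcat-terminal} and \Cref{fibcat-pullback}), so its projections are fibrations, whence the evaluation $P \cat{D} \to \cat{D} \times \cat{D} \to \cat{D}$ at $0$ is a fibration, and it is a weak equivalence by \Cref{tribe-path}. Pulling it back along $F$ produces $\cat{C} \pull_{\cat{D}} P \cat{D}$, and the canonical functor $\cat{C} \to \cat{C} \pull_{\cat{D}} P \cat{D}$ (the identity paired with $\cat{D} \to P \cat{D}$ precomposed with $F$) is a section of the acyclic fibration $\cat{C} \pull_{\cat{D}} P \cat{D} \to \cat{C}$, hence a weak equivalence. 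On the other hand $\cat{C} \pull_{\cat{D}} P \cat{D} \to \cat{C} \times \cat{D}$, recording the first component and the value of the second at $1$, is a pullback of $P \cat{D} \to \cat{D} \times \cat{D}$ along $F \times \id_{\cat{D}}$, hence a fibration; composing with the fibration $\cat{C} \times \cat{D} \to \cat{D}$ exhibits $\cat{C} \pull_{\cat{D}} P \cat{D} \to \cat{D}$ as a fibration, and this composite is $F$. All the functors here are structure functors by \Cref{tribe-path} and \Cref{pullback-of-tribes}.

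Finally, \Cref{fibcat-2-out-of-6} reduces to the 2-out-of-6 property for equivalences of categories: by definition a structure functor is a weak equivalence exactly when it induces an equivalence of homotopy categories, so the homotopy-category functor both preserves and reflects weak equivalences, and equivalences of categories satisfy 2-out-of-6 because isomorphisms do. The steps demanding the most care are \Cref{fibcat-pullback} and \Cref{fibcat-factorization}, but their substance---building pullbacks and path objects compatibly with the semisimplicial enrichments and, for tribes, with the anodyne morphisms---was already settled in \Cref{pullback-of-tribes} and \Cref{tribe-path}; within the present proof the only genuinely delicate point is verifying, as above, that acyclic fibrations stay acyclic fibrations under pullback.
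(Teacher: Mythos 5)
Your proof is correct and follows the same decomposition the paper uses: (F1) and (F4) are routine, (F2) comes from \Cref{pullback-of-tribes} and \Cref{tribe-acyclic-fib}, and (F3) comes from \Cref{tribe-path} together with Brown's factorization lemma (which the paper cites and you unfold into the explicit mapping-path-object argument). The only difference is level of detail; the substance is identical.
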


\begin{proof}
  The proofs of both parts are parallel.
  Axioms \Cref{fibcat-terminal,fibcat-2-out-of-6} are immediate.
  \Cref{fibcat-pullback} follows from \Cref{pullback-of-tribes,tribe-acyclic-fib}
  while \Cref{fibcat-factorization} follows from \Cref{tribe-path} and
  \cite{br}*{Factorization lemma, p.\ 421}.
\end{proof}

\section{Presheaves over simplicial categories}
\label{presheaves}

In this section we introduce
a tribe of injectively fibrant simplicial presheaves over a simplicial category,
which will be the starting point of constructions of tribes in
\Cref{hammocks,tribes-of-presheaves}.
We begin by recalling
model structures on the categories of simplicial presheaves.

\begin{theorem}[\cite{HTT}*{Prop.\ A.3.3.2}]
  The category of simplicial (enriched) presheaves over
  a small simplicial category carries
  two cofibrantly generated proper model structures:
  \begin{enumerate}
  \item the \emph{injective} model structure where
    weak equivalences are levelwise weak homotopy equivalences and
    cofibrations are monomorphisms;
  \item the \emph{projective} model structure where
    weak equivalences are levelwise \whe{}s and
    fibrations are levelwise Kan fibrations. \qed
  \end{enumerate}
\end{theorem}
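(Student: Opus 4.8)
This is standard, so we only indicate a proof. The plan is to construct the
  two model structures separately, in each case reducing to well-known
  properties of the Kan--Quillen model structure on simplicial sets: it is
  cofibrantly generated, proper, and combinatorial, and its weak equivalences
  are closed under filtered colimits. For the projective structure, I would
  proceed by transfer along the adjunction whose right adjoint sends a
  simplicial presheaf $X$ to the family $\bigl( X(c) \bigr)_{c \in \ob \cat{C}}$
  of its values, viewed in the product of $\ob \cat{C}$ copies of simplicial
  sets. As generating cofibrations, respectively generating acyclic
  cofibrations, one takes the maps
  $\cat{C}(\uvar, c) \otimes (\bd \simp{n} \ito \simp{n})$, respectively
  $\cat{C}(\uvar, c) \otimes (\Lambda^n_k \ito \simp{n})$, where $\otimes$ is
  the tensoring of simplicial presheaves over simplicial sets; by the enriched
  Yoneda lemma a map has the right lifting property against all of these
  precisely when it is a levelwise Kan fibration, respectively a levelwise
  acyclic Kan fibration. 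It then remains to check the hypotheses of the
  standard recognition (transfer) theorem for cofibrantly generated model
  categories: smallness of the domains is automatic since every category in
  sight is locally presentable, and a transfinite composite of pushouts of the
  generating acyclic cofibrations is a levelwise weak equivalence because,
  evaluated at each object, it is a transfinite composite of pushouts of horn
  inclusions, hence an acyclic cofibration of simplicial sets, using left
  properness of the Kan--Quillen structure and closure of its weak
  equivalences under filtered colimits.

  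For the injective structure, the plan is to invoke Jeff Smith's recognition
  theorem for combinatorial model categories (see \cite{HTT}). The category of
  simplicial presheaves over a small simplicial category is locally
  presentable. Take $\cat{W}$ to be the class of levelwise weak homotopy
  equivalences; it satisfies 2-out-of-3, is closed under retracts, and is
  accessible and accessibly embedded in the arrow category, since the weak
  equivalences of simplicial sets form such a class and this is inherited by
  the relevant limits of accessible categories. For the generating
  cofibrations, take a set $I$ of monomorphisms generating all monomorphisms
  under pushout, transfinite composition, and retract; such a set exists in any
  presheaf topos. One then verifies that every $I$-injective map is a levelwise
  acyclic Kan fibration, hence a levelwise weak equivalence, and that $\cat{W}$
  is closed under transfinite composition and under pushout along
  monomorphisms. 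The former is once more closure of the weak equivalences of
  simplicial sets under filtered colimits; the latter is a levelwise instance
  of the gluing lemma, that is, of left properness of the Kan--Quillen
  structure. Smith's theorem then produces a combinatorial, in particular
  cofibrantly generated, model structure with the stated cofibrations and weak
  equivalences.

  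It remains to address properness. Left properness is immediate for the
  injective structure, in which every object is cofibrant because the map out
  of the initial presheaf is always a monomorphism; the projective structure
  then inherits it, having the same weak equivalences and fewer cofibrations.
  For right properness, an injective fibration has the right lifting property
  against all acyclic monomorphisms, in particular against the generating
  acyclic projective cofibrations, so it is a projective fibration; thus in
  both structures fibrations are levelwise Kan fibrations and weak equivalences
  are levelwise, and stability of weak equivalences under pullback along
  fibrations is checked objectwise using right properness of the Kan--Quillen
  structure.

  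The one genuinely substantial point is the verification, for the injective
  structure, that levelwise weak equivalences are closed under pushout along
  monomorphisms and under transfinite composition, together with the
  accessibility bookkeeping needed to legitimately apply Smith's theorem;
  everything else is a transcription of standard facts about simplicial sets.
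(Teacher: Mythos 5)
The paper does not prove this statement: it cites Lurie's \cite{HTT}*{Prop.\ A.3.3.2} and records the result with a \verb|\qed| to mark the proof as omitted. Your sketch follows the same route that Lurie takes in HTT \S A.3.3 (and \S A.2.6--A.3.2): the transfer argument for the projective structure and Jeff Smith's recognition theorem for the injective one, so the overall strategy is right and the argument works.

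Two small imprecisions worth tightening. First, in the projective case you invoke left properness and closure under filtered colimits to show that transfinite composites of pushouts of the generating acyclic cofibrations are levelwise weak equivalences; this is more machinery than needed. Evaluated at an object $d$, the map $\cat{C}(\uvar,c) \otimes (\Lambda^n_k \ito \simp{n})$ becomes $\cat{C}(d,c) \times (\Lambda^n_k \ito \simp{n})$, which is anodyne (product of an anodyne map with any simplicial set), so the whole relative cell complex is levelwise anodyne, hence levelwise an acyclic cofibration; no properness is used. Second, in the Smith's-theorem step the hypothesis one actually needs is that the \emph{acyclic cofibrations} --- i.e.\ $\cat{W}$ intersected with the monomorphisms --- are closed under pushout and transfinite composition, not that all of $\cat{W}$ is. The version you state (all of $\cat{W}$ closed under pushout along monos, via left properness of Kan--Quillen) happens to be stronger and true, so nothing breaks, but it is not literally the hypothesis of the theorem, and conflating the two can lead one astray in other applications. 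Everything else --- the local presentability and accessibility bookkeeping, the observation that injective fibrations are in particular projective fibrations, and the derivation of right properness objectwise --- is correct.
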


The fibrations of these model structures are usually called
\emph{injective fibrations} and \emph{projective fibrations}, respectively.
We will almost always use injective fibrations,
so we will call them \emph{fibrations} for brevity.
Similarly, \emph{injectively fibrant} presheaves will be referred to as
\emph{fibrant} presheaves.

The cofibrations of the injective model structure are closed under pullback
since they are exactly monomorphisms.
This yields the following corollary.

\begin{corollary}\label{presheaves-tribe}
  The category of fibrant presheaves over a small simplicial category
  is a tribe. \qed
\end{corollary}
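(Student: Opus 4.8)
We verify the axioms \Cref{tribe-terminal}--\Cref{tribe-anodyne} for the category $\cat{T}$ of fibrant simplicial presheaves over a small simplicial category, fibrations being the injective fibrations. Axioms \Cref{tribe-terminal} and \Cref{tribe-pullback} follow at once: the terminal presheaf is levelwise the terminal Kan complex and hence fibrant, every object of $\cat{T}$ is fibrant by definition, and pullbacks of injective fibrations along arbitrary morphisms exist in all simplicial presheaves and are again injective fibrations, with fibrant domain (being the source of a fibration into a fibrant object); so such pullbacks lie in $\cat{T}$ and fibrations there are stable under pullback.

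The key step is to identify the anodyne morphisms of $\cat{T}$ with the acyclic cofibrations of the injective model structure (all objects being fibrant, such morphisms automatically belong to $\cat{T}$). One inclusion is a model-category axiom: an acyclic cofibration lifts against every fibration, in particular against every fibration of $\cat{T}$. Conversely, given an anodyne $f \from x \to y$, factor it as $f = p i$ with $i$ an acyclic cofibration and $p$ a fibration; since $y$ is fibrant so is the middle object, hence $p$ is a fibration of $\cat{T}$, and solving the lifting problem of $f$ against $p$ (with $i$ on top and $\id_y$ on the bottom) exhibits $f$ as a retract of $i$. As acyclic cofibrations are closed under retracts, $f$ is one.

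Given this, \Cref{tribe-factorization} is the injective (acyclic cofibration, fibration) factorization --- the acyclic cofibration is anodyne by the preceding paragraph, and the fibration is one of $\cat{T}$ since its codomain, and therefore its domain, is fibrant. For \Cref{tribe-anodyne}, an anodyne morphism is an acyclic cofibration, i.e.\ a monomorphism which is a weak equivalence; monomorphisms are exactly the injective cofibrations and are stable under arbitrary pullback, and the injective model structure is proper, so weak equivalences are stable under pullback along fibrations. Thus the pullback of an anodyne morphism along a fibration of $\cat{T}$ is again a monomorphic weak equivalence with fibrant total object, hence anodyne. The only real subtlety is here: since the left lifting property is not preserved by pullback, one genuinely needs the characterization of anodyne morphisms as acyclic cofibrations together with right properness, which is exactly the content of the remark that injective cofibrations coincide with monomorphisms.
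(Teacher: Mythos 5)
Your proof is correct and follows essentially the same route as the paper, which compresses the argument to the one-line observation preceding the corollary (injective cofibrations are monomorphisms, hence closed under pullback, with properness of the model structure cited earlier doing the rest). You fill in the details the paper leaves tacit — in particular the retract argument identifying anodyne morphisms of the fibrant subcategory with acyclic injective cofibrations, and the explicit appeal to right properness in verifying \Cref{tribe-anodyne}.
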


In the remainder of this section,
we review a few standard facts about homotopy theory of presheaves.

We will use the notion of a homotopy pullback as in \Cref{hpb}
in the fibration category underlying the tribe above.
However, since it arises from a right proper model structure,
\Cref{hpb} applies verbatim even to non-fibrant presheaves.
In particular, \Cref{hpb-invariant} holds for non-fibrant presheaves as well.

\begin{lemma}\label{hpb-fibers}
  A square
  \begin{tikzeq*}
  \matrix[diagram]
  {
    |(U)| A & |(X)| C \\
    |(V)| B & |(Y)| D \\
  };

  \draw[->] (U) to (V);
  \draw[->] (X) to (Y);

  \draw[->] (U) to (X);
  \draw[->] (V) to (Y);
  \end{tikzeq*}
  of presheaves over $\cat{A}$ is a homotopy pullback \iff{}
  for every object $a \in \cat{A}$ and every point of $B_a$,
  the induced map from the homotopy fiber of $A \to B$ to
  the homotopy fiber of $C \to D$ is a \whe{}.
\end{lemma}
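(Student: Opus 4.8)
The plan is to reduce the statement to a classical fact about simplicial sets by detecting everything objectwise. First I would note that the homotopy pullback of \Cref{hpb} may be computed using the projective model structure of \cite{HTT}*{Prop.\ A.3.3.2}: it has the same weak equivalences as the injective one and is likewise right proper, so (cf.\ the remark following \Cref{presheaves-tribe}) the two compute the same homotopy pullbacks, and its fibrations are objectwise Kan fibrations. Thus, taking a projective factorization $C \weto C' \fto D$, the square of presheaves is a homotopy pullback \iff{} $A \to B \pull_D C'$ is a levelwise \whe, \iff{} for every $a \in \cat{A}$ the map $A_a \to B_a \pull_{D_a} C'_a$ is a \whe; and since $C'_a \fto D_a$ is a Kan fibration and $C_a \weto C'_a$ a \whe, the target here is a model for the homotopy pullback of the square obtained by evaluating at $a$. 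Hence the square of presheaves is a homotopy pullback \iff{} each of its evaluations at objects of $\cat{A}$ is a homotopy pullback of simplicial sets. Since the homotopy fibers in the statement are by definition those of the evaluated maps, and the points of $B_a$ range over all connected components of $B_a$ (the case $B_a = \emptyset$ being vacuous, with $A_a = \emptyset$ and the square at $a$ then automatically a homotopy pullback), it remains to prove the following statement about simplicial sets: the square is a homotopy pullback \iff{} for every vertex $b$ of $B$ the induced map $\operatorname{hofib}_b(A \to B) \to \operatorname{hofib}_{f(b)}(C \to D)$ is a \whe.

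For this I would invoke \Cref{hpb-invariant}(1) to replace the square, up to levelwise \whe{} (which preserves homotopy pullbacks and, up to \whe, homotopy fibers, and matches up the components of $B$), by one in which $A \fto B$ and $C \fto D$ are Kan fibrations between Kan complexes. Then the square is a homotopy pullback exactly when the comparison $A \to B \pull_D C$ is a \whe. Now $B \pull_D C \fto B$ is a Kan fibration whose fiber over a vertex $b$ is the strict fiber $C_{f(b)}$, a model for $\operatorname{hofib}_{f(b)}(C \to D)$, while the fiber of $A \fto B$ over $b$ is $\operatorname{hofib}_b(A \to B)$; and the comparison $A \to B \pull_D C$ is a map over $B$ between two Kan fibrations over $B$. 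Such a map is a \whe{} \iff{} it restricts to a \whe{} on the fiber over every vertex of $B$ --- a standard consequence of comparing the long exact sequences of the two fibrations via the five lemma, with a short direct lifting argument handling $\pi_0$. Since the restriction to the fiber over $b$ is precisely the map $\operatorname{hofib}_b(A \to B) \to \operatorname{hofib}_{f(b)}(C \to D)$, both implications follow at once.

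The step I expect to be the main obstacle is the first reduction. It is \emph{not} true that an injective fibration of simplicial presheaves is objectwise a Kan fibration, so one cannot naively compute the homotopy pullback objectwise from an injective-fibrant replacement; the point that makes the reduction work is that, by right properness, the homotopy pullback of \Cref{hpb} (as extended to non-fibrant presheaves in the remark following \Cref{presheaves-tribe}) is insensitive to which right proper model structure with the given weak equivalences one uses, so the projective model structure may be used to supply objectwise Kan fibrations. Everything after this reduction is routine homotopy theory of simplicial sets.
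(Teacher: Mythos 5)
Your argument is correct and reaches the right conclusion, but the ``main obstacle'' you identify is not actually an obstacle, and the paper's proof is a one-liner precisely because of this. You assert that an injective fibration of simplicial presheaves need not be an objectwise Kan fibration; this is false. The generating projective cofibrations have the form $\partial\Delta^n \otimes \cat{A}(\uvar,a) \hookrightarrow \Delta^n \otimes \cat{A}(\uvar,a)$ and are in particular monomorphisms, so every projective cofibration is a monomorphism, hence every projective trivial cofibration is an injective trivial cofibration, hence every injective fibration is a projective fibration and therefore an objectwise Kan fibration. Consequently, an injective factorization $C \weto C' \fto D$ already consists of a levelwise weak equivalence followed by a levelwise Kan fibration, and the reduction to ``homotopy pullback of presheaves $\Leftrightarrow$ levelwise homotopy pullback of simplicial sets'' is immediate. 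This is exactly the paper's proof, which then simply cites Munson--Voli\'c (Prop.~3.3.18) for the simplicial-set fact that a square is a homotopy pullback \iff{} all induced maps on homotopy fibers are weak homotopy equivalences. Your detour through right properness and model-structure independence of homotopy pullbacks is sound, and your direct proof of the simplicial-set fact via the comparison map $A \to B \pull_D C$ of fibrations over $B$, long exact sequences, and the $\pi_0$ argument is routine and fine; but both are avoidable. The edge-case discussion (empty $B_a$ forces empty $A_a$, and the square is then vacuously a homotopy pullback) is a nice touch.
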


\begin{proof}
  Since every injective fibration is in particular a projective fibration,
  the square above is a homotopy pullback \iff{} it is
  a levelwise homotopy pullback.
  Moreover, by \cite{mv}*{Prop.\ 3.3.18} a square of simplicial sets is
  a homotopy pullback \iff{} the induced maps on all homotopy fibers are
  \whe{}s.
\end{proof}

\begin{definition}
  If $\cat{A}$ is a simplicial category, then a presheaf $A$ over $\cat{A}$ is
  \emph{homotopy representable} if there exist $a \in \cat{A}$ and
  a weak equivalence $\cat{A}(\uvar, a) \weto A$
  (called a \emph{representation} of $A$).
\end{definition}


\begin{lemma}\label{representable-invariant}
  A presheaf weakly equivalent to a homotopy representable one is
  also homotopy representable.
\end{lemma}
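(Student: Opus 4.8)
The plan is to prove that homotopy representability is invariant under weak equivalence. Since ``weakly equivalent'' means connected by a finite zig-zag of weak equivalences, I would induct on the length of the zig-zag, reducing the statement to: if $A$ admits a representation $g\from\cat{A}(\uvar,a)\weto A$ and $B$ is related to $A$ by a single weak equivalence, then $B$ is homotopy representable. If that weak equivalence points out of $A$, say $w\from A\weto B$, then $wg\from\cat{A}(\uvar,a)\to B$ is a weak equivalence by the 2-out-of-3 property, hence a representation of $B$, and we are done.

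The substantive case is the opposite orientation, $w\from B\weto A$, where $w$ must in effect be reversed. Here I would work in the injective model structure. Factor $g$ as $\cat{A}(\uvar,a)\weto A'\fto A$; by 2-out-of-3 the fibration $A'\fto A$ is acyclic, and $\cat{A}(\uvar,a)\weto A'$ is still a representation. Because the cofibrations of the injective model structure are exactly the monomorphisms, every presheaf is injectively cofibrant; in particular $B$ is, so $w$ lifts along the acyclic fibration $A'\fto A$ to a morphism $\tilde w\from B\to A'$ over $A$, and $\tilde w$ is a weak equivalence by 2-out-of-3. Now $A'$, being a fibration over $A$, is fibrant whenever $A$ is; and in the setting in which the lemma gets used — the tribe of fibrant presheaves of \Cref{presheaves-tribe}, where every object is both fibrant and cofibrant — \Cref{we-he} identifies the weak equivalence $\tilde w$ with a homotopy equivalence. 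It therefore has a homotopy inverse $A'\weto B$, and composing that with $\cat{A}(\uvar,a)\weto A'$ gives a representation of $B$. (Equivalently one can form the pullback $P:=A'\pull_A\bar B$ of $A'\fto A$ along a fibrant replacement $B\weto\bar B\fto A$ of $w$: both projections $P\fto A'$ and $P\fto\bar B$ are acyclic fibrations as pullbacks of such, the representation $\cat{A}(\uvar,a)\weto A'$ lifts along $P\fto A'$ since $\cat{A}(\uvar,a)$ is injectively cofibrant, and composing with $P\fto\bar B$ makes $\bar B$ homotopy representable.)

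The point I expect to be the genuine obstacle is this very reversal of a weak equivalence pointing ``into'' a representable: it is clean only because the objects at hand are fibrant. For presheaves that are not assumed fibrant one would first replace $A$ and $B$ by fibrant presheaves — a representation survives this replacement because $\cat{A}(\uvar,a)$ is injectively cofibrant — apply the fibrant case, and then transport the conclusion back along the acyclic cofibration from $B$ to its fibrant replacement, which is one more ``wrong-way'' weak equivalence; so in full generality the cleanest thing is to fold that replacement into the induction, or simply to observe that in the applications $B$ is already fibrant. The essential inputs throughout are that injectively cofibrant presheaves (hence representables) lift against acyclic fibrations, and that weak equivalences between fibrant objects are homotopy equivalences.
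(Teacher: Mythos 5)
Your reduction to a single weak equivalence and the easy direction (composing the representation with $w$) match the paper, but for the hard direction you take a genuinely different route. The paper's argument is Yoneda-based and more direct: with $w \from A \weto B$ and $r_b \from \cat{A}(\uvar, b) \weto B$, the levelwise weak equivalence $A_b \to B_b$ lets one lift the vertex $r_b(\id_b)$ to a vertex of $A_b$ up to a path in $B_b$; the lifted vertex \emph{is}, by the enriched Yoneda lemma, the desired map $\cat{A}(\uvar, b) \to A$, and the path makes the triangle homotopy-commute so that 2-out-of-3 (via saturation) applies. This entirely avoids the step you correctly flag as the genuine obstacle — inverting $\tilde w$ — and so has no need for \Cref{we-he} or for any fibrancy hypothesis on $B$. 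Your lifting argument (factor the representation as $\cat{A}(\uvar, a) \weto A' \afto A$, lift $w$ across the acyclic fibration using that every presheaf is injectively cofibrant) gets you a weak equivalence $\tilde w \from B \to A'$, but as you observe, converting that into a weak equivalence pointing the other way requires a homotopy-inverse argument, and hence fibrancy; your pullback variant runs into exactly the same residual wrong-way acyclic cofibration $B \acto \bar B$. So in full generality your proof has a gap that the paper's does not, though as you note the gap is harmless in the paper's applications, where $B$ is a fibrant presheaf. The moral of the comparison is that representables are special enough that one should use Yoneda to produce the sought map directly, rather than manufacture it by generic model-categorical lifting and inversion.
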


\begin{proof}
  If $A \weto B$ is a weak equivalence and $A$ is homotopy representable,
  then so is $B$.
  Thus it is enough to check that if $B$ is homotopy representable,
  then so is $A$.
  Let $r_b \from \cat{A}(\uvar, b) \weto B$ be a representation.
  Since $A_b \to B_b$ is a \whe{}, we can pick a vertex in $ A_b$ and
  a path connecting its image in $B_b$ to $r_b(\id_b)$ which yields
  a homotopy commutative triangle
  \begin{tikzeq*}
  \matrix[diagram,column sep={7em,between origins}]
  {
                            & |(A)| A \\
    |(b)| \cat{A}(\uvar, b) & |(B)| B \\
  };

  \draw[->,dashed] (b) to (A);

  \draw[->] (A) to node[left] {$\we$} (B);

  \draw[->] (b) to node[above] {$r_b$} node[below] {$\we$} (B);
  \end{tikzeq*}
  and thus the dashed arrow provides a representation of $A$.
\end{proof}

\begin{lemma}[\cite{hi}*{Cor.\ 7.3.12(2)}]\label{HEP}
  If $X \fto Y$ is a fibration between presheaves and
  \begin{tikzeq*}
  \matrix[diagram,column sep={6em,between origins}]
  {
    |(A)| A &         \\
    |(X)| X & |(Y)| Y \\
  };

  \draw[->] (A) to node[left] {$f$} (X);

  \draw[->]  (A) to (Y);
  \draw[fib] (X) to (Y);
  \end{tikzeq*}
  is a homotopy commutative triangle, then
  there is a map $g \from A \to X$ homotopic to $f$
  making the triangle commute strictly. \qed
\end{lemma}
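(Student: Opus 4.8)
The plan is to treat this as the standard \emph{homotopy lifting extension property} and deduce it directly from the lifting axioms of the injective model structure, exploiting the fact that in this structure \emph{every} presheaf is cofibrant, since the cofibrations are exactly the monomorphisms. In particular $A$ is cofibrant, so homotopies between maps out of $A$ may be represented by maps out of the cylinder $A \times \simp{1}$.

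First I would unwind the hypothesis. Write $q \from A \to Y$ for the lower edge of the triangle and $p \from X \fto Y$ for the fibration. Homotopy commutativity yields a homotopy $H \from A \times \simp{1} \to Y$ with $H i_0 = p f$ and $H i_1 = q$, where $i_0, i_1 \from A \to A \times \simp{1}$ are the two end inclusions. The map $i_0$ is a monomorphism, hence an injective cofibration, and also a levelwise weak homotopy equivalence, hence acyclic.

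Next, since $p$ is an injective fibration it has the right lifting property against the acyclic cofibration $i_0$. Applying this to the square whose top edge is $f \from A \to X$, left edge $i_0$, bottom edge $H$ and right edge $p$ produces a diagonal filler $\tilde H \from A \times \simp{1} \to X$ with $\tilde H i_0 = f$ and $p \tilde H = H$. Setting $g = \tilde H i_1$ we obtain $p g = p \tilde H i_1 = H i_1 = q$, so the triangle commutes strictly, while $\tilde H$ is a homotopy from $f = \tilde H i_0$ to $g$, as required.

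I do not anticipate any genuine obstacle. The only mildly delicate points are that the homotopy witnessing commutativity of the triangle can be taken in the form of a map out of the cylinder $A \times \simp{1}$ --- legitimate because $A$ is cofibrant, so $A \times \simp{1}$ is a genuine cylinder object and the various notions of homotopy for maps out of $A$ coincide --- and the observation that $i_0$ is an acyclic cofibration, which is immediate from the explicit descriptions of the injective cofibrations and injective weak equivalences recalled above. In practice one simply invokes \cite{hi}, but the above is the self-contained argument.
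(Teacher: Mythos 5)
Your proof is correct: it is the standard covering-homotopy argument, which the paper does not reproduce but simply cites from Hirschhorn (Cor.\ 7.3.12(2)). The reduction to a left homotopy out of the cylinder $A \times \simp{1}$ is legitimate because every object is cofibrant in the injective model structure, and the conclusion then follows by lifting the acyclic cofibration $i_0$ against the fibration $p$ exactly as you describe.
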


\section{Hammock localization of a fibration category}
\label{hammocks}

The goal of the present section is the construction of
the tribe of homotopy representable fibrant presheaves over
the hammock localization $\hamm \cat{C}$ \cite{dk2} of
a fibration category $\cat{C}$.
In the context of fibration categories,
the mapping spaces of the hammock localization can be approximated by
the categories of fractions.

In a fibration category $\cat{C}$, a \emph{right fraction} from $a$ to $b$ is
a diagram of the form
\begin{tikzeq*}
\matrix[diagram]
{
  |(a)| a & |(e)| a' & |(b)| b \\
};

\draw[->] (e) to node[above] {$s$} (b);

\draw[->] (e) to node[above] {$v$} node[below] {$\we$} (a);
\end{tikzeq*}
which we denote by $s \bar{v} \from a \rfto b$.
Such a fraction is \emph{Reedy fibrant} if the morphism $a' \to a \times b$
is a fibration.
We will write $\Frac_{\cat{C}}(a, b)$ for
the category of right fractions from $a$ to $b$
and $\Frac^\Reedy_{\cat{C}}(a, b)$ for
the category of Reedy fibrant right fractions from $a$ to $b$.

\begin{theorem}\label{fractions-hammocks}
  For all $a, b \in \cat{C}$ the canonical maps
  \begin{tikzeq*}
  \matrix[diagram,column sep={8em,between origins}]
  {
    |(FR)| \nerve \Frac^\Reedy_{\cat{C}}(a, b) &[1em]
    |(F)|  \nerve \Frac_{\cat{C}}(a, b)        &
    |(H)|  \hamm \cat{C}(a, b)                 \\
  };

  \draw[->] (FR) to node[above] {$\iota$} (F);

  \draw[->] (F) to (H);
  \end{tikzeq*}
  are \whe{}s.
\end{theorem}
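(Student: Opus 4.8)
The plan is to establish the two weak equivalences separately, dealing first with the easier map $\iota\colon \nerve \Frac^\Reedy_\cat{C}(a,b) \to \nerve \Frac_\cat{C}(a,b)$ and then with the comparison to the hammock localization. For the first map, I would exhibit a deformation retraction of $\Frac_\cat{C}(a,b)$ onto $\Frac^\Reedy_\cat{C}(a,b)$. Given a right fraction $s\bar v\colon a \rfto b$ with apex $a'$, factor the (not necessarily fibration) morphism $a' \to a \times b$ as a weak equivalence $a' \weto a''$ followed by a fibration $a'' \fto a \times b$; composing with the projections gives a Reedy fibrant fraction $a \weot a'' \to b$ together with a map of fractions from the original one. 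Functoriality of a chosen factorization (or, if one does not want to choose, a cofinality argument via \ref{fibcat-factorization} and the fact that the factorizations form a contractible category) makes this into a natural transformation $\id \Rightarrow R$ with $R$ landing in $\Frac^\Reedy_\cat{C}(a,b)$ and $R\iota$ naturally isomorphic to the identity; hence $\nerve \iota$ is a homotopy equivalence of simplicial sets.

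For the second map $\nerve \Frac_\cat{C}(a,b) \to \hamm\cat{C}(a,b)$, the strategy is the standard Dwyer--Kan argument (as in \cite{dk2}) identifying the mapping space of the hammock localization with a category of zig-zags, combined with the observation that in a category admitting a calculus of right fractions the length-one right fractions are cofinal among all zig-zags of the allowed shape. Concretely, $\hamm\cat{C}(a,b)$ is weakly equivalent to the nerve of the category of reduced hammocks from $a$ to $b$, and for each word $w$ in ``forward'' and ``backward'' letters the category of hammocks of type $w$ receives a functor from $\Frac_\cat{C}(a,b)$; one shows these functors are homotopy equivalences by building the required zig-zag reductions using \ref{fibcat-factorization} (to replace a span of weak equivalences by a single one) and the stability of acyclic fibrations under pullback from \ref{fibcat-pullback} (to compose a backward weak equivalence past a forward map). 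Assembling these over all $w$ and passing to the colimit over reductions gives the weak equivalence on nerves.

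The main obstacle is the second step: making precise that right fractions compute the hammock localization requires care because a fibration category does not literally admit a Brown--Thomason calculus of fractions in the classical sense — weak equivalences need not be closed under the relevant pushouts. The key technical input is \Cref{hpb-invariant}, which lets one replace homotopy pullbacks by strict pullbacks along fibrations and thereby perform the ``slide a backward arrow to the right'' move up to weak equivalence rather than on the nose; one must track that these replacements are compatible enough to induce a map on the (homotopy) colimit over hammock reductions. I expect the cleanest route is to cite or reprove a Dwyer--Kan-style lemma that the nerve of the category of right fractions models $\hamm\cat{C}(a,b)$ whenever $\cat{C}$ satisfies a ``homotopical'' calculus of right fractions, and then verify that a fibration category does so using \ref{fibcat-factorization}, \ref{fibcat-pullback} and \Cref{hpb-invariant}. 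The Reedy fibrant variant then follows by composing with the first step.
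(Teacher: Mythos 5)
Your proposal takes a genuinely different route from the paper for both halves, and both halves have gaps tied to the same underlying problem: a fibration category has factorizations but not \emph{functorial} factorizations, and this is precisely what your arguments need and the paper's machinery is designed to circumvent.

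For the first map $\iota$, your deformation-retraction idea requires choosing, for each fraction $s\bar v$, a factorization of $a'\to a\times b$ in a way that is natural in $s\bar v$; without functorial factorizations this is not available, and your fallback phrase ``a cofinality argument via (F3) and the fact that the factorizations form a contractible category'' is really the entire content of what has to be proved, not a tool one can invoke. (Also, even with choices made, $R\iota$ is only weakly equivalent to the identity, not naturally isomorphic, since an already Reedy fibrant fraction gets replaced too.) The paper's actual argument for $\iota$ is a Quillen's Theorem~A argument: it shows each slice $s\bar v\slice\iota$ has contractible nerve, and the contractibility is established not by choosing a retraction but by passing to the semisimplicial subcomplex $\Ex\nerve(s\bar v\slice\iota)^\Reedy$ of simplices that are Reedy fibrant as diagrams in $\cat{C}\fslice a\times b$, showing via a HELP-style lemma that the inclusion into $U\,\Ex\nerve(s\bar v\slice\iota)$ is a weak homotopy equivalence, and then directly verifying that the subcomplex is a contractible Kan complex using only the (non-functorial) factorization axiom in the Reedy category over $\Sd[m]$. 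That semisimplicial detour is the actual workaround for the lack of functoriality and is missing from your proposal.

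For the second map, your outline (``right fractions are cofinal among reduced hammocks, slide backward arrows past forward arrows using homotopy pullbacks'') is a sketch of re-deriving a Dwyer--Kan calculus-of-fractions comparison from scratch; you correctly flag this as the main obstacle, but you do not resolve it. The two concrete problems are (a) the hammock reduction moves again require making coherent choices of factorizations, and (b) the replacement of a homotopy pullback by a strict pullback along a fibration (\Cref{hpb-invariant}) is a zig-zag of weak equivalences of squares, not a morphism in the category of hammocks of a fixed type, so it is not clear how these replacements assemble into the cofinality argument you want. The paper avoids rebuilding any of this: it cites Nikolaus--Schreiber--Stevenson for the statement when $\cat{C}$ has \emph{functorial} factorizations, notes that every fibration category is weakly equivalent to one with functorial factorizations (\cite{s3}*{Lem.~4.5}), and then transports the conclusion across such a weak equivalence $F\from\cat{C}\to\cat{D}$ using the approximation theorem (\Cref{App}) on the hammock side and a second semisimplicial Reedy-subcomplex argument on the fractions side to show $\nerve\Frac^\Reedy_{\cat{C}}(a,b)\to\nerve\Frac^\Reedy_{\cat{D}}(Fa,Fb)$ is a weak homotopy equivalence. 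Without either that reduction or an equivalent semisimplicial device, I do not see how to complete your second step.
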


This theorem is a variant of a classical result of Dwyer and Kan
\cite{dk2}*{Prop.\ 6.2}.
The proof of the first part uses semisimplicial methods which
we summarize in the following lemmas.

\begin{lemma}\label{simp-semisimp}
  The unit and the counit of the free/forgetful adjunction $F \adj U$ are
  both \whe{}s.
  Hence the forgetful functor from simplicial sets to semisimplicial sets is
  a homotopy equivalence.
\end{lemma}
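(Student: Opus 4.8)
The plan is to reduce the whole lemma to a single statement about simplicial sets: that the counit $\epsilon_X \from FUX \to X$ is a \whe{} for every simplicial set $X$. Indeed, the triangle identity $\epsilon_{FK} \circ F\eta_K = \id_{FK}$ exhibits $F\eta_K$ as a section of $\epsilon_{FK}$, so if $\epsilon_{FK}$ is a \whe{} of simplicial sets then so is $F\eta_K$ by 2-out-of-3, and hence $\eta_K$ is a \whe{} of semisimplicial sets by the very definition of the latter (which are created by $F$). Since $\epsilon_X$ is already a map of simplicial sets, establishing that $\epsilon_X$ is always a \whe{} yields both halves of the first assertion simultaneously.

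To prove that $\epsilon_X$ is a \whe{}, I would pass to geometric realizations. The functor sending a semisimplicial set $K$ to $|FK|$ is cocontinuous, since both $F$ and $|{-}|$ are left adjoints, and it sends the representable $\ssimp{n}$ to $|F\ssimp{n}| = |\simp{n}| = \Delta^n$; as every semisimplicial set is a colimit of representables, this functor is therefore naturally isomorphic to the ``fat'' (semisimplicial) realization $K \mapsto \coend^{[n] \in \sSimp} K_n \cdot \Delta^n$. Under this identification $|\epsilon_X|$ becomes the canonical comparison map from the fat realization of the underlying semisimplicial set of $X$ to the ordinary realization $|X|$. This comparison map is a weak homotopy equivalence because every simplicial set, viewed as a discrete simplicial space, is Reedy cofibrant (``good''): its degeneracy operators are split monomorphisms, hence injective, hence closed cofibrations. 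This is a classical fact about fat versus thin realizations, which I would simply cite (e.g.\ from \cite{rs}); alternatively one can prove it by induction over the skeleta of $X$, using the gluing lemma and the fact that $FU\simp{n}$ is contractible because the semisimplicial set $U\simp{n}$ admits an extra degeneracy. Thus $|\epsilon_X|$ is a weak equivalence, so $\epsilon_X$ is a \whe{}, which proves the first sentence of the lemma.

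For the second sentence, it remains to see that $U$ is a homotopy equivalence with homotopy inverse $F$. The functor $F$ is homotopical by the definition of \whe{}s of semisimplicial sets. The functor $U$ is also homotopical: if $g$ is a \whe{} of simplicial sets, then naturality of $\epsilon$, the previous paragraph, and 2-out-of-3 show that $FUg$ is a \whe{} of simplicial sets, so $Ug$ is a \whe{} of semisimplicial sets. Finally $\epsilon \from FU \Rightarrow \id$ and $\eta \from \id \Rightarrow UF$ are natural \whe{}s by the first two paragraphs, so $U$ and $F$ are mutually homotopy inverse and $U$ is a homotopy equivalence.

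The only non-formal ingredient is the classical comparison between the fat and thin realizations of a simplicial set; once that is granted, everything else is bookkeeping with the adjunction $F \adj U$, the triangle identities, and the definition of \whe{}s of semisimplicial sets, so I expect that comparison to be the main point.
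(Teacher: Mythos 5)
Your proposal is correct, but the key step is handled by a different method than in the paper. Your reduction (via the triangle identity and the fact that semisimplicial weak equivalences are created by $F$) to the single claim that $\epsilon_X \from FUX \to X$ is a weak equivalence is exactly the paper's reduction. Where you diverge is in proving that claim: you pass to geometric realizations, identify $|FUX|$ with the fat realization of $X$, and cite the classical comparison between fat and thin realizations for ``good'' simplicial spaces. The paper instead proves it from scratch: it observes that $FU\simp{m}$ is the nerve of the category $[m]'$ (the poset $[m]$ with one added idempotent endomorphism at each object), exhibits a natural transformation from $\id_{[m]'}$ to the constant functor at $0$ to conclude $FU\simp{m}$ is contractible, and then runs the skeletal induction using the Gluing Lemma and cocontinuity of $F$ and $U$ --- which you correctly flag as the alternative route. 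Both arguments work; the paper's is self-contained, yours outsources the geometric input to a citation. One small caution: your phrase ``the semisimplicial set $U\simp{n}$ admits an extra degeneracy'' is imprecise, since semisimplicial sets have no degeneracy operators; the correct formulation is the paper's, namely that the simplicial set $FU\simp{n}$ is the nerve of a category with a cone to a point.
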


\begin{proof}
  First, note that $F U \simp{m}$ is the nerve of the category $[m]'$
  whose morphisms are exactly the morphisms of $[m]$ along with
  one idempotent endomorphism of each object.
  There is a natural transformation connecting $\id \from [m]' \to [m]'$ with
  the constant functor at $0$ and hence $F U \simp{m}$ is contractible.
  Thus $\epsilon_{\simp{m}} \from F U \simp{m} \to \simp{m}$ is
  a \whe{} for all $m$.
  Since $F$ and $U$ preserve colimits (they are both left adjoints),
  a standard inductive argument over the skeleta of a simplicial set $K$
  (using the Gluing Lemma in the inductive step) shows that
  $\epsilon$ is a \whe{} everywhere.
  The \whe{}s of semisimplicial sets are created by $F$ and thus
  the triangle identity $\epsilon F \cdot F \eta = \id_F$ implies that
  $\eta$ is also a \whe{}.
\end{proof}

A \emph{homotopy} between semisimplicial maps $f, g \from K \to L$ is
a semisimplicial map $K \gprod \ssimp{1} \to L$ that
extends $[f, g] \from K \gprod \bdssimp{1} \to L$.
The free simplicial set functor is monoidal and hence
it carries such semisimplicial homotopies to simplicial homotopies.

\begin{lemma}\label{HELP}
  Let $K \to L$ be a semisimplicial map \st{} for every $m$,
  every square of the form
  \begin{tikzeq*}
  \matrix[diagram,column sep={6em,between origins}]
  {
    |(b)| \bdssimp{m} & |(K)| K \\
    |(s)| \ssimp{m}   & |(L)| L \\
  };

  \draw[inj] (b) to (s);
  \draw[->]  (K) to (L);
  \draw[->]  (b) to (K);
  \draw[->]  (s) to (L);
  \end{tikzeq*}
  admits a lift up to homotopy, i.e.,
  there is a map $\ssimp{m} \to K$ making the upper triangle commute strictly
  and the lower triangle commute up to homotopy relative to $\bdssimp{m}$.
  Then $K \to L$ is a \whe{}.
\end{lemma}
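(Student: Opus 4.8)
The plan is to deduce this semisimplicial lifting criterion from the corresponding classical statement for simplicial sets via the free/forgetful adjunction $F \adj U$, using \Cref{simp-semisimp}. Recall that for simplicial sets there is a well-known analogue: a map of simplicial sets admitting lifts up to homotopy against all boundary inclusions $\bd\simp{m} \ito \simp{m}$ is a \whe{} (this is a standard consequence of the small object argument together with the fact that such a map becomes a trivial fibration after a suitable factorization, or directly: it induces isomorphisms on homotopy groups by the very lifting-extension criterion). So the strategy is: transfer the hypothesis on $K \to L$ across $F$, apply the simplicial statement to $FK \to FL$, and transfer the conclusion back.

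First I would recall that $F$ preserves the relevant shapes: since $F$ is a left adjoint it preserves colimits, and since the geometric product on semisimplicial sets is a Day convolution while the cartesian product on simplicial sets restricts compatibly, $F$ is monoidal; in particular $F$ carries $\bdssimp{m} \ito \ssimp{m}$ to (a map isomorphic to) $\bd\simp{m} \ito \simp{m}$ and carries semisimplicial homotopies $K \gprod \ssimp{1} \to L$ to simplicial homotopies $FK \times \simp{1} \to FL$, relative to boundaries as needed. Next, given a simplicial square with left edge $\bd\simp{m} \ito \simp{m}$ and right edge $FK \to FL$, I would transpose along the adjunction to a semisimplicial square with left edge $\bdssimp{m} \ito \ssimp{m}$ (using that $U\simp{m}$ receives $\ssimp{m}$, or more carefully, adjointing the maps out of the representables), apply the hypothesis to obtain a semisimplicial lift up to homotopy, and transpose back — using the monoidality of $F$ to convert the semisimplicial homotopy witnessing the lower triangle into a simplicial one — to get a lift up to homotopy in the simplicial square. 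Thus $FK \to FL$ satisfies the simplicial lifting-up-to-homotopy criterion, so it is a \whe{} of simplicial sets. Finally, since \whe{}s of semisimplicial sets are by definition created by $F$ (as recalled just before \Cref{simp-semisimp}), it follows that $K \to L$ is a \whe{}.

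The main obstacle I anticipate is the bookkeeping in the transposition step: the adjunction $F \adj U$ is not monoidal on the nose in a way that makes $F(\bdssimp{m} \ito \ssimp{m})$ literally equal to $\bd\simp{m} \ito \simp{m}$, so one must either compute $F\ssimp{m}$ explicitly (it is $\simp{m}$ since $U\simp{m}$ is the underlying semisimplicial set of $\simp{m}$ and the counit is an iso on representables after the relevant identification — more precisely, $F$ applied to a representable semisimplicial set need not be representable, so this needs care) or, more robustly, argue that it suffices to test the simplicial criterion against a generating set of cofibrations and that the images $F(\bdssimp{m} \ito \ssimp{m})$ together with the simplicial degeneracy-collapsing maps still detect \whe{}s. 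A cleaner route, which I would actually pursue, is to avoid identifying $F\ssimp{m}$ and instead note that the hypothesis plus monoidality of $F$ show directly that $FK \to FL$ has the lifting-up-to-homotopy property against every $\bd\simp{m} \ito \simp{m}$ by choosing, for a given simplicial test square, a semisimplicial square whose $F$-image maps to it — concretely, restrict the simplicial square along the counit-related comparison $F U \simp{m} \to \simp{m}$, which is a \whe{} by the argument in \Cref{simp-semisimp}, reducing to test squares that factor through $F$ of a semisimplicial boundary inclusion. Making this reduction precise, and checking that ``lift up to homotopy relative to the boundary'' is preserved under these manipulations, is the technical heart of the argument; everything else is formal.
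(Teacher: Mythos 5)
Your strategy is to transfer the hypothesis across the adjunction $F \adj U$, invoke a classical simplicial version of the lemma for $FK \to FL$, and conclude since weak homotopy equivalences of semisimplicial sets are created by $F$. This is genuinely different from the paper's proof, which stays entirely on the semisimplicial side: it cites \cite{ks}*{Lem.\ 5.3} to conclude directly that $K \to L$ is a \emph{semisimplicial} homotopy equivalence (the usual HELP-style inductive construction of a homotopy inverse and the witnessing homotopies, cell by cell), then observes that the monoidal functor $F$ carries this to a simplicial homotopy equivalence, hence a \whe{}, and finally uses that \whe{}s are created by $F$.

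There is, however, a genuine gap in the transfer step, and the paper's route is precisely designed to avoid it. You are right that $F \ssimp{m} \iso \simp{m}$ and $F \bdssimp{m} \iso \bdsimp{m}$ (both follow from the Yoneda lemma once one notes $\hom_{\Delta_\sharp}(\ssimp{m}, U X) \iso X_m$, and $F$ preserves the colimit defining the boundary). But $F$ is \emph{not full}: by adjunction $\hom_{\Delta}(\bdsimp{m}, FK) \iso \hom_{\Delta_\sharp}(\bdssimp{m}, U F K)$, and the unit $\eta_K \from K \to U F K$ is a proper monomorphism (it misses all the freely adjoined degenerate simplices). So a simplicial test square for $FK \to FL$ against $\bdsimp{m} \ito \simp{m}$ corresponds, after transposition, to a semisimplicial test square whose vertical maps land in $U F K$ and $U F L$, not in $K$ and $L$. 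Already for $m = 2$, a map $\bdssimp{2} \to U F K$ may send an edge to a degenerate edge $s_0(v)$, which has no preimage in $K$. Hence the hypothesis on $K \to L$ does not yield a lift in a general simplicial test square for $FK \to FL$, and the simplicial HELP criterion cannot be applied. Your proposed remedy of restricting along the counit $F U \simp{m} \to \simp{m}$ does not close the gap either: that map points the wrong way to produce a semisimplicial test square for $K \to L$ from a simplicial one for $FK \to FL$, and you would still need some mechanism to homotope a map $\bdssimp{m} \to U F K$ into the image of $\eta_K$, relative to a partially prescribed lift — an extra argument that is not sketched and is not obviously easier than the direct semisimplicial HELP construction. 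You do flag this as the ``technical heart'' and leave it open, but as written the plan does not establish the lemma; the paper's direct semisimplicial argument sidesteps the issue by never having to transpose a simplicial datum back to a semisimplicial one.
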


\begin{proof}
  The argument of \cite{ks}*{Lem.\ 5.3} shows that $K \to L$ is
  a semisimplicial homotopy equivalence.
  The preceding remark implies that
  the free functor carries it to a simplicial homotopy equivalence and
  thus it is a \whe{}.
\end{proof}

\begin{proof}[Proof of \Cref{fractions-hammocks}]
%
  To verify that $\iota$ is a \whe{}, by Quillen's Theorem A \cite{q}*{p.\ 85}
  and \cite{ltw}*{Thm.\ 4.1 and Rmk.\ 5.6} it is enough to show that
  for every fraction $s \bar{v} \in \Frac_{\cat{C}}(a, b)$
  the slice $\Ex \nerve (s \bar{v} \slice \iota)$ is contractible.
  Here, $\Ex$ denotes Kan's classical functor \cite{kan}.

  We define a semisimplicial set $\Ex \nerve (s \bar{v} \slice \iota)^\Reedy$
  as follows.
  An $m$-simplex in $\Ex \nerve (s \bar{v} \slice \iota)^\Reedy$ is
  an $m$-simplex $(\Sd[m])^\op \to s \bar{v} \slice \iota$ of
  $\Ex \nerve (s \bar{v} \slice \iota)$ that is Reedy fibrant.
  Here for a poset $P$, $\Sd P$ denotes the poset of chains
  (i.e., finite non-empty totally ordered subsets) in $P$, ordered by inclusion.
  Explicitly, such an $m$-simplex consists of
  a functor $t \bar{w} \from (\Sd[m])^\op \to \Frac_{\cat{C}}(a, b)$
  Reedy fibrant as a diagram in $\cat{C} \fslice a \times b$ together with
  a morphism $\lambda \from s \bar{v} \to t_{[m]} \bar{w}_{[m]}$.

  We will check that the inclusion
  $\Ex \nerve (s \bar{v} \slice \iota)^\Reedy \ito U \Ex \nerve (s \bar{v} \slice \iota)$
  is a \whe{}.
  By \Cref{HELP}, it is enough to find a lift up to homotopy
  in every diagram of the form
  \begin{tikzeq*}
  \matrix[diagram,column sep={8em,between origins}]
  {
    |(b)| \bdssimp{m} & |(ExR)|   \Ex \nerve (s \bar{v} \slice \iota)^\Reedy   \\
    |(s)| \ssimp{m}   & |(Ex)|  U \Ex \nerve (s \bar{v} \slice \iota) \text{.} \\
  };

  \draw[inj] (b)   to (s);
  \draw[->]  (ExR) to (Ex);
  \draw[->]  (b)   to (ExR);
  \draw[->]  (s)   to (Ex);
  \end{tikzeq*}
  The data of the diagram corresponds to
  a simplex $(t \bar{w}, \lambda)$ as above \st{}
  $t \bar{w}$ is Reedy fibrant only over $\Sd \bdssimp{m}$, i.e.,
  the subposet of $\Sd [m]$ obtained by removing the top element.
  By \cite{s2}*{Lem.\ 1.9(1)} we can find a Reedy fibrant $t' \bar{w}'$
  together with a weak equivalence $u \from t \bar{w} \weto t' \bar{w}'$ that
  restricts to the identity over $\Sd \bdssimp{m}$.
  Then $(t' \bar{w}', u_{[m]} \lambda)$ is
  a simplex $\ssimp{m} \to \Ex \nerve (s \bar{v} \slice \iota)^\Reedy$ which
  makes the upper triangle commute while $u$ yields a homotopy in the lower one
  by taking the composite
  \begin{tikzeq*}
  \matrix[diagram,column sep={8em,between origins}]
  {
    |(Sd)|  (\Sd([m] \times [1]))^\op &[3em]
    |(Sd1)| (\Sd[m])^\op \times [1]   &
    |(svi)| s \bar{v} \slice \iota    \\
  };

  \draw[->] (Sd)  to (Sd1);

  \draw[->] (Sd1) to node[above] {$u$} (svi);
  \end{tikzeq*}
  where the first map takes a chain $A \subseteq [m] \times [1]$ to
  $(\mathrm{proj}_0 A, \min \mathrm{proj}_1 A)$.

  \Cref{simp-semisimp} implies that
  $\Ex \nerve (s \bar{v} \slice \iota)$ is contractible \iff{}
  $\Ex \nerve (s \bar{v} \slice \iota)^\Reedy$ is contractible.
  Thus it suffices to show that
  $\Ex \nerve (s \bar{v} \slice \iota)^\Reedy$ is a contractible Kan complex.
  To extend a map
  $(t \bar{w}, \lambda) \from \bdssimp{m} \to \Ex \nerve (s \bar{v} \slice \iota)^\Reedy$
  to $\ssimp{m}$ we factor $s \bar{v} \to \lim_{(\Sd \bdsimp{m})^\op} t \bar{w}$
  as a weak equivalence $s \bar{v} \to t_{[m]} \bar{w}_{[m]}$ followed by
  a fibration.

  The morphism $\nerve \Frac_{\cat{C}}(a, b) \to \hamm \cat{C}(a, b)$ is
  a \whe{} by \cite{nss}*{Thm.\ 3.61} provided that
  $\cat{C}$ has functorial factorizations.
  We will generalize this to an arbitrary fibration category.
  It follows from \cite{s3}*{Lem.\ 4.5} that every fibration category is
  weakly equivalent to one with functorial factorizations
  (cf.\ \cite{s3}*{Def.\ 4.1}).
  Since \Cref{App} implies that
  a weak equivalence $F \from \cat{C} \to \cat{D}$ induces
  a \whe{} $\hamm \cat{C}(a, b) \to \hamm \cat{D}(F a, F b)$,
  it is be enough to verify that it also induces
  a \whe{} $\nerve \Frac_{\cat{C}}(a, b) \to \nerve \Frac_{\cat{D}}(F a, F b)$.
  By the first part of the proof, it suffices to check that
  $\nerve \Frac^\Reedy_{\cat{C}}(a, b) \to \nerve \Frac^\Reedy_{\cat{D}}(F a, F b)$
  is a \whe{}.

  Just as above, we define $\nerve \Frac^\Reedy_{\cat{C}}(a, b)^\Reedy$ to be
  the semisimplicial subset of $U \nerve \Frac^\Reedy_{\cat{C}}(a, b)$
  consisting of these simplices
  that are Reedy fibrant as diagrams in $\cat{C} \fslice a \times b$.
  An analogous argument shows that the inclusion
  $\nerve \Frac^\Reedy_{\cat{C}}(a, b)^\Reedy \ito U \nerve \Frac^\Reedy_{\cat{C}}(a, b)$
  is a \whe{}.
  Thus it remains to show that
  $\nerve \Frac^\Reedy_{\cat{C}}(a, b)^\Reedy \to \nerve \Frac^\Reedy_{\cat{D}}(F a, F b)^\Reedy$
  is a \whe{} for which we will verify the assumptions of \Cref{HELP}.

  Consider a diagram
  \begin{tikzeq*}
  \matrix[diagram,column sep={9em,between origins}]
  {
    |(b)| \bdssimp{m} & |(C)| \nerve \Frac^\Reedy_{\cat{C}}(a, b)^\Reedy     \\
    |(s)| \ssimp{m}   & |(D)|  \nerve \Frac^\Reedy_{\cat{D}}(F a, F b)^\Reedy \\
  };

  \draw[inj] (b) to (s);

  \draw[->] (C) to node[right] {$F$} (D);

  \draw[->] (b) to node[above] {$c$} (C);
  \draw[->] (s) to node[below] {$z$} (D);
  \end{tikzeq*}
  where $c$ and $z$ denote the middle objects in the diagrams of fractions
  (the morphism of fractions themselves are suppressed from the notation).
  We apply (App2) to the morphism $z_{[m]} \to F M_{[m]} c$ and
  obtain a square
  \begin{tikzeq*}
  \matrix[diagram]
  {
    |(z)|  z  & |(c)|  F M_{[m]} c \\
    |(z')| z' & |(c')| F c_{[m]} \\
  };

  \draw[->] (z') to node[left]  {$\we$} (z);
  \draw[->] (z') to node[below] {$\we$} (c');

  \draw[->] (z) to (c);
  \draw[->] (c') to (c);
  \end{tikzeq*}
  where, using the factorization axiom and (App1), we can assume that
  the right morphism is induced by a fibration $c_{[m]} \fto F M_{[m]} c$.
  This extends $c$ to a Reedy fibrant diagram over $\Sd [m]$.
  The diagrams $z$ and $F c$ together with $z'$ assemble to
  a diagram over $\Sd [m] \times \Sd [1]$ which we pull back to
  $\Sd ([m] \times [1])$ along the functor induced by
  the projections $[m] \times [1] \to [m]$ and $[m] \times [1] \to [1]$.
  Applying \cite{s2}*{Lem.\ 1.9(1)} in $\cat{C} \fslice a \times b$
  allows us to replace it by a Reedy fibrant diagram without
  changing it over $\Sd ([m] \times \{ 0, 1 \})$ or
  $\Sd(\bdssimp{m} \gprod \ssimp{1})$.
  This replacement yields a homotopy as required in \Cref{HELP} which
  concludes the proof.
\end{proof}

As a consequence we obtain the following corollary.

\begin{corollary}\label{tribe-single-arrow}
  If $\cat{T}$ is a tribe, then every zig-zag in $\hamm \cat{T}$ is
  homotopic to a single morphism in $\cat{T}$.
\end{corollary}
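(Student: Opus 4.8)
The plan is to reduce the statement to a claim about right fractions by way of \Cref{fractions-hammocks}. That theorem exhibits
\[
  \nerve \Frac^\Reedy_{\cat{T}}(a, b) \to \nerve \Frac_{\cat{T}}(a, b) \to \hamm \cat{T}(a, b)
\]
as a chain of weak homotopy equivalences for all $a, b \in \cat{T}$, so each map is a bijection on $\pi_0$. I would use this first to argue that an arbitrary zig-zag in $\hamm \cat{T}$ from $a$ to $b$, regarded as a vertex of $\hamm \cat{T}(a, b)$, is homotopic to (the image of) a Reedy fibrant right fraction $s \bar v \from a \rfto b$, that is, a span $a \xleftarrow{v} a' \xrightarrow{s} b$ with $v$ a weak equivalence and $a' \to a \times b$ a fibration. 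The point of insisting on Reedy fibrancy is that then $v$ is itself a fibration, being the composite of the fibration $a' \to a \times b$ with the projection $a \times b \to a$, which is a pullback of the fibration $b \to 1$.

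Next I would observe that $v$, being simultaneously a fibration and a weak equivalence, is an acyclic fibration, so by \Cref{tribe-cofibrant} it admits a section $t \from a \to a'$ with $v t = \id_a$. This section is the key input. The trivial span $a \xleftarrow{\id_a} a \xrightarrow{s t} b$ is an object of $\Frac_{\cat{T}}(a, b)$, and $t$ is a morphism of right fractions from it to $s \bar v$: the compatibility with the right legs is automatic since $s t = s t$, and the compatibility with the left legs is exactly the equation $v t = \id_a$. A morphism in $\Frac_{\cat{T}}(a, b)$ gives an edge in its nerve, hence a path in $\hamm \cat{T}(a, b)$, and the trivial span corresponds to the single morphism $s t \from a \to b$. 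Stringing the homotopies together, the original zig-zag is homotopic in $\hamm \cat{T}$ to the morphism $s t$ of $\cat{T}$.

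I expect no genuinely hard step: the homotopy-theoretic content is entirely contained in the fact that acyclic fibrations in a tribe split (\Cref{tribe-cofibrant}), and everything else is bookkeeping — confirming that ``homotopic in $\hamm \cat{T}$'' means ``in the same path component of the mapping space'', that the $\pi_0$-bijections of \Cref{fractions-hammocks} really do let one replace an arbitrary hammock by a right fraction, and that a morphism of fractions yields such a path. The one mild subtlety worth flagging is that the trivial span must be handled in $\Frac_{\cat{T}}(a, b)$ rather than in $\Frac^\Reedy_{\cat{T}}(a, b)$, since $a \to a \times b$ need not be a fibration; this is harmless because \Cref{fractions-hammocks} identifies the $\pi_0$ of all three mapping spaces.
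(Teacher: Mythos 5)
Your argument is correct and takes a genuinely different route from the paper. Both proofs start by invoking \Cref{fractions-hammocks} to reduce to the category of right fractions, but from there they diverge. The paper works with an arbitrary fraction $x \xleftarrow{v} x' \xrightarrow{s} y$: since $v$ is a weak equivalence between (automatically cofibrant) objects, it is a homotopy equivalence, so one picks a homotopy inverse $w$ and a homotopy $H$ from $wv$ to $\id_{x'}$, then writes down an explicit three-row hammock witnessing that the fraction is homotopic to the single arrow $sw$. You instead first replace the fraction by a Reedy fibrant one, observe that Reedy fibrancy forces $v$ to be a fibration and hence an acyclic fibration, and then use \Cref{tribe-cofibrant} to split it. The section $t$ is precisely a morphism in $\Frac_{\cat{T}}(a,b)$ from the trivial span $a = a \xrightarrow{st} b$ to $s\bar v$, and such a morphism gives an edge in the nerve, i.e.\ the required path. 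Your approach buys you economy: the morphism of fractions does the work of the paper's hand-built hammock, and you never need to manipulate path objects directly. What you pay for this is the upfront passage to a Reedy fibrant fraction, and the (minor, and correctly flagged) bookkeeping that the trivial span lives in $\Frac_{\cat{T}}$ rather than $\Frac^\Reedy_{\cat{T}}$ — harmless because \Cref{fractions-hammocks} identifies all three $\pi_0$'s. Note that both arguments ultimately rest on cofibrancy of objects in a tribe (\Cref{tribe-cofibrant}), just applied differently: the paper uses it indirectly to upgrade a weak equivalence to a homotopy equivalence, while you use it directly to split the acyclic fibration.
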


\begin{proof}
  By \Cref{fractions-hammocks}, it suffices to show that
  every fraction is homotopic to a single arrow.
  Let
  \begin{tikzeq*}
  \matrix[diagram]
  {
    |(x)| x & |(x')| x' & |(y)| y \\
  };

  \draw[->] (x') to node[above] {$v$} node[below] {$\we$} (x);

  \draw[->] (x') to node[above] {$s$} (y);
  \end{tikzeq*}
  be such a fraction.
  Since $v$ is a weak equivalence, we can choose
  a homotopy inverse $w \from x \to x'$.
  Let $H \from x' \to P x'$ be a homotopy from $w v$ to $\id_{x'}$, where
  $(\pi_0, \pi_1) \sigma \from x' \to P x' \to x' \times x'$ is a path object.
  Then
  \begin{tikzeq*}
  \matrix[diagram,column sep={5em,between origins},row sep={5em,between origins}]
  {
             & |(x1)|   x  & |(x'20)| x'   & |(x'30)| x' &         \\
    |(x0)| x & |(x'11)| x' & |(Px')|  P x' & |(x'31)| x' & |(y)| y \\
             & |(x'12)| x' & |(x'22)| x'   & |(x'32)| x' &         \\
  };

  \draw[->] (x0)   to (x1);
  \draw[->] (x'20) to node[above] {$v$} (x1);
  \draw[->] (x'20) to (x'30);
  \draw[->] (x'30) to node[above right] {$s$} (y);

  \draw[->] (x0)   to node[above] {$w$} (x'11);
  \draw[->] (Px')  to node[above] {$\pi_0$} (x'11);
  \draw[->] (Px')  to node[above] {$\pi_1$} (x'31);
  \draw[->] (x'31) to node[above] {$s$} (y);

  \draw[->] (x0)   to node[below left] {$w$} (x'12);
  \draw[->] (x'22) to (x'12);
  \draw[->] (x'22) to (x'32);
  \draw[->] (x'32) to node[below right] {$s$} (y);

  \draw[->] (x1)   to node[right] {$w$} (x'11);
  \draw[->] (x'12) to (x'11);

  \draw[->] (x'20) to node[right] {$H$} (Px');
  \draw[->] (x'22) to node[right] {$\sigma$} (Px');

  \draw[->] (x'30) to (x'31);
  \draw[->] (x'32) to (x'31);
  \end{tikzeq*}
  (where the unlabeled arrows are identities)
  is a homotopy between the original fraction and
  the composite $s w$ in $\hamm \cat{T}$.
\end{proof}

We next turn our attention to homotopy limits in
the category of homotopy representable fibrant presheaves over $\hamm \cat{C}$.
In \Cref{hammock-terminal} we characterize homotopy terminal objects and
in \Cref{hammock-pullback} homotopy pullbacks.

\begin{lemma}\label{Yoneda-we}
  A morphism $f \from a \to b$ in $\cat{C}$ is a weak equivalence \iff{}
  the induced map
  \begin{tikzeq*}
  \matrix[diagram,column sep={8em,between origins}]
  {
    |(a)| \hammrep{\cat{C}, a} & |(b)| \hammrep{\cat{C}, b} \\
  };
  
  \draw[->] (a) to node[above] {$f_*$} (b);
  \end{tikzeq*}
  is a weak equivalence.
\end{lemma}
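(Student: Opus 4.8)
The plan is to reduce the statement to categories of right fractions. Unwinding the definitions, a weak equivalence between presheaves over the simplicial category $\hamm \cat{C}$ is a levelwise \whe{} (cf.\ \Cref{presheaves}), so the lemma asserts that the map $\hamm \cat{C}(c, a) \to \hamm \cat{C}(c, b)$ induced by $f$ is a \whe{} for every $c \in \cat{C}$ \iff{} $f$ is a weak equivalence. By \Cref{fractions-hammocks} the canonical map $\nerve \Frac_\cat{C}(c, d) \to \hamm \cat{C}(c, d)$ is a \whe{}, and it is natural in $d$ with respect to post-composition; so, writing $f_* \from \Frac_\cat{C}(c, a) \to \Frac_\cat{C}(c, b)$ for the functor induced by $f$, the map $\nerve f_*$ is a \whe{} \iff{} the map $\hamm \cat{C}(c, a) \to \hamm \cat{C}(c, b)$ induced by $f$ is. Thus it suffices to show that $\nerve f_*$ is a \whe{} for all $c$ \iff{} $f$ is a weak equivalence. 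I will also use the identification $\pi_0 \hamm \cat{C}(c, d) \iso \Ho \cat{C}(c, d)$, natural in both variables (cf.\ \cite{dk2}), under which $\pi_0 f_*$ becomes post-composition with the image of $f$ in $\Ho \cat{C}$.

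For the forward implication, first suppose $p \from a \afto b$ is an acyclic fibration. Then for any $c$ and any object $\beta$ of $\Frac_\cat{C}(c, b)$, say the right fraction $c \weot e' \xrightarrow{s'} b$, the comma category $p_* \slice \beta$ has an initial object: the right fraction whose middle object is the pullback $e' \pull_b a$ of $p$ along $s'$, with numerator the projection to $a$ and denominator the projection to $e'$ composed with $e' \weto c$ (a weak equivalence, that projection being a pullback of the acyclic fibration $p$). Indeed, an object of $p_* \slice \beta$ is a right fraction $\alpha \from c \rfto a$ together with a morphism $p_* \alpha \to \beta$ in $\Frac_\cat{C}(c, b)$, and the universal property of the pullback supplies a unique morphism to it from the candidate just described. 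By Quillen's Theorem~A \cite{q} it follows that $\nerve p_*$ is a \whe{}. Hence the functor $d \mapsto \nerve \Frac_\cat{C}(c, d)$ carries acyclic fibrations to weak equivalences, and so, by Ken Brown's lemma \cite{br}, it carries every weak equivalence $f$ to a weak equivalence; this is the forward implication. (Alternatively, this direction is just the general fact that weak equivalences become homotopy equivalences in the hammock localization, and may be quoted from \cite{dk2}.)

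For the converse, suppose $\nerve f_*$ is a \whe{} for every $c$. Applying $\pi_0$ and the identification above, $f$ induces a bijection $\Ho \cat{C}(c, a) \to \Ho \cat{C}(c, b)$ for every $c$, and hence becomes an isomorphism in $\Ho \cat{C}$ by the Yoneda lemma. Since the weak equivalences of a fibration category are saturated --- which is the reason the 2-out-of-6 axiom is imposed, cf.\ \cite{rb} --- it follows that $f$ is a weak equivalence. The step I expect to be the \emph{main obstacle} is this last one: the passage from ``invertible in $\Ho \cat{C}$'' back to ``weak equivalence'' rests entirely on the saturation of fibration categories, which I intend to cite rather than reprove. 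The forward implication, by contrast, is elementary once \Cref{fractions-hammocks} and Ken Brown's lemma are in hand.
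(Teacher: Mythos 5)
Your proof is correct, and the converse direction (pass to $\pi_0$, identify it with $\Ho\cat{C}(\uvar,a)\to\Ho\cat{C}(\uvar,b)$, apply the Yoneda lemma in $\Ho\cat{C}$, and invoke saturation of fibration categories) is essentially the paper's own argument. For the forward direction the paper simply cites \cite{dk2}*{Prop.~3.3}, which you correctly note as an alternative; your primary route, reducing to $\nerve\Frac_{\cat{C}}(c,\uvar)$ via \Cref{fractions-hammocks} and then treating acyclic fibrations by Quillen's Theorem~A before applying Ken Brown's lemma, is a more self-contained substitute. One small slip: the fraction built from the pullback $e'\pull_b a$ is a \emph{terminal}, not initial, object of the comma category $p_*\slice\beta$ --- the universal property of the pullback produces a unique map \emph{into} $e'\pull_b a$ from the middle object of any competing fraction, not out of it --- but since Theorem~A only needs the comma category to be contractible, the conclusion is unaffected. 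The trade-off is that your forward argument is longer but does not lean on the specific Dwyer--Kan proposition, while the paper's is a one-line citation; for the lemma's role in the paper either suffices.
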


\begin{proof}
  If $f$ is a weak equivalence, then so is $f_*$ by \cite{dk2}*{Prop.\ 3.3}.

  Conversely, the map $\pi_0 \hammrep{\cat{C}, a} \to \pi_0 \hammrep{\cat{C}, b}$
  coincides with $\Ho \cat{C}(\uvar, a) \to \Ho \cat{C}(\uvar, b)$ which
  is therefore an isomorphism.
  Thus $f$ is an isomorphism in $\Ho \cat{C}$ and
  it follows from \cite{rb}*{Thm.\ 7.2.7} that it is a weak equivalence.
\end{proof}

\begin{lemma}\label{hammock-terminal}
  For every $a \in \cat{C}$ the morphism $a \to 1$ is a weak equivalence \iff{}
  $\hamm \cat{C} (\uvar, a) \to 1$ is a weak equivalence of presheaves.
\end{lemma}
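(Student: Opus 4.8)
The plan is to deduce this from \Cref{Yoneda-we} together with the observation that all mapping spaces out of the terminal object of $\cat{C}$ in the hammock localization are contractible. Applying \Cref{Yoneda-we} to the canonical morphism $a \to 1$, we see that $a \to 1$ is a weak equivalence \iff{} the induced map $\hamm \cat{C}(\uvar, a) \to \hamm \cat{C}(\uvar, 1)$ is a weak equivalence of presheaves. Since weak equivalences of presheaves are levelwise \whe{}s and hence satisfy 2-out-of-3, it therefore suffices to prove that the projection $\hamm \cat{C}(\uvar, 1) \to 1$ onto the terminal presheaf is a weak equivalence; the statement of the lemma then follows by composition and cancellation, noting that the terminal map $\hamm \cat{C}(\uvar, a) \to 1$ factors as $f_*$ followed by $\hamm \cat{C}(\uvar, 1) \to 1$.

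To prove this last claim I would fix an object $c \in \cat{C}$ and show that the simplicial set $\hamm \cat{C}(c, 1)$ is weakly contractible. By \Cref{fractions-hammocks}, the canonical map $\nerve \Frac_{\cat{C}}(c, 1) \to \hamm \cat{C}(c, 1)$ is a \whe{}, so it is enough to check that $\Frac_{\cat{C}}(c, 1)$ has contractible nerve. Here an object is a right fraction $c \weot c' \to 1$; but the right leg $c' \to 1$ is uniquely determined, so such an object amounts to a single weak equivalence $v \from c' \weto c$, and a morphism into another such object $v' \from c'' \weto c$ is just a morphism $c' \to c''$ in $\cat{C}$ commuting with $v$ and $v'$. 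Consequently $\id_c \from c \weto c$ is a terminal object of $\Frac_{\cat{C}}(c, 1)$, so $\nerve \Frac_{\cat{C}}(c, 1)$ is contractible, and the claim follows.

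I do not anticipate any real difficulty; the only things to be careful about are that the right leg of a fraction into $1$ is genuinely forced — so that $\Frac_{\cat{C}}(c, 1)$ really does reduce to a category possessing a terminal object — and that the map provided by \Cref{fractions-hammocks} points in the direction needed for the 2-out-of-3 argument. Neither point is subtle, so the main content of the proof is simply the bookkeeping reduction to \Cref{Yoneda-we} and \Cref{fractions-hammocks}.
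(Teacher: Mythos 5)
Your proposal is correct and follows essentially the same route as the paper: reduce via \Cref{Yoneda-we} and 2-out-of-3 to showing that $\hamm\cat{C}(\uvar,1)\to 1$ is a weak equivalence, then invoke \Cref{fractions-hammocks} and observe that $\Frac_{\cat{C}}(c,1)$ has a terminal object, namely $\id_c$. The paper's argument is organized slightly differently (first proving the forward implication outright, then using it in the converse), but the content is identical, and your identification of the terminal object agrees with the one the paper uses.
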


\begin{proof}
  First assume that $a \to 1$ is a weak equivalence.
  For every $e \in \cat{C}$ we have weak homotopy equivalences
  \begin{tikzeq*}
  \matrix[diagram,column sep={8em,between origins}]
  {
    |(a)| \hamm \cat{C}(e, a) &
    |(H)| \hamm \cat{C}(e, 1) &
    |(F)| \nerve \Frac_{\cat{C}}(e, 1) \\
  };

  \draw[->] (a) to node[above] {$\we$} (H);
  \draw[->] (F) to node[above] {$\we$} (H);
  \end{tikzeq*}
  by \Cref{fractions-hammocks} and \Cref{Yoneda-we}.
  Thus it is enough to check that $\Frac_{\cat{C}}(e, 1)$ is contractible which
  is the case since it has a terminal object $e \leftarrow e \to 1$.

  Conversely, assume that
  $\hamm \cat{C}(\uvar, a) \to 1$ is a weak equivalence.
  Then in the diagram
  \begin{tikzeq*}
  \matrix[diagram]
  {
    |(La)| \hamm \cat{C}(\uvar, a) & & |(L1)| \hamm \cat{C}(\uvar, 1) \\
    & |(1)| 1 \\
  };

  \draw[->] (La) to (L1);

  \draw[->] (La) to node[below left]  {$\we$} (1);
  \draw[->] (L1) to node[below right] {$\we$} (1);
  \end{tikzeq*}
  both downward maps are weak equivalences by the first part of the proof and
  hence so is the horizontal one.
  It follows that $a \to 1$ is a weak equivalence by \Cref{Yoneda-we}.
\end{proof}

\begin{lemma}\label{strict-representation}
  If $q \from A \fto B$ is a fibration between
  homotopy representable presheaves over $\hamm \cat{C}$, then
  for every fixed representation $r_b \from \hamm \cat{C}(\uvar, b) \weto B$
  there are a representation $r_a \from \hamm \cat{C}(\uvar, a) \weto A$ and
  a fibration $p \from a \fto b$ \st{} the square
  \begin{tikzeq*}
  \matrix[diagram,column sep={6em,between origins}]
  {
    |(a)| \hamm \cat{C}(\uvar, a) & |(A)| A \\
    |(b)| \hamm \cat{C}(\uvar, b) & |(B)| B \\    
  };

  \draw[->] (a) to node[above] {$r_a$} (A);
  \draw[->] (b) to node[below] {$r_b$} (B);

  \draw[->] (a) to node[left]  {$p_*$} (b);
  \draw[->] (A) to node[right] {$q$}   (B);
  \end{tikzeq*}
  commutes.
\end{lemma}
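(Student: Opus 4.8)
The plan is to transport some representation of $A$ along a zig-zag of weak equivalences in $\cat{C}$ extracted from a fractional description of the induced map to $B$, and then to rigidify the square by means of \Cref{HEP}. Concretely, fix any representation $s \colon \hamm\cat{C}(\uvar, a_0) \weto A$, which exists since $A$ is homotopy representable. By the Yoneda lemma for simplicially enriched presheaves, the composite $q s \colon \hamm\cat{C}(\uvar, a_0) \to B$ is classified by a vertex $\beta \in B_{a_0}$, and since $r_b$ is a levelwise weak homotopy equivalence there is a vertex $\gamma \in \hamm\cat{C}(a_0, b)_0$ with $(r_b)_{a_0}(\gamma)$ in the same path component of $B_{a_0}$ as $\beta$; because $\hamm\cat{C}(\uvar, a_0)$ is cofibrant and $B$ is fibrant, this exhibits a homotopy $q s \simeq r_b \gamma_*$, where $\gamma_*$ is post-composition by $\gamma$. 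By \Cref{fractions-hammocks} the map $\nerve \Frac_{\cat{C}}(a_0, b) \to \hamm\cat{C}(a_0, b)$ is a weak equivalence, hence surjective on path components, so $\gamma$ is connected in $\hamm\cat{C}(a_0, b)$ to the hammock $\xi$ of a right fraction $a_0 \xleftarrow{v} a_1 \xrightarrow{t} b$ (so $v$ is a weak equivalence); applying $(r_b)_{a_0}$ to the connecting path and using fibrancy of $B$ gives $q s \simeq r_b \xi_*$. Finally factor $t$ as $a_1 \xrightarrow{j} a \xrightarrow{p} b$ with $j$ a weak equivalence and $p$ a fibration, by \Cref{fibcat-factorization}; this $p \colon a \fto b$ will be the required fibration.

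Next, precomposing the homotopy $q s \simeq r_b \xi_*$ with the map $v_* \colon \hamm\cat{C}(\uvar, a_1) \to \hamm\cat{C}(\uvar, a_0)$ induced by the backward leg $v \colon a_1 \to a_0$ of the fraction gives $q \circ s \circ v_* \simeq r_b \circ \xi_* \circ v_* = r_b \circ (\xi \circ v)_*$, where $\xi \circ v$ is the hammock $a_1 \xrightarrow{v} a_0 \xleftarrow{v} a_1 \xrightarrow{t} b$ in $\hamm\cat{C}$. By the elementary hammock calculus, $\xi \circ v$ is connected to $t$ in $\hamm\cat{C}(a_1, b)$, so $r_b \circ (\xi \circ v)_*$ and $r_b \circ t_*$ are classified by points in the same path component of the Kan complex $B_{a_1}$, whence $r_b \circ (\xi \circ v)_* \simeq r_b \circ t_* = r_b \circ (p j)_* = r_b \circ p_* \circ j_*$; altogether $q \circ s \circ v_* \simeq r_b \circ p_* \circ j_*$. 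Both $v_*$ and $j_*$ are weak equivalences of presheaves by \Cref{Yoneda-we}, so in the homotopy category of presheaves the composite $[s] \circ [v_*] \circ [j_*]^{-1} \colon \hamm\cat{C}(\uvar, a) \to A$ is an isomorphism; since $\hamm\cat{C}(\uvar, a)$ is cofibrant and $A$ is fibrant, it is represented by an actual weak equivalence $f \colon \hamm\cat{C}(\uvar, a) \weto A$. Then $[q \circ f] = [q] \circ [s] \circ [v_*] \circ [j_*]^{-1} = [q \circ s \circ v_*] \circ [j_*]^{-1} = [r_b \circ p_* \circ j_*] \circ [j_*]^{-1} = [r_b \circ p_*]$, so $q \circ f \simeq r_b \circ p_*$ as actual maps, the target $B$ being fibrant.

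Finally, apply \Cref{HEP} to the fibration $q \colon A \fto B$ and the homotopy commutative triangle with legs $f \colon \hamm\cat{C}(\uvar, a) \to A$ and $r_b \circ p_* \colon \hamm\cat{C}(\uvar, a) \to B$: it produces a map $r_a \colon \hamm\cat{C}(\uvar, a) \to A$ homotopic to $f$ with $q \circ r_a = r_b \circ p_*$. As $r_a \simeq f$ and $f$ is a weak equivalence, $r_a$ is a weak equivalence, hence a representation, and the equation $q \circ r_a = r_b \circ p_*$ is exactly the commutativity of the required square. The step I expect to be the main obstacle is the hammock-calculus input in the second paragraph, namely that the hammock $a_1 \xrightarrow{v} a_0 \xleftarrow{v} a_1 \xrightarrow{t} b$ is connected to $t$ in $\hamm\cat{C}(a_1, b)$: this is the standard fact that a weak equivalence composed with its formal inverse is an identity up to homotopy in the hammock localization, but it must be applied with care about which presheaves are fibrant — only $A$ and $B$, not the representables $\hamm\cat{C}(\uvar, x)$ — so that homotopies may be composed precisely where that is needed.
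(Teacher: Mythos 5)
Your proof is correct, and it follows the same overall strategy as the paper's: pick a representation of $A$, use the \Cref{fractions-hammocks} identification of mapping spaces with nerves of fraction categories to pull the classifying vertex of $q$ back to a right fraction, assemble a homotopy-commutative square, and then strictify it with \Cref{HEP}. The one place where you take a visibly longer route is in producing the fibration $p$. The paper invokes the \emph{Reedy fibrant} fraction category $\Frac^\Reedy_{\cat{C}}(\tilde a, b)$, whose forward leg is already a fibration (since $(w,p) \from a \to \tilde a \times b$ being a fibration forces $p$ to be one), so there is no extra leg to correct for and the homotopy-commutative square is built directly over $\hammrep{\cat{C}, a}$. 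You instead used a plain fraction $a_0 \xleftarrow{v} a_1 \xrightarrow{t} b$, then factored $t = p j$, which introduces the auxiliary object $a_1$ and forces the second paragraph: precomposing with $v_*$, reducing the hammock $a_1 \to a_0 \leftarrow a_1 \to b$ to $t$, and finally inverting $j_*$ in the homotopy category to land a representative $f \from \hammrep{\cat{C}, a} \to A$. All of this is sound (representables are cofibrant in the injective model structure, $A$ and $B$ are fibrant, so the passage to and from the homotopy category is legitimate), but it can be collapsed: since \Cref{fractions-hammocks} also gives $\nerve \Frac^\Reedy_{\cat{C}}(a_0, b) \to \hamm\cat{C}(a_0, b)$ as a weak equivalence, you could have chosen the fraction to be Reedy fibrant from the start, set $a := a_1$, and gone straight to the \Cref{HEP} step with $r_{a_0} w_*$ as the top map, exactly as the paper does.
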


\begin{proof}
  Pick some
  representation $r_{\tilde a} \from \hammrep{\cat{C},\tilde a} \weto A$ and
  consider $q r_{\tilde a} (\id_{\tilde a}) \in B_{\tilde a}$.
  Since both
  \begin{tikzeq*}
  \matrix[diagram,column sep={7em,between origins}]
  { 
    |(F)| \nerve \Frac^\Reedy_{\cat{C}}(\tilde a, b) &[2em]
    |(H)| \hamm \cat{C}(\tilde a, b)                 &
    |(B)| B_{\tilde a}                               \\
  };
  
  \draw[->] (F) to (H);

  \draw[->] (H) to node[above] {$r_b$} (B);
  \end{tikzeq*}
  are weak homotopy equivalences (see \Cref{fractions-hammocks}) and
  $B_{\tilde a}$ is a Kan complex, there are
  a fraction $p \bar{w} \from \tilde a \rfto b$ and
  an edge in $B_{\tilde a}$ connecting
  $r_b (p \bar{w})$ to $q r_{\tilde a} (\id_{\tilde a})$.
  This yields a diagram
  \begin{tikzeq*}
  \matrix[diagram,column sep={8em,between origins}]
  {
    |(a)| \hammrep{\cat{C},a} & |(A)| A \\
    |(b)| \hammrep{\cat{C},b} & |(B)| B \\
  };

  \draw[->] (a) to node[left]  {$p_*$} (b);
  \draw[->] (A) to node[right] {$q$}   (B);

  \draw[->] (a)  to node[above] {$r_{\tilde a} w_*$} (A);
  \draw[->] (b)  to node[below] {$r_b$}              (B);
  \end{tikzeq*}
  (where $a$ is the domain of $p$ and $w$)
  which does not commute strictly but only up to homotopy.
  However, since $q$ is a fibration this square can be strictified
  for the price of replacing $r_{\tilde a} w_*$ by homotopic $r_a$
  by \Cref{HEP}.
  Since both $w_*$ and $r_{\tilde a}$ are weak equivalences
  (by \Cref{Yoneda-we}) so is $r_a$ which completes the proof.
\end{proof}

\begin{lemma}\label{fiber-representables}
  Let $p \from a \fto b$ be a fibration in $\cat{C}$, then
  for every $e \in \cat{C}$ and
  a Reedy fibrant fraction $s \bar{v} \from e \rfto b$
  the slice $p_* \slice s \bar{v}$ is the homotopy fiber of
  $p_* \from \Frac^\Reedy_{\cat{C}}(e, a) \to \Frac^\Reedy_{\cat{C}}(e, b)$
  over $s \bar{v}$.
\end{lemma}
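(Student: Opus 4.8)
The plan is to realize $\nerve(p_* \slice s\bar{v})$ as the homotopy fiber of $\nerve p_*$ over $s\bar{v}$ by way of the (``over'') form of Quillen's Theorem~B. First I would record the basic structure: $p_*$ is induced by post-composing fractions with the fibration $\id_e \pull p \from e \pull a \fto e \pull b$, it preserves Reedy fibrancy because the relevant structure map of the image fraction factors through $e \pull a$ as a composite of fibrations, and every morphism of $\Frac^\Reedy_{\cat{C}}(e, b)$ is a weak equivalence in $\cat{C}$, since the triangle over $e$ exhibits its weak equivalence leg as a composite of the two given weak equivalence legs and 2-out-of-3 applies. Then I would describe the comma category concretely: writing $P := (e \pull a) \pull_{e \pull b} e'$ for the pullback of the fibration $(v, s) \from e' \fto e \pull b$ along $\id_e \pull p$ — so that $P \fto e \pull a$ is a fibration and $P \iso a \pull_b e'$ — an object of $p_* \slice s\bar{v}$ amounts to a morphism $\psi \from c \to P$ for which $c \to P \fto e \pull a$ is a fibration and $c \to P \to e'$ is a weak equivalence, the morphisms being the triangles over $P$.

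By Theorem~B, the forgetful functor $p_* \slice s\bar{v} \to \Frac^\Reedy_{\cat{C}}(e, a)$ then exhibits $\nerve(p_* \slice s\bar{v})$ as the homotopy fiber of $\nerve p_*$ over $s\bar{v}$ as soon as, for every morphism $\beta \from s\bar{v} \to s'\bar{v}'$ in $\Frac^\Reedy_{\cat{C}}(e, b)$, the post-composition functor $\beta_* \from p_* \slice s\bar{v} \to p_* \slice s'\bar{v}'$ is a weak homotopy equivalence; the remaining ingredient is only that $\Frac^\Reedy_{\cat{C}}(e, b) \slice s\bar{v}$ has a terminal object, hence a contractible nerve.

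Checking this hypothesis is the crux, and where I expect the real work. Under the description above, $\beta_*$ is post-composition with an induced morphism $\theta \from P \to P'$ over $e \pull a$, and $\theta$ is a weak equivalence: $\beta$ is a weak equivalence in the fibration category $\cat{C}\fslice b$, and $\theta$ is its image under the exact pullback functor $p^* \from \cat{C}\fslice b \to \cat{C}\fslice a$. So everything reduces to showing that post-composition along a weak equivalence $\theta \from P \to P'$ between objects fibrant over $e \pull a$ induces a weak homotopy equivalence of these subcategories, and here the ``terminal object'' shortcut is unavailable since they are not full slices. When $\theta$ is an acyclic fibration one can argue directly: the pullback adjunction $\theta_! \adj \theta^*$ restricts to the subcategories in question and has unit and counit weak equivalences, by the Gluing Lemma. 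The general case I would reduce to this one by passing first to the weakly equivalent full subcategories on the $\psi$ that are themselves fibrations — every object is connected to such a one by the weak equivalence coming from a factorization of $\psi$ — where the relevant categories become homotopy-section constructions on the pulled-back fibration $a \pull_b e' \fto e'$ and so depend on $e'$ only up to weak equivalence. The delicate point throughout is the bookkeeping: verifying that the defining fibrancy and weak-equivalence conditions survive each replacement, which nonetheless uses nothing beyond the Gluing Lemma and 2-out-of-3.
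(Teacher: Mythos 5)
Your setup is the same as the paper's: reduce via Quillen's Theorem~B to showing that for every morphism $u \from s_0\bar v_0 \to s_1\bar v_1$ in $\Frac^\Reedy_{\cat C}(e,b)$, the induced functor $u_* \from p_*\slice s_0\bar v_0 \to p_*\slice s_1\bar v_1$ is a weak homotopy equivalence, and you correctly observe that all such $u$ are weak equivalences of $\cat C$. For $u$ a fibration the paper observes simply that $u_*$ then has a right adjoint (pullback), so its nerve is a homotopy equivalence; you make the analogous observation for $\theta$ an acyclic fibration, with the superfluous further claim that the unit/counit are weak equivalences — superfluous because an adjunction between small categories already induces inverse homotopy equivalences of nerves regardless of what the unit and counit look like.

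The genuine gap is in your reduction of the general case to the (acyclic) fibration case. You propose to pass to the full subcategories of objects $\psi \from c \to P$ that are themselves fibrations, and then argue that these ``depend on $e'$ only up to weak equivalence.'' This does not work as written, for two independent reasons. First, the inclusion of the full subcategory of $\psi$-fibrations into $p_*\slice s\bar v$ is not obviously a weak homotopy equivalence: the observation that each object receives a morphism (coming from a factorization of $\psi$) into the subcategory does not imply anything about nerves, and without functorial factorizations there is no evident deformation retraction or adjoint giving the equivalence. Second, even granting that inclusion, $\beta_*$ does not restrict to these subcategories when $\theta \from P \to P'$ is not a fibration — post-composing a fibration with a non-fibration need not be a fibration — so you cannot simply transport the problem there. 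And the claim that the resulting categories ``depend on $e'$ only up to weak equivalence'' would, even if established, only show the two categories to be abstractly equivalent, not that the specific functor $\beta_*$ is a weak homotopy equivalence, which is what Theorem~B requires.

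The paper's reduction avoids all of this by a single clean move: factor the morphism $(\id, u) \from e'_0 \to e'_0 \pull_{e\times b} e'_1$ in the slice $\cat C \fslice e\times b$ as a weak equivalence followed by a fibration. This produces a Reedy fibrant fraction $\bullet$ sitting over both $e'_0$ and $e'_1$ via acyclic fibrations that are morphisms in $\Frac^\Reedy_{\cat C}(e,b)$, and the commuting triangle $u \circ (\bullet \to e'_0) = (\bullet \to e'_1)$ lets you conclude $u_*$ is a weak homotopy equivalence from the already-known fibration case by 2-out-of-3 on nerves. I would suggest you replace your subcategory argument with this factorization step; the rest of your outline is essentially correct.
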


\begin{proof}
  By Quillen's Theorem B \cite{q}*{p.\ 89} it is enough to
  check that for every morphism $u \from s_0 \bar{v}_0 \to s_1 \bar{v}_1$
  in $\Frac_{\cat{C}}(e, b)$, the induced functor
  $u_* \from p_* \slice s_0 \bar{v}_0 \to p_* \slice s_1 \bar{v}_1$ is a \whe{}.
  Indeed, $u_*$ has a right adjoint given by pullback provided that
  $u$ is a fibration.

  On the other hand, given a general $u \from e'_0 \to e'_1$,
  we take a factorization $e'_0 \weto \bullet \fto e'_0 \pull_{e \times b} e'_1$
  of $(\id, u)$ in the slice $\cat{C} \fslice e \times b$ which yields a diagram
  \begin{tikzeq*}
  \matrix[diagram]
  {
                & |(t)| e'_0    &                      \\
    |(e1)| e'_1 & |(b)| \bullet & |(e0)| e'_0 \text{.} \\
  };

  \draw[->] (t) to (b);

  \draw[->] (t) to node[above left]  {$u$}   (e1);
  \draw[->] (t) to node[above right] {$\id$} (e0);

  \draw[fib] (b) to node[below] {$\we$} (e1);
  \draw[fib] (b) to node[below] {$\we$} (e0);
  \end{tikzeq*}
  By the previous part of the argument, the two horizontal fibrations induce
  \whe{}s of the respective slices and so does the identity.
  Therefore, $u_*$ is a \whe{} by 2-out-of-3.
\end{proof}

\begin{proposition}\label{hammock-pullback}
  A square
  \begin{tikzeq*}
  \matrix[diagram]
  {
    |(u)| a & |(x)| c \\
    |(v)| b & |(y)| d \\
  };

  \draw[->] (u) to node[left]  {$p$} (v);
  \draw[->] (x) to node[right] {$q$} (y);

  \draw[->] (u) to node[above] {$f$} (x);
  \draw[->] (v) to node[below] {$g$} (y);
  \end{tikzeq*}
  in $\cat{C}$ is a homotopy pullback \iff{} the associated square
  \begin{tikzeq*}
  \matrix[diagram,column sep={8em,between origins}]
  {
    |(u)| \hammrep{\cat{C},a} & |(x)| \hammrep{\cat{C},c} \\
    |(v)| \hammrep{\cat{C},b} & |(y)| \hammrep{\cat{C},d} \\
  };

  \draw[->] (u) to (v);
  \draw[->] (x) to (y);
  \draw[->] (u) to (x);
  \draw[->] (v) to (y);
  \end{tikzeq*}
  is a homotopy pullback of presheaves.
\end{proposition}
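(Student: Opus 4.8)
The plan is to reduce the statement to the special case of a strict pullback along a fibration, and to settle that case by modelling the mapping spaces of $\hamm\cat{C}$ by nerves of categories of fractions (\Cref{fractions-hammocks}) and comparing homotopy fibers (\Cref{hpb-fibers,fiber-representables}). For the reduction, choose a factorization $c \weto c' \fto d$, form the strict pullback $P = b \pull_d c'$, and let $h \from a \to P$ be the induced comparison. By \Cref{hpb} the square in $\cat{C}$ is a homotopy pullback \iff{} $h$ is a weak equivalence, equivalently (\Cref{Yoneda-we}) \iff{} $h_* \from \hammrep{\cat{C}, a} \to \hammrep{\cat{C}, P}$ is a weak equivalence of presheaves. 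Write $\cat{S}$ and $\cat{S}'$ for the squares of representables attached to $(a, c, b, d)$ and to $(P, c', b, d)$; there is a morphism $\cat{S} \to \cat{S}'$ which is the identity on the $b$- and $d$-corners, is induced by $c \weto c'$ (a weak equivalence) on the $c$-corner, and is $h_*$ on the $a$-corner. By the special case below $\cat{S}'$ is a homotopy pullback of presheaves, and since homotopy pullbacks of possibly non-fibrant presheaves are invariant under weak equivalences of squares (\Cref{hpb-invariant}(1), valid for possibly non-fibrant presheaves by \Cref{presheaves}), $\cat{S}$ is a homotopy pullback as soon as $h_*$ is a weak equivalence. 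Conversely, if $\cat{S}$ is a homotopy pullback, then both $\cat{S}$ and $\cat{S}'$ are, so a comparison of homotopy fibers (\Cref{hpb-fibers}) applied to the map $\cat{S} \to \cat{S}'$, already a weak equivalence on three corners, shows it is one on the $a$-corner too, i.e.\ $h_*$ is a weak equivalence. Thus both conditions of the proposition are equivalent to ``$h_*$ is a weak equivalence''.

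It remains to treat the special case: the square is a strict pullback of a fibration $q \from c \fto d$ along $g$, so $a = b \pull_d c$ with $p \from a \fto b$ the pulled-back fibration and $f \from a \to c$, $p$ the two projections. A square of presheaves is a homotopy pullback \iff{} it is one levelwise (cf.\ the proof of \Cref{hpb-fibers}), so we fix $e \in \cat{C}$ and, again by \Cref{hpb-fibers}, must check that for every point $\beta$ of $\hamm\cat{C}(e, b)$ the induced map from the homotopy fiber of $\hamm\cat{C}(e, a) \to \hamm\cat{C}(e, b)$ over $\beta$ to the homotopy fiber of $\hamm\cat{C}(e, c) \to \hamm\cat{C}(e, d)$ over $g_* \beta$ is a \whe{}. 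By \Cref{fractions-hammocks} we may replace these mapping spaces by nerves of categories of Reedy fibrant fractions, and represent $\beta$ and $g_*\beta$ by Reedy fibrant fractions $s\bar{v} \from e \rfto b$ and $s'\bar{v}' \from e \rfto d$. Applying \Cref{fiber-representables} to the fibrations $p$ and $q$ identifies the two homotopy fibers with the nerves of the comma categories $p_* \slice s\bar{v}$ and $q_* \slice s'\bar{v}'$, so it suffices to show that the comparison functor between them, induced by $f$, is a \whe{}. Informally this is just the universal property of $a = b \pull_d c$ carried over to fractions --- a fraction $e \rfto a$ lying over $s\bar{v}$ amounts to a fraction $e \rfto c$ lying over $g_*(s\bar{v})$ --- and one makes it precise via Quillen's Theorem A \cite{q}, by showing that the relevant slice categories are contractible.

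The main obstacle is exactly this last comparison, and the difficulty is the interaction with Reedy fibrancy. Pushforward along $f \from a \to c$ need not preserve Reedy fibrant fractions, and $\Frac^\Reedy_\cat{C}(e, b \pull_d c)$ is not the strict pullback of $\Frac^\Reedy_\cat{C}(e, b)$ and $\Frac^\Reedy_\cat{C}(e, c)$ over $\Frac^\Reedy_\cat{C}(e, d)$; the plain categories $\Frac_\cat{C}(e, \uvar)$ do take the pullback to a strict pullback, but then they fail to compute homotopy fibers via comma categories. One circumvents this by first using \Cref{hpb-invariant} to reduce further to a strict pullback of two fibrations, so that $p$, $q$ and $g$ are all fibrations and the pushforwards along $a \to c$, $c \fto d$ and $b \fto d$ all preserve Reedy fibrancy; even then, the ``section'' of the comparison functor furnished by the universal property only lands in Reedy fibrant fractions into $a$ after a Reedy fibrant replacement, and verifying that these replacements assemble coherently --- equivalently, that the slice categories in Quillen's Theorem A are contractible --- is carried out by the skeletal induction with fraction replacement used in the proof of \Cref{fractions-hammocks}.
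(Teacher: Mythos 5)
Your overall plan matches the paper's: both reduce via \Cref{hpb-invariant} to a square that is a strict pullback of fibrations, both reformulate the presheaf-side condition via \Cref{hpb-fibers} and \Cref{fiber-representables} as the assertion that for each $e$ and each Reedy fibrant fraction $s\bar v \from e \rfto b$ the induced functor $p_*\slice s\bar v \to q_*\slice gs\bar v$ is a \whe{}, and both settle the converse direction via \Cref{Yoneda-we}. Your reorganization around ``$h_*$ is a weak equivalence'' is just a repackaging of the paper's argument.

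Where you diverge from the paper is at the crux, and there your caution is actually well founded. The paper's treatment of the comma-category comparison is much shorter than what you anticipate: it claims that once the square is a strict pullback, $p_*\slice s\bar v \to q_*\slice gs\bar v$ is an \emph{isomorphism of categories}, with inverse supplied by the universal property of $a = b \pull_d c$, sending $(e \xleftarrow{w} e'' \xrightarrow{t} c,\, u)$ to $(e \xleftarrow{w} e'' \xrightarrow{(su,t)} a,\, u)$. But for this recipe to output an object of $p_*\slice s\bar v$ one needs $(w,(su,t)) \from e'' \to e \times a$ to be a fibration, and this does not follow from Reedy fibrancy of the input. (With $\cat{C}$ Kan complexes, $e = d = 1$, $b = c = S^1$, $a = S^1 \times S^1$, $e' = e'' = P S^1$ the path space with $s = t$ the endpoint fibration, and $u$ constant at the constant path, one gets $(su,t)(\gamma) = (\ast, \gamma(1))$, which is not a Kan fibration.) So you have spotted a genuine overstatement in the paper: as you say, the comparison is only a fully faithful embedding onto a full subcategory, and showing that this inclusion is a \whe{} still requires a homotopical argument of the kind you gesture at. That said, your proposed repair --- Theorem~A together with ``the skeletal induction with fraction replacement used in the proof of \Cref{fractions-hammocks}'' --- is left as a plan rather than an argument, so your attempt also does not close this step. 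Two minor corrections: once the square is a strict pullback of two fibrations, $f$ is itself a fibration, so pushforward along $f$ \emph{does} preserve Reedy fibrancy (the only thing at risk is the section); and \Cref{fiber-representables} requires $gs\bar v$ to be Reedy fibrant, which is an additional reason to insist that $g$, too, be a fibration before invoking it.
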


\begin{proof}
  The statement is invariant under weak equivalences of squares in $\cat{C}$ so
  we can assume that both $p$ and $q$ are fibrations.
  Then the square in $\cat{C}$ is a homotopy pullback if and only if
  the morphism $(p, f) \from a \to b \pull_d c$ is a weak equivalence while
  \Cref{hpb-fibers,fiber-representables} imply that
  the square of presheaves is a homotopy pullback if and only if
  for all $e \in \cat{C}$ and $s \bar{v} \from e \rfto b$ the induced functor
  $g_* \from p_* \slice s \bar{v} \to q_* \slice g s \bar{v}$ is a \whe{}.
  We need to verify that these conditions are equivalent.

  If the square in $\cat{C}$ is a homotopy pullback we can further assume that
  it is also a strict pullback.
  In this case $p_* \slice s \bar{v} \to q_* \slice g s \bar{v}$ turns out to be
  an isomorphism of categories.
  We construct its inverse as follows.
  Fix an object of $q_* \slice g s \bar{v}$, i.e.,
  \begin{tikzeq*}
  \matrix[diagram,row sep=1em]
  {
             &              &                  &[3em]                                                          &[3em]
             & |(e''1)| e'' &                  \\
    |(e0)| e & |(e''0)| e'' & |(c)| c          & \text{ in} \Frac^\Reedy_{\cat{C}}(e, c) \text{ together with} &
    |(e1)| e &              & |(d)| d \text{.} \\
             &              &                  &                                                               &
             & |(e')| e'    &                  \\
  };

  \draw[->] (e''0) to node[above] {$w$} node[below] {$\we$} (e0);

  \draw[->] (e''0) to node[above] {$t$} (c);

  \draw[->] (e''1) to node[above left]  {$w$}  (e1);
  \draw[->] (e')   to node[below left]  {$v$}  (e1);
  \draw[->] (e''1) to node[right]       {$u$}  (e');
  \draw[->] (e''1) to node[above right] {$qt$} (d);
  \draw[->] (e')   to node[below right] {$gs$} (d);
  \end{tikzeq*}
  By the universal property of the original pullback we obtain
  a morphism $(su, t) \from e'' \to a$.
  This yields
  \begin{tikzeq*}
  \matrix[diagram,row sep=1em]
  {
             &              &         &[3em]                                                          &[3em]
             & |(e''1)| e'' &         \\
    |(e0)| e & |(e''0)| e'' & |(a)| a & \text{ in} \Frac^\Reedy_{\cat{C}}(e, a) \text{ together with} &
    |(e1)| e &              & |(b)| b \\
             &              &         &                                                               &
             & |(e')| e'    &         \\
  };

  \draw[->] (e''0) to node[above] {$w$} node[below] {$\we$} (e0);

  \draw[->] (e''0) to node[above] {$(su, t)$} (a);

  \draw[->] (e''1) to node[above left]  {$w$}        (e1);
  \draw[->] (e')   to node[below left]  {$v$}        (e1);
  \draw[->] (e''1) to node[right]       {$u$}        (e');
  \draw[->] (e''1) to node[above right] {$p(su, t)$} (b);
  \draw[->] (e')   to node[below right] {$s$}        (b);
  \end{tikzeq*}
  which is an object of $p_* \slice s \bar{v}$.
  This defines the inverse of $g_*$.

  Conversely, assume that the square of presheaves is a homotopy pullback.
  The argument above shows that so is
  \begin{tikzeq*}
  \matrix[diagram,column sep={8em,between origins}]
  {
    |(p)| \hammrep{\cat{C},b \pull_d c} & |(x)| \hammrep{\cat{C},c} \\
    |(v)| \hammrep{\cat{C},b}           & |(y)| \hammrep{\cat{C},d} \\
  };

  \draw[->] (p) to (v);
  \draw[->] (x) to (y);
  \draw[->] (p) to (x);
  \draw[->] (v) to (y);
  \end{tikzeq*}
  and hence the map
  $(p, f)_* \from \hammrep{\cat{C}, a} \to \hammrep{\cat{C}, b \pull_d c}$ is
  a weak equivalence.
  Thus so is $(p, f)$ by \Cref{Yoneda-we}, i.e.,
  the original square is a homotopy pullback.
\end{proof}

\begin{theorem}\label{representable-tribe}
  The category $R \cat{C}$ of homotopy representable fibrant presheaves over
  the hammock localization of a fibration category is a tribe.
\end{theorem}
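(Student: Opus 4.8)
The plan is to make $R \cat{C}$ into a tribe whose fibrations are the injective fibrations between homotopy representable presheaves, transferring the structure from the ambient tribe of fibrant presheaves over $\hamm \cat{C}$ (\Cref{presheaves-tribe}), of which $R \cat{C}$ is a full subcategory. Two facts are used throughout: a presheaf weakly equivalent to a homotopy representable one is homotopy representable (\Cref{representable-invariant}), and anodyne morphisms of the ambient tribe, being homotopy equivalences by \Cref{anodyne-he}, are weak equivalences of presheaves (all presheaves being injectively cofibrant). The crucial preliminary is that a morphism between homotopy representable presheaves is anodyne in $R \cat{C}$ \iff{} it is anodyne in the ambient tribe: one implication is immediate since $R \cat{C}$ has fewer fibrations, and for the other, given $i \from A \acto B$ anodyne in $R \cat{C}$, one factors it in the ambient tribe as $A \acto A' \fto B$, notes that $A'$ is weakly equivalent to $A$ hence homotopy representable so that $A' \fto B$ is a fibration in $R \cat{C}$, and lifts $i$ against it to exhibit $i$ as a retract of the ambient-anodyne $A \acto A'$ — whence $i$ is anodyne in the ambient tribe, classes defined by a left lifting property being closed under retracts. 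Granting this, \Cref{tribe-terminal} and \Cref{tribe-factorization} follow quickly. The terminal presheaf is the terminal object of the ambient tribe and is homotopy representable, since \Cref{hammock-terminal} applied to the identity of the terminal object of $\cat{C}$ shows $\hamm \cat{C}(\uvar, 1) \weto 1$; it is therefore terminal in $R \cat{C}$, and every object of $R \cat{C}$ is fibrant as a fibrant presheaf, giving \Cref{tribe-terminal}. For \Cref{tribe-factorization} one factors a morphism of $R \cat{C}$ in the ambient tribe as an anodyne morphism followed by a fibration; the intermediate object is weakly equivalent to the source, hence homotopy representable, so the factorization lies in $R \cat{C}$, and its anodyne part is \emph{a fortiori} anodyne in $R \cat{C}$.

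The substance of the proof is \Cref{tribe-pullback}. Let $q \from A \fto B$ be a fibration in $R \cat{C}$ and $g \from C \to B$ a morphism with $C \in R \cat{C}$; form the pullback $C \pull_B A$ in fibrant presheaves. It suffices to show it is homotopy representable, for then it is automatically the pullback in $R \cat{C}$, and stability of fibrations under pullback is inherited from the ambient tribe. Factoring $g$ in the ambient tribe as $C \acto C' \fto B$ makes $C'$ homotopy representable and, by right properness of the injective model structure, makes $C \pull_B A \to C' \pull_B A$ a weak equivalence, so we may assume $g$ is itself a fibration. Apply \Cref{strict-representation} to both fibrations $A \fto B$ and $C \fto B$ with one common representation $r_b \from \hamm \cat{C}(\uvar, b) \weto B$, obtaining representations $r_a$, $r_c$ and fibrations $a \fto b$, $c \fto b$ in $\cat{C}$ compatible with $q$, $g$ and $r_b$; form the pullback $a \pull_b c$ in $\cat{C}$. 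Since $a \pull_b c \to a$, $c \to b$ is a strict pullback along a fibration it is a homotopy pullback in $\cat{C}$, so by \Cref{hammock-pullback} the square on $\hamm \cat{C}(\uvar, a \pull_b c) \to \hamm \cat{C}(\uvar, a)$ and $\hamm \cat{C}(\uvar, c) \to \hamm \cat{C}(\uvar, b)$ is a homotopy pullback of presheaves, while $C \pull_B A \to C$, $A \to B$ is one as a strict pullback along $q$. The triple $r_a$, $r_c$, $r_b$ is a weak equivalence between the underlying cospans of these squares, so by invariance of homotopy pullbacks under weak equivalences of the cospan (cf.\ \Cref{hpb-invariant}, valid for non-fibrant presheaves) the presheaf $C \pull_B A$ is weakly equivalent to $\hamm \cat{C}(\uvar, a \pull_b c)$, hence homotopy representable.

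It remains to check \Cref{tribe-anodyne}. If $i \from A \acto B$ is anodyne in $R \cat{C}$ and $q \from C \fto B$ is a fibration in $R \cat{C}$, the preliminary shows $i$ is anodyne in the ambient tribe, so the pullback $A \pull_B C \to C$ is anodyne there; its domain is homotopy representable by the verification of \Cref{tribe-pullback}, so this pullback is a morphism of $R \cat{C}$, and it lifts against every injective fibration between fibrant presheaves and in particular against the fibrations of $R \cat{C}$. The main obstacle is \Cref{tribe-pullback} itself — transporting the presheaf-level pullback back into $\cat{C}$ via \Cref{strict-representation} and identifying the resulting square with the one coming from $\cat{C}$ through \Cref{hammock-pullback} — together with the identification of anodyne morphisms used for \Cref{tribe-anodyne}, which is the other point that demands care.
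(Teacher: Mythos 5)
Your proposal is correct and follows essentially the same route as the paper: the retract argument identifying anodyne morphisms of $R\cat{C}$ with ambient anodynes, \Cref{hammock-terminal} for the terminal object, and \Cref{strict-representation} together with \Cref{hammock-pullback} and the Gluing Lemma (your invariance of homotopy pullbacks under weak equivalences of cospans) to show homotopy representable presheaves are closed under the relevant pullbacks. The only differences are organizational — the paper phrases the reduction as ``closure under homotopy pullbacks'' and passes via \Cref{hpb-invariant} to a strict pullback of two fibrations, while you reduce by factoring $g$ and invoking right properness — but the underlying argument is the same.
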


\begin{proof}
  By a retract argument as in the proof of \Cref{tribe-Reedy},
  a map is anodyne in the category of homotopy representable fibrant presheaves
  \iff{} it is anodyne in the category of all fibrant presheaves.
  Thus in light of \Cref{presheaves-tribe,representable-invariant},
  it suffices to verify that the terminal presheaf is homotopy representable and
  that homotopy representable presheaves are closed under homotopy pullbacks.

  The first claim follows directly from \Cref{hammock-terminal}.
  For the second one, by \Cref{hpb-invariant}, it is enough to
  consider a strict pullback square
  \begin{tikzeq*}
  \matrix[diagram]
  {
    |(U)| A & |(X)| C \\
    |(V)| B & |(Y)| D \\
  };

  \draw[->] (U) to (V);
  \draw[->] (X) to (Y);

  \draw[->] (U) to (X);
  \draw[->] (V) to (Y);
  \end{tikzeq*}
  where both $C \to D$ and $B \to D$ are fibrations.
  We assume that $B$, $C$ and $D$ are homotopy representable and
  by \Cref{strict-representation} we can pick
  compatible representing objects $b$, $c$ and $d$ and
  fibrations $c \to d$ and $b \to d$.
  We denote the resulting pullback in $\cat{C}$ by $a$ and obtain a cube
  \begin{tikzeq*}
  \matrix[diagram,column sep={6em,between origins}]
  {
      |(u)| \hammrep{\cat{C},a} & & |(x)| \hammrep{\cat{C},c} & \\
    & |(U)| A                   & & |(X)| C \\
      |(v)| \hammrep{\cat{C},b} & & |(y)| \hammrep{\cat{C},d} & \\
    & |(V)| B                   & & |(Y)| D \\
  };

  \draw[->] (u) to (v);
  \draw[->] (x) to (y);

  \draw[->] (u) to (x);
  \draw[->] (v) to (y);

  \draw[->,over] (U) to (V);
  \draw[->] (X) to (Y);

  \draw[->,over] (U) to (X);
  \draw[->] (V) to (Y);

  \draw[->] (u) to (U);
  \draw[->] (v) to (V);
  \draw[->] (x) to (X);
  \draw[->] (y) to (Y);
  \end{tikzeq*}
  where both the front and the back face are homotopy pullbacks
  (by \Cref{hammock-pullback}).
  It follows from the Gluing Lemma that
  $\hammrep{\cat{C},a} \to A$ is a weak equivalence, i.e.,
  $A$ is homotopy representable.
\end{proof}

The tribe $R \cat{C}$ is in fact (semi)simplicial, but
this enrichment will play no role in our arguments.

We conclude the section with a technical result which
will be needed in \Cref{tribes-of-presheaves}.
Let $F \from \cat{C} \to \cat{D}$ be an exact functor of fibration categories
and let $A$ be a presheaf over $\hamm \cat{C}$.
A \emph{left homotopy Kan extension} of $A$ along $F$ is
a presheaf $X$ over $\hamm \cat{D}$ together with a map $A \to F^* X$ \st{}
there is a projectively cofibrant replacement $A' \weto A$ \st{}
the adjoint transpose $\Lan_F A' \to X$ of the composite $A' \to A \to F^* X$
is a weak equivalence.
(Note that a representation of $A$ is in particular
a projectively cofibrant replacement of $A$.)
Here, we write $\Lan_F$ for the strict left Kan extension along $F$.

\begin{lemma}\label{Lan-pullback}
  Let $F \from \cat{C} \to \cat{D}$ be an exact functor of fibration categories
  and let
  \begin{tikzeq*}
  \matrix[diagram]
  {
    |(U)| X_\emptyset & |(X)| X_1    \\
    |(V)| X_0         & |(Y)| X_{01} \\
  };

  \draw[->] (U) to (V);
  \draw[->] (X) to (Y);

  \draw[->] (U) to (X);
  \draw[->] (V) to (Y);
  \end{tikzeq*}
  be a homotopy pullback of presheaves over $\hamm \cat{D}$ and let
  \begin{tikzeq*}
  \matrix[diagram,column sep={6em,between origins}]
  {
      |(A)| A_\emptyset     & & |(C)| A_1        & \\
    & |(U)| F^* X_\emptyset & & |(X)| F^* X_1      \\
      |(B)| A_0             & & |(D)| A_{01}     & \\
    & |(V)| F^* X_0         & & |(Y)| F^* X_{01}   \\
  };

  \draw[->] (A) to (B);
  \draw[->] (C) to (D);

  \draw[->] (A) to (C);
  \draw[->] (B) to (D);

  \draw[->,over] (U) to (V);
  \draw[->] (X) to (Y);

  \draw[->,over] (U) to (X);
  \draw[->] (V) to (Y);

  \draw[->] (A) to (U);
  \draw[->] (B) to (V);
  \draw[->] (C) to (X);
  \draw[->] (D) to (Y);
  \end{tikzeq*}
  be a diagram of presheaves over $\hamm \cat{C}$ where
  the back face is also a homotopy pullback and
  $A_0$, $A_1$ and $A_{01}$ are all homotopy representable.
  If three of the diagonal arrows exhibit $X_0$, $X_1$ and $X_{01}$ as
  left homotopy Kan extensions of $A_0$, $A_1$ and $A_{01}$, respectively, then
  the fourth one exhibits
  $X_\emptyset$ as a left homotopy Kan extension of $A_\emptyset$.
\end{lemma}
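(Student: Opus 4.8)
The plan is to reduce the statement to a computation with representable presheaves, where the behaviour of $\Lan_F$ is transparent. The basic input is that, since the exact functor $F$ is homotopical, it induces a simplicial functor $\hamm\cat{C}\to\hamm\cat{D}$ along which $F^*$ is restriction and $\Lan_F$ is its left adjoint; in particular $\Lan_F$ carries a representable presheaf $\hammrep{\cat{C}, a}$ to the representable presheaf $\hammrep{\cat{D}, Fa}$, naturally in $a\in\cat{C}$. Moreover $F^*$ preserves projective fibrations and projective acyclic fibrations, so $\Lan_F$ is left Quillen for the projective model structures; hence a left homotopy Kan extension along $F$ may be detected using \emph{any} projectively cofibrant replacement of the source, in particular the representation of a homotopy representable one. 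Thus, for a homotopy representable presheaf $A$ with representation $\hammrep{\cat{C}, a}\weto A$, a map $A\to F^*X$ exhibits $X$ as a left homotopy Kan extension of $A$ \iff{} the transpose $\hammrep{\cat{D}, Fa}=\Lan_F\hammrep{\cat{C}, a}\to X$ of the composite $\hammrep{\cat{C}, a}\to A\to F^*X$ is a weak equivalence, which by \Cref{representable-invariant} just says that $X$ is homotopy representable with $Fa$ as a representing object, compatibly with the structure map.

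First I would put the back face in a strict form. Factoring the cospan $A_0\to A_{01}\leftarrow A_1$ into weak equivalences followed by fibrations of presheaves and then applying \Cref{strict-representation} twice — after fixing a representation of $A_{01}$ — exactly as in the proof of \Cref{representable-tribe}, I obtain representing objects $a_0,a_1,a_{01}\in\cat{C}$, fibrations $a_0\fto a_{01}$ and $a_1\fto a_{01}$, and representations $\hammrep{\cat{C}, a_i}\weto A_i$ for $i\in\{0,1,01\}$ that are compatible with the maps of the back face. Setting $a_\emptyset := a_0\pull_{a_{01}}a_1$, \Cref{hammock-pullback} shows that the square of representables $\hammrep{\cat{C}, a_\bullet}$ is a homotopy pullback, so comparing it with the back face through the Gluing Lemma (\Cref{hpb-invariant}) shows that the induced map $\hammrep{\cat{C}, a_\emptyset}\to A_\emptyset$ is a weak equivalence. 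Thus $A_\emptyset$ is homotopy representable, $\hammrep{\cat{C}, a_\emptyset}$ serves as its projectively cofibrant replacement, and after replacing the whole cube by a weakly equivalent one I may assume that its back face is the square $\hammrep{\cat{C}, a_\bullet}$.

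Next I would transport this to $\cat{D}$ and conclude. Since $F$ is exact, $Fa_\emptyset = Fa_0\pull_{Fa_{01}}Fa_1$ is a strict pullback along the fibrations $Fa_0\fto Fa_{01}$ and $Fa_1\fto Fa_{01}$, so \Cref{hammock-pullback} shows that $\hammrep{\cat{D}, Fa_\bullet}=\Lan_F\hammrep{\cat{C}, a_\bullet}$ is a homotopy pullback square over $\hamm\cat{D}$. Transposing the map of squares $\hammrep{\cat{C}, a_\bullet}\to F^*X_\bullet$ along the adjunction $\Lan_F\adj F^*$ yields a map of homotopy pullback squares $\hammrep{\cat{D}, Fa_\bullet}\to X_\bullet$ over $\hamm\cat{D}$ which, by the first paragraph, is a weak equivalence on the three corners indexed by $0$, $1$ and $01$; this is precisely the hypothesis. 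Invariance of homotopy pullbacks under weak equivalence (again the Gluing Lemma, \Cref{hpb-invariant}) then forces the remaining corner map $\hammrep{\cat{D}, Fa_\emptyset}=\Lan_F\hammrep{\cat{C}, a_\emptyset}\to X_\emptyset$ to be a weak equivalence as well; since $\hammrep{\cat{C}, a_\emptyset}\weto A_\emptyset$ is a representation, this is exactly the assertion that $X_\emptyset$ is a left homotopy Kan extension of $A_\emptyset$.

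The step I expect to require the most care is the reduction in the second paragraph: one must replace the given cube by a weakly equivalent one whose back face is of the form $\hammrep{\cat{C}, a_\bullet}$ while retaining all the structure in play — that the front and back faces are homotopy pullbacks, that $A_0$, $A_1$ and $A_{01}$ are homotopy representable, and that $X_0$, $X_1$ and $X_{01}$ are left homotopy Kan extensions of them — and one must keep the vertical maps genuine morphisms of presheaves, so that the transpose in the third paragraph is an honest map of squares over $\hamm\cat{D}$ rather than a zig-zag. This is handled by the same bookkeeping as in the proofs of \Cref{representable-tribe} and \Cref{strict-representation}, together with the naturality of the isomorphism $\Lan_F\hammrep{\cat{C}, a}\iso\hammrep{\cat{D}, Fa}$; once that is in place, the rest is the purely formal comparison of homotopy pullback squares.
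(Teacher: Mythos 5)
Your proof is correct and takes essentially the same route as the paper: strictify the back face of the cube so it becomes a square of representables $\hammrep{\cat{C}, a_\bullet}$ with $a_\emptyset = a_0 \pull_{a_{01}} a_1$ a pullback along fibrations, transpose along $\Lan_F \adj F^*$ using $\Lan_F \hammrep{\cat{C}, a} \iso \hammrep{\cat{D}, F a}$, and conclude by the Gluing Lemma. The one organizational difference is in how the reduction is arranged: the paper first replaces the front face $X$ by a weakly equivalent $X'$ that is a strict pullback of two projective fibrations, and only then factors the composite $A \to F^* X \to F^* X'$ as $A \weto A' \to F^* X'$ (by factoring the map of underlying cospans as a weak equivalence followed by a Reedy fibration and completing to a pullback). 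This ordering guarantees, essentially for free, that the replaced back face $A'$ comes equipped with a genuine map of squares to $F^* X'$. Your version factors the cospan of $A_i$'s first, which means the new back face does not automatically map to $F^* X$; you would need an explicit lifting step (using that the $A_i \to A'_i$ can be taken to be acyclic cofibrations and that $F^* X_i$ is injectively fibrant) to recover the cube, which you flag as the delicate point but leave implicit. With that filled in, the two proofs are materially identical.
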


\begin{proof}
  Without loss of generality we may assume that the back face is
  a strict pullback of two projective fibrations.
  To see that we view the cube as a map of squares $A \to F^* X$ and
  we choose a weak equivalence $X \weto X'$ where
  $X'$ is a pullback of two projective fibrations.
  We then factor the composite $A \to F^* X \to F^* X'$ as
  $A \weto A' \to F^* X'$ with
  $A'$ also a pullback of two projective fibrations.
  We do so by first factoring the map of underlying cospans as
  a weak equivalence followed by a Reedy fibration and
  completing the resulting cospan to a pullback.

  We pick a square $a'$ in $\cat{C}$ together with compatible representations
  $\hammrep{\cat{C}, a'} \weto A'$ as
  in the proof of \Cref{representable-tribe}.
  By the assumption all the adjoint transposes
  $\hammrep{\cat{D}, F a_0} \to X'_0$, $\hammrep{\cat{D}, F a_1} \to X'_1$ and
  $\hammrep{\cat{D}, F a_{01}} \to X'_{01}$ are weak equivalences.
  Thus, by the Gluing Lemma, so is
  $\hammrep{\cat{D}, F a_\emptyset} \to X'_\emptyset$ which concludes the proof.
\end{proof}

\section{Approximating fibration categories by tribes}
\label{tribes-of-presheaves}

In this section we prove the following result.
\begin{theorem}\label{trb-fibcat-App}
  The forgetful functor $\Trb \to \FibCat$ of \Cref{Trb-FibCat} satisfies
  the approximation property (App2).
\end{theorem}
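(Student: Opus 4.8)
The plan is to represent the source fibration category, up to weak equivalence, by a tribe of presheaves and to carry along enough rigidified data to map strictly into $\cat{T}$. Unwinding (App2) for $U \from \Trb \to \FibCat$: given a fibration category $\cat{C}$, a tribe $\cat{T}$ and an exact functor $\varphi \from \cat{C} \to \cat{T}$, one must produce a tribe $\cat{A}$, a homomorphism $h \from \cat{A} \to \cat{T}$, a fibration category $\cat{C}'$ and exact weak equivalences $r \from \cat{C}' \to \cat{C}$ and $e \from \cat{C}' \to \cat{A}$ (the latter read between underlying fibration categories) with $h e = \varphi r$. The tribe $R\cat{C}$ of homotopy representable fibrant presheaves over $\hamm\cat{C}$ from \Cref{representable-tribe} is weakly equivalent to $\cat{C}$, but there is no strict homomorphism $R\cat{C} \to \cat{T}$, so I instead take $\cat{A}$ to be a relative variant glued onto $\cat{T}$: an object of $\cat{A}$ is a presheaf $A \in R\cat{C}$, a fibrant presheaf $X$ over $\hamm\cat{T}$, an object $t \in \cat{T}$, a representation $\hamm\cat{T}(\uvar, t) \weto X$ of $t$, and a map $A \to (\hamm\varphi)^* X$ exhibiting $X$ as the left homotopy Kan extension of $A$ along $\hamm\varphi$ (in the sense introduced just before \Cref{Lan-pullback}); a morphism is a compatible tuple, and it is a fibration when its coordinates in $R\cat{C}$, in fibrant presheaves over $\hamm\cat{T}$, and in $\cat{T}$ are fibrations and the evident comparison maps into the corresponding pullbacks are fibrations. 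The homomorphism $h$ is the projection to the $\cat{T}$-coordinate.

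The verification that $\cat{A}$ is a tribe assembles the results of the previous section. As $R\cat{C}$, the tribe of fibrant presheaves over $\hamm\cat{T}$ (\Cref{presheaves-tribe,representable-tribe}) and $\cat{T}$ are all tribes, \Cref{tribe-terminal,tribe-pullback} reduce to checking that $\cat{A}$ is closed under the relevant operations. The terminal object is built from the terminal presheaves, which are homotopy representable by \Cref{hammock-terminal} and are preserved by restriction along $\hamm\varphi$. Closure under pullback along fibrations is coordinatewise, the point being that the two homotopy-theoretic side conditions survive: for the representation condition one uses \Cref{hammock-pullback} to see that a pullback along a fibration in $\cat{T}$ becomes a homotopy pullback of representables over $\hamm\cat{T}$, together with the Gluing Lemma for presheaves; and preservation of the left-homotopy-Kan-extension condition is exactly \Cref{Lan-pullback}. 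For \Cref{tribe-factorization,tribe-anodyne} one factors coordinatewise into a levelwise anodyne morphism followed by a fibration — \Cref{representable-invariant} ensures that an anodyne extension of a homotopy representable presheaf stays homotopy representable, and homotopy invariance of left homotopy Kan extensions handles the side conditions — and then the retract argument from the proof of \Cref{tribe-Reedy} identifies the anodyne morphisms of $\cat{A}$ with the levelwise ones, whence they are stable under pullback along fibrations and $h$ is a homomorphism.

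For the comparison, let $\cat{C}'$ be the fibration category whose objects carry, over an object $c \in \cat{C}$, a full set of auxiliary data compatible with $c$: a presheaf $A \in R\cat{C}$ with a representation $\hamm\cat{C}(\uvar, c) \weto A$, a fibrant presheaf $X$ over $\hamm\cat{T}$ with a representation $\hamm\cat{T}(\uvar, \varphi c) \weto X$, and a map $A \to (\hamm\varphi)^* X$ whose composite with the representation of $A$ is the canonical comparison $\hamm\cat{C}(\uvar, c) \to (\hamm\varphi)^* \hamm\cat{T}(\uvar, \varphi c) \to (\hamm\varphi)^* X$; fibrations and weak equivalences are detected on $c$ together with the pullback conditions on the presheaf coordinates. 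The functor $r$ forgets everything but $c$, and $e$ reads off the tuple $(A, X, \varphi c, \ldots)$, which lies in $\cat{A}$ because a representation is in particular a projective cofibrant replacement, so that the adjoint transpose of the structure map is the chosen representation of $\varphi c$, hence a weak equivalence. Both are exact and $h e = \varphi r$ holds on the nose. That $r$ and $e$ are weak equivalences is checked using \Cref{App}: reflection of weak equivalences follows from \Cref{Yoneda-we} (if the image of a morphism of $\cat{C}'$ is a weak equivalence, then the induced map on representables is one, hence so is the morphism), and the approximation property (App2) for $r$ and $e$ is supplied by \Cref{strict-representation}, which converts a fibration between homotopy representable presheaves, together with a chosen representation of its target, into an honest fibration in $\cat{C}$ with a compatible representation of its source — precisely the data needed to fill the required squares, essential surjectivity being taken care of by \Cref{representable-invariant}.

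The hard part is not any single computation but the pervasive failure of strictness: representations, representing objects, and left homotopy Kan extensions are canonical only up to homotopy, so none of the naive functors $\cat{C} \to R\cat{C}$ or $R\cat{C} \to \cat{T}$ exists strictly, and the whole purpose of building the coherence data into $\cat{A}$ and $\cat{C}'$ as honest structure is to make $h$, $r$ and $e$ strict functors with the square commuting strictly rather than merely up to homotopy; the single genuinely non-formal ingredient is the compatibility of left homotopy Kan extension with homotopy pullbacks, \Cref{Lan-pullback}. (This establishes (App2) but not yet that $U$ is a weak equivalence, since \Cref{App} does not apply to $\Trb$, which need not be a fibration category; that gap is closed in \Cref{equivalence}.)
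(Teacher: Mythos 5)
Your high-level strategy is the same as the paper's: build a tribe of homotopy-representable presheaf data over $\hamm\cat{C}$ that projects strictly to $\cat{T}$, and a more rigidified fibration category of such data that projects strictly to $\cat{C}$, with \Cref{Lan-pullback}, \Cref{strict-representation}, \Cref{hammock-pullback} and \Cref{representable-tribe} doing the heavy lifting, and \Cref{App} closing out the weak equivalences. You also correctly identify the two nontrivial points (failure of strictness forces all coherence to be bundled into the objects; \Cref{Lan-pullback} is the one non-formal input) and correctly flag that this does not yet show $U$ is a weak equivalence.

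However, the specific category $\cat{A}$ you define has a genuine gap, and it is precisely the one the paper's auxiliary object $\bar{A}$ exists to solve. In the paper's $\bar{\cat{C}}$, the object data is $(A, \bar{A}, x)$ with $A$ fibrant over $\hamm\cat{C}$, $x\in\cat{T}$, and — crucially — a fibration $\bar{A}\fto A\times F^*\hammrep{\cat{T},x}$ with $\bar{A}\to A$ a weak equivalence; the existence of a compatible representation of $\bar{A}$ is a condition, not data. The fibrations of $\bar{\cat{C}}$ are then defined by the relative matching map $\bar{A}_0 \to \bar{A}_1 \pull_{A_1\times F^*\hammrep{\cat{T},x_1}} (A_0\times F^*\hammrep{\cat{T},x_0})$, which makes literal sense because the structure maps $\bar{A}_i\fto A_i\times F^*\hammrep{\cat{T},x_i}$ are fibrations. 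Your objects record a map $\alpha\colon A\to(\hamm\varphi)^*X$ that is \emph{not} required to be a fibration, plus a representation $\rho\colon\hammrep{\cat{T},t}\weto X$ as \emph{data}. Two problems follow. First, your ``evident comparison map into the corresponding pullback'' is $A_0 \to A_1\pull_{(\hamm\varphi)^*X_1}(\hamm\varphi)^*X_0$, and this only gives a sensible fibration notion if $(\hamm\varphi)^*$ of an injective fibration is again a usable fibration; but restriction along an enriched functor sends injective fibrations only to projective fibrations, so the cospan you pull back along is of the wrong kind. Second, making the representation of $X$ into data forces every morphism (hence every factorization) in $\cat{A}$ to be strictly compatible with the chosen $\rho$'s; producing a representation $\rho_Y\colon\hammrep{\cat{T},s}\weto Y$ of the middle object of a factorization that commutes strictly with both ends is not something a coordinatewise factorization hands you. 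The paper avoids both problems: the representation condition on $\bar{A}$ is existence-only, and $\bar{A}$ (which need not be fibrant) is exactly the elbow room needed to get factorizations and pullbacks. Your $\cat{C}'$ has the same omission (no auxiliary $\bar{A}$ with a fibration to a product), so its factorization axiom and its fibration notion run into the same difficulty, whereas the paper's $\hat{\cat{C}}$ factorization relies on a split-mono trick ($a\weto a'\fto a\times b$) together with the fibration $\bar{A}\fto A\times F^*\hammrep{\cat{T},Fa}$ to lift the representation. To rescue your construction you would essentially be led to reinstate $\bar{A}$ and the fibration into a product, which recovers the paper's $\bar{\cat{C}}$ and $\hat{\cat{C}}$.
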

Later, we will employ semisimplicial fibration categories and
semisimplicial tribes to prove that this functor is a DK-equivalence.
Note that this does not follow directly from the theorem above since
$\Trb$ is not known to be a fibration category.

Throughout this section, we fix a fibration category $\cat{C}$,
a tribe $\cat{T}$ and an exact functor $F \from \cat{C} \to \cat{T}$.
We will prove \Cref{trb-fibcat-App} by constructing a diagram
\begin{tikzeq*}
\matrix[diagram]
{
  |(C)|  \cat{C}       & |(T)|  \cat{T}       \\
  |(hC)| \hat{\cat{C}} & |(bC)| \bar{\cat{C}} \\
};

\draw[->] (C)  to node [above] {$F$}   (T);
\draw[->] (hC) to node [below] {$\we$} (bC);
\draw[->] (bC) to                      (T);

\draw[->,shorten <=0.2em] (hC) to node [left]  {$\we$} (C);
\end{tikzeq*}
where $\hat{\cat{C}}$ is a fibration category and $\bar{\cat{C}}$ is a tribe.
Both of these categories are variations of
$R \cat{C}$ of \Cref{representable-tribe}.
Their objects are homotopy representable fibrant presheaves over $\hamm \cat{C}$
equipped with additional structure that ensures
the existence of the functors in the diagram above, in particular,
that $\bar{\cat{C}} \to \cat{T}$ is a homomorphism.

We impose certain cardinality restriction to ensure that
the categories $\bar{\cat{C}}$ and $\hat{\cat{C}}$ are small.
We fix a cardinal number $\kappa$ \st{}
$\hammrep{\cat{C}, a}$ is $\kappa$-small for all $a \in \cat{C}$,
$F^* \hammrep{\cat{T}, x}$ is $\kappa$-small for all $x \in \cat{T}$
and there is an (acyclic cofibration, fibration) factorization functor in
the category of simplicial presheaves over $\hamm \cat{C}$ \st{}
for every map of $\kappa$-small presheaves,
the presheaf resulting from the factorization is also $\kappa$-small.
Throughout this section all presheaves will be implicitly assumed to
be $\kappa$-small.
Thanks to this assumption,
the categories $\bar{\cat{C}}$ and $\hat{\cat{C}}$ constructed below
will be essentially small and so can be replaced by equivalent small ones.

First, we construct a category $\bar{\cat{C}}$ as follows.
An object is a tuple consisting of
\begin{enumerate}
\item a fibrant presheaf $A$ over $\hamm\cat{C}$;
\item an object $x$ of $\cat{T}$;
\item a fibration $\bar{A} \fto A \times F^* \hammrep{\cat{T}, x}$
\end{enumerate}
subject to the following conditions:
\begin{enumerate}
\setcounter{enumi}{3}
\item the map $\bar{A} \to A$ is a weak equivalence;
\item there is a representation $\hammrep{\cat{C}, a} \weto \bar{A}$ \st{}
  the composite $\hammrep{\cat{C}, a} \to \bar{A} \to F^* \hammrep{\cat{T}, x}$
  corresponds to a weak equivalence
  $\hammrep{\cat{T}, F a} \to \hammrep{\cat{T}, x}$.
\end{enumerate}
Note that these conditions imply that $A$ and $\bar{A}$ are homotopy representable,
but not that $\bar{A} \to A$ is a fibration nor that $\bar{A}$ is fibrant.
We will denote such an object by $(A, \bar{A}, x)$
suppressing the structure map.

A morphism $(A, \bar{A}, x) \to (B, \bar{B}, y)$ consists of
maps of presheaves $A \to B$ and $\bar{A} \to \bar{B}$ and
a morphism $x \to y$ in $\cat{T}$ that are compatible in the sense that
the square
\begin{tikzeq*}
\matrix[diagram,column sep={12em,between origins}]
{
  |(bA)| \bar{A}                           & |(bB)| \bar{B}                           \\
  |(Ax)| A \times F^* \hammrep{\cat{T}, x} & |(By)| B \times F^* \hammrep{\cat{T}, y} \\
};

\draw[->] (bA) to (bB);
\draw[->] (Ax) to (By);

\draw[->,shorten >=0.2em] (bA) to (Ax);
\draw[->,shorten >=0.2em] (bB) to (By);
\end{tikzeq*}
commutes.
We call such a morphism a \emph{fibration} if
\begin{enumerate}
\item $A \to B$ is a fibration of presheaves;
\item $x \to y$ is a fibration in $\cat{T}$;
\item the map $\bar{A} \to
  \bar{B} \pull_{B \times F^* \hammrep{\cat{T}, y}} (A \times F^* \hammrep{\cat{T}, x})$
  induced from the square above is a fibration of presheaves.
\end{enumerate}
Note that this definition does not imply that
$\bar{A} \to \bar{B}$ is a fibration.

The category $\bar{\cat{C}}$ could be viewed as
a variant of the gluing construction in the sense of \cite{sh}*{Sec.~13}.
However, it does not fall under the scope of this construction since
the categories involved are not tribes (or even fibration categories) and
the functor between them is not exact.

We proceed to prove that $\bar{\cat{C}}$ is a tribe and to
characterize its anodyne morphisms (\Cref{C-bar-tribe}) and
weak equivalences (\Cref{C-bar-levelwise}).

\begin{lemma}\label{C-bar-limits}
  In the category $\bar{\cat{C}}$:
  \begin{enumerate}
  \item the object $(1, F^* \hammrep{\cat{T}, 1}, 1)$ with the identity structure map
    is terminal;
  \item pullbacks along fibrations exist.
  \end{enumerate}
\end{lemma}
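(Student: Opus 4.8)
The plan is to treat the two parts separately. Part~(1) is a direct verification of terminality: first I would check that $(1,F^{*}\hammrep{\cat{T},1},1)$ is an object of $\bar{\cat{C}}$. Its structure map is an identity, hence a fibration, so clause~(3) holds. Clause~(4) asks that $F^{*}\hammrep{\cat{T},1}\to 1$ be a weak equivalence: since $\cat{T}$ underlies a fibration category by \Cref{Trb-FibCat}, \Cref{hammock-terminal} makes $\hammrep{\cat{T},1}\to 1$ a weak equivalence of presheaves over $\hamm\cat{T}$, and $F^{*}$ preserves weak equivalences because they are levelwise. For clause~(5) I take $a=1\in\cat{C}$ and use that $F$ preserves the terminal object, so that the transpose $\hammrep{\cat{C},1}\to F^{*}\hammrep{\cat{T},1}$ of $\id_{\hammrep{\cat{T},1}}$ becomes, after composition with $F^{*}\hammrep{\cat{T},1}\to 1$, the canonical map to the terminal presheaf; the latter is a weak equivalence by \Cref{hammock-terminal}, whence the transpose is a weak equivalence by 2-out-of-3 and provides the required representation. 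Terminality is then formal: for any $(B,\bar{B},y)$ the first and third components of a morphism to $(1,F^{*}\hammrep{\cat{T},1},1)$ are forced (being the unique maps $B\to 1$ and $y\to 1$), and the compatibility square forces the middle component to be the composite $\bar{B}\to F^{*}\hammrep{\cat{T},y}\to F^{*}\hammrep{\cat{T},1}$, which is a legitimate morphism, hence the unique one.

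For part~(2), given a fibration $(B,\bar{B},y)\fto(C,\bar{C},z)$ in $\bar{\cat{C}}$ and an arbitrary morphism $(A,\bar{A},x)\to(C,\bar{C},z)$, I would put $P=A\pull_{C}B$ (so $P$ is fibrant and $P\to A$ is a fibration, since $B\to C$ is a fibration of presheaves) and $w=x\pull_{z}y$ (which exists as $y\to z$ is a fibration in $\cat{T}$). Writing $Q_{A}=A\times F^{*}\hammrep{\cat{T},x}$, and likewise $Q_{B}$ and $Q_{C}$, I would take $\bar{P}$ to be the pullback of $\bar{A}\pull_{\bar{C}}\bar{B}\to Q_{A}\pull_{Q_{C}}Q_{B}\leftarrow P\times F^{*}\hammrep{\cat{T},w}$, the second map being induced on the second factor by the projections $w\to x$ and $w\to y$. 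A diagram chase with the compatibility squares shows that $(P,\bar{P},w)$, with its evident projections, is the pullback in the category of such tuples before clauses~(4)--(5) are imposed, so it remains to verify clauses~(1)--(5). Clauses~(1) and~(2) are immediate. For~(3) one exhibits $\bar{P}\to P\times F^{*}\hammrep{\cat{T},w}$ as a pullback of $\bar{A}\pull_{\bar{C}}\bar{B}\to Q_{A}\pull_{Q_{C}}Q_{B}$, which in turn factors as $\bar{A}\pull_{\bar{C}}\bar{B}\to\bar{A}\pull_{Q_{C}}Q_{B}\to Q_{A}\pull_{Q_{C}}Q_{B}$, whose first map is a pullback of the fibration $\bar{B}\to\bar{C}\pull_{Q_{C}}Q_{B}$ supplied by the hypothesis that $(B,\bar{B},y)\fto(C,\bar{C},z)$ is a fibration, and whose second map is a pullback of the structure fibration $\bar{A}\to Q_{A}$.

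Clauses~(4) and~(5) are where the work lies, and I expect~(4)---that $\bar{P}\to P$ is a weak equivalence---to be the main obstacle. For~(4) I would assemble the cube of structure maps connecting $\bar{A},\bar{B},\bar{C},\bar{P}$ to $A,B,C,P$ and apply the Gluing Lemma, the front square $P\to A$, $P\to B$ over $C$ being a homotopy pullback since $B\to C$ is a fibration; the delicate point is the factor $F^{*}\hammrep{\cat{T},w}$, which I would control by applying \Cref{hammock-pullback} to $\cat{T}$, so that the square of representables over $\hamm\cat{T}$ associated with $w=x\pull_{z}y$ is a homotopy pullback (preserved by $F^{*}$), together with right properness of the injective model structure. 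For~(5), $\bar{P}$ is homotopy representable by \Cref{representable-invariant} once~(4) is known and $P$ is seen to be homotopy representable as in the proof of \Cref{representable-tribe}; to get a representation compatible with the $\cat{T}$-structure I would use the clause~(5) representations of $\bar{A},\bar{B},\bar{C}$ together with \Cref{strict-representation}, as in the proof of \Cref{representable-tribe}, to choose compatible representing objects $a,b,c$ in $\cat{C}$ with $b\to c$ a fibration, form $p=a\pull_{c}b$, and check by the Gluing Lemma that $\hammrep{\cat{C},p}\to\bar{P}$ and the induced $\hammrep{\cat{T},Fp}\to\hammrep{\cat{T},w}$ are weak equivalences, using exactness of $F$ (so that $Fp=Fa\pull_{Fc}Fb$) and \Cref{hammock-pullback} once more. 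The universal property of $(P,\bar{P},w)$ in $\bar{\cat{C}}$ is then the diagram chase already noted.
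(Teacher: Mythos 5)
Your proposal is correct and follows essentially the same route as the paper. Your construction of $\bar P$ as the single pullback of $\bar A\pull_{\bar C}\bar B\to Q_A\pull_{Q_C}Q_B\leftarrow Q_P$ is isomorphic to the paper's iterated-pullback cube (both impose the same compatibility conditions on elements of $\bar A$, $\bar B$, $Q_P$), and your verification of clauses (3)--(4) mirrors the paper's. The one stylistic difference is clause (5): you re-derive compatible representations via \Cref{strict-representation} and the Gluing Lemma, whereas the paper simply invokes \Cref{Lan-pullback}, which packages exactly that argument; your version is correct but duplicates work that the paper has already abstracted into a lemma.
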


\begin{proof}
  Part (1) is immediate.

  For part (2), given a fibration $(A, \bar{A}, x) \fto (B, \bar{B}, y)$ and
  any morphism $(C, \bar{C}, z) \to (B, \bar{B}, y)$
  we construct a pullback as follows.
  First, form pullbacks
  \begin{tikzeq*}
  \matrix[diagram]
  {
    |(p)| z \pull_y x & |(x)| x &[2em] |(P)| C \pull_B A & |(A)| A \\
    |(z)| z           & |(y)| y &      |(C)| C           & |(B)| B \\
  };

  \draw[->]  (p) to (x);
  \draw[->]  (z) to (y);
  \draw[->]  (p) to (z);
  \draw[fib] (x) to (y);

  \draw[->]  (P) to (A);
  \draw[->]  (C) to (B);
  \draw[->]  (P) to (C);
  \draw[fib] (A) to (B);
  \end{tikzeq*}
  and then construct a diagram
  \begin{tikzeq*}
  \matrix[diagram,column sep={9em,between origins}]
  {
    |(bP)| \bar{C \pull_B A} &                & |(bA)| \bar{A} &                \\[3ex]
    |(bl)| \bullet           &                & |(br)| \bullet &                \\
                             & |(bC)| \bar{C} &                & |(bB)| \bar{B} \\
      |(Pp)| (C \pull_B A) \times F^* \hammrep{\cat{T}, z \pull_y x} &&
      |(Ax)| A \times F^* \hammrep{\cat{T}, x} & \\
    & |(Cz)| C \times F^* \hammrep{\cat{T}, z} &&
      |(By)| B \times F^* \hammrep{\cat{T}, y} \\
  };

  \draw[->] (bP) to (bA);
  \draw[->] (bl) to (br);
  \draw[->] (Pp) to (Ax);
  \draw[->] (Cz) to (By);

  \draw[->]  (bP) to (bl);
  \draw[->]  (bl) to (Pp);
  \draw[fib] (bA) to (br);

  \draw[fib,over,shorten >=0.2em] (bC) to (Cz);
  \draw[->,shorten >=0.2em]       (br) to (Ax);
  \draw[fib,shorten >=0.2em]      (bB) to (By);

  \draw[->,over] (bC) to (bB);
  
  \draw[->] (bl) to (bC);
  \draw[->] (br) to (bB);
  \draw[->] (Pp) to (Cz);
  \draw[->] (Ax) to (By);
  \end{tikzeq*}
  as follows.
  The bottom face of the cube is obtained by combining the two squares above.
  Then the left and right face are formed by taking pullbacks which gives rise to
  the two objects denoted by bullets.
  The right one has an induced map from $\bar{A}$ and
  the square at the very top is constructed by taking a pullback again.

  Next, we show that $(C \pull_B A, \bar{C \pull_B A}, z \pull_y x)$ is
  an object of $\bar{\cat{C}}$.
  By construction the bottom, left and right faces of the cube above as well as
  the square at the very top are homotopy pullbacks.
  Thus so is the top face of the cube below.
  \begin{tikzeq*}
  \matrix[diagram,column sep={6em,between origins}]
  {
    |(bP)| \bar{C \pull_B A} &                & |(bA)| \bar{A} &                \\\
                             & |(bC)| \bar{C} &                & |(bB)| \bar{B} \\
      |(Pp)| C \pull_B A &&
      |(Ax)| A  & \\
    & |(Cz)| C  &&
      |(By)| B  \\
  };

  \draw[->] (bP) to (bA);
  \draw[->] (Pp) to (Ax);
  \draw[->] (Cz) to (By);

  \draw[->]      (bP) to (Pp);
  \draw[->,over] (bC) to (Cz);
  \draw[->]      (bA) to (Ax);
  \draw[->]      (bB) to (By);

  \draw[->,over] (bC) to (bB);
  
  \draw[->] (bP) to (bC);
  \draw[->] (bA) to (bB);
  \draw[->] (Pp) to (Cz);
  \draw[->] (Ax) to (By);
  \end{tikzeq*}
  The bottom face is also a homotopy pullback.
  Since three of the vertical arrows are weak equivalences,
  so is $\bar{C \pull_B A} \to C \pull_B A$.
  Moreover, by \Cref{Lan-pullback}, $\bar{C \pull_B A}$ has
  a representation $\hammrep{\cat{C}, d} \weto \bar{C \pull_B A}$ \st{}
  the adjoint transpose of the composite
  $\hammrep{\cat{C}, d} \weto \bar{C \pull_B A} \to F^* \hammrep{\cat{T}, z \pull_y x}$
  is a weak equivalence.

  The universal properties of the pullbacks constructed above imply that
  $(C \pull_B A, \bar{C \pull_B A}, z \pull_y x)$ is
  a pullback of the two original morphisms in $\bar{\cat{C}}$.
  (Even though $\bar{C \pull_B A}$ is
  not the same as $\bar{C} \pull_{\bar B} \bar{A}$.)
\end{proof}

\begin{lemma}\label{C-bar-tribe}
  The category $\bar{\cat{C}}$ with fibrations as defined above
  is a tribe.
  A morphism $(A, \bar{A}, x) \to (B, \bar{B}, y)$ is anodyne \iff{}
  all $A \to B$, $\bar{A} \to \bar{B}$ and $x \to y$ are.
  Moreover, the forgetful functor $\bar{\cat{C}} \to \cat{T}$ is
  a homomorphism of tribes.
\end{lemma}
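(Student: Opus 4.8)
The plan is to mimic the ``levelwise anodyne'' technique used in the proofs of \Cref{tribe-Reedy} and \Cref{pullback-of-tribes}. Axioms \Cref{tribe-terminal} and \Cref{tribe-pullback} for $\bar{\cat{C}}$ are exactly \Cref{C-bar-limits}, with the terminal object identified in \Cref{C-bar-limits}(1), so what remains is factorizations (\Cref{tribe-factorization}), stability of anodyne morphisms (\Cref{tribe-anodyne}), the coordinatewise description of anodyne morphisms, and the assertion about the forgetful functor. Call a morphism $(A, \bar{A}, x) \to (B, \bar{B}, y)$ \emph{levelwise anodyne} if $A \to B$ and $\bar{A} \to \bar{B}$ are acyclic cofibrations of presheaves in the injective model structure (equivalently, have the left lifting property against fibrations of presheaves) and $x \to y$ is anodyne in $\cat{T}$. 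I would establish, in order: (a) every morphism factors as a levelwise anodyne morphism followed by a fibration; (b) levelwise anodyne morphisms have the left lifting property against fibrations of $\bar{\cat{C}}$; (c) conversely every anodyne morphism of $\bar{\cat{C}}$ is levelwise anodyne. Then \Cref{tribe-factorization} and the coordinatewise description are immediate, and the forgetful functor claim follows from the definitions.

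For (a), given $(A, \bar{A}, x) \to (B, \bar{B}, y)$, factor $A \to B$ as $A \acto A' \fto B$ with the chosen $\kappa$-small (acyclic cofibration, fibration) factorization functor and $x \to y$ as $x \acto x' \fto y$ in $\cat{T}$. Form $P := \bar{B} \pull_{B \times F^* \hammrep{\cat{T}, y}} (A' \times F^* \hammrep{\cat{T}, x'})$, which carries a fibration to $A' \times F^* \hammrep{\cat{T}, x'}$ (a base change of the structure fibration $\bar{B} \fto B \times F^* \hammrep{\cat{T}, y}$), and factor the induced map $\bar{A} \to P$ as $\bar{A} \acto \bar{A}' \fto P$. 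Then $(A', \bar{A}', x') \fto (B, \bar{B}, y)$ is a fibration of $\bar{\cat{C}}$ and $(A, \bar{A}, x) \to (A', \bar{A}', x')$ is levelwise anodyne, once we know $(A', \bar{A}', x')$ is an object of $\bar{\cat{C}}$: the map $\bar{A}' \to A'$ is a weak equivalence by 2-out-of-3 from $\bar{A} \acto \bar{A}'$, $\bar{A} \to A$ and $A \acto A'$; and precomposing a representation $\hammrep{\cat{C}, a} \weto \bar{A}$ witnessing condition (5) for $(A, \bar{A}, x)$ with $\bar{A} \acto \bar{A}'$ yields a representation of $\bar{A}'$ whose transpose factors as $\hammrep{\cat{T}, F a} \to \hammrep{\cat{T}, x} \to \hammrep{\cat{T}, x'}$ --- the first map a weak equivalence by hypothesis, the second one because $x \acto x'$ is a homotopy equivalence by \Cref{anodyne-he} and \Cref{Yoneda-we} applies to $\cat{T}$ --- so it is again a weak equivalence; homotopy representability of $A'$ follows from \Cref{representable-invariant}.

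For (b), a lift against a fibration $(C, \bar{C}, z) \fto (D, \bar{D}, w)$ is assembled coordinatewise: lift $A \acto B$ against the presheaf fibration $C \to D$, lift $x \acto y$ against $z \fto w$ in $\cat{T}$, and then lift $\bar{A} \acto \bar{B}$ against the presheaf fibration $\bar{C} \to \bar{D} \pull_{D \times F^* \hammrep{\cat{T}, w}} (C \times F^* \hammrep{\cat{T}, z})$ coming from the fibration structure, using the previous two lifts to form the bottom map of the square; one checks these combine into a morphism of $\bar{\cat{C}}$. So levelwise anodyne morphisms are anodyne, and for (c) a retract argument as in \Cref{tribe-Reedy} --- factor an anodyne morphism as in (a), then lift against the resulting fibration --- exhibits any anodyne morphism as a retract of a levelwise anodyne one, hence levelwise anodyne, since acyclic cofibrations of presheaves and anodyne morphisms of $\cat{T}$ are each closed under retracts. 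For \Cref{tribe-anodyne}, by the coordinatewise description it is enough to see that the pullback of a levelwise anodyne morphism along a fibration, computed as in \Cref{C-bar-limits}(2), is levelwise anodyne: the $A$- and $x$-coordinates of the projection are anodyne by right properness of the injective model structure and by \Cref{tribe-anodyne} for $\cat{T}$, while the $\bar{A}$-coordinate is a pullback along a fibration of the induced map between the two auxiliary pullback objects of \Cref{C-bar-limits}(2), which one argues is anodyne via \Cref{anodyne-pullback}, \Cref{anodyne-cancellation} and \Cref{anodyne-product}. Finally, the forgetful functor $\bar{\cat{C}} \to \cat{T}$ preserves fibrations and anodyne morphisms by the definitions and the coordinatewise description, the terminal object by \Cref{C-bar-limits}(1), and pullbacks along fibrations because the $\cat{T}$-coordinate of the pullback in \Cref{C-bar-limits}(2) is the strict pullback $z \pull_y x$ in $\cat{T}$; hence it is a homomorphism.

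The main obstacle is the $\bar{A}$-coordinate bookkeeping. In (a) one must confirm that the factorization produces a bona fide object of $\bar{\cat{C}}$, above all the transported condition (5) --- this is where \Cref{anodyne-he} and \Cref{Yoneda-we} are needed. More seriously, in \Cref{tribe-anodyne} the $\bar{A}$-coordinate of the pullback is not a strict pullback of the barred presheaves but the iterated pullback of \Cref{C-bar-limits}(2) through the auxiliary ``bullet'' objects, built from the structure fibrations $\bar{A} \fto A \times F^* \hammrep{\cat{T}, x}$ and the homotopy-pullback squares of representables supplied by \Cref{hammock-pullback}; propagating anodyneness through each stage, where the lifting properties of anodyne presheaf maps have to be played against the fact that $\hammrep{\cat{T}, -}$ of an anodyne morphism need only be a weak equivalence, is the delicate step, and it is exactly for this that the closure lemmas \Cref{anodyne-pullback}, \Cref{anodyne-cancellation}, \Cref{anodyne-product} (and, for the parallel verification inside \Cref{C-bar-limits}, \Cref{Lan-pullback}) were prepared.
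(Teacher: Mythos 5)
Your overall architecture matches the paper's: dispatch \Cref{tribe-terminal} and \Cref{tribe-pullback} to \Cref{C-bar-limits}, define levelwise anodyne morphisms, factor every morphism as levelwise anodyne followed by a fibration, and run the retract argument from \Cref{tribe-Reedy} to show anodyne $=$ levelwise anodyne. Your treatment of part~(a), including the verification that the factored object satisfies condition~(5) via \Cref{anodyne-he} and \Cref{Yoneda-we}, is also correct and in the spirit of the paper. The issue is in \Cref{tribe-anodyne}.

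The gap is exactly at the point you flag as ``delicate.'' You assert that ``$\hammrep{\cat{T},-}$ of an anodyne morphism need only be a weak equivalence'' and propose to compensate with \Cref{anodyne-pullback}, \Cref{anodyne-cancellation}, \Cref{anodyne-product}. This does not close. To conclude that the induced map $\bullet_L \to \bullet_R$ between the two auxiliary pullbacks is an acyclic cofibration, the relevant tool is the cube lemma \Cref{anodyne-gluing} (not \Cref{anodyne-pullback}, whose hypotheses --- two pullbacks of a common fibration along the same base change --- don't match the configuration here). The cube hypotheses require the bottom edge $C \times F^*\hammrep{\cat{T},z} \to B \times F^*\hammrep{\cat{T},y}$ and the edge $(C\pull_B A) \times F^*\hammrep{\cat{T},z\pull_y x} \to A \times F^*\hammrep{\cat{T},x}$ to themselves be acyclic cofibrations of presheaves, which, after \Cref{anodyne-product}, reduces precisely to $F^*\hammrep{\cat{T},-}$ of an anodyne morphism being an acyclic cofibration, not merely a weak equivalence. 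If it were only a weak equivalence, $\bullet_L \to \bullet_R$ would only be a weak equivalence, and you could not conclude that its pullback along the fibration $\bar{A} \fto \bullet_R$ has the left lifting property against fibrations.

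The missing observation, and the one-line key to the paper's proof of \Cref{tribe-anodyne}, is that every anodyne morphism $z \acto y$ in a tribe admits a retraction $r \from y \to z$ (by lifting against $z \fto 1$; this is the very first step of the proof of \Cref{anodyne-he}, which you already cite), so $\hammrep{\cat{T},z} \to \hammrep{\cat{T},y}$ is a split monomorphism and hence, after applying \Cref{anodyne-he} and \Cref{Yoneda-we} for acyclicity, an acyclic cofibration. Restriction along $\hamm F$ preserves both monomorphisms and levelwise weak equivalences, so $F^*\hammrep{\cat{T},-}$ likewise carries anodyne morphisms of $\cat{T}$ to acyclic cofibrations of presheaves. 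With that in hand, the pullback construction of \Cref{C-bar-limits}(2) manifestly preserves levelwise anodyne morphisms and the rest of your argument goes through.
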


In the statement of this proposition (as well as in the proof below),
it is a slight abuse of language to
call the map $\bar{A} \to \bar{B}$ ``anodyne'' since
it is not a morphism of a tribe ($\bar{A}$ and $\bar{B}$ are not fibrant).
However,
acyclic cofibrations of the injective model structure on simplicial presheaves
enjoy all the necessary properties of anodyne morphisms in a tribe and
the proof applies as written.

\begin{proof}
  \Cref{tribe-terminal,tribe-pullback} follow by \Cref{C-bar-limits}.

  \Cref{tribe-factorization} is proven by an argument similar to the proof of
  \Cref{tribe-Reedy}.
  That is, every morphism can be factored as
  a levelwise anodyne morphism followed by a fibration, and then
  a retract argument shows that every anodyne morphism is levelwise anodyne.
  Conversely, every levelwise anodyne morphism is anodyne.
  Thus every morphism factors as an anodyne morphism followed by a fibration.

  Every anodyne morphism $z \acto y$ in $\cat{T}$ has a retraction and so
  it induces
  an acyclic cofibration $\hammrep{\cat{T}, y} \to \hammrep{\cat{T}, z}$ by
  \Cref{anodyne-he,Yoneda-we}.
  Thus by the construction of pullbacks in $\bar{\cat{C}}$,
  levelwise anodyne morphisms are stable under pullbacks along fibrations and
  hence so are the anodyne morphisms which proves \Cref{tribe-anodyne}.

  The functor $\bar{\cat{C}} \to \cat{T}$ is a homomorphism since
  anodyne morphisms in $\bar{\cat{C}}$ are levelwise.
\end{proof}

\begin{lemma}\label{C-bar-levelwise}
  A morphism $(A, \bar{A}, x) \to (B, \bar{B}, y)$ in $\bar{\cat{C}}$ is
  a weak equivalence \iff{}
  all $A \to B$ and $\bar{A} \to \bar{B}$ and $x \to y$ are weak equivalences.
\end{lemma}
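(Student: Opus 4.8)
The plan is to mimic the arguments already used for \Cref{tribe-Reedy} and for the pullback category $\cat{P}$ in \Cref{pullback-of-tribes}. Write $\bar{\cat{C}}_{\mathrm{lvl}}$ for the category with the same objects, morphisms and fibrations as $\bar{\cat{C}}$, but whose weak equivalences are the \emph{levelwise} ones: a morphism $(A, \bar{A}, x) \to (B, \bar{B}, y)$ is declared a weak equivalence precisely when each of $A \to B$, $\bar{A} \to \bar{B}$ and $x \to y$ is one. Thus the statement of the lemma is exactly the assertion that the weak equivalences of the tribe $\bar{\cat{C}}$ (i.e., its homotopy equivalences) coincide with those of $\bar{\cat{C}}_{\mathrm{lvl}}$.

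First I would verify that $\bar{\cat{C}}_{\mathrm{lvl}}$ is a fibration category. Axiom \Cref{fibcat-terminal}, the existence of pullbacks along fibrations, and the stability of fibrations under such pullbacks are already part of $\bar{\cat{C}}$ being a tribe (\Cref{C-bar-limits}, \Cref{C-bar-tribe}). For \Cref{fibcat-2-out-of-6} one uses that weak equivalences in the injective model structure on simplicial presheaves satisfy 2-out-of-6 and that homotopy equivalences in $\cat{T}$ do by \Cref{he-2-out-of-6}. For \Cref{fibcat-factorization} one reuses the factorization from the proof of \Cref{C-bar-tribe}: every morphism factors as a levelwise anodyne morphism---in particular a levelwise weak equivalence---followed by a fibration. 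The one genuinely new point is stability of \emph{acyclic} fibrations under pullback; this is checked one coordinate at a time, using that pullbacks of acyclic fibrations of simplicial presheaves are again acyclic fibrations, that \Cref{fibcat-pullback} applies to the fibration category underlying $\cat{T}$, and, for the middle coordinate, the homotopy-pullback square built in the proof of \Cref{C-bar-limits} (if $\bar{A} \to \bar{B}$ is a weak equivalence, then so is the opposite edge $\bar{C \pull_B A} \to \bar{C}$ of that homotopy pullback).

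Next I would record that every object of $\bar{\cat{C}}_{\mathrm{lvl}}$ is cofibrant. Given an acyclic fibration $(P, \bar{P}, Q) \afto (P', \bar{P}', Q')$ of $\bar{\cat{C}}_{\mathrm{lvl}}$ and a morphism into its target, a lift is assembled coordinatewise: first lift the $\cat{T}$-component through the acyclic fibration $Q \afto Q'$ (possible by \Cref{tribe-cofibrant}), then lift the two presheaf components through the corresponding acyclic fibrations of simplicial presheaves (the middle one being acyclic by right-properness of the injective model structure), which is possible since every simplicial presheaf is cofibrant there. I would also note, exactly as in \Cref{tribe-Reedy}, that any path object of the tribe $\bar{\cat{C}}$---whose anodyne leg is automatically levelwise by \Cref{C-bar-tribe}, hence a levelwise weak equivalence---is simultaneously a path object of $\bar{\cat{C}}_{\mathrm{lvl}}$.

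It then remains to chain things together. Since every object of $\bar{\cat{C}}$ is cofibrant (\Cref{tribe-cofibrant}) and every object of $\bar{\cat{C}}_{\mathrm{lvl}}$ is cofibrant, the homotopy relation on morphisms in either category does not depend on the chosen path object, and a path object of $\bar{\cat{C}}$ may be used in $\bar{\cat{C}}_{\mathrm{lvl}}$; hence a morphism is a homotopy equivalence in $\bar{\cat{C}}$ if and only if it is one in $\bar{\cat{C}}_{\mathrm{lvl}}$. By \Cref{we-he} applied to $\bar{\cat{C}}_{\mathrm{lvl}}$, the latter condition is the same as being a weak equivalence of $\bar{\cat{C}}_{\mathrm{lvl}}$, which by definition means being a levelwise weak equivalence, while a weak equivalence of $\bar{\cat{C}}$ is by definition a homotopy equivalence. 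Combining these proves the claim. The main obstacle is the first step---assembling the fibration-category axioms for $\bar{\cat{C}}_{\mathrm{lvl}}$, in particular the pullback-stability of its acyclic fibrations, which forces one to revisit the componentwise pullback construction from \Cref{C-bar-limits}; the remaining steps are routine transcriptions of the arguments given for \Cref{tribe-Reedy} and \Cref{pullback-of-tribes}.
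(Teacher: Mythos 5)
Your proposal is correct and takes essentially the same route as the paper. The paper's own proof is extremely terse, deferring to ``the same reasoning as in the proof of \Cref{pullback-of-tribes}'' and ``as in the proof of \Cref{tribe-Reedy}''; you have simply unfolded those pointers into an explicit verification of the fibration-category axioms for $\bar{\cat{C}}_{\mathrm{lvl}}$, the coordinatewise cofibrancy argument, the agreement of path objects, and the final application of \Cref{we-he}.
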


\begin{proof}
  By an argument similar to the proof of the previous proposition,
  there is a fibration category $\bar{\cat{C}}_{\mathrm{lvl}}$ with
  the same underlying category and fibrations as $\bar{\cat{C}}$ and
  with levelwise weak equivalences.
  We can prove that every object of $\bar{\cat{C}}_{\mathrm{lvl}}$ is cofibrant
  and that path objects in $\bar{\cat{C}}$ and $\bar{\cat{C}}_{\mathrm{lvl}}$ agree
  using the same reasoning as in the proof of \Cref{pullback-of-tribes}.
  Hence the conclusion follows as in the proof of \Cref{tribe-Reedy}.
\end{proof}

Next, we construct the fibration category $\hat{\cat{C}}$.
An object is a tuple consisting of
\begin{enumerate}
\item a fibrant presheaf $A$ over $\hamm\cat{C}$;
\item an object $a$ of $\cat{C}$;
\item a fibration $\bar{A} \fto A \times F^* \hammrep{\cat{T}, F a}$;
\item a representation $\hammrep{\cat{C}, a} \weto \bar{A}$
\end{enumerate}
subject to the following conditions:
\begin{enumerate}
\setcounter{enumi}{4}
\item the map $\bar{A} \to A$ is a weak equivalence;
\item the composite
  $\hammrep{\cat{C}, a} \to \bar{A} \to F^* \hammrep{\cat{T}, F a}$ is
  the unit of the adjunction $\Lan_F \adj F^*$.
\end{enumerate}
Such an object will be denoted by $(A, \bar{A}, a)$.

A morphism $(A, \bar{A}, a) \to (B, \bar{B}, b)$ consists of
maps of presheaves $A \to B$ and $\bar{A} \to \bar{B}$ and
a morphism $a \to b$ in $\cat{C}$ that are compatible in the sense that
the diagram
\begin{tikzeq*}
\matrix[diagram,column sep={12em,between origins}]
{
  |(a)|  \hammrep{\cat{C}, a}                & |(b)|  \hammrep{\cat{C}, b}                \\
  |(bA)| \bar{A}                             & |(bB)| \bar{B}                             \\
  |(Aa)| A \times F^* \hammrep{\cat{T}, F a} & |(Bb)| B \times F^* \hammrep{\cat{T}, F b} \\
};

\draw[->] (a)  to (b);
\draw[->] (bA) to (bB);
\draw[->] (Aa) to (Bb);

\draw[->] (a)  to (bA);
\draw[->] (b)  to (bB);

\draw[->,shorten >=0.2em] (bA) to (Aa);
\draw[->,shorten >=0.2em] (bB) to (Bb);
\end{tikzeq*}
commutes.
We call such a morphism a weak equivalence if all $A \to B$,
$\bar{A} \to \bar{B}$ and $a \to b$ are weak equivalences.
We call it a \emph{fibration} if
\begin{enumerate}
\item $A \to B$ is a fibration of presheaves;
\item $a \to b$ is a fibration in $\cat{C}$;
\item the induced map $\bar{A} \to
  \bar{B} \pull_{B \times F^* \hammrep{\cat{T}, F b}} (A \times F^* \hammrep{\cat{T}, F a})$
  induced from the square above is a fibration of presheaves.
\end{enumerate}

\begin{lemma}\label{C-hat-fibcat}
  The category $\hat{\cat{C}}$ with weak equivalences and fibrations
  as defined above is a fibration category.
  Moreover, the forgetful functors $\hat{\cat{C}} \to \cat{C}$ and
  $\hat{\cat{C}} \to \bar{\cat{C}}$ are exact.
\end{lemma}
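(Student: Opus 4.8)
The plan is to verify the axioms \Cref{fibcat-terminal,fibcat-pullback,fibcat-factorization,fibcat-2-out-of-6} one at a time, reusing the constructions in the proofs of \Cref{C-bar-limits,C-bar-tribe,C-bar-levelwise} and additionally keeping track of the representation $\hammrep{\cat{C}, a} \weto \bar{A}$ and of the unit condition. Axiom \Cref{fibcat-terminal} is immediate: the object $(1, F^{*}\hammrep{\cat{T}, 1}, 1)$ with the identity structure map and the representation $\hammrep{\cat{C}, 1} \weto F^{*}\hammrep{\cat{T}, 1}$ given by the unit of $\Lan_{F} \adj F^{*}$ at $1$ is terminal; this unit is a weak equivalence because both presheaves are weakly contractible by \Cref{hammock-terminal} (and its analogue for the underlying fibration category of $\cat{T}$), and every object is fibrant since the three conditions defining a fibration to the terminal object unwind to conditions that hold by definition of an object. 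Axiom \Cref{fibcat-2-out-of-6} holds because weak equivalences in $\hat{\cat{C}}$ are detected componentwise and both simplicial presheaves and $\cat{C}$ satisfy 2-out-of-6.

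For \Cref{fibcat-pullback} I would construct pullbacks along fibrations exactly as in the proof of \Cref{C-bar-limits}: given a fibration $(A, \bar{A}, a) \fto (B, \bar{B}, b)$ and a morphism $(C, \bar{C}, c) \to (B, \bar{B}, b)$, form $a \pull_{b} c$ in $\cat{C}$, $A \pull_{B} C$ in simplicial presheaves, and $\bar{A \pull_{B} C}$ by the same cube of pullbacks. The one genuinely new point is the representation of the pullback corner: the universal property of $A \pull_{B} C$, together with the given representations and the canonical maps, yields a map $\hammrep{\cat{C}, a \pull_{b} c} \to \bar{A \pull_{B} C}$ whose composite to $F^{*}\hammrep{\cat{T}, F(a \pull_{b} c)}$ is the required unit by construction, and which is a weak equivalence by the Gluing Lemma argument already used in the proof of \Cref{representable-tribe} (with \Cref{hammock-pullback} identifying $\hammrep{\cat{C}, a \pull_{b} c}$ with the homotopy pullback of the representables). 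Stability of fibrations and acyclic fibrations under pullback then follows from the corresponding facts in simplicial presheaves and in $\cat{C}$, together with 2-out-of-3 and the componentwise description of weak equivalences.

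The substantive axiom is \Cref{fibcat-factorization}. Given a morphism $(A, \bar{A}, a) \to (B, \bar{B}, b)$, I would first factor $a \to b$ in $\cat{C}$ as $a \weto \hat{a} \fto b$; this fixes $\hammrep{\cat{C}, \hat{a}}$, $F^{*}\hammrep{\cat{T}, F\hat{a}}$, and the unit map between them. Next I would produce the fibrant presheaf $\hat{A}$ together with $A \weto \hat{A} \fto B$, and the presheaf $\hat{\bar{A}}$ together with $\bar{A} \weto \hat{\bar{A}}$ fibering over the appropriate iterated pullback, by successive (acyclic cofibration, fibration) factorizations in the injective model structure, in the style of the proof of \Cref{tribe-Reedy}. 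Finally, the representation $\hammrep{\cat{C}, \hat{a}} \weto \hat{\bar{A}}$ satisfying the unit condition would be obtained by repeated application of \Cref{HEP}: one extends the data carried by $\hammrep{\cat{C}, a} \weto \bar{A}$ along the weak equivalence $\hammrep{\cat{C}, a} \weto \hammrep{\cat{C}, \hat{a}}$ and strictifies against the fibrations in sight, exactly as in the proof of \Cref{strict-representation}, the strict commutativities that arise being consequences of the naturality of the unit of $\Lan_{F} \adj F^{*}$. I expect this last step to be the main obstacle, since the chosen representation must be matched simultaneously with the $F^{*}\hammrep{\cat{T}, F\hat{a}}$-structure, with the map to $\bar{B}$, and with the representations of source and target, which forces some care in the order of the lifts.

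It then remains to observe that the two forgetful functors are exact. Both preserve weak equivalences and fibrations directly from the definitions (for $\hat{\cat{C}} \to \bar{\cat{C}}$, $(A, \bar{A}, a) \mapsto (A, \bar{A}, Fa)$, one uses that $F$ is exact, so $Fa \to Fb$ is a weak equivalence, resp.\ a fibration, whenever $a \to b$ is, and that $(A, \bar{A}, Fa)$ indeed satisfies the defining conditions of an object of $\bar{\cat{C}}$, the relevant adjoint transpose being the identity of $\hammrep{\cat{T}, Fa}$). The terminal object computed above maps to the terminal objects of $\cat{C}$ and of $\bar{\cat{C}}$, and pullbacks along fibrations are preserved because they were constructed componentwise, the cube of \Cref{C-bar-limits} being applied with $\cat{T}$-components $Fa$, $Fb$, $Fc$. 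Hence both functors are exact and $\hat{\cat{C}}$ is a fibration category.
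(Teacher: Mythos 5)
The treatments of \Cref{fibcat-terminal,fibcat-pullback,fibcat-2-out-of-6} and of exactness of the forgetful functors follow the paper closely enough (though for preservation of weak equivalences by $\hat{\cat{C}}\to\bar{\cat{C}}$ you should explicitly cite \Cref{C-bar-levelwise} to translate from the tribe weak equivalences of $\bar{\cat{C}}$ to levelwise ones). The genuine problem is in your treatment of \Cref{fibcat-factorization}, and it is a problem you yourself flag as the main obstacle without resolving it.

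Condition (6) in the definition of an object of $\hat{\cat{C}}$ demands that the composite $\hammrep{\cat{C}, a} \to \bar{A} \to F^* \hammrep{\cat{T}, F a}$ \emph{equal} the unit of $\Lan_F \adj F^*$ on the nose — it is a strict, not a homotopy, constraint. Your plan is to factor $a \to b$ in $\cat{C}$ as $a \weto \hat a \fto b$, whence $\hammrep{\cat{C}, a} \to \hammrep{\cat{C}, \hat a}$ is a weak equivalence but in general not an injective cofibration, and then to extend the representation by ``repeated application of \Cref{HEP}.'' But \Cref{HEP} only lets you replace a map by a \emph{homotopic} one to force a single triangle to commute; it cannot simultaneously enforce several strict commutativities, and in particular it cannot guarantee the strict unit condition while also being compatible with the fixed map into $\bar{B}$. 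The homotopy you would be left with cannot be absorbed, because condition (6) is strict.

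The paper sidesteps this entirely by a small but essential change: instead of factoring $a \to b$, factor $a \to a \times b$ as $a \weto a' \fto a \times b$. Composing with the projection gives a factorization $a \weto a' \fto b$ in which $a \to a'$ is additionally a \emph{split monomorphism} (a section of the composite $a' \fto a \times b \to a$). Consequently $\hammrep{\cat{C}, a} \to \hammrep{\cat{C}, a'}$ is an \emph{acyclic injective cofibration}, and the needed map $\hammrep{\cat{C}, a'} \to A'$ can be found by solving a genuine lifting problem against the fibration $A' \fto B$, with strict commutativity. One then forms the pushout $\bar{A} \push_{\hammrep{\cat{C}, a}} \hammrep{\cat{C}, a'}$, which carries the new representation strictly, and factors the induced map into $\bar{B} \pull_{B \times F^*\hammrep{\cat{T}, F b}} \bigl(A' \times F^*\hammrep{\cat{T}, F a'}\bigr)$; the unit condition (6) is preserved through the pushout by naturality. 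Your proposal is missing this cofibrancy trick, and without it the factorization axiom does not go through.
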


\begin{proof}
  Axioms \Cref{fibcat-terminal,fibcat-pullback} follow by
  an argument similar to the one in the proof of \Cref{C-bar-tribe} and
  \Cref{fibcat-2-out-of-6} is immediate.

  To factor a morphism $(A, \bar{A}, a) \to (B, \bar{B}, b)$ as
  a weak equivalence followed by a fibration we proceed as follows.
  First, we factor $A \to B$ as $A \weto A' \fto B$ and
  $a \to a \times b$ as $a \weto a' \fto a \times b$ which
  results in a factorization $a \weto a' \fto b$ where the first morphism is
  not only a weak equivalence but also a split monomorphism.
  Thus the induced map $\hammrep{\cat{C}, a} \to \hammrep{\cat{C}, a'}$ is
  an acyclic injective cofibration and so the lifting problem
  \begin{tikzeq*}
  \matrix[diagram,column sep={8em,between origins}]
  {
    |(a)|  \hammrep{\cat{C}, a}  & |(A)| A                    & |(A')| A' \\
    |(a')| \hammrep{\cat{C}, a'} & |(b)| \hammrep{\cat{C}, b} & |(B)| B \\
  };

  \draw[cof] (a) to node[left]  {$\we$} (a');

  \draw[fib] (A') to (B);

  \draw[->] (a)  to (A);
  \draw[->] (A)  to (A');
  \draw[->] (a') to (b);
  \draw[->] (b)  to (B);
  \end{tikzeq*}
  has a solution $\hammrep{\cat{C}, a'} \to A'$.
  This induces a morphism
  \begin{equation*}
    \bar{A} \push_{\hammrep{\cat{C}, a}} \hammrep{\cat{C}, a'} \to
    \bar{B} \pull_{B \times F^* \hammrep{\cat{T}, F b}} (A \times F^* \hammrep{\cat{T}, F a'})
  \end{equation*}
  which we factor as
  \begin{equation*}
    \bar{A} \push_{\hammrep{\cat{C}, a}} \hammrep{\cat{C}, a'} \weto \bar{A}' \fto
    \bar{B} \pull_{B \times F^* \hammrep{\cat{T}, F b}} (A \times F^* \hammrep{\cat{T}, F a'})
    \text{.}
  \end{equation*}
  This yields the required factorization
  $(A, \bar{A}, a) \weto (A', \bar{A}', a') \fto (B, \bar{B}, b)$.
  (Note that the map $\bar{A} \to \bar{A} \push_{\hammrep{\cat{C}, a}} \hammrep{\cat{C}, a'}$
  is a weak equivalence as a pushout of
  an acyclic cofibration $\hammrep{\cat{C}, a} \to \hammrep{\cat{C}, a'}$ and thus
  the map $(A, \bar{A}, a) \to (A', \bar{A}', a')$ is indeed a weak equivalence.)

  The functors $\hat{\cat{C}} \to \cat{C}$ and $\hat{\cat{C}} \to \bar{\cat{C}}$
  are exact by construction, with the latter
  preserving weak equivalences by \Cref{C-bar-levelwise}.
\end{proof}

Finally, in the three remaining lemmas we prove that
the functors $\hat{\cat{C}} \to \cat{C}$ and $\bar{\cat{C}} \to \cat{T}$ are
weak equivalences.

\begin{lemma}\label{C-hat-C-equivalence}
  The functor $\hat{\cat{C}} \to \cat{C}$ is a weak equivalence.
\end{lemma}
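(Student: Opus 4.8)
The plan is to apply Cisinski's characterization \Cref{App}. By \Cref{C-hat-fibcat}, $\hat{\cat{C}}$ is a fibration category and the forgetful functor $\ev \from \hat{\cat{C}} \to \cat{C}$, $(A, \bar{A}, a) \mapsto a$, is exact, so it suffices to verify the approximation properties (App1) and (App2).

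Property (App1) is essentially immediate. Suppose a morphism $(A, \bar{A}, a) \to (B, \bar{B}, b)$ of $\hat{\cat{C}}$ becomes a weak equivalence under $\ev$, i.e.\ $a \to b$ is a weak equivalence in $\cat{C}$. Then $\hammrep{\cat{C}, a} \to \hammrep{\cat{C}, b}$ is a weak equivalence by \Cref{Yoneda-we}, so in the commuting square formed by this map together with the representations $\hammrep{\cat{C}, a} \weto \bar{A}$ and $\hammrep{\cat{C}, b} \weto \bar{B}$, the map $\bar{A} \to \bar{B}$ is a weak equivalence by 2-out-of-3; combining this with the structural weak equivalences $\bar{A} \weto A$ and $\bar{B} \weto B$, the map $A \to B$ is a weak equivalence too. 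Hence all three components are weak equivalences and the morphism is a weak equivalence in $\hat{\cat{C}}$.

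The substance is (App2). Fix an object $(B, \bar{B}, b)$ of $\hat{\cat{C}}$, an object $x$ of $\cat{C}$ and a morphism $h \from x \to b$ in $\cat{C}$; it is enough to construct an object $(A, \bar{A}, x)$ of $\hat{\cat{C}}$ together with a morphism $(A, \bar{A}, x) \to (B, \bar{B}, b)$ whose $\cat{C}$-component is $h$, for then the square required by (App2) results from taking $x' = x$ with both legs the identity. First I would take a fibrant replacement $\hammrep{\cat{C}, x} \acto A$ of the representable presheaf; since $B$ is fibrant, the composite $\hammrep{\cat{C}, x} \to \hammrep{\cat{C}, b} \weto \bar{B} \to B$, built from the map $h_* \from \hammrep{\cat{C}, x} \to \hammrep{\cat{C}, b}$ induced by $h$ and the data of $(B, \bar{B}, b)$, extends along this acyclic cofibration to a map $A \to B$. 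Then I would factor the map $\hammrep{\cat{C}, x} \to A \times F^*\hammrep{\cat{T}, F x}$ with components $\hammrep{\cat{C}, x} \acto A$ and the unit $\eta_x$ of $\Lan_F \adj F^*$ (under which $\Lan_F \hammrep{\cat{C}, x} \iso \hammrep{\cat{T}, F x}$) as $\hammrep{\cat{C}, x} \acto \bar{A} \fto A \times F^*\hammrep{\cat{T}, F x}$; 2-out-of-3 shows $\bar{A} \to A$ is a weak equivalence and $\hammrep{\cat{C}, x} \weto \bar{A}$ is a representation whose composite to $F^*\hammrep{\cat{T}, F x}$ is $\eta_x$, so $(A, \bar{A}, x) \in \hat{\cat{C}}$. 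Finally the missing component $\bar{A} \to \bar{B}$ is produced as a lift against the fibration $\bar{B} \fto B \times F^*\hammrep{\cat{T}, F b}$ in the square with top edge $\hammrep{\cat{C}, x} \to \hammrep{\cat{C}, b} \weto \bar{B}$ (the composite of $h_*$ with the representation), left edge $\hammrep{\cat{C}, x} \acto \bar{A}$, and bottom edge $\bar{A} \fto A \times F^*\hammrep{\cat{T}, F x} \to B \times F^*\hammrep{\cat{T}, F b}$.

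I expect the crux to be the commutativity of that last square. Its component into $B$ commutes because $A \to B$ was chosen as an extension of $\hammrep{\cat{C}, x} \to \hammrep{\cat{C}, b} \weto \bar{B} \to B$; its component into $F^*\hammrep{\cat{T}, F b}$ amounts to comparing $\eta_b \circ h_*$ — which, by the defining condition on $(B, \bar{B}, b)$, is precisely the composite $\hammrep{\cat{C}, b} \weto \bar{B} \to F^*\hammrep{\cat{T}, F b}$ precomposed with $h_*$ — with $F^*(Fh)_* \circ \eta_x$, and these agree by naturality of the unit of $\Lan_F \adj F^*$ along $h_* \from \hammrep{\cat{C}, x} \to \hammrep{\cat{C}, b}$. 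Everything else is routine: checking that the constructed data really is a morphism of $\hat{\cat{C}}$ and that the resulting square satisfies (App2) are diagram chases, after which \Cref{App} yields the claim.
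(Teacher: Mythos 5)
Your proof is correct, but it takes a genuinely different route from the paper. The paper's own argument constructs an explicit homotopy inverse $\Phi \from \cat{C} \to \hat{\cat{C}}$, $a \mapsto (\Phi a, \bar\Phi a, a)$, built from a functorial fibrant replacement and functorial factorization in the category of presheaves over $\hamm\cat{C}$; it then checks $\ev \circ \Phi = \id_{\cat{C}}$ directly and produces a zig-zag of natural weak equivalences from $\Phi \circ \ev$ to $\id_{\hat{\cat{C}}}$ through an auxiliary object $(\Psi(A,\bar A, a), \bar\Psi(A,\bar A,a), a)$, so that $\ev$ is a homotopy equivalence (hence a DK-equivalence and a weak equivalence). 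You instead verify the approximation properties (App1), (App2) and invoke Cisinski's criterion \Cref{App}, which only requires the pointwise fibrant replacement of $\hammrep{\cat{C}, x}$, the factorization producing $\bar{A}$, and one lift against the fibration $\bar B \fto B \times F^* \hammrep{\cat{T}, Fb}$; the naturality of the unit of $\Lan_F \adj F^*$ makes the relevant square commute, exactly as you observe. Both arguments are sound: the paper's yields the slightly stronger conclusion that $\ev$ is a homotopy equivalence in the sense of admitting a functorial inverse-up-to-zig-zag, but that extra strength is not used later; your route is shorter, avoids the need for the replacement/factorization to be functorial at this point, and is uniform in method with the companion \Cref{C-hat-RC-equivalence,C-bar-RC-equivalence}, which the paper itself proves via the approximation properties.
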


\begin{proof}
  We construct a functor $\cat{C} \to \hat{\cat{C}}$
  sending $a$ to $(\Phi a, \bar{\Phi} a, a)$ as follows.
  Given $a \in \cat{C}$, take $\Phi a$ to be
  the functorial fibrant replacement of $\hammrep{\cat{C}, a}$.
  Then (functorially) factor
  $\hammrep{\cat{C}, a} \to \Phi a \times F^* \hammrep{\cat{T}, F a}$
  as a weak equivalence $\hammrep{\cat{C}, a} \weto \bar{\Phi} a$ followed by
  a fibration $\bar{\Phi} a \fto \Phi a \times F^* \hammrep{\cat{T}, F a}$.
  Clearly, the composite $\cat{C} \to \hat{\cat{C}} \to \cat{C}$ is the identity.

  We will show that the composite $\hat{\cat{C}} \to \cat{C} \to \hat{\cat{C}}$
  is weakly equivalent to the identity, i.e., that
  every object $(A, \bar{A}, a) \in \hat{\cat{C}}$ can be connected to
  $(\Phi a, \bar{\Phi} a, a)$ by a zig-zag of natural weak equivalences.
  First, factor $\hammrep{\cat{C}, a} \to A \times \Phi a$ as
  a weak equivalence $\hammrep{\cat{C}, a} \weto \Psi(A, \bar{A}, a)$
  followed by a fibration $\Psi(A, \bar{A}, a) \fto A \times \Phi a$.
  Next, form a diagram
  \begin{tikzeq*}
  \matrix[diagram]
  {
    |(a)|  \hammrep{\cat{C}, a}        &[12em]                                                                                             \\
    |(bP)| \bar{\Psi}(A, \bar{A}, a)   &                                                                                                   \\
    |(b)|  \bullet                     &       |(Pa)|  \Psi(A, \bar{A}, a) \times F^* \hammrep{\cat{T}, F a}                               \\
    |(bA)| \bar{A} \times \bar{\Phi} a &       |(Aaa)| A \times F^* \hammrep{\cat{T}, F a} \times \Phi a \times F^* \hammrep{\cat{T}, F a} \\
  };

  \draw[->] (a) to[bend right=50] (bA);
  \draw[->] (a) to[bend left]     (Pa);

  \draw[fib] (bA) to (Aaa);
  \draw[->]  (Pa) to (Aaa);
  \draw[->]  (b)  to (bA);
  \draw[fib] (b)  to (Pa);

  \draw[->] (a) to node[right] {$\we$} (bP);
  
  \draw[fib] (bP) to (b);
  \end{tikzeq*}
  as follows.
  The right vertical arrow is obtained by combining
  the fibration from the factorization above with
  the diagonal map of $F^* \hammrep{\cat{T}, F a}$.
  The object denoted by a bullet arises by taking a pullback and
  $\bar{\Psi}(A, \bar{A}, a)$ comes from factoring the resulting map as
  a weak equivalence followed by a fibration.
  Then $(\Psi(A, \bar{A}, a), \bar{\Psi}(A, \bar{A}, a), a)$ is an object of
  $\hat{\cat{C}}$ and the vertical morphisms assemble into weak equivalences
  \begin{tikzeq*}
  \matrix[diagram,column sep={12em,between origins}]
  {
    |(A)|   (A, \bar{A}, a)                                     &
    |(Psi)| (\Psi(A, \bar{A}, a), \bar{\Psi}(A, \bar{A}, a), a) &
    |(Phi)| (\Phi a, \bar{\Phi} a, a) \text{.}                  \\
  };

  \draw[->] (Psi) to node[above] {$\we$} (A);
  \draw[->] (Psi) to node[above] {$\we$} (Phi);

  \node [anchor=east,overlay,inner sep=0, outer sep=0] at (Phi -| 0.5\textwidth,0) {\qedhere};
  \end{tikzeq*}
\end{proof}

\begin{lemma}\label{C-hat-RC-equivalence}
  The forgetful functor $\hat{\cat{C}} \to R \cat{C}$ is a weak equivalence.
\end{lemma}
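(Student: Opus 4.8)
The plan is to reduce, using \Cref{C-hat-C-equivalence}, to a statement about a Yoneda-type embedding of $\cat{C}$ into $R\cat{C}$, and then to identify its effect on hom-sets of homotopy categories directly. First, the forgetful functor $P \from \hat{\cat{C}} \to R\cat{C}$, $(A, \bar{A}, a) \mapsto A$, is exact: it preserves weak equivalences by the definition of weak equivalences in $\hat{\cat{C}}$, and it preserves the terminal object, fibrations, and pullbacks along fibrations because, as in \Cref{C-bar-limits,C-hat-fibcat}, all of these are formed on the underlying presheaf. So it remains to show that $P$ induces an equivalence of homotopy categories.

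Recall from the proof of \Cref{C-hat-C-equivalence} the functor $\Phi \from \cat{C} \to \hat{\cat{C}}$, $a \mapsto (\Phi a, \bar{\Phi} a, a)$ (with $\Phi a$ also denoting the functorial fibrant replacement of $\hammrep{\cat{C}, a}$), and the fact that the evaluation functor $\ev \from \hat{\cat{C}} \to \cat{C}$ is a weak equivalence. For an object $(A, \bar{A}, a)$ of $\hat{\cat{C}}$ the data of the object assemble into a zig-zag of weak equivalences $\Phi a \weot \hammrep{\cat{C}, a} \weto \bar{A} \weto A$ (the two middle maps being the built-in representation $\hammrep{\cat{C}, a} \weto \bar{A}$ and the structure map $\bar{A} \weto A$, the left one the functorial replacement), and this zig-zag is natural with respect to morphisms of $\hat{\cat{C}}$ by the compatibility conditions in their definition. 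Hence $P$ is naturally weakly equivalent to $\ev$ followed by the functor $Y \from \cat{C} \to R\cat{C}$, $a \mapsto \Phi a$; since $\ev$ induces an equivalence $\Ho \hat{\cat{C}} \to \Ho \cat{C}$, it follows that $P$ induces an equivalence of homotopy categories if and only if $Y$ does, and so it suffices to prove the latter.

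Essential surjectivity of $Y$ on homotopy categories is immediate: if $Z \in R\cat{C}$ is homotopy representable via $r \from \hammrep{\cat{C}, a} \weto Z$, then lifting $r$ along the acyclic cofibration $\hammrep{\cat{C}, a} \to \Phi a$ against the fibration $Z \to 1$ produces a weak equivalence $\Phi a \to Z$ in $R\cat{C}$, so $Y a \iso Z$ in $\Ho R\cat{C}$. For full faithfulness I would compute the target of the map $\Ho \cat{C}(a, b) \to \Ho R\cat{C}(\Phi a, \Phi b)$. Since, by the proof of \Cref{he-2-out-of-6}, the homotopy category of a tribe is its quotient by the homotopy relation, $\Ho R\cat{C}(\Phi a, \Phi b)$ is the set of homotopy classes of maps $\Phi a \to \Phi b$; a path object in the tribe $R\cat{C}$ is also a path object in the injective model structure on simplicial presheaves over $\hamm \cat{C}$ (its anodyne leg is a weak equivalence and its other leg a fibration), and every presheaf there is cofibrant while $\Phi a$, $\Phi b$ are fibrant, so this set is $\pi_0 \operatorname{Map}(\hammrep{\cat{C}, a}, Q)$ for an injectively fibrant replacement $\hammrep{\cat{C}, b} \weto Q$. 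The enriched Yoneda lemma identifies $\operatorname{Map}(\hammrep{\cat{C}, a}, Q)$ with the value of $Q$ at $a$, which, the replacement being a levelwise equivalence, is weakly equivalent to the value $\hamm \cat{C}(a, b)$ of $\hammrep{\cat{C}, b}$ at $a$; its $\pi_0$ is $\Ho \cat{C}(a, b)$ by \Cref{fractions-hammocks} and the basic property of the hammock localization (see \cite{dk2}). Tracing through these identifications shows that the resulting bijection $\Ho R\cat{C}(\Phi a, \Phi b) \iso \Ho \cat{C}(a, b)$ is inverse to the map induced by $Y$, so $Y$ is fully faithful and hence induces an equivalence of homotopy categories, completing the argument.

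The step I expect to be the main obstacle is this identification of $\Ho R\cat{C}(\Phi a, \Phi b)$: one must take care that homotopy classes of presheaf maps between $\Phi a$ and $\Phi b$, computed via path objects inside the tribe $R\cat{C}$, agree with the morphisms in the homotopy category of the injective model structure (which holds because $R\cat{C}$ is full, path objects there agree, and all objects are injectively cofibrant), and that the enriched-Yoneda computation of $\operatorname{Map}(\hammrep{\cat{C}, a}, Q)$ together with its comparison with $\hamm \cat{C}(a, b)$ is natural enough in $a$ and $b$ to be compatible with the map induced by $Y$ on hom-sets.
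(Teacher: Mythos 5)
Your proposal is correct, but it takes a genuinely different route from the paper. The paper establishes the result by applying \Cref{App} (Cisinski's criterion): (App1) is immediate, and for (App2) one factors the given map $A \to B$ as $A \weto A' \fto B$, picks a compatible representation $a' \to b$ using \Cref{strict-representation}, and then glues in a $\bar{A}'$ by pullback and factorization. This is a local construction, done one approximation at a time. You instead argue globally: you reduce to showing that the fibrant-replacement-of-Yoneda functor $a \mapsto \Phi a$ from $\cat{C}$ into $R\cat{C}$ induces an equivalence of homotopy categories, and then verify this by the enriched Yoneda lemma together with the Dwyer--Kan identification $\pi_0 \hamm\cat{C}(a,b) \iso \Ho\cat{C}(a,b)$. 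What your approach buys is a cleaner conceptual picture (it isolates the statement that the ``homotopy Yoneda embedding'' $\cat{C} \to R\cat{C}$ is a DK-equivalence on homotopy categories) at the cost of more foundational input (enriched Yoneda, the computation of hom-sets in the homotopy category of a simplicial model category via fibrant/cofibrant replacements, and $\pi_0$ of the hammock localization); the paper's approach is less structured but stays entirely within the approximation-property machinery already set up in the paper and reuses \Cref{strict-representation} rather than invoking external facts.

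One point you should spell out more carefully. Your natural zig-zag $\Phi a \weot \hammrep{\cat{C}, a} \weto \bar{A} \weto A$ passes through presheaves that are typically neither fibrant nor homotopy representable, so it is not a zig-zag of natural weak equivalences between functors valued in $R\cat{C}$. To conclude that $P$ and $Y \circ \ev$ induce naturally isomorphic functors $\Ho\hat{\cat{C}} \to \Ho R\cat{C}$, you should either note that $\Ho R\cat{C}$ sits fully faithfully inside the homotopy category of all simplicial presheaves (so that the natural isomorphism built from the zig-zag restricts), or replace the intermediate objects in the zig-zag by suitable fibrant replacements. This is a routine fix, but without it the ``if and only if'' reduction to $Y$ has a small gap.
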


\begin{proof}
  By \Cref{App}, it suffices to verify the approximation properties.
  (App1) is immediate.
  For (App2) consider $(B, \bar{B}, b)$ in $\hat{\cat{C}}$ and a map $A \to B$.
  Factor it as a weak equivalence $A \weto A'$ followed by
  a fibration $A' \fto B$.
  Since $A'$ is homotopy representable we can pick a morphism $a' \to b$ in $\cat{C}$ and
  a square
  \begin{tikzeq*}
  \matrix[diagram,column sep={8em,between origins}]
  {
    |(a')| \hammrep{\cat{C}, a'} & |(b)| \hammrep{\cat{C}, b} \\
    |(A')| A'                    & |(B)| B                    \\
  };

  \draw[->] (a') to node[left]  {$\we$} (A');
  \draw[->] (b)  to node[right] {$\we$} (B);

  \draw[->]  (a') to (b);
  \draw[fib] (A') to (B);
  \end{tikzeq*}
  by \Cref{strict-representation}.
  By the naturality of the unit we obtain a square
  \begin{tikzeq*}
  \matrix[diagram,column sep={12em,between origins}]
  {
    |(a')| \hammrep{\cat{C}, a'}                 & |(b)| \hammrep{\cat{C}, b}                \\
    |(A')| A' \times F^* \hammrep{\cat{T}, F a'} & |(B)| B \times F^* \hammrep{\cat{T}, F b} \\
  };

  \draw[->,shorten >=0.2em] (a') to (A');
  \draw[->,shorten >=0.2em] (b)  to (B);

  \draw[->] (a') to (b);
  \draw[->] (A') to (B);
  \end{tikzeq*}
  This gives a map from $\hammrep{\cat{C}, a'}$ to the pullback denoted by
  the bullet in the diagram
  \begin{tikzeq*}
  \matrix[diagram,column sep={12em,between origins}]
  {
    |(a')|  \hammrep{\cat{C}, a'}                 & |(b)|  \hammrep{\cat{C}, b}                \\
    |(bA')| \bar{A}'                              & |(bB)| \bar{B}                             \\
    |(bt)|  \bullet                               &                                            \\
    |(A')|  A' \times F^* \hammrep{\cat{T}, F a'} & |(B)|  B \times F^* \hammrep{\cat{T}, F b} \\
  };

  \draw[->] (a') to node[right] {$\we$} (bA');
  \draw[->] (b)  to node[right] {$\we$} (bB);

  \draw[->] (a') to (b);
  \draw[->] (bt) to (bB);
  \draw[->] (A') to (B);

  \draw[->,shorten >=0.2em] (a') to [bend right] (A');

  \draw[fib] (bA') to (bt);

  \draw[fib,shorten >=0.2em] (bt)  to (A');
  \draw[fib,shorten >=0.2em] (bB)  to (B);
  \end{tikzeq*}
  which we then factor as a weak equivalence followed by a fibration.
  Altogether we obtain an object $(A', \bar{A}', a')$ and
  a morphism $(A', \bar{A}', a') \to (B, \bar{B}, b)$ thus completing the proof.
\end{proof}

\begin{lemma}\label{C-bar-RC-equivalence}
  The forgetful functor $\bar{\cat{C}} \to R \cat{C}$ is a weak equivalence.
\end{lemma}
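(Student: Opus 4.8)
The strategy is to verify the approximation properties and apply \Cref{App}. The forgetful functor $\bar{\cat{C}} \to R\cat{C}$ sending $(A, \bar A, x)$ to $A$ is a homomorphism of tribes: it preserves fibrations by the definition of fibrations in $\bar{\cat{C}}$, the terminal object and pullbacks along fibrations by \Cref{C-bar-limits}, and anodyne morphisms since, by \Cref{C-bar-tribe}, a morphism of $\bar{\cat{C}}$ is anodyne precisely when all three of its components are. Hence it is exact by \Cref{Trb-FibCat}, and as $R\cat{C}$ is a tribe (\Cref{representable-tribe}) it suffices by \Cref{App} to check (App1) and (App2).

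For (App1), let $(A, \bar A, x) \to (B, \bar B, y)$ be a morphism of $\bar{\cat{C}}$ whose presheaf component $A \to B$ is a weak equivalence. By \Cref{C-bar-levelwise} we must show that $\bar A \to \bar B$ and $x \to y$ are weak equivalences as well. The former follows by 2-out-of-3, since $\bar A \to A$ and $\bar B \to B$ are weak equivalences. For the latter, the structure maps place $\bar A \to \bar B$ in a commutative square with $F^*\hammrep{\cat{T}, x} \to F^*\hammrep{\cat{T}, y}$; combined with the representations $\hammrep{\cat{C}, a} \weto \bar A$ and $\hammrep{\cat{C}, b'} \weto \bar B$ provided by condition~(5) and the isomorphism $\hammrep{\cat{C}, a} \iso \hammrep{\cat{C}, b'}$ they induce in the homotopy category of presheaves over $\hamm \cat{C}$, this yields a commutative square of homotopy classes. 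Transposing it along the derived adjunction $\Lan_F \adj F^*$ and invoking condition~(5) — according to which the composites $\hammrep{\cat{C}, a} \to \bar A \to F^*\hammrep{\cat{T}, x}$ and $\hammrep{\cat{C}, b'} \to \bar B \to F^*\hammrep{\cat{T}, y}$ transpose to images of weak equivalences $F a \weto x$ and $F b' \weto y$ in $\cat{T}$ — produces a commutative square of homotopy classes of maps of presheaves over $\hamm \cat{T}$ in which every edge except the canonical map $\hammrep{\cat{T}, x} \to \hammrep{\cat{T}, y}$ is invertible. Hence that map is a weak equivalence, and so $x \to y$ is a weak equivalence in $\cat{T}$ by \Cref{Yoneda-we}.

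For (App2), we are given $(B, \bar B, y) \in \bar{\cat{C}}$ and a map $f \from A \to B$ in $R\cat{C}$, and must produce $(A_0, \bar{A_0}, x_0) \in \bar{\cat{C}}$ together with a morphism to $(B, \bar B, y)$ and weak equivalences filling the square of (App2). Factor $f$ as a weak equivalence $A \weto A_0$ followed by a fibration $p \from A_0 \fto B$; then $A_0$ is fibrant and homotopy representable (\Cref{representable-invariant}). Applying \Cref{strict-representation} to $p$ and the representation $\hammrep{\cat{C}, b'} \weto \bar B \weto B$, where $\hammrep{\cat{C}, b'} \weto \bar B$ is the representation from condition~(5) of $(B, \bar B, y)$, we get a representation $\hammrep{\cat{C}, a} \weto A_0$ and a fibration $p_0 \from a \fto b'$ in $\cat{C}$ fitting into a strictly commutative square. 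Condition~(5) for $(B, \bar B, y)$ also supplies a weak equivalence $F b' \weto y$ in $\cat{T}$; factor the composite $F a \xrightarrow{F p_0} F b' \weto y$ as a weak equivalence $F a \weto x_0$ followed by a fibration $x_0 \fto y$. Let $\bar{A_0}$ be obtained by factoring, as an anodyne morphism $\hammrep{\cat{C}, a} \acto \bar{A_0}$ followed by a fibration $\bar{A_0} \fto A_0 \times F^*\hammrep{\cat{T}, x_0}$, the map whose components are the chosen representation $\hammrep{\cat{C}, a} \weto A_0$ and the composite of the unit $\hammrep{\cat{C}, a} \to F^*\hammrep{\cat{T}, F a}$ with $F^*$ applied to the weak equivalence $\hammrep{\cat{T}, F a} \weto \hammrep{\cat{T}, x_0}$. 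A direct check shows $(A_0, \bar{A_0}, x_0)$ satisfies the defining conditions of $\bar{\cat{C}}$ — in particular condition~(5), because the latter composite transposes to $\hammrep{\cat{T}, F a} \weto \hammrep{\cat{T}, x_0}$. The required morphism to $(B, \bar B, y)$ has $p$ and $x_0 \fto y$ as its presheaf and $\cat{T}$-components; its third component $\bar{A_0} \to \bar B$ is obtained by solving the lifting problem of the anodyne map $\hammrep{\cat{C}, a} \acto \bar{A_0}$ against the structure fibration $\bar B \fto B \times F^*\hammrep{\cat{T}, y}$, the relevant square commuting strictly thanks to the strict compatibility from \Cref{strict-representation}, the naturality of the unit of $\Lan_F \adj F^*$, and condition~(5) for $(B, \bar B, y)$. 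Together with the weak equivalence $A \weto A_0$, this morphism assembles into the square demanded by (App2).

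The main obstacle is the construction in (App2): one must choose the object $x_0$ and the presheaf $\bar{A_0}$ so that $(A_0, \bar{A_0}, x_0)$ genuinely lies in $\bar{\cat{C}}$ and so that the lifting problem yielding $\bar{A_0} \to \bar B$ commutes on the nose rather than merely up to homotopy — this is exactly where \Cref{strict-representation}, the naturality of the unit of $\Lan_F \adj F^*$, and condition~(5) for $(B, \bar B, y)$ must be brought together. A secondary point requiring care is the bookkeeping of transposes, and of the identification of maps of representable presheaves with morphisms of the homotopy categories (as in \Cref{Yoneda-we}), in the proof of (App1).
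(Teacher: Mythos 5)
Your overall approach---verifying the approximation properties via \Cref{App}---is the paper's approach, and the (App1) argument is essentially sound (if a bit informally phrased about transposing across the derived adjunction; the paper dismisses (App1) as immediate). However, the (App2) step has a genuine gap, and it is exactly at the place where the paper has to do extra work.

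You write that ``condition~(5) for $(B, \bar B, y)$ also supplies a weak equivalence $F b' \weto y$ in $\cat{T}$.'' It does not. Condition~(5) says the composite $\hammrep{\cat{C},b'} \to \bar B \to F^*\hammrep{\cat{T},y}$ corresponds to a weak equivalence of \emph{presheaves} $\hammrep{\cat{T}, F b'} \to \hammrep{\cat{T}, y}$, which by Yoneda is the same as a $0$-simplex of $\hamm \cat{T}(F b', y)$, i.e.\ a zig-zag, not a single morphism of $\cat{T}$. (Contrast this with condition~(6) in the definition of $\hat{\cat{C}}$, which really does pin down the composite as the honest unit; what you are running is essentially the proof of \Cref{C-hat-RC-equivalence}, and it does not transfer to $\bar{\cat{C}}$ for free.) Consequently: (a) you cannot form the composite $F a \to F b' \to y$ in $\cat{T}$ and factor it there, since the second leg lives only in $\hamm\cat{T}$; and (b) even after somehow choosing a single morphism, the square you set up for the lift of $\hammrep{\cat{C}, a} \acto \bar{A_0}$ against the structure fibration $\bar B \fto B \times F^*\hammrep{\cat{T}, y}$ only commutes up to homotopy in the $F^*\hammrep{\cat{T},y}$ component, so the lifting property is not directly applicable.

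The paper repairs precisely this point: it first invokes \Cref{tribe-single-arrow} to replace the zig-zag $F b' \zto y$ by a homotopic single morphism $w \from F b' \to y$ of $\cat{T}$ (a weak equivalence by \Cref{Yoneda-we}), and then uses \Cref{HEP} to replace the representation $\hammrep{\cat{C}, b'} \weto \bar B$ by a homotopic one for which the triangle over $F^*\hammrep{\cat{T},y}$ commutes \emph{strictly} with the diagonal induced by $w$. Only after that step is the representation fed into \Cref{strict-representation} and does the resulting square commute on the nose, which is exactly what allows the construction of $\bar{A}'$ and the map to $\bar B$ in a single pullback-and-factor step. Your proof also differs in a minor inessential way---you further factor $F a \to y$ as $F a \weto x_0 \fto y$ and use $x_0$ as the third component, whereas the paper simply takes $F a'$---but this is cosmetic. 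The missing invocations of \Cref{tribe-single-arrow} and \Cref{HEP} are the real gap and need to be supplied for the argument to go through.
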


\begin{proof}
  By \Cref{App}, it is enough to verify the approximation properties.
  (App1) is immediate.
  For (App2) consider $(B, \bar{B}, y)$ in $\bar{\cat{C}}$ and a map $A \to B$.
  By the definition, there is an object $b$ and
  a representation $\hammrep{\cat{C}, b} \weto \bar{B}$.
  The map $\hammrep{\cat{T}, F b} \to \hammrep{\cat{T}, y}$,
  the adjoint transpose of the composite
  $\hammrep{\cat{C}, b} \to \bar{B} \to F^* \hammrep{\cat{T}, y}$,
  is a weak equivalence.
  It is induced by a zig-zag $F b \zto y$ homotopic to
  a morphism $w \from F b \to y$ by \Cref{tribe-single-arrow} which
  is a weak equivalence by \Cref{Yoneda-we}.
  This homotopy induces a homotopy commutative triangle
  \begin{tikzeq*}
  \matrix[diagram,column sep={8em,between origins}]
  {
    |(b)| \hammrep{\cat{C}, b} &                                \\
    |(B)| \bar{B}              & |(y)| F^* \hammrep{\cat{T}, y} \\
  };

  \draw[->] (b) to node[left] {$\we$} (B);

  \draw[->] (b) to (y);
  \draw[->] (B) to (y);
  \end{tikzeq*}
  in which the diagonal arrow is induced by $w$.
  By \Cref{HEP}, the map $\hammrep{\cat{C}, b} \weto \bar{B}$
  can be replaced by a homotopic one making the triangle commute strictly.

  Factor the map $A \to B$ as a weak equivalence $A \weto A'$ followed by
  a fibration $A' \fto B$.
  Since $A'$ is homotopy representable we can pick
  a morphism $a' \to b$ in $\cat{C}$ and a square
  \begin{tikzeq*}
  \matrix[diagram,column sep={8em,between origins}]
  {
    |(a')| \hammrep{\cat{C}, a'} & |(b)| \hammrep{\cat{C}, b} \\
    |(A')| A'                    & |(B)| B                    \\
  };

  \draw[->] (a') to node[left]  {$\we$} (A');
  \draw[->] (b)  to node[right] {$\we$} (B);

  \draw[->]  (a') to (b);
  \draw[fib] (A') to (B);
  \end{tikzeq*}
  by \Cref{strict-representation} where
  the map $\hammrep{\cat{C}, b} \weto \bar{B}$ is the one constructed in
  the preceding paragraph.
  Thus we obtain a square
  \begin{tikzeq*}
  \matrix[diagram,column sep={12em,between origins}]
  {
    |(a')| \hammrep{\cat{C}, a'}                 & |(b)| \hammrep{\cat{C}, b}              \\
    |(A')| A' \times F^* \hammrep{\cat{T}, F a'} & |(B)| B \times F^* \hammrep{\cat{T}, y} \\
  };

  \draw[->,shorten >=0.2em] (a') to (A');
  \draw[->,shorten >=0.2em] (b)  to (B);

  \draw[->] (a') to (b);
  \draw[->] (A') to (B);
  \end{tikzeq*}
  where the bottom map is induced by the composite $F a' \to F b \to y$.
  This gives a map from $\hammrep{\cat{C}, a'}$ to the pullback denoted by
  the bullet in the diagram
  \begin{tikzeq*}
  \matrix[diagram,column sep={12em,between origins}]
  {
    |(a')|  \hammrep{\cat{C}, a'}                 & |(b)|  \hammrep{\cat{C}, b}              \\
    |(bA')| \bar{A}'                              & |(bB)| \bar{B}                           \\
    |(bt)|  \bullet                               &                                          \\
    |(A')|  A' \times F^* \hammrep{\cat{T}, F a'} & |(B)|  B \times F^* \hammrep{\cat{T}, y} \\
  };

  \draw[->] (a') to node[right] {$\we$} (bA');
  \draw[->] (b)  to node[right] {$\we$} (bB);

  \draw[->] (a') to (b);
  \draw[->] (bt) to (bB);
  \draw[->] (A') to (B);

  \draw[->,shorten >=0.2em] (a') to [bend right] (A');

  \draw[fib] (bA') to (bt);

  \draw[fib,shorten >=0.2em] (bt)  to (A');
  \draw[fib,shorten >=0.2em] (bB)  to (B);
  \end{tikzeq*}
  which we then factor as a weak equivalence followed by a fibration.
  Altogether we obtain an object $(A', \bar{A}', F a')$ and
  a morphism $(A', \bar{A}', F a') \to (B, \bar{B}, y)$ as required.
\end{proof}

\begin{proof}[Proof of \Cref{trb-fibcat-App}.]
  By \Cref{C-hat-RC-equivalence,C-bar-RC-equivalence} the diagonal morphisms
  in the triangle
  \begin{tikzeq*}
  \matrix[diagram]
  {
    |(hC)| \hat{\cat{C}} &                  & |(bC)| \bar{\cat{C}} \\
                         & |(RC)| R \cat{C} &                      \\
  };

  \draw[->] (hC) to (bC);

  \draw[->] (hC) to node[below left]  {$\we$} (RC);
  \draw[->] (bC) to node[below right] {$\we$} (RC);
  \end{tikzeq*}
  are weak equivalences and hence so is $\hat{\cat{C}} \to \bar{\cat{C}}$.
  This along with \Cref{C-hat-C-equivalence} shows that both labeled arrows
  in the diagram
  \begin{tikzeq*}
  \matrix[diagram]
  {
    |(C)|  \cat{C}       & |(T)|  \cat{T}       \\
    |(hC)| \hat{\cat{C}} & |(bC)| \bar{\cat{C}} \\
  };

  \draw[->] (C)  to node [above] {$F$}   (T);
  \draw[->] (hC) to node [below] {$\we$} (bC);
  \draw[->] (bC) to                      (T);

  \draw[->,shorten <=0.2em] (hC) to node [left]  {$\we$} (C);
  \end{tikzeq*}
  are weak equivalences thus completing the proof.
\end{proof}

\section{Equivalence between tribes and fibration categories}
\label{equivalence}

We are now ready to prove our key theorem.

\begin{theorem}\label{trb-fibcat-DK}
  The forgetful functor $\Trb \to \FibCat$ of \Cref{Trb-FibCat} is
  a DK-equivalence.
\end{theorem}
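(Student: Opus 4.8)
The strategy is to factor the forgetful functor $\Trb \to \FibCat$ through the semisimplicial enrichments and exploit the fact that, although $\Trb$ itself is not known to be a fibration category, $\sTrb$ and $\sFibCat$ are (by \Cref{fibcat-of-tribes}). Concretely, I would consider the commutative square
\begin{tikzeq*}
\matrix[diagram,column sep={6em,between origins}]
{
  |(sT)| \sTrb & |(sF)| \sFibCat \\
  |(T)|  \Trb  & |(F)|  \FibCat  \\
};
\draw[->] (sT) to node[above] {$U$} (sF);
\draw[->] (sT) to node[left]  {$\we$} (T);
\draw[->] (sF) to node[right] {$\we$} (F);
\draw[->] (T)  to node[below] {$V$} (F);
\end{tikzeq*}
The two vertical functors are DK-equivalences by \Cref{semisimp-DK}. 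Hence, by the 2-out-of-3 property for DK-equivalences (DK-equivalences are, by construction, homotopical functors inducing equivalences of hammock localizations, and equivalences of $(\infty,1)$-categories satisfy 2-out-of-3), it suffices to show that the top functor $U \from \sTrb \to \sFibCat$ is a DK-equivalence.

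Since $U$ is an exact functor between fibration categories (the forgetful functor on semisimplicial enrichments is exact: it preserves the weak equivalences, fibrations, terminal objects, pullbacks along fibrations, and — by the very definitions of fibration in \Cref{tribe-fib} and the subsequent definition — it preserves fibrations between the fibration categories $\sTrb$ and $\sFibCat$), I can invoke \Cref{App}: it is enough to verify that $U$ satisfies the approximation properties (App1) and (App2). Property (App1) is immediate, since a homomorphism of semisimplicial tribes is a weak equivalence exactly when its underlying exact functor of semisimplicial fibration categories is, which in turn is detected on homotopy categories. The substance is (App2): given a semisimplicial tribe $\cat{T}$, a semisimplicial fibration category $\cat{C}$, and an exact functor $\cat{C} \to \cat{T}$ (regarded as a morphism $\cat{C} \to U\cat{T}$ in $\sFibCat$), I must produce a semisimplicial tribe $\cat{S}$, a weak equivalence of semisimplicial fibration categories $\cat{C} \we \cat{S}$ — wait, rather — an object $\cat{S}$ of $\sTrb$ together with weak equivalences realizing the approximation square.

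The key point — and the main obstacle — is that this (App2) diagram is supplied by the constructions of \Cref{tribes-of-presheaves}, but those were carried out for ordinary (non-enriched) fibration categories and tribes. So the real work is to \emph{upgrade} the construction $\cat{C} \mapsto (\hat{\cat{C}} \we \bar{\cat{C}} \to \cat{T})$, together with the weak equivalence $\hat{\cat{C}} \we \cat{C}$, to the semisimplicial setting: one must equip $\hat{\cat{C}}$ and $\bar{\cat{C}}$ with semisimplicial enrichments (with cotensors by finite semisimplicial sets) making $\hat{\cat{C}}$ a semisimplicial fibration category, $\bar{\cat{C}}$ a semisimplicial tribe, and all the functors in the diagram semisimplicial exact / semisimplicial homomorphisms. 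The natural source of such enrichments is the canonical semisimplicial structure on presheaf categories and on $R\cat{C}$ (mentioned in passing after \Cref{representable-tribe}), pulled back appropriately; cotensors $A \mapsto A^K$ of presheaves are levelwise and interact well with the fibrations used to define objects of $\bar{\cat{C}}$ and $\hat{\cat{C}}$, and the defining conditions (homotopy representability, the unit condition, etc.) are preserved by these cotensors because $F^*\hammrep{\cat{T},x}$ and $\hammrep{\cat{C},a}$ behave functorially. Once this upgrade is in place, (App2) for $U$ follows verbatim from \Cref{trb-fibcat-App}, and then \Cref{App} gives that $U$ is a weak equivalence, hence a DK-equivalence, and the square above finishes the proof. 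I would expect the enrichment bookkeeping — checking (SF) and (ST) for $\hat{\cat{C}}$ and $\bar{\cat{C}}$, and semisimpliciality of every structure map — to be the one genuinely laborious step, all other ingredients being direct citations of \Cref{semisimp-DK,App,trb-fibcat-App}.
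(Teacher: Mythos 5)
Your reduction is exactly the paper's: pass to the square relating $\sTrb \to \sFibCat$ to $\Trb \to \FibCat$, use \Cref{semisimp-DK} to dispose of the vertical arrows, note that $\sTrb$ and $\sFibCat$ are fibration categories by \Cref{fibcat-of-tribes} so \Cref{App} applies, and hence reduce to (App2) for $\sTrb \to \sFibCat$. Where you diverge — and where a real gap appears — is in the proposed method for upgrading the (App2) square of \Cref{trb-fibcat-App} to the semisimplicial setting.

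You propose to directly endow $\hat{\cat{C}}$ and $\bar{\cat{C}}$ with semisimplicial enrichments inherited from cotensors in the presheaf category over $\hamm\cat{C}$. This is where the plan breaks: an object of $\hat{\cat{C}}$ is a tuple $(A, \bar{A}, a)$ with $a$ an object of $\cat{C}$ itself, and a putative cotensor by $K$ would have to produce something of the form $(A^K, \bar{A}^K, a^K)$, where $a^K$ uses the semisimplicial enrichment of $\cat{C}$. But the representation $\hammrep{\cat{C}, a} \weto \bar{A}$ together with the unit condition involves the representable presheaf $\hammrep{\cat{C}, a}$, and the presheaf cotensor $\hammrep{\cat{C}, a}^K$ is \emph{not} isomorphic to $\hammrep{\cat{C}, a^K}$ — there is only a comparison map which is a weak equivalence, not an isomorphism, and the canonical fibrant-presheaf cotensor does not commute strictly with the Yoneda embedding of $\hamm\cat{C}$. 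Consequently the defining strict conditions on objects of $\hat{\cat{C}}$ (in particular the unit condition (6)) and the strict compatibility squares in the definition of morphisms are not preserved by the naive levelwise cotensor, so it is not at all automatic that $\hat{\cat{C}}$ and $\bar{\cat{C}}$ inherit an honest semisimplicial fibration-category/tribe structure. "All other ingredients being direct citations" significantly understates this obstruction.

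The paper avoids this coherence problem by an indirect maneuver that does not require enriching $\hat{\cat{C}}$ and $\bar{\cat{C}}$ at all. It proves two auxiliary lemmas (\Cref{App-fibrations} and \Cref{Fr-fibrations}): the non-enriched forgetful functors $\hat{\cat{C}} \to \cat{C}$ and $\bar{\cat{C}} \to \cat{T}$ are fibrations in the sense of \Cref{fibcat-fib,tribe-fib}, and the frames functor $\fr$ preserves these fibrations. It then applies $\fr$ to the entire (App2) square from \Cref{trb-fibcat-App}, obtaining a square of semisimplicial objects $\fr\cat{C} \to \fr\cat{T}$, $\fr\hat{\cat{C}} \to \fr\bar{\cat{C}}$ connected by fibrations, and splices this together with the naturality squares of the transformations $\frh \to \fr$ and $\frh \to \id$ from \Cref{semisimp-fibcat-tribe}. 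The actual (App2) witnesses are the pullbacks $\hat{\cat{C}}' = \frh\cat{C} \pull_{\fr\cat{C}} \fr\hat{\cat{C}}$ in $\sFibCat$ and $\bar{\cat{C}}' = \frh\cat{T} \pull_{\fr\cat{T}} \fr\bar{\cat{C}}$ in $\sTrb$, which exist because the right legs are fibrations, and the needed weak equivalences $\hat{\cat{C}}' \weto \cat{C}$ and $\hat{\cat{C}}' \weto \bar{\cat{C}}'$ follow from 2-out-of-3 and the homotopy-pullback property of those squares. This is the missing idea in your proposal: rather than trying to make $\hat{\cat{C}}$ itself semisimplicial, one transports the approximation square into the semisimplicial world via $\fr$ and then corrects the domain and codomain using $\frh$ and pullbacks along fibrations.
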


By \Cref{semisimp-DK}, it suffices to show that the forgetful functor
$\sTrb \to \sFibCat$ is a DK-equivalence, which we will do by
verifying the approximation properties.
This can be accomplished by refining the result of the previous section
for which we will need the following two lemmas.
Let
\begin{tikzeq*}
\matrix[diagram]
{
  |(C)|  \cat{C}       & |(T)|  \cat{T}       \\
  |(hC)| \hat{\cat{C}} & |(bC)| \bar{\cat{C}} \\
};

\draw[->] (C)  to node [above] {$F$}   (T);
\draw[->] (hC) to node [below] {$\we$} (bC);
\draw[->] (bC) to                      (T);

\draw[->,shorten <=0.2em] (hC) to node [left]  {$\we$} (C);
\end{tikzeq*}
be categories and functors introduced in the previous section.

\begin{lemma}\label{App-fibrations}
  \leavevmode
  \begin{enumerate}
  \item The functor $\hat{\cat{C}} \to \cat{C}$ is
    a fibration of fibration categories.
  \item The functor $\bar{\cat{C}} \to \cat{T}$ is a fibration of tribes.
  \end{enumerate}
\end{lemma}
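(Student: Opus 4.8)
The plan is to verify directly the defining conditions of a fibration: for part~(1) the three conditions of \Cref{fibcat-fib}, and for part~(2) the six conditions of \Cref{tribe-fib} (the first three asserting that $\bar{\cat{C}} \to \cat{T}$ is a fibration of the underlying fibration categories, the last three being the tribe-specific ones). The two parts run in parallel. Two observations are used throughout: every simplicial presheaf is injectively cofibrant, so acyclic injective fibrations admit sections and acyclic injective cofibrations lift against all fibrations; and, by \Cref{Yoneda-we}, a morphism of $\cat{C}$ (resp.\ of $\cat{T}$) is a weak equivalence precisely when the induced map of representable presheaves is. The isofibration condition is immediate in both cases: given an isomorphism $a \to b'$ in $\cat{C}$ (resp.\ $x \to y'$ in $\cat{T}$) one transports all the presheaf data of the given object along the induced isomorphism $\hammrep{\cat{C}, a} \iso \hammrep{\cat{C}, b'}$ (resp.\ $\hammrep{\cat{T}, x} \iso \hammrep{\cat{T}, y'}$).

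The substance lies in the lifting properties for factorizations. For the lifting property for WF-factorizations of $\hat{\cat{C}} \to \cat{C}$, suppose we are given a morphism $(A, \bar{A}, a) \to (B, \bar{B}, b)$ and a factorization $a \weto c' \fto b$ in $\cat{C}$. One first factors $A \to B$ in the injective model structure as $A \weto C \fto B$ with $C$ fibrant, and then produces the glued object $\bar{C}$ over $C \pull F^* \hammrep{\cat{T}, F c'}$ together with a representation $\hammrep{\cat{C}, c'} \weto \bar{C}$ whose composite to $F^* \hammrep{\cat{T}, F c'}$ is the unit, exactly by the factorization step in the proof of \Cref{C-hat-fibcat}: one forms a suitable pushout incorporating the representable $\hammrep{\cat{C}, c'}$ and factors the induced map into the pullback $\bar{B} \pull_{B \pull F^* \hammrep{\cat{T}, F b}} (C \pull F^* \hammrep{\cat{T}, F c'})$ as a weak equivalence followed by a fibration, the required compatibilities being forced by the naturality of the unit $\hammrep{\cat{C}, a} \to F^* \hammrep{\cat{T}, F a}$. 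The one genuinely new point compared to \Cref{C-hat-fibcat} is that there the weak equivalence in $\cat{C}$ was arranged to be a split monomorphism, so that its image on representables is an acyclic cofibration, whereas here the weak equivalence $a \weto c'$ is prescribed; this is remedied by replacing $\hammrep{\cat{C}, a} \weto \hammrep{\cat{C}, c'}$ by an acyclic cofibration (using a section of the associated acyclic fibration) before forming the pushout, and strictifying the resulting coherence, e.g.\ along the lines of \Cref{HEP}. The lifting properties for pseudofactorizations (and, in part~(2), for AF-factorizations) are proven by variants of the same construction.

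For part~(2), the lifting properties for WF- and AF-factorizations of $\bar{\cat{C}} \to \cat{T}$ are in fact easier, because the third component of an object of $\bar{\cat{C}}$ is only an object of $\cat{T}$, with no prescribed representing object. Given a factorization $x \weto c' \fto y$ (resp.\ an anodyne morphism followed by a fibration) in $\cat{T}$, one factors $A \to B$ as above and factors $\bar{A}$ into $\bar{B} \pull_{B \pull F^* \hammrep{\cat{T}, y}} (C \pull F^* \hammrep{\cat{T}, c'})$ as a weak equivalence (resp.\ a levelwise anodyne morphism) followed by a fibration; the existence condition on a representation $\hammrep{\cat{C}, a} \weto \bar{C}$ is inherited from that of the source through the weak equivalence $\hammrep{\cat{T}, x} \weto \hammrep{\cat{T}, c'}$ of \Cref{Yoneda-we}, and in the anodyne case one uses, as in the proof of \Cref{C-bar-tribe}, that representables carry anodyne morphisms of $\cat{T}$ to acyclic cofibrations (by \Cref{anodyne-he,Yoneda-we}).

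It remains to check the lifting properties for lifts and for cofibrancy lifts. For a lifting problem in $\bar{\cat{C}}$ with anodyne left leg $(A, \bar{A}, x) \acto (B, \bar{B}, y)$ --- hence levelwise anodyne by \Cref{C-bar-tribe} --- and fibration right leg $(X, \bar{X}, z) \fto (Y, \bar{Y}, w)$, together with a solution $f' \from y \to z$ of the image problem in $\cat{T}$, one builds the solution in $\bar{\cat{C}}$ with third component $f'$ by first solving the presheaf lifting problem for the acyclic cofibration $A \acto B$ against the fibration $X \fto Y$, and then, using $f'$ and that solution to map $\bar{B}$ into $\bar{Y} \pull_{Y \pull F^* \hammrep{\cat{T}, w}} (X \pull F^* \hammrep{\cat{T}, z})$, solving the lifting problem for $\bar{A} \acto \bar{B}$ against the fibration $\bar{X} \to \bar{Y} \pull_{Y \pull F^* \hammrep{\cat{T}, w}} (X \pull F^* \hammrep{\cat{T}, z})$; all compatibilities follow from the hypotheses. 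The lifting property for cofibrancy lifts is proven the same way, solving lifting problems against acyclic fibrations and using cofibrancy of presheaves and of objects of $\cat{T}$ (\Cref{tribe-cofibrant}). I expect the main obstacle to be exactly the bookkeeping needed to keep every square commuting strictly rather than merely up to homotopy in the factorization-lifting arguments --- in particular the split-monomorphism point in part~(1) --- which is controlled by the naturality of the unit and by \Cref{HEP}.
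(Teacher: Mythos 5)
Your overall plan follows the paper's: verify the conditions of \Cref{fibcat-fib} (resp.\ \Cref{tribe-fib}) one by one, with the isofibration condition trivial and the real content in the factorization-lifting, lift and cofibrancy-lift properties. Your treatment of the lifts and cofibrancy lifts in part~(2) is essentially the paper's, and your observation that the factorization liftings for $\bar{\cat{C}} \to \cat{T}$ are genuinely easier --- because objects of $\bar{\cat{C}}$ require only the \emph{existence} of a representation, with no chosen one to maintain strict compatibility against --- is exactly right and a real simplification.

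The sticking point is the WF-factorization lifting for $\hat{\cat{C}} \to \cat{C}$, and you correctly locate the difficulty: the given weak equivalence $a \weto c'$ need not be a split monomorphism, so $\hammrep{\cat{C},a} \to \hammrep{\cat{C},c'}$ need not be an acyclic cofibration and the lifting step from the proof of \Cref{C-hat-fibcat} does not apply verbatim. The paper glosses over this entirely ("verified just like axiom (F3) \ldots except that a part of the factorization is given in advance"), so you are doing better than the source by flagging it. However, the remedy you sketch does not close the gap. If you factor $\hammrep{\cat{C},a} \to \hammrep{\cat{C},c'}$ as an acyclic cofibration $j$ followed by an acyclic fibration $q$ and use a section $s$ of $q$ to map $\hammrep{\cat{C},c'}$ into the pushout, then for $(A,\bar{A},a) \to (C,\bar{C},c')$ to be a morphism of $\hat{\cat{C}}$ the square of representations --- with verticals $\hammrep{\cat{C},a} \to \hammrep{\cat{C},c'}$ and $\bar{A} \to \bar{C}$ and horizontals the chosen representations --- must commute strictly, which amounts to $s \circ i = j$; but $s\circ i$ and $j$ are merely two lifts of $i$ along $q$, hence homotopic but not equal in general. \Cref{HEP} does not rescue this: it corrects a map into the source of a fibration in a homotopy-commutative triangle, whereas the map that would need correcting here is the representation $\hammrep{\cat{C},a} \to \bar{A}$, which is fixed data of $(A,\bar{A},a)$. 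A further idea is needed (e.g.\ constructing $C$ and $\bar{C}$ so that $\hammrep{\cat{C},c'}$ maps into them strictly compatibly by construction rather than via a chosen section), and your sketch does not supply it. Separately, your "variants of the same construction" for pseudofactorizations is vaguer than what the paper actually does: there one first lifts the acyclic fibration $a' \afto a$ by an explicit pullback-and-factor step and then feeds the residual $a' \weto a'' \fto b$ into the WF-lifting --- a genuinely different decomposition that is worth spelling out, especially since it makes the pseudofactorization case depend on the very WF-lifting step discussed above.
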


Note that these are fibrations between
non-semisimplicial fibration categories (tribes) as defined in
\Cref{fibcat-fib,tribe-fib}.
Using the next lemma we will promote them to fibrations in
$\sFibCat$ and $\sTrb$.

\begin{proof}
  For part (1), the isofibration condition is immediate.
  The lifting property for WF-factorizations is verified just like axiom (F3)
  in the proof of \Cref{C-hat-fibcat} except that
  a part of the factorization is given in advance.
  It remains to check the lifting property for pseudofactorizations.
  Let $(A, \bar{A}, a) \to (B, \bar{B}, b)$ be a morphism in $\hat{\cat{C}}$ and
  let
  \begin{tikzeq*}
  \matrix[diagram]
  {
    |(a)|  a  & |(b)|   b   \\
    |(a')| a' & |(a'')| a'' \\
  };

  \draw[->]  (a)   to (b);
  \draw[fib] (a'') to (b);

  \draw[->]  (a') to node[below] {$\we$} (a'');
  \draw[fib] (a') to node[left]  {$\we$} (a);
  \end{tikzeq*}
  be a pseudofactorization of its image in $\cat{C}$.
  We form a diagram
  \begin{tikzeq*}
  \matrix[diagram,column sep={12em,between origins}]
  {
    |(a')|  \hammrep{\cat{C}, a'}                & |(b)|  \hammrep{\cat{C}, a}                \\
    |(bA')| \bar{A}'                             & |(bB)| \bar{A}                             \\
    |(bt)|  \bullet                              &                                            \\
    |(A')|  A \times F^* \hammrep{\cat{T}, F a'} & |(B)|  A \times F^* \hammrep{\cat{T}, F a} \\
  };

  \draw[->] (a') to node[right] {$\we$} (bA');
  \draw[->] (b)  to node[right] {$\we$} (bB);

  \draw[->] (a') to (b);
  \draw[->] (bt) to (bB);
  \draw[->] (A') to (B);

  \draw[->,shorten >=0.2em] (a') to [bend right] (A');

  \draw[fib] (bA') to (bt);

  \draw[fib,shorten >=0.2em] (bt)  to (A');
  \draw[fib,shorten >=0.2em] (bB)  to (B);
  \end{tikzeq*}
  by first taking a pullback, denoted by a bullet, and then
  factoring the resulting map as a weak equivalence followed by a fibration.
  This way we obtain
  an acyclic fibration $(A', \bar{A}', a') \afto (A, \bar{A}, a)$.
  To construct the remaining part we lift the factorization of the composite
  $(A', \bar{A}', a') \afto (A, \bar{A}, a) \to (B, \bar{B}, b)$.

  For part (2), the verification of the first four properties is analogous to
  the proof of part (1).
  Next, we verify the lifting property for lifts.
  Let
  \begin{tikzeq*}
  \matrix[diagram,column sep={7em,between origins}]
  {
    |(A)| (A, \bar{A}, u) & |(C)| (C, \bar{C}, x) \\
    |(B)| (B, \bar{B}, v) & |(D)| (D, \bar{D}, y) \\
  };

  \draw[->]  (A) to (C);
  \draw[->]  (B) to (D);
  \draw[fib] (C) to (D);

  \draw[cof] (A) to node[left] {$\we$} (B);
  \end{tikzeq*}
  be a lifting problem in $\hat{\cat{C}}$ and
  fix a solution $v \to x$ of its image in $\cat{C}$.
  Pick any solution $B \to C$ of its image in $R \cat{C}$.
  Since $\bar{A} \to \bar{B}$ is an acyclic cofibration by \Cref{C-bar-tribe},
  there is a lift in
  \begin{tikzeq*}
  \matrix[diagram,column sep={14em,between origins},row sep={5em,between origins}]
  {
    |(A)| \bar{A} & |(C)| \bar{C}                                                                                 \\
    |(B)| \bar{B} & |(M)| \bar{D} \pull_{D \times F^* \hammrep{\cat{T}, y}} (C \times F^* \hammrep{\cat{T}, F x}) \\
  };

  \draw[->]  (A) to (C);
  \draw[->]  (B) to (M);
  \draw[fib] (C) to (M);

  \draw[cof] (A) to node[left] {$\we$} (B);

  \draw[->,dashed] (B) to (C);
  \end{tikzeq*}
  which completes a lift in the original square.
  The proof of the lifting property for cofibrancy lifts is analogous.
\end{proof}

\begin{lemma}\label{Fr-fibrations}
  \leavevmode
  \begin{enumerate}
  \item $\fr \from \FibCat \to \sFibCat$ preserves fibrations.
  \item $\fr \from \Trb \to \sTrb$ preserves fibrations.
  \end{enumerate}
\end{lemma}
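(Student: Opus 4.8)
The plan is to treat the two parts together, the tribe case simply requiring more of the defining conditions of a fibration to be checked. Recall that a fibration of semisimplicial fibration categories (resp.\ tribes) is by definition an exact functor (resp.\ homomorphism) whose underlying functor is a fibration of fibration categories (resp.\ tribes) in the sense of \Cref{fibcat-fib} (resp.\ \Cref{tribe-fib}). So it suffices to show that $\fr P$ is such a fibration whenever $P \from \cat{S} \to \cat{T}$ is.

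The first step I would take is to rephrase $\fr$ in terms of the Reedy machinery already at hand. Equip $\sSimp^\op$ with the homotopical structure in which \emph{every} morphism is a weak equivalence; this makes it a homotopical inverse category with degree the simplicial dimension, and a homotopical diagram $\sSimp^\op \to \cat{C}$ is precisely a homotopically constant semisimplicial object. Hence $\fr \cat{C} = \cat{C}^{\sSimp^\op}_\Reedy$: the objects, morphisms, fibrations (Reedy fibrations) and weak equivalences (levelwise) agree, by \Cref{tribe-Reedy} and \Cref{tribe-frames}. Under this identification $\fr P$ is the functor $P^{\sSimp^\op}_\Reedy$ induced by postcomposition; it is a homomorphism of tribes (resp.\ an exact functor of fibration categories) because $P$ preserves weak equivalences, fibrations, terminal objects, pullbacks along fibrations, matching objects of Reedy fibrant diagrams (being exact), and --- since anodyne morphisms in $\cat{T}^{\sSimp^\op}_\Reedy$ are levelwise by \Cref{tribe-Reedy} --- anodyne morphisms.

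It then remains to verify conditions (1)--(3) of \Cref{fibcat-fib} (and, in the tribe case, also (4)--(6) of \Cref{tribe-fib}) for $P^{\sSimp^\op}_\Reedy$. Condition (1) is immediate, since isofibrations are closed under $(\uvar)^{\sSimp^\op}$ and both Reedy fibrancy and homotopical constancy are invariant under isomorphism. Each of the remaining lifting conditions I would check by induction on the simplicial dimension, along the lines of the proof of \Cref{tribe-cosieve-fib}: assuming the required lift has been built through dimension $n-1$ with the prescribed image, the data so far induce a morphism out of $x_n$ into the relevant relative matching object in $\cat{S}$, and --- because $P$ preserves matching objects --- the prescribed dimension-$n$ image data constitute exactly a solution of the corresponding lifting problem for that morphism after applying $P$. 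Invoking the matching condition of \Cref{fibcat-fib} / \Cref{tribe-fib} for $P$ itself (for the cofibrancy-lift condition one also uses that every object of a tribe is cofibrant, \Cref{tribe-cofibrant}) extends the construction to dimension $n$. One final check: every new object produced this way comes equipped with a levelwise weak equivalence to or from a frame (for an AF-factorization, say, the morphism $x_n \to z_n$ assembled into $x \acto z$ is in particular such a weak equivalence out of $x$), so the two-out-of-three property forces it to be homotopically constant; hence all the constructed data stay inside $\fr \cat{S}$.

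The step I expect to be the main obstacle is the bookkeeping of the inductive step, above all for the lifting property for pseudofactorizations, where two new objects together with an acyclic fibration, a weak equivalence and a fibration have to be threaded simultaneously through the Reedy induction. There one must be careful that the relative matching maps are formed correctly and that the morphisms handed to the lifting property of $P$ are compatible with the face operators, so that the lift produced really is a morphism of semisimplicial objects; beyond that the argument is routine once the template of \Cref{tribe-cosieve-fib} is in place.
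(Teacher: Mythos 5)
Your plan is correct and matches the paper's proof essentially step for step: identify $\fr\cat{C}$ with the Reedy fibrant homotopical diagrams over $\sSimp^\op$ (made into a homotopical inverse category by declaring all morphisms weak equivalences), then verify the lifting conditions of \Cref{fibcat-fib} and \Cref{tribe-fib} for $\fr P$ by induction on simplicial dimension, at each stage handing the relative matching map its prescribed image and invoking the corresponding lifting condition for $P$. The only cosmetic difference is that the paper disposes of part (1) by citing a prior result and only spells out the induction for the AF-factorization case of part (2), saying the other two conditions are similar, whereas you propose to run the Reedy induction uniformly for all conditions; you also make explicit the observation, left tacit in the paper, that the objects produced stay homotopically constant because they receive levelwise weak equivalences from frames, so 2-out-of-3 applies.
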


\begin{proof}
  Part (1) follows from \cite{s2}*{Lem.\ 1.11(1)}.

  For part (2), consider
  a fibration $P \from \cat{S} \to \cat{T}$ of semisimplicial tribes.
  By part (1), $\fr P$ is a fibration of underlying fibration categories.

  Let $a \to b$ be a morphism in $\fr \cat{S}$ and
  consider a factorization $P a \acto x \fto P b$.
  We lift it to a factorization $a \acto a' \fto b$ inductively.
  First, the factorization $P a_0 \acto x_0 \fto P b_0$ lifts to
  a factorization $a_0 \acto a'_0 \fto b_0$ since $P$ is a fibration.
  For the inductive step, the partial factorization below dimension $m$ induces
  a morphism $a_m \to M_m a' \pull_{M_m b} b_m$ which
  we factor as an anodyne morphism $a_m \acto a'_m$ followed by
  a fibration $a'_m \fto M_m a' \pull_{M_m b} b_m$.
  This proves the lifting property for AF-factorizations.

  The other two conditions are verified in a similar manner.
\end{proof}

\begin{proof}[Proof of \Cref{trb-fibcat-DK}.]
  Consider the square
  \begin{tikzeq*}
  \matrix[diagram,column sep={6em,between origins}]
  {
    |(sT)| \sTrb & |(sF)| \sFibCat \\
    |(T)|  \Trb  & |(F)|  \FibCat  \\
  };

  \draw[->] (sT) to node[left]  {$\we$} (T);
  \draw[->] (sF) to node[right] {$\we$} (F);

  \draw[->] (sT) to (sF);
  \draw[->] (T)  to (F);
  \end{tikzeq*}
  where the vertical functors are DK-equivalences by \Cref{semisimp-DK}.
  The categories $\sTrb$ and $\sFibCat$ are fibration categories
  by \Cref{fibcat-of-tribes} and the functor $\sTrb \to \sFibCat$ is exact
  by \Cref{fibcat-fib,tribe-fib}.
  It suffices to verify that this functor is a DK-equivalence and,
  in light of \Cref{App}, we can do so by checking that
  it satisfies the approximation properties.

  (App1) is immediate.
  For (App2) consider a semisimplicial fibration category $\cat{C}$,
  a semisimplicial tribe $\cat{T}$ and
  a semisimplicial exact functor $F \from \cat{C} \to \cat{T}$.
  By \Cref{trb-fibcat-App} there are a fibration category $\hat{\cat{C}}$,
  a tribe $\bar{\cat{C}}$ (not necessarily semisimplicial) and a square
  \begin{tikzeq*}
  \matrix[diagram]
  {
    |(C)|  \cat{C}       & |(T)|  \cat{T}       \\
    |(hC)| \hat{\cat{C}} & |(bC)| \bar{\cat{C}} \\
  };

  \draw[->] (C)  to node [above] {$F$}   (T);
  \draw[->] (hC) to node [below] {$\we$} (bC);
  \draw[->] (bC) to                      (T);

  \draw[->,shorten <=0.2em] (hC) to node [left]  {$\we$} (C);
  \end{tikzeq*}
  in $\FibCat$.
  We form a diagram
  \begin{tikzeq*}
  \matrix[diagram]
  {
    |(C)| \cat{C} &                        &                           & |(T)| \cat{T} &                        &                           \\
                  & |(hFC)| \frh \cat{C}   &                           &               & |(hFT)| \frh \cat{T}   &                           \\
                  &                        & |(FC)| \fr \cat{C}        &               &                        & |(FT)| \fr \cat{T}        \\
                  & |(hCp)| \hat{\cat{C}}' &                           &               & |(bCp)| \bar{\cat{C}}' &                           \\
                  &                        & |(FhC)| \fr \hat{\cat{C}} &               &                        & |(FbC)| \fr \bar{\cat{C}} \\
  };

  \draw[->] (C) to node[above] {$F$} (T);

  \draw[->] (hCp) to [bend left] (C);
  \draw[->] (bCp) to [bend left] (T);

  \draw[->] (hFC) to node [above right] {$\we$} (C);
  \draw[->] (hFT) to node [above right] {$\we$} (T);

  \draw[->,over] (hFC) to (hFT);
  \draw[->]      (hCp) to (bCp);
  \draw[->]      (hCp) to (hFC);
  \draw[->]      (bCp) to (hFT);

  \draw[->,over]  (FC)  to (FT);
  \draw[fib]      (FbC) to (FT);

  \draw[->]       (FhC) to node[below]       {$\we$} (FbC);
  \draw[fib,over] (FhC) to node[above right] {$\we$} (FC);

  \draw[->] (hFC) to node[above right] {$\we$} (FC);
  \draw[->] (hFT) to node[above right] {$\we$} (FT);

  \draw[->] (hCp) to (FhC);
  \draw[->] (bCp) to (FbC);
  \end{tikzeq*}
  as follows.
  \begin{enumerate}
  \item The front square is obtained by applying $\fr$ to the square above.
  \item The two top squares are naturality squares of
    transformations $\frh \to \fr$ and $\frh \to \id$.
  \item The category $\hat{\cat{C}}'$ is defined as
    the pullback $\frh \cat{C} \pull_{\fr \cat{C}} \fr \hat{\cat{C}}$ which
    can be constructed since $\fr \hat{\cat{C}} \to \fr \cat{C}$ is a fibration
    by \Cref{App-fibrations,Fr-fibrations}.
  \item The category $\bar{\cat{C}}'$ is defined as
    the pullback $\frh \cat{T} \pull_{\fr \cat{T}} \fr \bar{\cat{C}}$ which
    can be constructed since $\fr \bar{\cat{C}} \to \fr \cat{T}$ is a fibration
    by \Cref{App-fibrations,Fr-fibrations}.
  \end{enumerate}
  The top diagonal arrows are weak equivalences
  by \Cref{tribe-frh1,tribe-frh2}.
  The functors $\fr \hat{\cat{C}} \to \fr \cat{C}$ and
  $\fr \hat{\cat{C}} \to \fr \bar{\cat{C}}$ are weak equivalences since
  $\fr$ is homotopical.

  Since the left and right squares are homotopy pullbacks,
  the functors $\hat{\cat{C}}' \to \frh \cat{C}$,
  $\hat{\cat{C}}' \to \fr \hat{\cat{C}}$ and
  $\bar{\cat{C}}' \to \fr \bar{\cat{C}}$ are weak equivalences.
  Therefore, by 2-out-of-3 in the square
  \begin{tikzeq*}
  \matrix[diagram]
  {
    |(C)|  \cat{C}        & |(T)|  \cat{T}        \\
    |(hC)| \hat{\cat{C}}' & |(bC)| \bar{\cat{C}}' \\
  };

  \draw[->] (C)  to node [above] {$F$}   (T);
  \draw[->] (hC) to node [below] {$\we$} (bC);
  \draw[->] (bC) to                      (T);

  \draw[->,shorten <=0.2em] (hC) to node [left] {$\we$} (C);
  \end{tikzeq*}
  the labeled arrows are weak equivalences which completes the proof.
\end{proof}

\section{Application to internal languages}
\label{language}

In the final section, we apply our results to establish an equivalence between
categorical models of Martin-L\"of Type Theory with
dependent sums and intensional identity types and
finitely complete $(\infty, 1)$-categories.
We will use comprehension categories as our notion of categorical models.
They were introduced by Jacobs \cite{j-comp} and
developed extensively in \cite{j}.

\begin{definition}
  A \emph{comprehension category} is a category $\ccat{C}$ with a terminal object
  equipped with a Grothendieck fibration $p \from \ccat{T} \to \ccat{C}$ and
  a fully faithful \emph{comprehension} functor
  $\chi \from \ccat{T} \to \ccat{C}^{[1]}$ \st{} the triangle
  \begin{tikzeq*}
    \matrix[diagram]
    {
      |(T)| \ccat{T} &                & |(C1)| \ccat{C}^{[1]} \\
                     & |(C)| \ccat{C} &                       \\
    };

    \draw[->] (T)  to node[above]       {$\chi$} (C1);
    \draw[->] (T)  to node[below left]  {$p$}    (C);
    \draw[->] (C1) to node[below right] {$\cod$} (C);
  \end{tikzeq*}
  commutes and $\chi$ carries cartesian morphisms to pullback squares.
\end{definition}

\begin{definition}
  A \emph{fibration} in a comprehension category $\ccat{C}$ is
  a morphism isomorphic to a composite of morphisms in the image of $\chi$.
\end{definition}

Bare comprehension categories only model the structural rules of
Martin-L\"of Type Theory.
Thus in the definition of a categorical model, we make additional assumptions on
the comprehension category $\ccat{C}$ to ensure that
it carries an interpretation of the type constructors $\tSigma$ and $\Id$.

\begin{definition}
  A \emph{categorical model of type theory} is
  a comprehension category $\ccat{C}$ that has
  \begin{enumerate}
  \item \emph{$\Sigma$-types} in the sense of \cite{lw}*{Def.~3.4.4.1} with
    strong $\eta$-rule;%
    \footnote{This ensures that the $\Sigma$-type is given by
    composition and hence preserved by morphisms of categorical models of
    type theory, cf.~Def.~9.6.}
  \item \emph{weakly stable $\Id$-types} in the sense of \cite{lw}*{Def.\ 2.3.6};
  \end{enumerate}
  \st{} all objects are fibrant.
\end{definition}

Given $A \in \ccat{T}(\Gamma)$, we will write $\Gamma.A$ for
the domain of $\chi(A)$.
This operation can be extended to dependent contexts as follows.
Given a context $\Delta = (A_1, \ldots, A_m)$ where $A_1 \in \ccat{T}(\Gamma)$,
$A_2 \in \ccat{T}(\Gamma.A_1)$, \ldots,
$A_m \in \ccat{T}(\Gamma.A_1.\cdots.A_{m-1})$, we will write
$\Gamma.\Delta$ for the domain of $\chi(A_m)$.
We will also use Garner's identity contexts \cite{g} which allows us to
form $\Id_\Gamma \in \ccat{T}(\Gamma.\Gamma)$.

\begin{definition}
  Let $\ccat{C}$ be a categorical model.
  \begin{enumerate}
  \item A \emph{homotopy} between morphisms $f, g \from \Gamma \to \Delta$ is
    a morphism $H \from \Gamma \to \Delta.\Delta.\Id_\Delta$ \st{} the triangle
    \begin{tikzeq*}
    \matrix[diagram,column sep={6em,between origins}]
    {
                   & |(Id)| \Delta.\Delta.\Id_\Delta \\
      |(G)| \Gamma & |(DD)| \Delta.\Delta            \\
    };

    \draw[->] (G)  to node[below]      {$f.g$}              (DD);
    \draw[->] (G)  to node[above left] {$H$}                (Id);
    \draw[->] (Id) to node[right]      {$\chi(\Id_\Delta)$} (DD);
    \end{tikzeq*}
    commutes.
  \item A morphism $f \from \Gamma \to \Delta$ is a \emph{homotopy equivalence}
    if there is a morphism $g \from \Delta \to \Gamma$ \st{}
    $f g$ is homotopic to $\id_\Delta$ and $g f$ is homotopic to $\id_\Gamma$.
  \end{enumerate}
\end{definition}

\begin{remark}\label{Sigma}
  Given $\Gamma \in \ccat{C}$, $A \in \ccat{T}(\Gamma)$ and
  $B \in \ccat{T}(\Gamma.A)$, let $\tSigma_A B \in \ccat{T}(\Gamma)$ denote
  the strong $\tSigma$-type of $A$ and $B$.
  For fixed $\Gamma$ and $A$ as above, the assignment $B \mapsto \tSigma_A B$ is
  a left adjoint of the pullback functor
  $\chi(A)^* \from \ccat{T}(\Gamma) \to \ccat{T}(\Gamma.A)$.
  Conversely, if such a left adjoint exists, then
  its values are strong $\tSigma$-types.
\end{remark}

\begin{definition}
  \leavevmode
  \begin{enumerate}
  \item A \emph{morphism between categorical models} $\ccat{C}$ and $\ccat{C}'$
    is a pair of functors $F_0 \from \ccat{C} \to \ccat{C}'$ and
    $F_1 \from \ccat{T} \to \ccat{T}'$ strictly compatible with $p$ and $\chi$
    \st{} $F_0$ preserves a terminal object, $\tSigma$-types and $\Id$-types.
  \item A \emph{weak equivalence of categorical models} is
    a morphism \st{} $F_0$ induces an equivalence of the homotopy categories,
    i.e., the localizations \wrt{} homotopy equivalences.
\end{enumerate}

\end{definition}

The homotopical category of categorical models is
denoted by $\CompCat_{\Id, \tSigma}$.
We will prove that it is DK-equivalent to the category of tribes.

\begin{proposition}\label{compcat-to-trb}
  A categorical model with its subcategory of fibrations is a tribe.
  Moreover, a morphism of categorical models is a homomorphism of tribes.
  This defines a homotopical functor $T \from \CompCat_{\Id, \tSigma} \to \Trb$.
\end{proposition}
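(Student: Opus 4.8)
The plan is to verify the tribe axioms \Cref{tribe-terminal,tribe-pullback,tribe-factorization,tribe-anodyne} for a categorical model $\ccat{C}$ equipped with its subcategory of fibrations, and then to check that morphisms of categorical models respect this structure. Axiom \Cref{tribe-terminal} is immediate, since a terminal object is part of the data of a comprehension category and all objects are fibrant by assumption. For \Cref{tribe-pullback}, the facts that $p \from \ccat{T} \to \ccat{C}$ is a Grothendieck fibration and that $\chi$ sends cartesian morphisms to pullback squares show that each $\chi(A)$ admits a pullback along any morphism and that this pullback is again (isomorphic to) a morphism in the image of $\chi$; an induction on the length of a composite then extends both statements to all fibrations. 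It is also convenient to record here that a morphism has the \llp{} \wrt{} all fibrations \iff{} it has the \llp{} \wrt{} all morphisms in the image of $\chi$ (solve a lifting problem against a composite of display maps one factor at a time), so that anodyne morphisms may be detected against display maps alone.

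Axiom \Cref{tribe-factorization} is the standard construction of a weak factorization structure from weakly stable $\Id$-types, as in \cite{lw} and \cite{bg}. Given $f \from \Gamma \to \Delta$, one forms the type-theoretic mapping path context $P_f$ — weaken $\Delta$ over $\Gamma$ and reindex Garner's identity context $\Id_\Delta \in \ccat{T}(\Delta.\Delta)$ along the evident map into $\Delta.\Delta$ — which comes with a composite of display maps $P_f \fto \Delta$ and a section $\Gamma \to P_f$ built from the reflexivity term, and $f$ factors as the latter followed by the former. The one point requiring care is that $\Gamma \to P_f$ is genuinely anodyne and not merely a weak equivalence: fibrewise over $\Gamma$ it is the inclusion of the center of contraction of a based path space, and that this has the \llp{} \wrt{} display maps — uniformly over $\Gamma$, i.e.\ after the reindexing implicit in its construction — is precisely the weak stability of the $\Id$-types (equivalently, the $\Id$-elimination rule in the ambient context). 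Axiom \Cref{tribe-anodyne}, stability of anodyne morphisms under pullback along fibrations, is the Frobenius property of the identity-type factorization; this is the only genuinely non-formal step, and it is where the strong $\tSigma$-types are used (cf.\ \cite{sh}, \cite{bg}). Alternatively, one may note that a categorical model satisfies all the axioms of a type-theoretic fibration category in the sense of \cite{sh} except possibly the one concerning dependent products, and that — as observed in the proof of \Cref{Trb-FibCat} — the only way in which that notion strengthens the tribe axioms, apart from dependent products, is by positing the conclusion of \Cref{anodyne-pullback}, which holds in every tribe; hence all four tribe axioms hold.

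For the second assertion, let $(F_0, F_1)$ be a morphism of categorical models; it is strictly compatible with $p$ and $\chi$ and $F_0$ preserves the terminal object, the $\tSigma$-types and the $\Id$-types. Compatibility with $\chi$ forces $F_0$ to send display maps, and hence their composites, to fibrations; being a morphism of the underlying fibrations it preserves reindexing, hence the canonical pullback squares along display maps, hence pullbacks along fibrations; the terminal object is preserved by hypothesis. Preservation of $\Id$-types means $F_0$ carries the reflexivity terms, and therefore the factorization $\Gamma \to P_f \fto \Delta$ constructed above, to the corresponding data in the target; since every anodyne morphism is a retract of the left factor of its own such factorization — by the usual retract argument, lifting the anodyne morphism against the fibration that occurs there — and anodyne morphisms are closed under retracts, $F_0$ preserves anodyne morphisms and is thus a homomorphism of tribes. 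Finally, $F_0$ preserves homotopies (being compatible with the $\Id$-types) and hence homotopy equivalences; since homotopies and homotopy equivalences in a categorical model are defined by the same path objects that underlie its tribe structure, the homotopy category of $\ccat{C}$ coincides with that of its underlying tribe, so a weak equivalence of categorical models is sent by $T$ to a weak equivalence of tribes and $T$ is homotopical. The main obstacle throughout is \Cref{tribe-anodyne}: unlike the other axioms it is not formal and really does use the dependent sums, so it is the step at which one must take care to invoke — or reprove in the present setting — the Frobenius property of the identity-type weak factorization system.
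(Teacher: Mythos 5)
Your proof is correct and follows essentially the same route as the paper's: the tribe axioms are verified by the Gambino--Garner identity-type factorization (the paper cites \cite{gg}*{Lem.~11} for \Cref{tribe-factorization} and \cite{gg}*{Prop.~14} for the Frobenius property \Cref{tribe-anodyne}), and preservation of anodyne morphisms by a morphism of categorical models is reduced to preservation of $\Id$-types.

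The one place where you diverge, slightly, is in how you extract preservation of anodyne morphisms. You use the retract argument: factor an anodyne $i$ through its mapping path object, lift against the fibration factor, exhibit $i$ as a retract of the (preserved) left factor, and conclude using closure of LLP-classes under retracts. The paper instead quotes \cite{gg}*{Lem.~13(i)}, which characterizes anodyne morphisms intrinsically as those admitting a deformation retraction; since deformation retractions are expressed in terms of $\Id$-types, preservation is then immediate. Both work; yours is self-contained while the paper's is shorter because it outsources the characterization. Your ``alternative'' route to \Cref{tribe-anodyne} via type-theoretic fibration categories is, as you half-acknowledge, not really an alternative: asserting that a categorical model satisfies the type-theoretic fibration category axioms minus $\Pi$-types is precisely asserting (T4), so it merely relabels the appeal to \cite{sh} without shortening the argument. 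That is a stylistic quibble, not a gap.
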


\begin{proof}
  Axiom \Cref{tribe-terminal} is satisfied by the assumption while
  \Cref{tribe-pullback} follows from the fact that
  $\chi$ carries cartesian morphisms to pullback squares.
  A factorization of a morphism $f \from \Gamma \to \Delta$ is given by
  \begin{tikzeq*}
  \matrix[diagram,column sep={7em,between origins}]
  {
    |(G)| \Gamma & |(Id)| \Delta.\Gamma.(\id.f)^*\Id_\Delta & |(D)| \Delta \\
  };

  \draw[->] (G)  to (Id);
  \draw[->] (Id) to (D);
  \end{tikzeq*}
  as constructed in
  \cite{gg}*{Lem.\ 11} which proves \Cref{tribe-factorization}.
  Finally, \Cref{tribe-anodyne} follows by \cite{gg}*{Prop.\ 14}.

  A morphism of categorical models preserves fibrations by definition.
  On the other hand, the anodyne morphisms can be characterized as
  those admitting deformation retractions by \cite{gg}*{Lem.\ 13(i)} so
  the conclusion follows by preservation of $\Id$-types.
\end{proof}

Given a tribe $\cat{T}$ we define a category $\cat{T}^{[1]}_{\mathrm{f}}$ as
the full subcategory of $\cat{T}^{[1]}$ spanned by fibrations.

\begin{proposition}\label{trb-to-compcat}
  If $\cat{T}$ is a tribe, then let $\chi$ denote
  the inclusion $\cat{T}^{[1]}_{\mathrm{f}} \ito \cat{T}^{[1]}$.
  Moreover, let $p \from \cat{T}^{[1]}_{\mathrm{f}} \to \cat{T}$ be
  the composite $\cod \chi$.
  Then $(\cat{T}, \cat{T}^{[1]}_{\mathrm{f}}, \chi)$ is a categorical model.
  Moreover, a homomorphism of tribes induces
  a morphism of the associated categorical models,
  yielding a homotopical functor $C \from \Trb \to \CompCat$.
\end{proposition}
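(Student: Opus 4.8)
The plan is to verify directly that the triple $(\cat{T}, \cat{T}^{[1]}_{\mathrm{f}}, \chi)$ satisfies all the axioms of a categorical model, then treat functoriality separately. First I would check that $p = \cod\chi \from \cat{T}^{[1]}_{\mathrm{f}} \to \cat{T}$ is a Grothendieck fibration: a cartesian lift of a morphism $\gamma \from \Gamma' \to \Gamma$ over an object $(x \fto \Gamma) \in \cat{T}^{[1]}_{\mathrm{f}}$ is given by the pullback $\gamma^* x \fto \Gamma'$, which exists and is again a fibration by \Cref{tribe-pullback}; the universal property of the pullback furnishes cartesianness. Since $\chi$ is a full subcategory inclusion into $\cat{T}^{[1]}$ it is automatically fully faithful, the triangle with $\cod$ commutes on the nose by definition of $p$, and $\chi$ carries cartesian morphisms (i.e.\ pullback squares of fibrations) to pullback squares tautologically. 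The terminal object of $\cat{T}$ is supplied by \Cref{tribe-terminal}, so $\cat{T}$ is a comprehension category. It remains to supply the two type constructors and fibrancy.

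For $\Sigma$-types, by \Cref{Sigma} it suffices to produce, for each fibration $A \from \Gamma.A \fto \Gamma$ in $\cat{T}$ (i.e.\ object $A \in \cat{T}^{[1]}_{\mathrm{f}}(\Gamma)$), a left adjoint to $\chi(A)^* \from \cat{T}^{[1]}_{\mathrm{f}}(\Gamma) \to \cat{T}^{[1]}_{\mathrm{f}}(\Gamma.A)$; this is given by composition, sending a fibration $B \from (\Gamma.A).B \fto \Gamma.A$ to the composite $\Gamma.A.B \fto \Gamma.A \fto \Gamma$, which is again a fibration since fibrations are a subcategory, and the adjunction is the evident one. Strong $\eta$ holds because the $\Sigma$-type is \emph{literally} composition, so the relevant projections are identities. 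For $\Id$-types, I would use the path objects of \Cref{hpb}'s ambient structure: given a fibration $A \from \Gamma.A \fto \Gamma$, factor the diagonal $\Gamma.A \to \Gamma.A \pull_\Gamma \Gamma.A$ using \Cref{tribe-factorization} as an anodyne map $r$ followed by a fibration, giving the identity type over $\Gamma.A \pull_\Gamma \Gamma.A$; the elimination rule and its weak stability follow from the lifting property of anodyne maps against fibrations (\Cref{anodyne-pullback} and \Cref{tribe-anodyne} guarantee the stability needed to verify van den Berg--Garner's conditions as in \cite{lw}). All objects are fibrant by \Cref{tribe-terminal}.

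For the functoriality claim, a homomorphism of tribes $H \from \cat{S} \to \cat{T}$ preserves fibrations, terminal objects, and pullbacks along fibrations by definition, so it induces a functor $\cat{S}^{[1]}_{\mathrm{f}} \to \cat{T}^{[1]}_{\mathrm{f}}$ strictly over $\chi$ and $p$; it preserves $\Sigma$-types because these are given by composition, which any functor preserves; and it preserves $\Id$-types because it preserves anodyne morphisms and hence the chosen path-object factorizations up to the coherence needed. Finally, the assignment $\cat{T} \mapsto (\cat{T}, \cat{T}^{[1]}_{\mathrm{f}}, \chi)$ on objects together with this action on morphisms is strictly functorial since everything is defined by choosing literal subcategories and composites, and it is homotopical because a weak equivalence of tribes induces an equivalence on homotopy categories, which is exactly the definition of a weak equivalence of categorical models. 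The main obstacle I anticipate is the verification of the $\Id$-type axioms in the precise sense of \cite{lw}*{Def.\ 2.3.6}, namely checking that the path-object factorization genuinely models \emph{weakly stable} identity types --- this is where the tribe axiom \Cref{tribe-anodyne} (stability of anodyne morphisms under pullback along fibrations) does the essential work, and one must be careful that the weak stability only requires a section up to fiberwise homotopy rather than on the nose, which is precisely what a tribe (as opposed to a model category) provides.
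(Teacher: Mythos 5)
Your proposal matches the paper's proof essentially line for line: the comprehension category axioms are checked directly using \Cref{tribe-terminal,tribe-pullback}, $\tSigma$-types are constructed as composition via the left-adjoint characterization of \Cref{Sigma}, $\Id$-types are constructed by factoring the fiberwise diagonal $x \to x \pull_y x$ as anodyne followed by fibration, and weak stability is verified via \Cref{anodyne-pullback}. The functoriality discussion also tracks the paper's; the only difference is that you spell out details the paper declares straightforward.
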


\begin{proof}
  The category $\cat{T}$ has a terminal object by assumption.
  The functor $p$ is a Grothendieck fibration and
  $\chi$ preserves cartesian morphisms since
  $p$-cartesian morphisms are exactly pullbacks along fibrations.

  For every fibration $q \from x \fto y$, the pullback functor
  $q^* \from \cat{T}^{[1]}_{\mathrm{f}}(y) \to \cat{T}^{[1]}_{\mathrm{f}}(x)$
  has a left adjoint given by composition and hence by \Cref{Sigma}
  strong $\tSigma$-types exist.

  Moreover, for every fibration $q \from x \fto y$, we choose a factorization
  \begin{tikzeq*}
  \matrix[diagram,column sep={6em,between origins}]
  {
            & |(Id)|  \Id_x                \\
    |(x)| x & |(xyx)| x \pull_y x \text{.} \\
  };

  \draw[->]  (x)  to (xyx);
  \draw[fib] (Id) to (xyx);

  \draw[cof] (x) to node[above left] {$\mathsf{r}_x$} node[below right] {$\we$} (Id);
  \end{tikzeq*}
  To see that $(\Id_x, \mathsf{r}_x)$ is an $\Id$-type, we need to verify that
  for every morphism $f \from y' \to y$,
  the morphism $f^* \mathsf{r}_x \from f^* x \to f^* \Id_x$ is anodyne.
  Indeed, this follows from \Cref{anodyne-pullback}.

  The verification that a homomorphism of tribes induces
  a morphism of categorical model is straightforward.
\end{proof}

\begin{theorem}\label{compcat-trb}
  The functor $T \from \CompCat_{\Id, \tSigma} \to \Trb$
  of \Cref{compcat-to-trb} is a DK-equivalence.
\end{theorem}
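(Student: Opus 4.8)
The plan is to exhibit $T \from \CompCat_{\Id,\tSigma} \to \Trb$ and the functor $C \from \Trb \to \CompCat_{\Id,\tSigma}$ of \Cref{trb-to-compcat} as mutually inverse homotopy equivalences, which by the last clause of the definition of homotopical functors makes each a DK-equivalence. So the whole burden is to produce zig-zags of natural weak equivalences between the two composites and the respective identity functors. One direction is essentially immediate: starting from a tribe $\cat{T}$, the categorical model $C\cat{T} = (\cat{T}, \cat{T}^{[1]}_{\mathrm f}, \chi)$ has underlying category $\cat{T}$ and its fibrations are exactly the fibrations of $\cat{T}$, so $T C \cat{T} = \cat{T}$ on the nose; this identification is natural in homomorphisms, hence $T C = \id_{\Trb}$ strictly. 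The real content is the other composite $C T$.

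For the composite $C T$, fix a categorical model $\ccat{C}$ and let $\cat{T} = T\ccat{C}$ be its underlying tribe (same objects and morphisms as $\ccat{C}$, fibrations as defined in \Cref{compcat-to-trb}). Then $CT\ccat{C} = (\cat{T}, \cat{T}^{[1]}_{\mathrm f}, \chi)$, where now $\cat{T}^{[1]}_{\mathrm f}$ is the full subcategory of $\cat{T}^{[1]}$ on the fibrations. There is a canonical comparison: the comprehension functor $\chi_\ccat{C} \from \ccat{T} \to \ccat{C}^{[1]}$ of the original model is fully faithful and lands (by the definition of fibration in $\ccat{C}$ together with the structure of $\chi$) among morphisms that become fibrations; restricting codomains gives a functor $\ccat{T} \to \cat{T}^{[1]}_{\mathrm f}$ over $\ccat{C} = \cat{T}$, strictly compatible with $p$ and $\chi$. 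First I would check that this functor is an equivalence of categories: full faithfulness is inherited from full faithfulness of $\chi_\ccat{C}$, and essential surjectivity amounts to saying that every fibration of $\cat{T}$ — i.e.\ every composite of morphisms in the image of $\chi_\ccat{C}$ — is isomorphic in $\cat{T}^{[1]}$ to a single object in the image of $\chi_\ccat{C}$, which follows because $\tSigma$-types (with the strong $\eta$-rule, \Cref{Sigma}) exhibit the composite of two display maps as again a display map. Next I would verify that this equivalence intertwines the two comprehension structures: the Grothendieck fibrations $p$ agree, the $\chi$'s agree by construction, and the $\tSigma$- and $\Id$-type structures are transported along it; the key point is that on the tribe side $\tSigma$ is ``composition'' and $\Id$ is ``path object,'' and on the categorical-model side these are exactly what the strong $\eta$-rule and Garner's identity contexts deliver, so the structures match up to canonical isomorphism. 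This yields an isomorphism (or at least a natural equivalence) $CT\ccat{C} \simeq \ccat{C}$, natural in morphisms of categorical models; in particular it is a weak equivalence of categorical models since an equivalence of underlying categories induces an equivalence of homotopy categories.

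The main obstacle I anticipate is bookkeeping rather than conceptual: the notions of morphism in $\CompCat_{\Id,\tSigma}$ require \emph{strict} compatibility with $p$ and $\chi$ and strict preservation of $\tSigma$- and $\Id$-types, so one must be careful that the comparison $\ccat{T} \to \cat{T}^{[1]}_{\mathrm f}$ and its naturality squares commute strictly, not merely up to coherent isomorphism; some care is needed in choosing the $\Id$-type structure on $\cat{T}^{[1]}_{\mathrm f}$ (made in the proof of \Cref{trb-to-compcat} via a chosen factorization) so that it agrees on the nose with the one coming from $\ccat{C}$, or else one argues that any two choices are connected by a natural homotopy equivalence and hence induce weakly equivalent categorical models. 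Once this is settled, the two composites $TC$ and $CT$ are each connected to the identity by natural weak equivalences, so both $T$ and $C$ are homotopy equivalences and therefore DK-equivalences.
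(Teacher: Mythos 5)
Your overall strategy matches the paper's: show $TC = \id_\Trb$ on the nose, construct a natural morphism $\ccat{C} \to CT\ccat{C}$ with $F_0 = \id_{\ccat{C}}$ and $F_1$ the corestriction of $\chi$ to $\ccat{C}^{[1]}_{\mathrm{f}}$, and conclude $T$ is a homotopy equivalence and hence a DK-equivalence. Where you diverge is in how much you try to prove about that comparison. You aim to show $F_1$ is an equivalence of categories (full faithfulness from $\chi$, essential surjectivity via $\tSigma$-types and \Cref{Sigma}), so that $CT\ccat{C}$ is actually \emph{equivalent} to $\ccat{C}$ as a comprehension category. That is a sound argument, but it is more than what's needed: by definition, a weak equivalence of categorical models is a morphism for which $F_0$ alone induces an equivalence of homotopy categories, and with $F_0 = \id$ the only thing to check is that the homotopy equivalences of $\ccat{C}$ and of $CT\ccat{C}$ coincide. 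They do, because both are the homotopy equivalences of the common underlying tribe $T\ccat{C}$. Nothing at all needs to be said about whether $F_1$ is full, faithful or essentially surjective. Your worry about strict preservation of $\Id$-types is a genuine subtlety that the paper's short proof passes over, but it resolves the way you suspect: ``preserves $\Id$-types'' for \emph{weakly stable} $\Id$-types does not require the chosen factorizations in $CT\ccat{C}$ to agree with those of $\ccat{C}$; it requires that $F_1$ carry an $\Id$-type to some $\Id$-type, and the original $\Id_A$ (being an anodyne-followed-by-fibration factorization of the diagonal, stable under pullback by \Cref{anodyne-pullback}) is already a valid $\Id$-type in $CT\ccat{C}$. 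So the morphism $(\id, F_1)$ is a bona fide morphism of categorical models and the argument closes with much less machinery than your proposal deploys.
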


\begin{proof}
  We will show that the functor $C$ of \Cref{trb-to-compcat} is
  a homotopy inverse of $T$.
  Clearly, $T C = \id_\Trb$.
  Given a categorical model $(\ccat{C}, \ccat{T}, \chi)$, we construct
  a natural morphism $(F_0, F_1) \from \ccat{C} \to C T \ccat{C}$.
  First, we set $F_0 = \id_{\ccat{C}}$.
  Moreover, $\chi \from \ccat{T} \to \ccat{C}^{[1]}$ factors as
  $F_1 \from \ccat{T} \to \ccat{C}^{[1]}_{\mathrm{f}}$ followed by
  $\ccat{C}^{[1]}_{\mathrm{f}} \ito \ccat{C}^{[1]}$.
  Since homotopy equivalences in $\ccat{C}$ and $T \ccat{C}$ agree,
  this morphism is a weak equivalence.
\end{proof}

Finally, we prove our main theorem.

\begin{theorem}\label{main-theorem}
  The homotopical category of categorical models of Martin-L\"of Type Theory
  with dependent sums and intensional identity types is DK-equivalent to
  the homotopical category of finitely complete $(\infty, 1)$-categories.
  In particular, the associated $(\infty, 1)$-categories are equivalnent.
\end{theorem}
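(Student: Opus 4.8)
The plan is to assemble the chain of Dwyer--Kan equivalences established in the preceding sections, together with the comparison recalled in the introduction, and then transfer the conclusion to the associated $(\infty, 1)$-categories.

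First I would combine three DK-equivalences: the functor $T \from \CompCat_{\Id, \tSigma} \to \Trb$ of \Cref{compcat-trb}, the forgetful functor $\Trb \to \FibCat$ of \Cref{trb-fibcat-DK}, and the comparison functor $\FibCat \to \Lex_\infty$ shown in \cite{s3} to be a DK-equivalence (as recalled in the introduction). Composing these homotopical functors produces a homotopical functor $\CompCat_{\Id, \tSigma} \to \Lex_\infty$.

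Next I would note that DK-equivalences compose. A DK-equivalence is a homotopical functor that induces an equivalence of homotopy categories and \whe{}s on all mapping spaces of the hammock localizations; the first property is stable under composition since equivalences of categories are, and the second since the hammock localization is functorial and \whe{}s satisfy 2-out-of-3. Hence the composite $\CompCat_{\Id, \tSigma} \to \Lex_\infty$ is a DK-equivalence, which is the first assertion of the theorem.

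For the final sentence, I would recall that the $(\infty, 1)$-category associated to a homotopical category $\cat{C}$ is a fibrant model of its hammock localization $\hamm \cat{C}$, viewed as a simplicially enriched category, and that a DK-equivalence is by construction a functor whose underlying map of hammock localizations is a weak equivalence of simplicially enriched categories; passing to homotopy coherent nerves, this is precisely an equivalence of the associated $(\infty, 1)$-categories. Applying this to the DK-equivalence obtained above yields the desired equivalence. I do not anticipate a genuine obstacle, since the substantive work is done in \Cref{compcat-trb,trb-fibcat-DK} and in \cite{s3}; the only point deserving care is to check that the comparison functor of \cite{s3} is indeed stated for the same homotopical category $\FibCat$ of small fibration categories and exact functors that we use here.
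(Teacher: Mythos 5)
Your proposal matches the paper's proof exactly: the paper also composes the three DK-equivalences $\CompCat_{\Id,\tSigma} \to \Trb$ (\Cref{compcat-trb}), $\Trb \to \FibCat$ (\Cref{trb-fibcat-DK}), and $\FibCat \to \Lex_\infty$ (\cite{s3}*{Thm.~4.9}). The extra remarks you add about DK-equivalences being closed under composition and about passing to associated $(\infty,1)$-categories are correct and merely make explicit what the paper leaves implicit.
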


\begin{proof}
  We consider the composite
  \begin{tikzeq*}
  \matrix[diagram,column sep={6em,between origins}]
  {
    |(C)| \CompCat_{\Id,\tSigma} &[1em] |(T)| \Trb & |(F)| \FibCat & |(Q)| \Lex_\infty \\
  };

  \draw[->] (C) to (T);
  \draw[->] (T) to (F);
  \draw[->] (F) to (Q);
  \end{tikzeq*}
  where the first functor is an equivalence by \Cref{compcat-trb},
  the second one by \Cref{trb-fibcat-DK} and
  the last one by \cite{s3}*{Thm.\ 4.9}.
\end{proof}

\begin{bibdiv}
\begin{biblist}

\bib{akl}{article}{
   author={Avigad, Jeremy},
   author={Kapulkin, Krzysztof},
   author={Lumsdaine, Peter Lefanu},
   title={Homotopy limits in type theory},
   journal={Math. Structures Comput. Sci.},
   volume={25},
   date={2015},
   number={5},
   pages={1040--1070},
   issn={0960-1295},
}

\bib{b}{book}{
   author={Baues, Hans Joachim},
   title={Algebraic homotopy},
   series={Cambridge Studies in Advanced Mathematics},
   volume={15},
   publisher={Cambridge University Press, Cambridge},
   date={1989},
   pages={xx+466},
   isbn={0-521-33376-8},
}

\bib{bg}{article}{
   label={BG11},
   author={van den Berg, Benno},
   author={Garner, Richard},
   title={Types are weak $\omega$-groupoids},
   journal={Proc. Lond. Math. Soc. (3)},
   volume={102},
   date={2011},
   number={2},
   pages={370--394},
   issn={0024-6115},
}

\bib{br}{article}{
  author={Brown, Kenneth S.},
  title={Abstract homotopy theory and generalized sheaf cohomology},
  journal={Trans. Amer. Math. Soc.},
  volume={186},
  date={1973},
  pages={419--458},
}

\bib{c-cd}{article}{
  author={Cisinski, Denis-Charles},
  title={Cat\'egories d\'erivables},
  journal={Bull. Soc. Math. France},
  volume={138},
  date={2010},
  number={3},
  pages={317--393},
  issn={0037-9484},
}

\bib{c-ik}{article}{
  author={Cisinski, Denis-Charles},
  title={Invariance de la $K$-th\'eorie par \'equivalences d\'eriv\'ees},
  journal={J. K-Theory},
  volume={6},
  date={2010},
  number={3},
  pages={505--546},
  issn={1865-2433},
}

\bib{dk2}{article}{
  author={Dwyer, W. G.},
  author={Kan, D. M.},
  title={Calculating simplicial localizations},
  journal={J. Pure Appl. Algebra},
  volume={18},
  date={1980},
  number={1},
  pages={17--35},
  issn={0022-4049},
}

\bib{ffll}{article}{
 author={Hou (Favonia), Kuen-Bang},
 author={Finster, Eric},
 author={Licata, Daniel R.},
 author={Lumsdaine, Peter LeFanu},
 title={A Mechanization of the Blakers-Massey Connectivity Theorem in Homotopy Type Theory},
 booktitle={Proceedings of the 31st Annual ACM/IEEE Symposium on Logic in Computer Science},
 year={2016},
 pages={565--574},
 label={FFLL16},
} 

\bib{gg}{article}{
   author={Gambino, Nicola},
   author={Garner, Richard},
   title={The identity type weak factorisation system},
   journal={Theoret. Comput. Sci.},
   volume={409},
   date={2008},
   number={1},
   pages={94--109},
   issn={0304-3975},
}

\bib{g}{article}{
   author={Garner, Richard},
   title={Two-dimensional models of type theory},
   journal={Math. Structures Comput. Sci.},
   volume={19},
   date={2009},
   number={4},
   pages={687--736},
   issn={0960-1295},
}

\bib{hi}{book}{
  author={Hirschhorn, Philip S.},
  title={Model categories and their localizations},
  series={Mathematical Surveys and Monographs},
  volume={99},
  publisher={American Mathematical Society},
  place={Providence, RI},
  date={2003},
  pages={xvi+457},
}

\bib{hott}{misc}{
  editor={Bauer, Andrej},
  editor={Gross, Jason},
  editor={LeFanu Lumsdaine, Peter},
  editor={Shulman, Michael},
  editor={Sozeau, Matthieu},
  editor={Spitters, Bas},
  title={The HoTT Library},
  note={open code repository, available at \url{https://github.com/HoTT/HoTT}},
  label={HoTT},
}

\bib{ho}{book}{
  author={Hovey, Mark},
  title={Model categories},
  series={Mathematical Surveys and Monographs},
  volume={63},
  publisher={American Mathematical Society},
  place={Providence, RI},
  date={1999},
  pages={xii+209},
}

\bib{j-comp}{article}{
  author={Jacobs, Bart},
  title={Comprehension categories and the semantics of type dependency},
  journal={Theoret. Comput. Sci.},
  volume={107},
  date={1993},
  number={2},
  pages={169--207},
  issn={0304-3975},
}

\bib{j}{book}{
   author={Jacobs, Bart},
   title={Categorical logic and type theory},
   series={Studies in Logic and the Foundations of Mathematics},
   volume={141},
   publisher={North-Holland Publishing Co., Amsterdam},
   date={1999},
   pages={xviii+760},
}

\bib{Joyal-theory}{book}{
  author={Joyal, Andr\'e},
  title={The Theory of Quasi-Categories and Its Applications},
  series={Quadern 45, Vol. II},
  publisher={Centre de Recerca Matem\`{a}tica Barcelona},
  date={2008},
}

\bib{Joyal-notes}{article}{
  author={Joyal, Andr\'e},
  title={Notes on Clans and Tribes},
  date={2017},
  eprint={https://arxiv.org/abs/1710.10238},
}

\bib{kan}{article}{
   author={Kan, Daniel M.},
   title={On c. s. s. complexes},
   journal={Amer. J. Math.},
   volume={79},
   date={1957},
   pages={449--476},
   issn={0002-9327},
}
		

\bib{k}{article}{
   author={Kapulkin, Krzysztof},
   title={Locally cartesian closed quasi-categories from type theory},
   journal={J. Topol.},
   volume={10},
   date={2017},
   number={4},
   pages={1029--1049},
   issn={1753-8416},
}


\bib{kl}{article}{
   author={Kapulkin, Krzysztof},
   author={Lumsdaine, Peter LeFanu},
   title={The homotopy theory of type theories},
   journal={Adv. Math.},
   volume={337},
   date={2018},
   pages={1--38},
   issn={0001-8708},
}

\bib{ks}{article}{
   author={Kapulkin, Krzysztof},
   author={Szumi\l o, Karol},
   title={Quasicategories of frames of cofibration categories},
   journal={Appl. Categ. Structures},
   volume={25},
   date={2017},
   number={3},
   pages={323--347},
   issn={0927-2852},
}

\bib{ltw}{article}{
   author={Latch, Dana May},
   author={Thomason, Robert W.},
   author={Wilson, W. Stephen},
   title={Simplicial sets from categories},
   journal={Math. Z.},
   volume={164},
   date={1979},
   number={3},
   pages={195--214},
   issn={0025-5874},
}

\bib{lw}{article}{
   author={Lumsdaine, Peter LeFanu},
   author={Warren, Michael A.},
   title={The local universes model: an overlooked coherence construction
   for dependent type theories},
   journal={ACM Trans. Comput. Log.},
   volume={16},
   date={2015},
   number={3},
   pages={Art. 23, 31},
   issn={1529-3785},
}

\bib{HTT}{book}{
  author={Lurie, Jacob},
  title={Higher topos theory},
  series={Annals of Mathematics Studies},
  volume={170},
  publisher={Princeton University Press},
  place={Princeton, NJ},
  date={2009},
  pages={xviii+925},
}

\bib{mv}{book}{
   author={Munson, Brian A.},
   author={Voli\'c, Ismar},
   title={Cubical homotopy theory},
   series={New Mathematical Monographs},
   volume={25},
   publisher={Cambridge University Press, Cambridge},
   date={2015},
   pages={xv+631},
   isbn={978-1-107-03025-1},
}

\bib{nss}{article}{
   author={Nikolaus, Thomas},
   author={Schreiber, Urs},
   author={Stevenson, Danny},
   title={Principal $\infty$-bundles: presentations},
   journal={J. Homotopy Relat. Struct.},
   volume={10},
   date={2015},
   number={3},
   pages={565--622},
   issn={2193-8407},
}

\bib{q}{article}{
   author={Quillen, Daniel},
   title={Higher algebraic $K$-theory. I},
   conference={
      title={Algebraic $K$-theory, I: Higher $K$-theories},
      address={Proc. Conf., Battelle Memorial Inst., Seattle, Wash.},
      date={1972},
   },
   book={
      publisher={Springer, Berlin},
   },
   date={1973},
   pages={85--147. Lecture Notes in Math., Vol. 341},
}

\bib{r}{article}{
  author={Rasekh, Nima},
  title={A Theory of Elementary Higher Toposes},
  date={2018},
  eprint={https://arxiv.org/abs/1805.03805},
}

\bib{rb}{article}{
  author={R\u{a}dulescu-Banu, Andrei},
  title={Cofibrations in Homotopy Theory},
  date={2006},
  eprint={https://arxiv.org/abs/math/0610009v4},
}

\bib{rs}{article}{
   author={Rourke, C. P.},
   author={Sanderson, B. J.},
   title={$\Delta$-sets. I. Homotopy theory},
   journal={Quart. J. Math. Oxford Ser. (2)},
   volume={22},
   date={1971},
   pages={321--338},
   issn={0033-5606},
}

\bib{s}{article}{
  author={Schwede, Stefan},
  title={The $p$-order of topological triangulated categories},
  journal={J. Topol.},
  volume={6},
  date={2013},
  number={4},
  pages={868--914},
  issn={1753-8416},
}

\bib{sh}{article}{
   author={Shulman, Michael},
   title={Univalence for inverse diagrams and homotopy canonicity},
   journal={Math. Structures Comput. Sci.},
   volume={25},
   date={2015},
   number={5},
   pages={1203--1277},
   issn={0960-1295},
}

\bib{s1}{article}{
   author={Szumi\l o, Karol},
   title={Homotopy theory of cofibration categories},
   journal={Homology Homotopy Appl.},
   volume={18},
   date={2016},
   number={2},
   pages={345--357},
   issn={1532-0073},
}

\bib{s2}{article}{
   author={Szumi\l o, Karol},
   title={Frames in cofibration categories},
   journal={J. Homotopy Relat. Struct.},
   volume={12},
   date={2017},
   number={3},
   pages={577--616},
   issn={2193-8407},
}

\bib{s3}{article}{
   author={Szumi\l o, Karol},
   title={Homotopy theory of cocomplete quasicategories},
   journal={Algebr. Geom. Topol.},
   volume={17},
   date={2017},
   number={2},
   pages={765--791},
   issn={1472-2747},
}

\bib{uf}{book}{
   author={The Univalent Foundations Program},
   title={Homotopy type theory---univalent foundations of mathematics},
   publisher={The Univalent Foundations Program, Princeton, NJ; Institute
   for Advanced Study (IAS), Princeton, NJ},
   date={2013},
   pages={xiv+589},
   label={UF13},
}

\bib{um}{article}{
   author={Voevodsky, Vladimir},
   author={Ahrens, Benedikt},
   author={Grayson, Daniel},
   author={others},
   title={\emph{UniMath}: Univalent Mathematics},
   eprint={https://github.com/UniMath},
}

\bib{wa}{article}{
  author={Waldhausen, Friedhelm},
  title={Algebraic $K$-theory of spaces},
  conference={
    title={Algebraic and geometric topology},
    address={New Brunswick, N.J.},
    date={1983},
  },
  book={
    series={Lecture Notes in Math.},
    volume={1126},
    publisher={Springer},
    place={Berlin},
  },
  date={1985},
  pages={318--419},
}
    
\end{biblist}
\end{bibdiv}

\end{document}